\newcommand{\vip}{\vskip0.15cm}
\newcommand{\ala}{\nonumber \\}
\newcommand{\indiq}{{\rm 1 \hskip-4pt 1}}
\newcommand{\lb}{[\![}
\newcommand{\rb}{]\!]}
\newcommand{\E}{{\mathbb{E}}}
\newcommand{\nn}{\mathbb{N}}
\newcommand{\zz}{\mathbb{Z}}
\newcommand{\rr}{{\mathbb{R}}}
\newcommand{\dd}{{\mathbb{D}}}
\newcommand{\supp}{{\mathrm{supp}\;}}
\newcommand{\zuz}{{\{0,1\}^\zz}}
\newcommand{\ba}{{\mathbf{a}}}
\newcommand{\bn}{{\mathbf{n}}}
\newcommand{\bm}{{\mathbf{m}}}
\newcommand{\bk}{{\mathbf{k}}}
\newcommand{\bd}{{\mathbf{d}}}
\newcommand{\bdelta}{{\boldsymbol{\delta}}}
\newcommand{\cI}{{\mathcal I}}
\newcommand{\cS}{{\mathcal S}}
\newcommand{\cA}{{\mathcal A}}
\newcommand{\cB}{{\mathcal B}}
\newcommand{\cC}{{\mathcal C}}
\newcommand{\cT}{{\mathcal T}}
\newcommand{\cX}{{\mathcal X}}
\newcommand{\cR}{{\mathcal R}}
\newcommand{\cU}{{\mathcal U}}
\newcommand{\ttau}{{\tilde \tau}}
\newcommand{\tS}{{\tilde S}}
\newcommand{\tV}{{\tilde V}}
\newcommand{\tY}{{\tilde Y}}
\newcommand{\tL}{{\tilde L}}
\newcommand{\tT}{{\tilde T}}
\newcommand{\tZ}{{\tilde Z}}
\newcommand{\tH}{{\tilde H}}
\newcommand{\tOmega}{{\tilde \Omega}}
\newcommand{\bOmega}{{\bar \Omega}}
\newcommand{\la}{{\lambda}}
\newcommand{\intot}{{\int_0^t}}
\newcommand{\sm}{{{s-}}}
\newcommand{\tm}{{{t-}}}
\newcommand{\e}{{\varepsilon}}
\newtheorem{theo}{\indent Theorem}[section]
\newtheorem{prop}[theo]{\indent Proposition}
\newtheorem{rem}[theo]{\indent Remark}
\newtheorem{lem}[theo]{\indent Lemma}
\newtheorem{defin}[theo]{\indent Definition}
\newtheorem{cor}[theo]{\indent Corollary}
\newtheorem{nota}[theo]{\indent Notation}
\newtheorem{algo}[theo]{\indent Algorithm}
\newenvironment{preuve}{\vip \noindent {\it Proof}}{\hfill$\square$ \vip}
\begin{document}

\title[General forest fire processes]
{One-dimensional general forest fire processes}
\author{Xavier Bressaud}
\author{Nicolas Fournier}
\address{Xavier Bressaud: Universit\'e P. Sabatier, Institut de 
Maths de Toulouse, F-31062 Toulouse Cedex, France}
\email{bressaud@math.univ-toulouse.fr}
\address{Nicolas Fournier:
Laboratoire d'Analyse et de Math\'ematiques Appliqu\'ees, CNRS UMR 8050,
Universit\'e Paris-Est, 
61 avenue du G\'en\'eral de Gaulle, 94010 Cr\'eteil Cedex, France}
\email{nicolas.fournier@univ-paris12.fr}

\begin{abstract}
We consider the one-dimensional generalized forest fire process:
at each site of $\zz$, seeds and matches fall according some i.i.d. 
stationary 
renewal processes. When a seed falls on an empty site, a tree grows immediately.
When a match falls on an occupied site, a fire starts and destroys
immediately the corresponding connected component of occupied sites.
Under some quite reasonable assumptions on the renewal processes,
we show that when matches become less and less frequent, the process converges,
with a correct normalization, to a limit forest fire model.
According to the nature of the renewal processes governing seeds, 
there are four possible limit forest fire models. The four limit
processes can be perfectly simulated.
This study generalizes consequently previous results of \cite{bf}
where seeds and matches were assumed to fall according to Poisson
processes.
\end{abstract}

\keywords{Stochastic interacting particle systems, 
Self-organized criticality, Forest fire model.}

\subjclass[2000]{60K35, 82C22.}

\maketitle

\vip \vip \vip

Acknowledgments: The second author was supported during this work by
the grant from the Agence Nationale de la Recherche with reference 
ANR-08-BLAN-0220-01.

\setcounter{tocdepth}{1}
\tableofcontents

\part{Introduction}

\section{Introduction}
\setcounter{equation}{0}

Consider a graph $G=(S,A)$, $S$ being the set of vertices and $A$
the set of edges. Introduce the space of configurations $E=\{0,1\}^S$.
For $\eta\in E$, we say that $\eta(i)=0$ if the site $i\in S$ is vacant
and $\eta(i)=1$ if $i$ is occupied by a tree. 
Two sites are neighbors if there is an edge between them. We call forests the 
connected components of occupied sites. 
For $i \in S$ and $\eta \in E$, we denote by $C(\eta,i)$ the forest
around $i$ in the configuration $\eta$ 
(with $C(\eta,i)=\emptyset$ if $\eta(i)=0$).
We consider the following (vague) rules: 

\vip

$\bullet$
vacant sites become occupied (a seed falls and a tree immediately grows) 
at rate $1$;

\vip

$\bullet$ occupied sites take fire (a match falls) at rate $\la>0$;

\vip

$\bullet$ fires propagate to neighbors (inside the forest) at rate $\pi>0$. 

\vip

Such a model was introduced by Henley \cite{h} and 
Drossel and Schwabl \cite{ds} as a toy model for forest fire
propagation and as an example of a simple model intended to clarify the 
concept of {\it self-organized criticality}.  

\vip

The order of magnitude of the 
rate of growth is much smaller than the propagation rate, $\pi \gg 1$. We 
will focus here on the limit case where the propagation is instantaneous: when 
a tree takes fire, the whole forest (to which it belongs) is destroyed 
immediately. The model is thus:

\vip

$\bullet$ vacant sites become occupied (a seed falls and a tree immediately 
grows) at rate $1$;

\vip

$\bullet$ matches fall on occupied sites at rate $\la$ and then burn
instantaneously the corresponding forest.

\vip

The features of the model depend on the geometry of the graph; we only 
consider in this paper 
the case $S = \zz$ (with its natural set of edges). 
They also depend  on the laws of the processes 
governing seeds and matches; the standard case is when these are Poisson 
processes so that the forest fire 
process is Markov. We deal here with the most general 
(stationary) case; Poisson processes are replaced by stationary renewal 
processes. 

\vip

Our main preoccupation is the behavior of this model in the 
asymptotic of rare seeds, namely when $\la \to 0$. 
We present four possible limit processes 
(depending on the tail properties of the 
law of the stationary processes governing seeds) arising 
when we suitably rescale space and 
accelerate time while letting  $\la \to 0$. This is a considerable
generalization of the results obtained in \cite{bf}. 

\vip

This introduction consists of five subsections. \vip

(i) In Subsection \ref{intro1}, we briefly recall
the concept of {\it self-organized criticality} and recall
a certain number of models supposed to enjoy self-organized critical
properties.

(ii) We present in Subsection \ref{intro2} a quick history of the forest-fire 
process, its other possible interpretations and its links
with other models.

(iii) Subsection \ref{intro3} explains the importance of the geometry of the
underlying graph $G$ and the links of the forest-fire model 
with {\it percolation}.

(iv) In Subsection \ref{intro4}, we recall what has been done
for the (Markov) forest-fire process on $\zz$ from a rigorous mathematical
point of view.

(v) Subsection \ref{intro5} is devoted to a brief exposition of the main ideas 
of the present paper.

(vi) Finally, we give the plan of the paper in Subsection \ref{intro6}.

\subsection{Self-organized criticality}\label{intro1}
One of the successes of statistical mechanics is to explain how 
local interactions generate macroscopic effects through simple models 
on lattices. Among the most striking phenomena are those observed around 
so-called {\it critical values} of the parameters of such models, such 
as scale-free patterns, power laws, conformal invariance, critical 
exponents or universality. 

\vip
\subsubsection{Paradigm}
The study of self-organized critical systems has
become rather popular in physics since the end of the 80's.
These are simple models supposed to clarify
temporal and spatial randomness observed in a variety of natural 
phenomena showing {\it long range correlations}, like  sand piles, avalanches,
earthquakes, stock market crashes, forest fires, shapes of mountains,
clouds, etc. It is remarkable that such phenomena reminiscent of critical 
behavior 
arise so frequently in nature where nobody is here to finely tune the 
parameters to critical values.  

\vip

An idea proposed in 1987 by Bak-Tang-Wiesenfeld \cite{btw1} to tackle this 
contradiction is, roughly, that of
systems {\it growing} toward a {\it critical state}  
and relaxing through {\it catastrophic}  
events: avalanches, crashes, fires, etc. If the catastrophic 
events become more and more probable when approaching the critical state, 
the system spontaneously reaches an equilibrium {\it close} to the critical 
state.   This idea was developed in  \cite{btw1} through the study of 
the {\it archetypical} sand pile model. 

\vip 

This paradigm was used to investigate various phenomena, from physics 
to sociology through biology, epidemiology or economics. The pertinence 
of the conclusions are not always convincing. Discussion to decide if 
whether or not there is self-organized criticality 
in nature or in one or another model, or 
even to decide what self-organized criticality should exactly be, 
is beyond our purpose. Anyhow 
let us summarize the usual standard features of these models:  

\vip

$\bullet$  local dynamics but with possibly very long range effects (at high 
speed) through a simple mechanism;
\vip

$\bullet$ 
macroscopic states with scaling invariance properties, {\it a priori}
related to the critical state of a well-known system;\vip

$\bullet$ 
long range spatial correlations and power laws for natural 
observables at fixed times;\vip

$\bullet$ 
presence of $1/f^\beta$-noise with $0<\beta<1$ in the temporal 
fluctuations of natural observables, i.e. $S(f)$ of order $1/f^\beta$ 
where  $S$ is the spectral density.   

\vip

One of the specificities of these models is that the interaction is formally 
non local; it is local in general, but may, when close to the 
{\it critical region} ---whatever this means--- have long range effects. 
This, together with a lack of monotonicity, yields mathematical difficulties 
that justify a careful treatment. 

\vip

To understand, explain or illustrate these phenomena, a multitude of  models 
have been proposed to explore various mechanisms that would produce these 
effects. Simple models, non necessarily realistic, are nice for they try 
to catch the underlying mechanisms. They have often been treated numerically, 
in the spirit of Bak-Tang-Wiesenfield 
\cite{btw1}.   Forest fire 
models are among them and still need a mathematical rigorous study. 
Sand pile models, while somehow more complicated, have been more studied. 

\vip
\subsubsection{Sand pile models} Let us explain in a few words what a 
sand pile model is. First, we assume that we have a definition
of what a {\it stable} sand pile is. Sand grains fall at random
on sites. When a grain falls, if the new pile is
{\it unstable},  it is immediately
re-organized to become stable, through (possibly many) successive
elementary steps. Such events are called {\it avalanches}.  
This model was introduced by 
Bak-Tang-Wiesenfeld \cite{btw1} and studied by Dhar \cite{d2}. 
Since, there has been a huge amount of results and we 
will not try to be exhaustive; 
for surveys see for instance Holroyd-Levine-Meszaros-Peres-Propp-Wilson
\cite{hlmppw}, Goles-Latapy-Magnien-Morvan-Phan \cite{glmmp}
or Redig \cite{r1}.  

\vip

Let us give a slightly 
more precise description of the so-called  {\it Abelian} sand pile model.  
The state of the system 
is described by   $\eta \in \zz^S$, representing local {\it slopes}
of the sand pile. For instance, when $S=\zz$, think that 
$\eta(i)=h(i+1) - h(i)$ where $h(i)$ is the height of the sand pile on the
site $i$. A dynamic is defined on $\zz^S$ using a matrix $\Delta$  
indexed by $S \times S$, called {\it toppling matrix}. It has  
positive entries on the diagonal (think of $\Delta_{i,i} = \gamma$ constant), 
negative entries when $i,j \in S$ are neighbors and null entries
elsewhere. It is {\it dissipative} if  $\Delta_{i,i} + \sum_{j \neq i} 
\Delta_{i,j} < 0$. Then define the toppling of a site $i$ as the 
mapping $T_i: \zz^S \to \zz^S$ defined by 
\begin{eqnarray*}
T_i(\eta)(j) &=& \eta(j) - \Delta_{i,j} \;\; \forall \; j \in S \;
\hbox{ if } \; \eta(i) > \Delta_{i,i};\\
T_i(\eta)&=&\eta \hbox{ otherwise}.
\end{eqnarray*} 
Toppling at $i$ consists, whenever the slope is {\it too big} at $i$, of
spreading grains on neighboring sites 
(possibly in a non conservative way).  A pile is stable if for all 
$i \in S$, $ \eta(i) \leq  \Delta_{i,i}$ (then, no toppling has any effect).
Observe that successive topplings at different sites commute (which explains
the term {\it Abelian}). 

\vip

Now consider the situation where sand grains fall at random, on each site,
at rate $1$.  Each time a grain falls, immediately
topple (possibly many times) until stability is reached.
Some dissipativity assumptions guarantee that this is always possible.

\vip

At first glance, arrival of a new sand grain 
on a site has only a local effect: a non trivial toppling at $i$ may occur. 
But there can be a chain reaction creating an avalanche.   And indeed, the 
action may, in general, have a long range effect.  

\vip

These systems have a nice 
underlying group structure that depends on the size and geometry of the 
underlying lattice, see e.g. 
Le Borgne-Rossin \cite{lbr} for such an algebraic point of view. 
The thermodynamic limits of the sand-pile models have been investigated.
In particular, existence and uniqueness of a stationary measure have 
been proved.
See for instance Maes-Redig-Saada \cite{mrs} when $S=\zz$ and 
J\'arai \cite{j2} when $S=\zz^d$.  Some features of self-organized criticality 
have been observed for $d>1$, at least numerically, in the physical 
literature, see e.g.  L\"ubeck-Usadel \cite{lu}. For instance, they have 
studied the {\it sizes of avalanches} 
(number of topplings necessary to stabilize after a 
grain has been added).  A scaling limit
was obtained recently by D\"urre \cite{du}. 

\vip
\subsubsection{Other models.} 
The Abelian sand pile seems to be the most popular sand pile model. However 
it has a lot of variants: Zhang sand pile model (see Zhang \cite{z},
Pietronero-Tartaglia-Zhang \cite{ptz}), Oslo model 
(see Christensen-Corral-Frette-Feder-Jossang \cite{ccffj}, 
Amaral-Lauristsen \cite{al}), Oslo rice pile model (see Brylawski \cite{bry}),
chip firing game (see Tardos \cite{tardos}), etc.

\vip

Moreover, various different models have been introduced and studied 
with the eyes of self-organized criticality. 
There is of course the forest fire model that we are going to discuss in this
paper.
Let us mention briefly some other models:  rotor-router 
model (introduced by Priezzhev-Dhar-Dhar-Krishnamurthy \cite{pddk} 
under the name {\it Eulerian walkers model}), loop-erased 
random walks (Majumdar \cite{maj}), diffusion/aggregation models 
(Cafiero-Pietronero-Vespignani \cite{cpv}), Scheidegger's model of river 
basin (Scheidegger \cite{scheid}), 
models describing earthquakes (Olami-Feder-Christensen \cite{ofc}) 
or crashes in stock markets (Staufer-Sornette \cite{ss,s}), etc.

\vip

As we already mentioned those systems have often been subjected to numerical 
experimentations and studies. 
Of course this is a difficult task and it has sometimes been misleading:
long range effects need huge simulations, the
interpretation of which is not always meaningful. 

\vip

For surveys on self-organized criticality, 
see Bak-Tang-Wiesenfeld \cite{btw2}, Dhar \cite{d}, Jensen
\cite{j} and the references therein.

\subsection{Forest fire models}\label{intro2}
Here we consider the classical forest fire model on $G=(S,A)$.
Recall that on each site of $S$, seeds are falling at rate $1$ and matches
are falling at rate $\la$, according to some Poisson processes. 
A seed falling on a vacant site makes it
immediately occupied, and a match falling on an occupied site
makes instantaneously vacant the whole corresponding occupied connected
component. 
Thus the forest fire process is Markov
(at least if one is able to prove that it exists and is unique). 

\vip

\subsubsection{History and numerical studies}
The forest fire model was introduced independently by Henley \cite{h} and 
Drossel-Schwabl \cite{ds}. In the literature, it is generally 
referred to as the Drossel-Schwabl forest fire model. In their original 
paper, they consider the case where $S$ is a cube in $\zz^d$. 
They are interested in scaling laws and critical exponents for 
this model. Orders of magnitude of relevant quantities are derived by 
analytical computations using essentially mean field considerations. 
The results are {\it confirmed} by computer simulations. 
In Drossel-Clar-Schwabl \cite{dcs}, 
the asymptotic behavior of the density of vacant sites in 
the limit $\la \to 0$ is obtained when $S=\zz$ (using heuristic arguments, see
Subsubsection \ref{clusterssize} below). After this work,  numerous numerical or
semi-analytical studies have been produced. Among others, let us mention 
Henecker-Peschel \cite{hp} and Pruessner-Jensen \cite{pj}. 
Numerical studies were handled again by  Grassberger \cite{g},
who computes, when $S=\zz^2$, the density of occupied sites,
the fractal dimension of fires 
and the distribution of the fire sizes, in the limit $\la \to 0$. 
 
\vip

The first rigorous probabilistic treatment of this model
is the paper 
by van den Berg and J\'arai \cite{vdbj}. They give a 
rigorous description of the 
asymptotic density of vacant sites in the limit $\la \to 0$ for the forest 
fire process on $\zz$.  To our knowledge, all the rigorous results about 
the forest fire process
concern the case where seeds and matches fall according to Poisson processes. 
See
D\"urre \cite{du1,du2,du} (existence and uniqueness of the process on 
$\zz^d$ with $\la>0$ fixed), 
van den Berg-Brouwer \cite{vdbb} (behavior of the process near 
the critical time in dimension $2$, as $\la\to 0$) 
and Brouwer-Pennanen \cite{bp} (estimates on
the cluster size distribution in the asymptotic $\la\to 0$, in dimension $1$).
See also the papers by the authors \cite{bfold} (study of the invariant 
distribution when $\la=1$ in dimension $1$) and  
\cite{bf} (scaling limit of the one dimensional forest fire process 
in the asymptotic $\la\to 0$). 
We will discuss all these results more specifically in this introduction.

\vip
\subsubsection{Real forest fires}
Forest fires in real life are also a subject of preoccupation and of study 
from different point of views. In particular there are various statistical 
studies of sizes (and sometimes shapes) of {\it real} forest fires in 
different regions (see for instance Holmes-Hugget-Westerling \cite{hhw}). 
One of the recurrent 
observations is that the distributions of those fires have heavy tails 
(power laws) and pleasant scale invariance properties. Another one is 
the tentative description of the (fractal) geometry of fires 
(see for instance Mangiavillano \cite{man}). 
For references, connection with real 
life and practical interest of these studies, see Cui-Perera
\cite{cp}. 
A few studies relate the dynamics of real fires in a given region with 
theoretical models. One natural task was to compare real data and 
numerical experiments done with the toy models we have. On this aspect, 
let us mention the recent (and encouraging) works by Zinck-Grimm-Johst 
\cite{zg,zgj}. 
Other studies focus on the propagation of the fire itself, 
but this is not our main preoccupation here since we have assumed that the 
propagation is instantaneous.

\vip 

A direction of study suggested by works on real forest fires is to 
consider fires in {\it inhomogeneous}, for instance {\it random}, media.
To our knowledge, this aspect has not yet been investigated. Another 
one, that we address here, is to consider the non Markov case: 
seeds and matches may not (and actually should not) fall according
to Poisson processes.

\vip
\subsubsection{Other interpretations and variations}
The forest fire model has a very simple (and natural) dynamic. It may 
accept a variety of interpretations. And various modifications can 
make it fit the description of other phenomena. Indeed, we initially 
thought of it as a simplification of the avalanche process: snow flakes
fall on each site, a snow flake falling on a vacant site makes it 
occupied, and a snow flake falling on an occupied site makes vacant 
the whole connected component of occupied sites (such an event being
called {\it avalanche}). This is nothing but the 
forest fire process with $\la=1$, see \cite{bfold}.
More generally, the forest fire process 
may be used to model phenomena involving geometric relations and a 
common behavior on connected components; natural examples arise e.g. in 
epidemiology (change {\it fire} by {\it virus}).
From these points of view, some natural modifications could 
be explored such as making the growth process have effect only on sites  
which are neighbors of occupied sites (in the spirit of the so-called 
contact process). Such variants should be dominated by the standard 
contact process and by the forest fire process and may enjoy interesting 
features.  

\vip

In a different spirit, a {\it directed} version of the forest fire 
model has been studied as a toy model for neural networks. Roughly, the 
idea is to think of growth as {\it activation} and of fire as 
{\it signal emission}. 
The signal is transmitted along the (directed) connected component which is at 
the same time deactivated.  The difference is that the underlying graph is
a directed graph (usually a tree) and that the signal is (instantaneously) 
sent according to the directed edge (instead of all the connected component). 
Let us mention 
the work of van den Berg-Brouwer \cite{vdbb}, which 
include remarks about this model, and the work of van den Berg-T\'oth
\cite{vdbt}.

\vip
\subsubsection{Coagulation/Fragmentation}
\label{coagfrag}
A slight change of point of view about the forest fire model makes explicit a 
parallel with a class of coagulation/fragmentation processes. 
Assume e.g. that $S=\zz$. Say that each edge $(i,i+1)$ has mass $1$,
and that two neighbor edges $(i-1,i)$ and $(i,i+1)$ are connected (or
belong to the same particle) if $\eta(i)=1$.
Then each time a seed falls on a vacant site, 
this glues two particles (preserving the
total mass). And each time a match falls on a site 
(say, belonging to a forest containing $k\geq 1$ sites), this breaks
a particle of mass $k+1$ into $k+1$ particles with mass $1$.

\vip

We used this remark in \cite{bfold} 
to study the evolution of the sizes of particles when neglecting
correlation, using a deterministic coagulation-fragmentation equation. 
Of course, similar considerations can be handled on any graph $G$.

\vip
\subsubsection{Recent results for related models in dimension 1}
Let us mention two recent results about one-dimensional forest fire processes
with a somehow different flavor. 

\vip

In \cite{vo}, Volkov considers a version of the forest fire process on 
$\nn$ where ignition occurs only at $0$. He studies the weak limit of 
the distribution of the (suitably normalized) delay between to fires involving
$n$, as $n \to \infty$. 

\vip

In \cite{be}, Bertoin considers a modified version of Knuth's parking 
model where random fires burn connected components of cars. On a 
circle of size $n$, cars arrive at each site at rate $1$.
When a car arrives, it occupies
the first vacant site (turning clockwise). 
Molotov cocktails fall on each site at rate $n^{-\alpha}$ where 
$0< \alpha<1$ is fixed. 
Bertoin studies the asymptotic behavior of the saturation time as 
$n \to \infty$ and  observes a phase transition at $\alpha = 2/3$.

\vip
\subsubsection{Specific difficulties} As we already mentioned, one of 
the difficulties with forest fire models (and with self-organized critical 
systems in general) is that 
the interaction is not local. The process, whenever it is Markov,  is 
not Feller and some classical results fail. In dimension one, this 
difficulty does not yield real problems for the questions of existence 
and uniqueness of the process. This is essentially due to the fact
that obviously, the sizes of the
forests always remain finite (even when $\la$ is very small). This 
difficulty is more important in higher dimensions, because
in the absence of fires, clusters would become infinite in finite time
(due to the fact that in dimension $d\geq 2$, percolation occurs).
Fires prevent us from the existence of infinite clusters, but precisely,
these arbitrarily huge clusters burning make difficult the 
control of the range of interactions. This difficulty also makes
the usual proof of existence of stationary measures using compactness 
arguments fail (because indeed there is a lack of continuity). 

\vip

The lack  of monotonicity of these models, although not fundamental, makes
the use of standard intuitions and techniques impossible. 
Monotonicity allows one
to compare the processes started from two different ordered 
initial configurations (coupled in a suitable way). Monotonicity cannot
hold here, because a configuration with more trees will burn sooner.

\subsection{Geometry of the lattice}\label{intro3}
The geometry of the 
underlying lattice is crucial in statistical mechanics. Recall 
for instance that {\it phase transition} for the Ising model on $\zz^d$
appears only for $d\geq 2$ (see Velenik \cite{vel}). 
For the forest fire models, the influence of the geometry 
clearly comes through the behavior of the lattice with 
respect to percolation.  
This geometrical influence was already striking  in numerical studies. 

\vip

\subsubsection{Growth without fires/Percolation}
Consider a graph $G=(S,A)$. For all $0\leq p \leq 1$ consider an
i.i.d. family $\{\eta(i), i \in S\}$
of  Bernoulli random variables with parameter $p$ (a {\it percolation 
trial} with probability $p$). It is well known that there is 
$0 \leq p_c \leq 1$, depending on the graph, such that for all $p<p_c$, 
there are a.s. no infinite connected components of occupied sites, 
while for $p>p_c$,  there is at least one infinite connected component 
with probability $1$. The real number $p_c$ is called
percolation threshold of $G$. It is rather 
natural to consider (dynamical) {\it percolation processes} on $G$, 
that are couplings of percolation trials for all $0\leq p \leq 1$. 
For instance, consider a family $\{T_i, i \in S\}$ of i.i.d. random
variables on $\rr_+$ with exponential distribution with parameter
$1$. Put $\eta_t(i)=0$ if $t<T_i$ and $\eta_t(i)=1$ if $t\geq T_i$.
Then for all $t>0$, $\{\eta_t(i), i \in S\}$ is a percolation trial with 
probability $P(T_i \leq t)=1-e^{-t}$. Thus an infinite cluster appears at
time $t_c$ defined by $1-e^{-t_c}=p_c$.

\vip

It clearly appears that the percolation threshold plays a 
{\it crucial} role in understanding the behavior of 
the forest fire process on a given lattice. The simple observation 
is that the {\it growth process}, i.e. without fires ($\la =0$), is 
exactly a percolation process on the lattice. For $\la$ small, and 
{\it a fortiori} for $\la \to 0$ its  study is a necessary preliminary. For 
instance, one aspect  is the formation of infinite clusters (although 
in general those clusters will never appear since, taking fires into 
account,  they must burn before they become infinite). Recall 
that the percolation threshold is $1$ in dimension $1$. It is 
$0< p_c^{(d)}<1$ on $\zz^d$ and once there is an infinite cluster, 
there is a unique one. While, for instance on a $d$-regular tree, 
just after the percolation threshold, there are infinitely many infinite 
clusters: these situations are rather different and should yield different 
behaviors for the corresponding forest fire processes. 
Observe that though, for $\la$ small enough, the forest fire process is 
easy to define for small times,  things turn out to be more complicated 
when we reach the {\it critical time} $t_c$.
Even in dimension $1$ the separate study of the percolation 
process makes sense as we shall see further, Subsection \ref{kingman}. 

\vip
\subsubsection{Modified percolation models}
It has  also been fruitful to study modified (for instance dynamical) 
versions of percolation processes.  
Models like frozen percolation 
(Aldous \cite{a2}, see also Brouwer \cite{br}), invasion percolation 
(see for instance V\'agv\"olgyi \cite{vag}), 
or  self-destructive percolation (see van den Berg-Brouwer \cite{vdbb} 
and more recently 
van den Berg-Brouwer-V\'agv\"olgyi \cite{vdbbv}) are
closely related to the forest fire processes. Let us focus one moment on 
this last example since it has direct implications on forest fire processes. 

\vip 

A typical configuration for the self-destructive percolation model on $\zz^2$
with parameter $(p,\delta)$
is generated in three steps: first generate a configuration for 
the ordinary percolation model 
with parameter $p$. Next, make all sites in the infinite occupied
cluster vacant. Finally, make occupied each vacant site with probability
$\delta$.
Let $\theta(p,\delta)$ be the probability that $0$ belongs, 
in the final configuration, to an infinite occupied cluster.
In a recent paper \cite{vdbbv}, van den Berg, Brouwer and V\'agv\"olgyi prove 
that this function  
is continuous outside of a set of the form 
$\{ (p_c, \delta): \delta < \delta_0 \}$. 
It is conjectured that this function has a discontinuity,
roughly meaning that there is $\delta>0$ such that for any $p>p_c$, 
the model with parameter $(p,\delta)$ 
is sub-critical (there a.s. is no infinite cluster). 

\vip

In \cite{vdbb}, van den Berg and Brouwer have proved that assumption of 
this conjecture yields a result for a $2$-dimensional forest fire process
after the critical time: there is $t>t_c$ such that for all $m\geq 1$,  
$$
\liminf_{\la\to 0} \liminf_{n \to \infty} \Pr\left[
\begin{array}{l}
\hbox{a tree in $[\![-m,m]\!]^2$ 
burns before $t$} \\
\hbox{in the  forest fire
process on $S_n=[\![-n,n]\!]^2$}
\end{array}
\right] \leq \frac12.
$$

\vip
\subsubsection{Thermodynamic limit}
The forest-fire process on a finite graph 
is a finite state space continuous time Markov 
chain (if matches and seeds fall according to Poisson processes). 
Existence and uniqueness of the process thus come for free. Existence of
an invariant measure as well. A basic argument also yields uniqueness of the 
invariant measure (because the configuration with all sites vacant
is recurrent). Hence interesting 
phenomena may arise only when we let the size of the lattice tend to infinity. 

\vip

When $S=\zz$, it is not very expensive to go directly to the limit: 
the process is naturally uniquely defined on $\zz$. This is easily 
seen through a graphical construction of the process (see \cite{bf}),
see also Proposition \ref{gcgff} below.

\vip

In dimension $d> 1$ the situation is more delicate. On $\zz^d$
(and actually on any graph with bounded vertex degree) 
existence has been proved recently by D\"urre \cite{du1}. He also 
proved uniqueness, but in two steps:  firstly, in \cite{du2}, he 
shows that, for $\la>0$ large enough (the bound is related to the percolation 
threshold), the forest-fire process is unique.
Only very recently the same author, in  \cite{du}, tackled the same
question on a  graph with bounded vertex degree and for all $\la>0$. 
This is a much more subtle task. To prove this result he has to 
introduce the so-called blur processes, to show that the influence
of matches falling far away from $0$ is negligible.

\vip
\subsubsection{Mean field model}\label{meanfield}
The mean field case is slightly different. Indeed, one has to adopt the 
dual point of view (on edges).
Furthermore, the process cannot be defined 
directly on an infinite lattice since we consider the complete graph. 
The point of view developed by R\'ath and T\'oth in \cite{rt} is based 
on the Erd\"os-Reyni construction \cite{er}. For all $n \geq 1$, let 
$S_n$ be a set (of vertices) with $|S_n|=n$, and consider 
the complete graph $G_n=(S_n,A_n)$. Start initially with all edges
vacant. Then edges appear independently at rate $1/n$. Matches fall
at rate $\la_n$ on each site and destroy instantaneously
the whole corresponding occupied connected component. 
We consider the asymptotic $n \to \infty$. 
The various regimes (see R\'ath-T\'oth \cite{rt}) are quite illuminating.

\vip

$\bullet$ (I) If $\la_n <<1/n$, then fires are (asymptotically) negligible. 
Thus we have the same asymptotics as in the Erd\"os-R\'eyni model: a 
giant component appears after some time $T_{gel}$ (the critical time in 
this formalism). 

\vip

$\bullet$ (II) If $\la_n \simeq \la/n$, then a giant component appears,
but is destroyed after some time. Only the giant component may burn:
there are no matches enough to burn finite size forests. 

\vip

$\bullet$ (III) If $1/n<< \la_n << 1$, there are not enough fires to burn
finite size forests, but too many to let any infinite forest appear. Hence
no giant component appears.

\vip

$\bullet$  (IV)  If $\la_n \simeq \la$, then matches may kill finite forests,
so that of course, no giant component emerges.

\vip

To formalize these statements rigorously,  R\'ath-T\'oth \cite{rt} consider 
the cluster size distributions: $\nu_{n,k}(t)$
is the number of vertices belonging to a connected component of size $k$ at time
$t$ divided by $n$. Consider also the {\it concentrations}
$c_{n,k}(t):=\nu_{n,k}(t) /k$.
As $n \to \infty$, the limit concentrations $(c_k(t))_{k\geq1}$
should satisfy a system of differential equations closely related to 
Smoluchowski's coagulation equations with 
multiplicative kernel and mono-disperse
initial condition:
$$\left\{\begin{array}{l}
c_1(0)=1, \quad c_k(0)=0, \quad k \geq 2, \\
\displaystyle\frac{d}{dt} c_k(t) = \frac 1 2 \sum_{i=1}^{k-1} 
i(k-i)c_i(t) c_{k-i}(t)
- k c_k(t) \sum_{i=1}^\infty ic_i(t), \quad k \geq 1.
\end{array}\right.
$$
Such equations, discussed in details in Aldous \cite{a}, have 
been introduced by Smoluchowski \cite{smol} in 1916. These equations
are subjected to a {\it phase transition} known as {\it gelation}: some
mass is lost at some positive finite instant $T_{gel}$, due to the emergence
of a giant particle. For $t>T_{gel}$, we have to decide what to do 
with the giant particle. It can e.g. interact with finite particles
(Flory's equation) or be removed from the system (Smoluchowski's equation).
See Aldous \cite{a3} and \cite{fl} for such considerations.

\vip

In the regime (I), the limit equations are the Flory equations: 
a giant particle appears at time $T_{gel}$ and then coexists  with other
particles (finite particles do coalesce with the giant particle). 
In the regime (II), the limit equations are closer to the Smoluchowski
equations: a giant particle appears at time $T_{gel}$ 
(the same one as previously) but once it is giant, it is replaced by
particles with mass $1$ (in a conservative way). In the regimes 
(III) and (IV), some other modifications
of the Smoluchowski equations appear.

\vip

The most interesting results obtained by R\'ath-T\'oth in \cite{rt} are that 
in the regime (III),
the modified Smoluchowski coagulation system has a unique solution 
which is the classical one for all $t < T_{gel}$ and has a particular 
(critical-like) form for $t>T_{gel}$, and $(c_{n,k}(t))_{t\geq 0,k \geq 1}$ 
converges to this unique solution as $n \to \infty$.
This shows that the complete graph exhibits self-organized criticality
in the sense that beyond $T_{gel}$, it 
remains critical forever: no giant component appears but, after $T_{gel}$,  
the size-distribution is, in some sense, critical.
 

\vip
\subsubsection{Stationary measures}
The existence of invariant measures for the 
forest-fire process in $\zz^d$ (with any $\la>0$ fixed) 
has been proved by Stahl \cite{stahl}. 
For the case of $\zz$ the situation is simpler, see the next subsection.

\subsection{Forest fire  on $\zz$}\label{intro4}
Let us review in details known results about the forest fire processes 
in dimension $1$. 
We still focus on the usual case where seeds and matches fall according
to i.i.d. Poisson processes, with respective rates $1$ and $\la>0$. 
We denote $\eta^\la_t \in \{0,1\}^\zz$ the configuration at time $t$ 
and, for $i\in\zz$,  $C(\eta^\la_t,i)$ is 
the connected component of occupied sites around $i$.
Observe that (possible) infinite clusters in the 
initial configuration would immediately disappear. 

\vip

From the point of view of self-organized criticality, the interesting
regime is the asymptotic behavior of the forest-fire process as $\la \to 0$:
then fires are very rare, but concern huge occupied components.

\vip
\subsubsection{Stationary measures}
Existence of a stationary measure does not immediately follow from standard 
compactness arguments since the process is not Feller. However, in \cite{bp}, 
Brouwer and Pennanen prove the existence of a stationary measure for all fixed 
$\la>0$. In \cite{bfold}, we proved the uniqueness of this invariant 
distribution, as well as the exponential convergence to equilibrium 
in the special case where $\la = 1$.
We also proved that the invariant distribution is
(spatially) exponentially mixing and can be graphically constructed. 
The methods in \cite{bfold} should be easily extended
to the case where $\la\geq 1$ (and actually to $\la>1-\e_0$ for some
rather small $\e_0>0$) but our proof completely breaks down 
for small values of $\la>0$.

\vip
\subsubsection{Asymptotic density}
Van den Berg and J\'arai study in \cite{vdbj} the asymptotic 
density of vacant sites in the limit $\la \to 0$. Their result states that 
there are two constants $0<c<C$ such that for any initial configuration,
for any $\la >0$ small enough, for $t$ large enough (of order 
$\log(1/\la)$), 
$$\frac{c}{\log(1/\la)} \leq \Pr\left(\eta^\la_t(0) = 0\right) \leq   
\frac{C}{\log(1/\la)}.$$
This is coherent with the intuition that the rarer fires are, the 
more space is occupied by trees 
(although because of the lack of monotonicity, this is not 
straightforward). 
We mentioned that such result was stated in Drossel-Clar-Schwabl
\cite{dcs}. But the proof in \cite{dcs}
is not rigorous: it is based on  the {\it ansatz}  that the cluster 
sizes were following a cutoff power law, for cluster-sizes
up to some $s_{max}^\la$ defined by $s_{max}^\la \log{s_{max}^\la} = 1/\la$, i.e.
$$
s_{max}^\la \simeq \frac 1 {\la \log(1/\la)}.
$$  
In \cite{vdbj}, van den Berg and J\'arai also show that
the cluster sizes cannot follow the predicted power law.

\vip
\subsubsection{Sizes of clusters, first results}
\label{clusterssize}
In \cite{bp}, Brouwer and Pennanen show that this last {\it ansatz} 
holds true up to 
$s_{max}^{1/3}$. More specifically, they show that there are constants 
$0<c<C$ such that for all $0 < \la < 1$ and all stationary measures $\mu_\la$
(invariant by translation) of the forest fire model on $\zz$ with parameter
$\la$, for all $x<(s_{max}^\la)^{1/3}$, 
$$
\frac{c}{(1+x)\log{(1/\la)}} \leq \mu_\la \left(|C(\eta,0)| = x \right) 
\leq \frac{C}{(1+x) \log{(1/\la)}}. 
$$
Observe that this estimate is valid for relatively small clusters 
that will not be seen after rescaling (microscopic clusters).  

\vip

\subsubsection{Kingman's Process} 
\label{kingman}
We detail a classical construction related to the Smoluchowski equation 
with constant kernel which is quite close to our point of view. Most ideas and 
references for proofs can be found in Aldous \cite{a}. 
Let us consider the following percolation process on 
$\zz$. Starting from the vacant configuration, we let appear trees at 
each site at some rate $r(t)$, that allows us to control the 
{\it speed} of the process. 
Say that each edge $(i,i+1)$ has mass $1$ (see Subsubsection \ref{coagfrag}).
Let a seed fall on each site $i$ at some random time $T_i$ with 
$P(T_i>t) = 2/(t+2)$
independently (this corresponds to the rate 
$r(t)= 1/(t+2)$). Call $D(t,i)$ the {\it particle} containing the edge
$(i,i+1)$ at time $t$ (say that two neighbor edges $(j-1,j)$ and $(j,j+1)$
are glued if $\eta_t(j)=1$).
At time $t$, the particle containing a given edge (e.g. $(0,1)$)
has mass $m$ with 
probability 
$$\rho_m(t) = m \left(\frac{2}{2+t}\right)^{2} \left(\frac{t}{2+t}\right)^{m-1}
$$ 
and hence the concentration of clusters with mass $m$ per unit length is 
nothing but
$$c_m(t)= \left(\frac{2}{2+t}\right)^{2} \left(\frac{t}{2+t}\right)^{m-1}.
$$
We recognize the solution to Smoluchowski's equation with constant
coagulation kernel and mono-disperse initial condition, see
Aldous \cite{a}.

\vip

Now consider a standard construction of the so-called Kingman coalescent 
process. Take independent exponential random variables $\{\xi_k, k \geq 2\}$ 
of rates $\binom{2}{k}$. Since $E[\sum_{k=2}^{\infty} \xi_k] = 2$, we can 
define random times $0 < \cdots < \tau_3< \tau_2 < \tau_1 < \infty$ by 
$\tau_i = \sum_{k=i+1}^{\infty} \xi_k$. Take $\{U_i, i \geq 1\}$ independent 
random variables uniformly distributed on $(0,1)$. For each $i$ draw a 
vertical segment  from $(U_i,\tau_i)$ to $(U_i,0)$. At time $t$ this 
construction splits $(0,1)$ into $i$ intervals, where 
$\tau_i < t < \tau_{i-1}$. Write $X(t)$ for the list of the lengths of 
these subintervals. This is a version of the stochastic coalescent called 
{\it Kingman's coalescent}. Observe that we also could have put the 
marks $\{(U_i,\tau_i), i \geq 1\}$ using a Poisson measure 
on $[0,1] \times \rr_+$ with a well-chosen intensity measure.

\vip

Straightforward computations show that Kingman's coalescent is a limit
of the previously defined percolation process in the following sense: 
consider the list of (distinct) normalized clusters
$\la D(t/\la, \lfloor x /\la  \rfloor)$ when $x$ runs 
along $[0,1]$ (cutoff the boundary clusters at $0$ and $1$)  
at time $t$. When $\la \to 0$, it converges to $X(t)$ in law (in an 
appropriate topology). This construction shows how the growth process 
behaves in the large scales. 
In some sense  we have identified $\{0, \ldots, \bn_\la\} \subset \zz$ 
with $[0,1] \subset \rr$ (here $\bn_\la = 1/\la$) and obtained
a limiting process for the rescaled percolation process.   

\vip

We stress the fact that the convergence holds globally only for the specific 
speed $r(t)= 1/(t+2)$ of the percolation process. This fact is related to 
the self-similarity of the percolation (coalescent) process. In particular it 
shows that for if the rate is constant (exponential times for seeds) then 
there is no hope for such a convergence to Kingman's coalescent: 
in this situation, 
after normalization the size of cluster at time $t$ is of order $\la^{1-t}$ 
and converges to $0$ or $\infty$ according to whether $t<1$ or $t>1$. 
Conversely, if the rate of growth has a polynomial decay,
there is a hope to have a limit process.

\vip
\subsubsection{Asymptotic regime: relevant space/time scales}
As already mentioned, we are interested in the 
behavior of the system in the large space and time scales in the limit 
$\la \to 0$. Hence the first difficulty is to decide what the relevant  
scales are.  Let us recall the heuristic developed in \cite{bf}.
We need a time scale for which tree clusters see about
one fire per unit of time. But for $\la$ very small, clusters will
be very large just before they burn. We thus also have to rescale space, 
in order that just before burning, clusters have a size of order $1$.

\vip

Consider the cluster $C(\eta^\la_t,0)$ around the site $0$ (for example)
at time $t$. For $\la>0$ very small and 
for $t$ not too large, one might neglect fires and consider only the growth
process; it follows that $|C(\eta^\la_t,0)|\simeq e^{t}$ for $t$ not too large
(because since seeds fall according to Poisson processes with rate $1$,
each site is vacant at time $t$ with probability $e^{-t}$). 
Then the cluster $C(\eta^\la_t,0)$ burns at rate $\la |C(\eta^\la_t,0)|
\simeq \la e^t$, so that we decide to accelerate time by 
a factor $\ba_\la:=\log (1/\la)$. 
By this way, $\la |C(\eta^\la_{\ba_\la},0)|\simeq 1$.

\vip

Now we rescale space in such a way that during a time interval of order
$\ba_\la$, something like one match falls per unit of (space) length.
Since matches fall at rate $\la$ on each site, 
our space scale has to be of order 
$\bn_\la:= 1/(\la\ba_\la)$: this means that we will identify
$\{ 0,\dots,\bn_\la\} \subset\zz$ with $[0,1]\subset\rr$.
Observe that there holds $\bn_\la \simeq s_{max}^\la$.

\vip

Consider now the time/space rescaled cluster around $0$
$$ 
D_t^\la(0) = \frac{1}{\bn_\la} C(\eta^\la_{ \ba_\la t}, 0).
$$
The same difficulty as in Subsubsection \ref{kingman} appears: neglecting
fires (which is roughly valid for small values of $t$), we see that
$$
|D^\la_t(0)| \simeq \bn_\la^{-1} e^{\ba_\la t}=\la^{1-t} \log (1/\la),
$$ 
which goes to $0$ for $t<1$ and to $\infty$ for $t\geq 1$. 
For $t\geq 1$, we hope that fires will be in effect, which will
limit the size of clusters. But for $t<1$, $|D^\la_t(0)|$ will indeed
tend to $0$. This means that we have lost some information. 
To describe the limit process, we have to keep in mind more information and
thus introduce another quantity (a sort of {\it degree of smallness}) 
which measures the order of magnitude of the {\it microscopic} clusters,
that is clusters that we can not see at macroscopic scales (of which the
sizes are much smaller than $\bn_\la$). 

\vip
\subsubsection{Limit processes}
We have proved in  \cite{bf} that in the asymptotic of rare 
matches, the forest fire process converges, under the previously described
normalization, to some limit forest fire process.
We described precisely the dynamics of this limit process and have shown 
that it is unique, that it can be built by using a graphical construction and
thus can be perfectly 
simulated. Using the limit process, we have also estimated 
the size of clusters. Very roughly, we have proved
that in a very weak sense, for $\la$ small enough and for $t$ large enough
(of order $\log(1/\la)$), the cluster-size distribution resembles
$$
\Pr\left[ C(\eta^\la_{t}, 0) = x\right]  
\simeq \frac{a}{(x+1) \log(1/\la)}\indiq_{\{x << \bn_\la\}} + 
\frac{b e^{-x / \bn_\la}}{\bn_\la},
$$
where $a,b$ are two positive constants. Very roughly, 
we are able to replace the condition $x<(s_{max}^\la)^{1/3}$ of \cite{bp} 
by the condition $x<(s_{max}^\la)^{1-\e}$ for any $\e\in (0,1)$
(but our result is weaker, in the sense that it holds when integrated in $x$,
and we have to take the limit $\la\to 0$).
This means that there are two types
of clusters: {\it microscopic clusters}, described by a power-like law
and {\it macroscopic} clusters, described by an exponential-like law.
This shows a {\it phase transition} around the {\it critical size} 
$\bn_\la$.

\vip

\subsubsection{No self-organized criticality}

From the qualitative point of view the conclusion is rather different from 
that of R\'ath and T\'oth \cite{rt} 
(presented in Subsubsection \ref{meanfield}).
Here, the (asymptotic) cluster-size distribution does not
exhibit self-organized criticality  features. We proved the presence 
of a power law, but
this power law describes clusters which are much smaller than the critical
size. Large clusters (clusters near the {\it critical size}) 
have a law with fast decay.

\subsection{Main ideas of the present paper}\label{intro5}

From the modelling point of view, the Poisson assumption is clearly not
well justified. Thus it seems interesting to study what happens
when seeds and matches are driven by other renewal processes. 
The goal of this paper is to extend the previous study \cite{bf} described 
above to a more general class of renewal processes. 
We assume that the renewal processes are stationary for simplicity,
but this can be more or less justified by the fact
that it is the only way that time $0$ does not play a special 
role.

\vip

We thus consider the case where seeds (respectively matches) 
fall on each site of $\zz$ independently, according to some stationary
renewal processes, with {\it stationary delay} distributed according
to some law $\nu_S$ (respectively $\nu^\la_M$). 
This means that for any time $t\geq 0$
and on any site $i\in\zz$, the time we have to wait for the next seed
is a $\nu_S$-distributed random variable. 
We have an assumption saying that as $\la \to 0$, matches are rarer
and rarer. We also assume that $\nu_S$ has a bounded support or a tail
with fast or regular or slow variations. We prove that,
after re-scaling, the corresponding forest fire process
converges, as $\la\to 0$, to a limit process. And we show that there
are four classes of limit processes, according to the fact that

\vip
$\bullet$ $\nu_S$ has a bounded support ($HS(BS)$),

\vip

$\bullet$ $\nu_S$ has a tail with fast decay ($HS(\infty)$),

\vip

$\bullet$ $\nu_S$ has a tail with polynomial decay ($HS(\beta)$),

\vip

$\bullet$  $\nu_S$ has a tail with logarithmic decay ($HS(0)$).

\vip

As we will see, the limit forest fire process
built in \cite{bf} is quite universal: it describes the asymptotics of
a large class (roughly exponential decay for $\nu_S$) of forest fire processes.
A similar limit process arises when $\nu_S$ has bounded support. 
But some quite different limit
processes arise when $\nu_S$ has a heavy tail.
We also develop the necessary tools to study the cluster size distributions.
Let us mention at once that there is indeed presence of a {\it critical size} 
under $(HS(BS))$ and $(HS(\infty))$ but not under $(HS(\beta))$ or $(HS(0))$.
In the latter situation, there are only macroscopic clusters. This
is related to Subsubsection \ref{kingman}.

\vip

It is striking that in \cite{bf} we made repeated use of the Markov property 
of Poisson processes while it turns out the result still holds 
without this assumption (and with no significant increase of the complexity). 
Indeed, proofs remain essentially elementary except maybe 
from the combinatorial and computational point of view. 

\vip

From the qualitative point of view, the main novelty is the rise of a new 
class of processes (those corresponding to polynomial tails), reminiscent 
of the Kingman coalescent (with deaths). But for this case as for the 
others, the conclusion is that, as expected, 
self-organized criticality features do not show up 
for this model in dimension 1.

\vip

Let us finally insist on the fact that surprisingly (in view of the
complexity and length of the proofs), our assumptions are really light. 
Consider e.g. the case where $\nu_S$ has an unbounded support and 
a fast decay, which means (for us) that for any $t>0$,
$$
\lim_{x\to \infty} \frac{\nu_S((x,\infty))}{\nu_S((tx,\infty))} = t^\infty,
$$
where $t^\infty=0$ if $t<1$,  $1^\infty=1$, and $t^\infty=\infty$ if $t>1$.
We do not need the least additional condition. 

\subsection{Plan of the paper}\label{intro6}

Part 2 is devoted to a complete exposition of our results.
We start in Section \ref{sex1} with notation and with the definitions of the 
objects under study, and we state our assumptions. 
In Section \ref{hscales}, we explain the heuristic scales
and the relevant quantities 
(rescaled macroscopic clusters and measure of microscopic
clusters). Then we describe precisely our results in Sections 
\ref{mri} (case with fast decay), \ref{mrbs} (case with bounded support),
\ref{mrbeta} (case with polynomial decay) and \ref{mrz} (case with logarithmic
decay). We conclude this part with a quick discussion about our modeling
choices.
Part 3 (Sections \ref{exunidisc} to \ref{prcsd}) contains all the proofs.
We handle a few numerical simulations to illustrate our results
in Part 4.
Finally, Part 5 contains an appendix about regularly varying
functions and coupling.

\part{Notation and results}

\section{Definitions, notation and assumptions}\label{sex1}

\subsection{Stationary Renewal processes}\label{srp}

We first fix notation about stationary renewal processes.
We refer to Cocozza-Thivent \cite{ct} for many precisions.

\begin{defin}\label{sr1}
For $\mu$ a probability measure on $(0,\infty)$ with finite expectation
$m_\mu$,
set $\nu_\mu(dt)=m_\mu^{-1}\mu((t,\infty))dt$, which is also a probability 
measure on $(0,\infty)$.
Let $T_1$ be a $\nu_\mu$-distributed random variable and let
$(X_k)_{k\geq 1}$ be a sequence of i.i.d. random variables
with law $\mu$, independent of $T_1$. Set
$T_{k+1}=T_k+X_k$ for all $k\geq 1$ and $N_t=\sum_{k\geq 1} 
\indiq_{\{T_k\geq t\}}$ for all $t\geq 0$.
We say that $(N_t)_{t\geq 0}$ is a stationary 
renewal process with parameter $\mu$, or a $SR(\mu)$-process in short.
\end{defin}

It is well-known, see e.g. \cite[Corollaire 6.19 p 169]{ct}, 
that for $(N_t)_{t\geq 0}$ a $SR(\mu)$-process in the sense of 
Definition \ref{sr1},
the law of $T_{N_{t}+1}-t$ (i.e. the time we have to wait
for the next mark at time $t$)
is $\nu_\mu$ for all $t\geq 0$.  Another possible definition is the following.

\begin{defin}\label{sr2}
For $\mu$ a probability measure on $(0,\infty)$ with finite expectation
$m_\mu$, set $\nu_\mu(dt)=m_\mu^{-1}\mu((t,\infty))dt$
and $\zeta_\mu(dt)=m_\mu^{-1} t \mu(dt)$,
which are also probability measures on $(0,\infty)$.
Consider some random variables $(X_i)_{i \in \zz\setminus \{0\}}$ with
law $\mu$. Consider also $X_0$ with law $\zeta_\mu$ and $U$ uniformly
distributed on $[0,1]$. Assume that all these random variables are independent.
Define $T_0=-(1-U)X_0$, $T_1= U X_0$ and then, for $n\geq 1$, $T_{n+1}=T_n+X_n$
and $T_{-n}=T_{-(n-1)} - X_{-n}$. Then we say that 
$(T_n)_{n\in \zz}$ is a $SR(\mu)$-process. 
\end{defin}

\vip

If $(T_n)_{n\in \zz}$ is a $SR(\mu)$-process in the sense of Definition
\ref{sr2} and if one considers the
associated counting process $N_t=\sum_{n\geq 1}\indiq_{\{T_n\leq t\}}$, 
it is indeed a 
$SR(\mu)$-process in the sense of Definition \ref{sr1}.
This can
be checked immediately: it suffices to observe that the law of $T_1$ is
$\nu_\mu$.

\vip

If we have a $SR(\mu)$-process $(N_t)_{t\geq 0}$ 
as in Definition \ref{sr1} and if we denote by $(T_n)_{n\geq 1}$ its successive
instants of jump, one can easily build $(T_n)_{n \leq 0}$ in such a way
that $(T_n)_{n\in \zz}$ is a $SR(\mu)$-process as in Definition \ref{sr2}.

\vip

It also holds that for $(T_n)_{n\in\zz}$ a $SR(\mu)$-process 
as in Definition \ref{sr2}, 
for any $t\in \rr$, the random sets $\cup_{n\in \zz} \{T_n\}$,  
$\cup_{n\in \zz} \{-T_n\}$ and 
$\cup_{n\in \zz} \{T_n+t\}$ have the same law. Thus if we introduce
$n_t$ such that $T_{n_t}+t<0<T_{n_t+1}+t$, the process $(T_{n_t+n}+t)_{n\in\zz}$ 
is a $SR(\mu)$-process. By the same way, the process
$(-T_{1-n})_{n\in \zz}$ is a $SR(\mu)$-process.

\subsection{The discrete model}

Next, we introduce the forest fire model.
For $a,b\in \zz$, we set $\lb a, b \rb =\{a,\dots,b\}\subset \zz$.
For $\eta \in \zuz$ and $i \in \zz$, we define the occupied 
connected component around $i$ as
$$
C(\eta,i)=\left\{ \begin{array}{lll}
\emptyset & \hbox{ if } & \eta(i)=0, \\
\lb l(\eta,i),r(\eta,i) \rb & \hbox{ if } & \eta(i)=1,
\end{array}\right.
$$ 
where $l(\eta,i)=\sup\{k< i:\; \eta(k)=0\}+1$ and
$r(\eta,i)=\inf\{k > i:\; \eta(k)=0\}-1$.

\begin{defin}\label{gff}
Let $\mu_S$ and $\mu_M$ be two laws on $(0,\infty)$ with some finite
expectations.
For each $i\in \zz$, we consider a $SR(\mu_S)$-process $(N^S_t(i))_{t\geq 0}$
and a $SR(\mu_M)$-process $(N^M_t(i))_{t\geq 0}$, all these processes being
independent. A $\{0,1\}$-valued process 
$(\eta_t(i))_{i \in \zz, t\geq 0}$ such that
$(\eta_t(i))_{t\geq 0}$
is a.s. c\`adl\`ag for all $i\in\zz$
is said to be a $FF(\mu_S,\mu_M)$-process
if a.s., for all $t\geq 0$, all $i\in \zz$,
$$
\eta_t(i)=\intot \indiq_{\{\eta_\sm(i)=0\}} dN^S_s(i) 
- \sum_{j\in\zz}\intot \indiq_{\{j\in C(\eta_\sm,i)\}} dN^M_s(j).
$$
\end{defin}

Formally, we say that $\eta_t(i)=0$ if there is no tree at site $i$
at time $t$ and $\eta_t(i)=1$ else.
Thus the forest fire process starts
from an empty initial configuration, seeds fall according to some i.i.d. 
$SR(\mu_S)$-processes and matches fall  according to some i.i.d. 
$SR(\mu_M)$-processes. When a seed falls on an empty site, a tree 
appears immediately. When a match falls on an occupied site,
it burns immediately  the corresponding connected component of occupied sites.
Seeds falling on occupied sites and matches falling on vacant sites
have no effect.
\vip

Assume for a moment that the support of $\mu_S$ is unbounded (thus so is that
of $\nu_{\mu_S}$). Then the $FF(\mu_S,\mu_M)$-process
can be shown to exist and to be unique (for almost every 
realization of $(N^S_t(i),N^M_t(i))_{i\in \zz,t\geq 0}$), 
by using a genuine {\it graphical construction}.
Indeed, to build the process until a given time $T>0$,
it suffices to work between sites $i$ which are vacant until time $T$ 
(because $N_T^S(i)=0$). Interaction cannot cross such sites. Since
such sites are a.s. infinitely many (because
$\Pr(N_T^S(i)=0)=\nu_{\mu_S}((T,\infty))>0$ by assumption),
this allows us to handle 
a graphical construction. This is illustrated by Figure \ref{GC}.
See Liggett \cite{l} for many examples of graphical constructions.

\begin{figure}[b] 
\fbox{
\begin{minipage}[c]{0.95\textwidth}
\centering
\includegraphics[width=9cm]{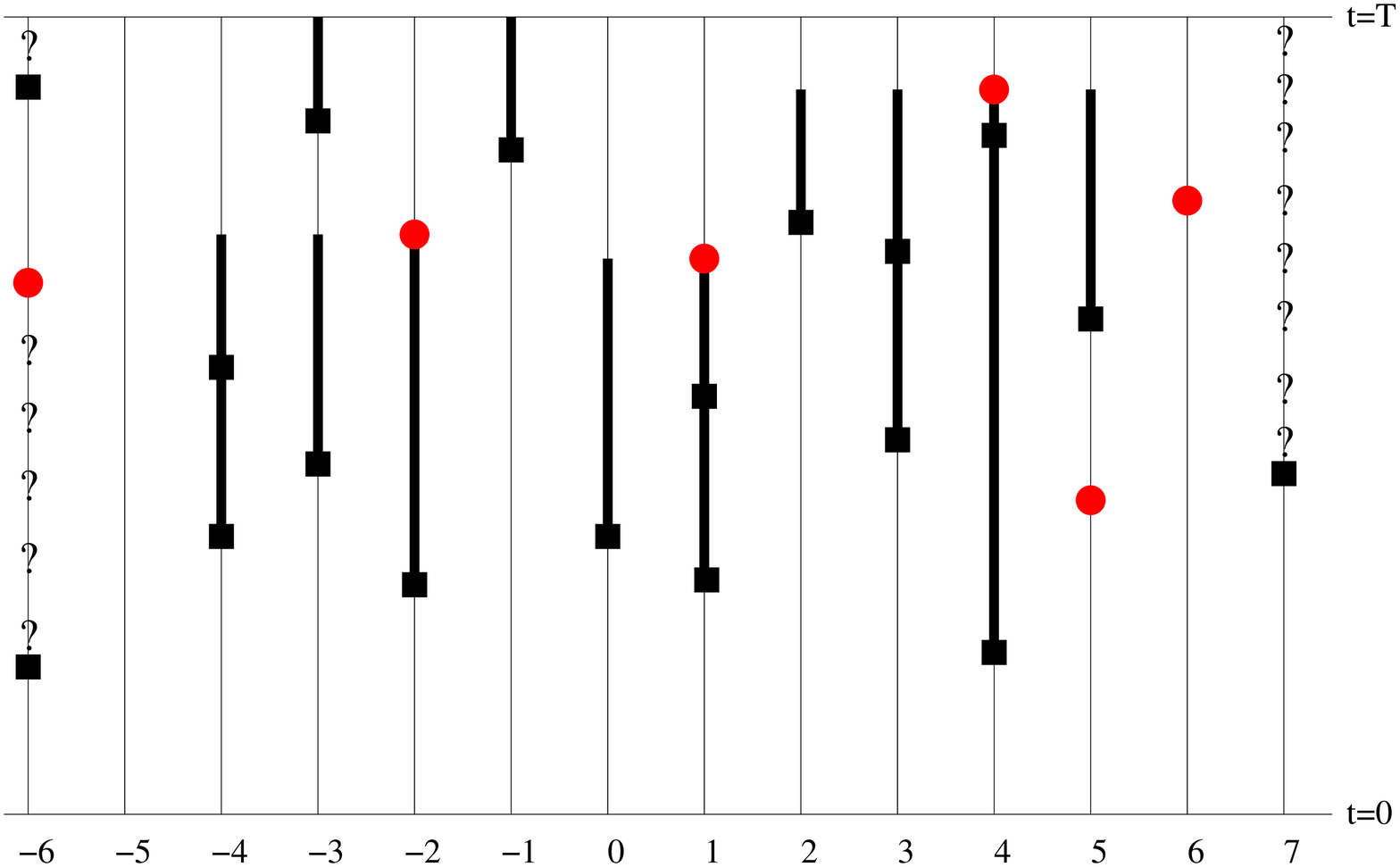}
\caption{Graphical construction of the $FF(\mu_S,\mu_M)$-process.}
\label{GC}
\vip
\parbox{13.3cm}{
\footnotesize{
Matches are represented as bullets and seeds as squares. On the sites
$-5$ and $6$, no seed fall during $[0,T]$, so that these
sites remain vacant until $T$. One can thus clearly deduce the values
of the process in $\lb -5,6\rb$ during $[0,T]$ using only
the bullets and squares inside $\lb -5,6\rb$.
}}
\end{minipage}
}
\end{figure}

\vip

We will also study the more complicated case 
where $\mu_S$ has a bounded support and this will lead to the 
following general result.

\begin{prop}\label{gcgff}
Let $\mu_S$ and $\mu_M$ be two laws on $(0,\infty)$ with some finite
expectations.
For each $i\in \zz$, we consider a $SR(\mu_S)$-process $(N^S_t(i))_{t\geq 0}$
and a $SR(\mu_M)$-process $(N^M_t(i))_{t\geq 0}$, all these processes being
independent.
Almost surely, there exists a unique $FF(\mu_S,\mu_M)$-process.
\end{prop}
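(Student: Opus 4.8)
The plan is to establish existence and uniqueness simultaneously, through a genuine graphical construction, treating first the ``short-time'' regime (along the lines already sketched in the excerpt for the case of unbounded support) and then bootstrapping in time. Write $b_S:=\sup \supp \mu_S\in(0,\infty]$.

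First I would handle all times $T<b_S$. For such $T$ one has $\Pr(N^S_T(i)=0)=\nu_{\mu_S}((T,\infty))>0$, so by the second Borel--Cantelli lemma there are a.s.\ infinitely many sites $i<0$ and infinitely many $i>0$ receiving no seed on $[0,T]$, hence staying vacant on $[0,T]$. Fix such sites $a_-<0<a_+$. Since $C(\eta_t,i)$ is always an interval avoiding every vacant site, no match burns across $a_\pm$ and no seed affects more than its own site, so the restriction to $\lb a_-+1,a_+-1\rb\times[0,T]$ of any $FF(\mu_S,\mu_M)$-process is a function of the a.s.\ finitely many marks $\{dN^S_s(j),dN^M_s(j):a_-\le j\le a_+,\ s\le T\}$ only, and on that finite data it is the obvious deterministic recursion (a.s.\ no two marks coincide; between marks nothing changes; a seed occupies a vacant site; a match erases the finite interval it hits). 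This both \emph{defines} the process on $\lb a_-+1,a_+-1\rb\times[0,T]$ and forces it to be \emph{unique} there. Letting $a_\pm\to\mp\infty$ and $T\uparrow b_S$, and noting that two such constructions must agree on their common domain (each being the unique restriction), we get the a.s.\ unique $FF(\mu_S,\mu_M)$-process on $\zz\times[0,b_S)$. If $b_S=\infty$ this finishes the proof.

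When $b_S<\infty$ I would extend to $[0,\infty)$ by induction over the slabs $[T_k,T_{k+1}]$, $T_k:=kb_S/2$. Assume the process is built and unique on $[0,T_k]$; being obtained by local recursions it is a measurable function of the i.i.d.\ mark field, so the pair formed by $\eta_{T_k}$ and the residual seed times $(R^S_i)_{i\in\zz}$ (with $R^S_i$ the waiting time at $T_k$ for the next seed on $i$) is a translation-invariant, ergodic field. The point is that the event ``site $i$ is vacant throughout $[T_k,T_{k+1}]$'', which equals $\{\eta_{T_k}(i)=0\}\cap\{\text{no seed on }i\text{ in }(T_k,T_{k+1}]\}$, has probability bounded below by a constant depending only on $\mu_S,\mu_M$; by ergodicity there are then a.s.\ infinitely many such sites on each side, and they play exactly the role of the barriers above, so that given $\eta_{T_k}$ the evolution between two consecutive such sites on $[T_k,T_{k+1}]$ is again a finite deterministic recursion, defining and uniquely determining $\eta$ on $[0,T_{k+1}]$. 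Intersecting over the countably many $k\ge0$ gives the result.

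The hard part is precisely this positivity when $T_k\ge b_S$: then every site has already received a seed, no permanently vacant barrier survives, and $\eta_{T_k}(i)$ and $R^S_i$ are \emph{not} independent, since both are read off the single process $(N^S_t(i))_{t\ge0}$. The way around it is to condition on the last seed time $S_J\le T_k$ on site $i$: given $\{S_{J+1}>T_k\}$ the next inter-arrival $X_J$ on $i$ has law $\mu_S$ restricted to $(T_k-S_J,\infty)$ and is independent of $\eta_{T_k}(i)$ (which sees nothing on $i$ after $T_k$), so $\Pr(R^S_i>b_S/2\mid S_J,\ \eta_{T_k}(i)=0)=\mu_S((T_k-S_J+b_S/2,\infty))/\mu_S((T_k-S_J,\infty))$, which is positive as soon as $T_k-S_J<b_S/2$; and a short explicit scenario (last seed on $i$ in a tiny window just before $T_k$, then a match on $i$ clearing its component, then no further seed on $i$) produces, with probability bounded below uniformly in $k$, simultaneously $\{\eta_{T_k}(i)=0\}$ and $\{T_k-S_J<b_S/2\}$. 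The other point to be careful with is keeping track of the a.s.\ finite interaction range along the whole induction, so that the measurability of $\eta_{T_k}$ with respect to the mark field---needed for the ergodicity step---is legitimate.
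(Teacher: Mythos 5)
Your argument is correct in substance, but it is organized quite differently from the paper's, so let me compare. Both proofs ultimately rest on the same mechanism: a match mark followed by a seed-free window forces the corresponding site to be vacant on that window, \emph{whatever} the configuration, and such configuration-independent vacant sites act as barriers through which the interaction cannot pass. The paper exploits this in one shot: for a fixed horizon $T$ it builds, from the marks alone, a space-time ``staircase'' event $\Omega_{n,T}$ (no seed on site $n$ during $[0,h_0]$; then for each $i\le K=\lfloor T/h_0\rfloor$ a match on $n+i$ in $((i-1)h_0,ih_0]$ and no seed there on $((i-1)h_0,(i+1)h_0]$), which guarantees a vacant site somewhere in $\lb n,n+K\rb$ at every time of $[0,T]$; these events have a fixed positive probability and are independent as soon as $|m-n|>K$, so one Borel--Cantelli argument yields existence and uniqueness with no reference to the state of the process being constructed. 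You instead induct over time slabs of length $b_S/2$ and recreate a one-site barrier in each slab; this works, but because you first formulate the barrier event in terms of $\eta_{T_k}$, you are led to invoke ergodicity of the factor field and a renewal conditioning on the age of the last seed --- machinery the paper never needs, and which also carries the burden you yourself flag (measurability of $\eta_{T_k}$ with respect to the marks along the whole induction). Note that a marks-only event does the job directly in each slab: ``a match on $i$ in $(T_k-h,T_k]$ and no seed on $i$ in $(T_k-h,T_{k+1}]$'' (with $b_S/2+h<b_S$, probability $\nu_M((0,h))\,\nu_S((b_S/2+h,\infty))>0$) forces $i$ to be vacant on all of $[T_k,T_{k+1}]$ for \emph{any} solution; these events are i.i.d.\ across sites, so plain Borel--Cantelli replaces the ergodicity and conditioning steps, and your prior-seed requirement is superfluous --- at which point your proof becomes exactly a time-sliced version of the paper's staircase. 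One small imprecision to fix in your write-up: the difficulty you defer to ``$T_k\ge b_S$'' already appears at the second slab $[b_S/2,b_S]$, since no site is seed-free on all of $[0,b_S]$; the event above handles every slab $k\ge1$ uniformly.
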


This proposition is proved in Section \ref{exunidisc}.

\subsection{Assumptions}\label{ass}

We now state the assumptions we will impose on the laws $\mu_S$ and
$\mu_M$. First, we want to express the fact that matches are less
and less frequent. To do so, we consider a family of laws
$\mu_M^\la$, for $\la \in (0,1]$, as follows.

\vip
\begin{center}
\fbox{\begin{minipage}[c]{0.9\textwidth}
$(H_M)$: For each $\la \in (0,1]$, $\mu_M^\la$
is the image measure of $\mu_M^1$ by the map $t \mapsto t/\la$
and the probability measure $\mu_M^1$ on $(0,\infty)$
satisfies $\int_0^\infty t \mu_M^1(dt)=1$.
We set
$$
\nu_M^\la(dt)=\nu_{\mu_M^\la}(dt)= \la \mu_M^\la((t,\infty))dt =
\la \mu_M^1((\la t, \infty))dt.
$$
\end{minipage}}
\end{center}
\vip

The idea we have in mind is that we slow down matches:
for $(N^M_t)_{t\geq 0}$ a $SR(\mu^1_M)$-process, $(N_{\la t}^M)_{t\geq 0}$ is a 
$SR(\mu_M^\la)$-process. 

\vip

Assume that $\int_0^\infty t \mu_M^1(dt)=\kappa \in (0,\infty)$. Then
$\tilde \mu_M^{\la}=\mu_M^{\kappa \la}$ satisfies $(H_M)$. We thus
may of course assume that $\kappa=1$ without loss of generality.

\vip

Next, we put some conditions about $\mu_S$. 

\vip
\begin{center}
\fbox{\begin{minipage}[c]{0.9\textwidth}
$(H_S)$: The probability measure $\mu_S$ on $(0,\infty)$
has a finite mean $m_S=\int_0^\infty t \mu_S(dt)$. We set
$$
\nu_S(dt)=\nu_{\mu_S}(dt)= m_S^{-1} \mu_S((t,\infty))dt.
$$
Either $\mu_S$ has a bounded 
support or $\mu_S$ has an unbounded support and
$$
\forall \; t>0, \quad
\lim_{x\to \infty}
\frac{\nu_S((x,\infty))}{\nu_S((tx,\infty))} \in [0,\infty)\cup\{\infty\}
\quad \hbox{exists}.
$$
\end{minipage}}
\end{center}
\vip

Surprisingly, we will consider these assumptions in full generality: 
no supplementary technical condition is needed.
In the whole paper, we admit the following convention:
$$
t^\infty= \left\{ \begin{array}{lll}
0 & \hbox{if} & t\in (0,1)\\
1 & \hbox{if} & t=1\\
\infty & \hbox{if} & t\in (1,\infty).
\end{array}\right.
$$

As proved in Lemma \ref{hsimplieshsbeta}, $(H_S)$ 
implies either 

\vip
\begin{center}
\fbox{\begin{minipage}[c]{0.9\textwidth}
$(H_S(BS))$: The probability measure $\mu_S$ on $(0,\infty)$
has a bounded support. We denote by $m_S$ the expectation of 
$\mu_S$ and define $T_S= \max \; \supp  \mu_S$ and
$\nu_S(dt)=m_S^{-1}\mu_S((t,\infty))dt$. 
Observe that $\supp  \nu_S = [0,T_S]$.
\end{minipage}}
\end{center}
\vip

\noindent or, for some $\beta \in [0,\infty) \cup \{\infty\}$,

\vip
\begin{center}
\fbox{\begin{minipage}[c]{0.9\textwidth}
$(H_S(\beta))$: The probability measure $\mu_S$ on $(0,\infty)$
has an unbounded support, a finite mean $m_S$ and for
$\nu_S(dt)=m_S^{-1}\mu_S((t,\infty))dt$,
$$ 
\forall \; t>0, \quad
\lim_{x\to \infty}\frac{\nu_S((x,\infty))}{\nu_S((tx,\infty))}=t^\beta.
$$
\end{minipage}}
\end{center}
\vip

We finally introduce the following notation.

\begin{nota}\label{phipsi}
(i) Assume $(H_S(\beta))$ for some $\beta \in [0,\infty)$. 
We denote by $\phi_S$ the 
inverse function of $t \mapsto t/\nu_S((t,\infty))$. Note that
$\phi_S:(0,\infty) \mapsto (0,\infty)$ is an increasing continuous 
bijection.

(ii) Assume $(H_S(\infty))$. We denote by $\phi_S$ the 
inverse function of $t \mapsto t/\nu_S((t,\infty))$ and by 
$\psi_S$ the inverse function of 
$t\mapsto \nu_S((0,t)) $. The functions 
$\phi_S:(0,\infty) \mapsto (0,\infty)$ and 
$\psi_S:(0,1) \mapsto (0,\infty)$ are increasing bijections.

(iii) Assume $(H_S(BS))$. We denote by $\psi_S$ the 
inverse function of $t \mapsto \nu_S((0,t))$.
The function $\psi_S:(0,1)\mapsto (0,T_S)$ is an increasing continuous
bijection.
\end{nota}

\subsection{Examples}
Concerning $(H_M)$,
the situation is clear. The Poisson case studied
in \cite{bf} corresponds to $\mu_M^1(dt)=e^{-t} \indiq_{\{t>0\}}dt$,
whence $\mu_M^\la(dt)=\nu_M^\la(dt)=\la e^{-\la t} \indiq_{\{t>0\}}dt$. We 
study here a much more general case. However, this is not the main
point of the paper, since it will not generate some very interesting
behaviors. Concerning $(H_S)$, we present here four classes of
examples, that will lead to different behaviors. 

\vip

{\bf Example 1.} If $\mu_S= \delta_{T_S}$, whence 
$\nu_S(dt)=T_S^{-1}\indiq_{[0,T_S]}(t)dt$, then $(H_S(BS))$ holds and
$\psi_S(z)=T_S z$.

\vip

{\bf Example 2.} Assume that $\mu_S((t,\infty))\stackrel \infty \sim
e^{-t^\alpha}$ for some $\alpha>0$, so that 
$\nu_S((t,\infty))\stackrel  \infty \sim c t^{1-\alpha} e^{-t^\alpha}$.
Then $(H_S(\infty))$ holds. Furthermore, 
$\phi_S(z) \stackrel \infty \sim (\log z)^{1/\alpha}$ and
$\psi_S(z) \stackrel 1 \sim [\log(1/(1-z))]^{1/\alpha}$.

\vip

{\bf Example 3.} Assume that $\mu_S((t,\infty)) 
\stackrel \infty \sim t^{-1-\beta}$ for some $\beta>0$,
whence $\nu_S((t,\infty))\stackrel \infty \sim c t^{-\beta}$.
Then $(H_S(\beta))$ holds and 
$\phi_S(z)\stackrel \infty \sim  (cz)^{1/(\beta+1)}$.

\vip

{\bf Example 4.} If $\mu_S((t,\infty))\stackrel \infty \sim  
t^{-1} (\log t)^{-1-\gamma}$ for some $\gamma>0$, then  
$\nu_S((t,\infty)) \stackrel \infty \sim  c (\log t)^{-\gamma}$, so that 
$(H_S(0))$ is satisfied and 
$\phi_S(z)\stackrel \infty \sim cz (\log z)^{-\gamma}$.

\vip

The Poisson
case treated in \cite{bf}, which corresponds to the case where
$\mu_S((t,\infty))=e^{-t}=\nu_S((t,\infty))$, is thus included in
Example 2. Example 1 might seem slightly strange from the modelling point 
of view, but it can happen e.g. if seeds are thrown by a machine.

\vip

Observe that $(H_S)$ is not very restrictive, since it is satisfied
by all reasonable laws.  Anyway, our results (not only the proofs) 
clearly break down without such an assumption.

\vip

It is not so easy to build a law $\mu_S$ not
meeting $(H_S)$, because the function $t\mapsto \nu_S((t,\infty))$
is automatically quite smooth (Lipschitz continuous, decreasing and convex). 
One can however
verify that $(H_S)$ is not holding for 
$\mu_S(dt)=\indiq_{\{t>0\}}
[20 -3\cos\log(1+t) + \sin \log(1+t)]/[9(1+t)^3)]dt$, 
for which $\nu_S((t,\infty))=[10+\sin\log(1+t)]/[10(1+t)]$. One 
easily checks that 
$\nu_S((x,\infty))/\nu_S((x e^{\pi/2},\infty))$ has no limit as $x\to \infty$,
choosing e.g. the sequences $x_n=e^{2n\pi}$ and $x_n=e^{2n\pi+\pi/2}$.

\subsection{Notation}\label{nocv}

In the whole paper, we denote, for $I\subset \zz$, by $|I|=\#I$
the number of elements in $I$. For $I=\lb a,b\rb=\{a,\dots,b\} \subset \zz$ and 
$\alpha>0$, we will set $\alpha I := [\alpha a, \alpha b]\subset \rr$.
For $\alpha>0$, we of course take the convention that 
$\alpha \emptyset=\emptyset$.

\vip

For $J=[a,b]$ an interval of $\rr$,
$|J|=b-a$ stands for the length of $J$ and for $\alpha>0$, we set
$\alpha J = [\alpha a, \alpha b]$. 

\vip

For $x\in \rr$, $\lfloor x \rfloor$ stands for the integer part of $x$.

\vip

We denote by $\cI=\{[a,b],a\leq b\}$ the
set of all closed finite intervals of $\rr$.
For two intervals $[a,b]$ and $[c,d]$, we set
$$
\bdelta([a,b],[c,d])=|a-c|+|b-d|, \quad 
\bdelta([a,b],\emptyset)=|b-a|.
$$

\vip

For two functions $I,J:[0,T]\mapsto \cI\cup\{\emptyset\}$,
we set 
$$
\bdelta_{T}(I,J)=\int_0^T \bdelta(I_t, J_t) dt.
$$

For $(x,I), (y,J)$ in $\dd([0,T],
\rr_+ \times \cI\cup\{\emptyset\})$, the set of c\`adl\`ag functions from
$[0,T]$ into $\rr_+\times \cI\cup\{\emptyset\}$,  we define
$$
\bd_T((x,I),(y,J))=\sup_{t\in [0,T]}|x(t)-y(t)|+\bdelta_{T}(I,J).
$$

\section{Heuristic scales and relevant quantities}\label{hscales}

For $\mu_S,\mu^\la_M$ satisfying $(H_S)$ and $(H_M)$, 
we consider the $FF(\mu_S,\mu^\la_M)$-process $(\eta_t^\la(i))_{t\geq 0,i\in\zz}$.
We look for some time scale for which tree clusters see about
one fire per unit of time. But for $\la$ very small, clusters will
be very large just before they burn. We thus also have to rescale space. 

\vip

{\it Time scale.} 
For $\la>0$ very small and 
for $t$ not too large, one might neglect fires,
so that roughly, each site is vacant with probability $\nu_S((t,\infty))$.
Indeed, the time we have to wait for the first seed follows,
on each site, the law $\nu_S$.
Thus $C(\eta_t^\la,0)\simeq \lb -X,Y\rb$, where $X,Y$ are geometric
random variables with parameter $\nu_S((t,\infty))$.
Consequently, for $t$ not too large,
$$
|C(\eta_t^\la,0)|\simeq 1/\nu_S((t,\infty)).
$$

Under $(H_S(BS))$, $|C(\eta_t^\la,0)|$ becomes infinite at time $T_S$,
so there is no really need to accelerate time: 
we are sure that $|C(\eta_t^\la,0)|$
will be involved in a fire before $T_S$. We will accelerate time by 
a factor $T_S$ (in some sense, this allows us to assume that
$T_S=1$).

\vip

Next we assume $(H_S(\beta))$ for some $\beta \in [0,\infty)\cup\{\infty\}$.
We observe that thanks to $(H_M)$, $\nu_M^\la((t,\infty))\simeq 
1 - \la \int_0^{t} \mu_M^1((\la s,\infty))ds \simeq 1-\la t$. Hence
the probability that at least one match falls in
the cluster $C(\eta^\la,0)$ during $[0,t]$ is roughly similar, under $(H_M)$, 
to  
$$
1- \left(\nu_M^\la((t,\infty)) \right)^{|C(\eta_t^\la,0)|}\simeq \la t 
|C(\eta_t^\la,0)|\simeq \la t /\nu_S((t,\infty)).
$$
We decide to accelerate time by a factor $\ba_\la$, where $\ba_\la$
solves $\la \ba_\la =  \nu_S((\ba_\la,\infty))$. By this way, 
the probability that a match falls in
$C(\eta^\la,0)$ during $[0,\ba_\la]$ should tend to some nontrivial value.

\vip

To summarize, we have set, recalling Notation \ref{phipsi},
\begin{align}\label{ala}
\left\{\begin{array}{l}
\hbox{under } (H_S(BS)), \;\;  \ba_\la = T_S,\\
\hbox{under } (H_S(\beta)) \hbox{ with }\beta \in [0,\infty)\cup\{\infty\}, 
\;\; 
\ba_\la = \phi_S(1/\la), \hbox{ which solves } 
\la \ba_\la =  \nu_S((\ba_\la,\infty)).
\end{array}\right.
\end{align} 

Under $(H_S(\beta))$ for some $\beta \in [0,\infty)\cup\{\infty\}$, 
one easily checks that
$$
\lim_{\la \to 0}\ba_\la = \infty \quad \hbox{and thus} \quad 
\lim_{\la\to 0} \la \ba_\la = \lim_{\la\to 0} \nu_S((\ba_\la,\infty))=0.
$$

\vip

{\it Space scale.}
Now we rescale space in such a way that during a time interval with length 
of order
$\ba_\la$, something like one
fire starts per unit of (space) length.
Since on each site, the probability that (at least) 
one match falls during $[0,\ba_\la]$
equals $\nu_M((0,\ba_\la))=\la \int_0^{\ba_\la}\mu_M^1((\la t,\infty))dt 
\simeq  \la \ba_\la$, 
our space scale has to be of order
\begin{align}\label{nla}
\bn_\la= \lfloor 1/ (\la \ba_\la) \rfloor. 
\end{align}
This means that we will identify
$\lb 0,\bn_\la\rb \subset\zz$ with $[0,1]\subset\rr$. We always have
$\lim_{\la\to 0} \bn_\la=\infty$.

\vip

{\it Rescaled clusters.} We thus set, for $\la \in (0,1)$, 
$t\geq 0$ and $x\in \rr$, 
recalling Subsection \ref{nocv},
\begin{align}\label{dlambda}
D^\la_t(x):= \frac{1}{\bn_\la} C(\eta^\la_{\ba_\la t},
\lfloor \bn_\la x \rfloor) \subset \rr.
\end{align}
By the previous study, we know that roughly, when neglecting fires,
$$
|D_t^\la(x)|\simeq \frac{1}{\bn_\la \nu_S((\ba_\la t,\infty))}\simeq
\frac{\la \ba_\la}{\nu_S((\ba_\la t,\infty))}.
$$
Under $(H_S(\beta))$ for some $\beta \in [0,\infty)\cup\{\infty\}$, one gets
$$
|D_t^\la(x)|\simeq
\frac{\nu_S((\ba_\la ,\infty))}{\nu_S((\ba_\la t,\infty))} \simeq t^\beta.
$$
Under $(H_S(BS))$, we obtain roughly (assume that $t\ne 1$)
$$
|D_t^\la(x)|\simeq t^\infty.
$$
Indeed, $\nu_S((\ba_\la t,\infty))=\nu_S((T_S t,\infty))$ does
not depend on $\la$ and is positive if and only if $t<1$.

\vip

{\it Case $\beta \in [0,\infty)$.} In this case, everything is fine:
for all times of order
$\ba_\la t$, the good space scale is indeed $\bn_\la$. Thus
we will describe the $FF(\mu_S,\mu_M^\la)$-process through
$(D^\la_t(x))_{x\in \rr,t\geq 0}$. 

\vip

{\it Case $\beta \in \{\infty,BS\}$.}
Then we have a difficulty as in \cite{bf}:
the previous estimate (neglecting fires) suggests that 
for all $x\in \rr$, for $t<1$, $|D_t^\la(x)|\to 0$ and
for $t>1$,  $|D_t^\la(x)|\to \infty$.
For $t >1$, fires might be in effect
and we hope that this will make finite the possible limit of  $|D^\la_t(x)|$.
But fires can only reduce the size of clusters, so that for $t<1$,
the limit of $|D^\la_t(x)|$ will really be $0$.

\vip

Since we would like to have an idea of the sizes of microscopic 
clusters, we have to keep some information about the
{\it degree of smallness} of microscopic clusters.
We adopt a different strategy than in \cite{bf}, which is more
adapted to the case where $\beta=BS$ and which leads us to a slightly
more direct proof (even in the Poisson case).
We consider a function
$\bm_\la:(0,1]\mapsto \nn$ satisfying
\begin{align}\label{mla}
\left\{\begin{array}{l}
\lim_{\la\to 0} \bm_\la = \infty, \;\;
\; \lim_{\la\to 0} (\bm_\la/\bn_\la) = 0, \;
\la \mapsto \bm_\la \hbox{ is non-increasing}\\
\hbox{and additionally, under $(H_S(\infty))$, } 
\forall z \in [0,1), \lim_{\la \to 0} \bm_\la \nu_S((\ba_\la z,\infty))=\infty.
\end{array}\right.
\end{align}

Such a function exists: under $(H_S(\infty))$, see Lemma \ref{mlaexist} 
and under $(H_S(BS))$, 
choose for example $\bm_\la = \lfloor \sqrt{1/\la}\rfloor$.

\vip

We introduce, for
$\la>0$, $x\in \rr$, $t>0$, recall Subsection \ref{nocv} and Notation
\ref{phipsi}, 
\begin{align}\label{zlambda}
\left\{ \begin{array}{rl}
K^\la_t(x)&:= 
\displaystyle\frac{ \left|\left\{i \in \lb \lfloor \bn_\la x \rfloor - \bm_\la 
, \lfloor \bn_\la x\rfloor + \bm_\la  \rb\,:\;   
\eta^\la_{\ba_\la t} (i)=1
\right\}\right|}{2 \bm_\la+1} \in [0,1],\\
Z^\la_t(x)&:= \displaystyle\frac{\psi_S (K^\la_t(x))}{\ba_\la} \land 1
\in [0,1]. \end{array}\right.
\end{align}
Observe that $K^\la_t(x)$ stands for the {\it local density of occupied sites}
around $ \lfloor \bn_\la x \rfloor $ at time $\ba_\la t$. This density is
{\it local} because $\bm_\la << \bn_\la$.
We hope that for $t<1$, neglecting fires, 
$K^\la_t(x)\simeq \nu_S((0,\ba_\la t))$, whence $Z^\la_t(x)\simeq t$.

\vip

For all $\la>0$ small enough (we need that $2\bm_\la+1<\bn_\la$), 
it also holds that  $Z^\la_t(x)=1$ if and only if  
$K^\la_t(x)=1$, i.e. if and only if  all the sites are 
occupied around $\lfloor \bn_\la x \rfloor$.
Indeed, under $(H_S(BS))$, $Z^\la_t(x)=1$ implies that
$\psi_S(K^\la_t(x))=T_S$, so that $K^\la_t(x)=\nu_S((0,T_S))=1$.
Under $(H_S(\infty))$, $Z^\la_t(x)=1$ implies that
$\psi_S(K^\la_t(x))\geq \ba_\la$, so that $K^\la_t(x) \geq \nu_S((0,\ba_\la))
=1-\nu_S((\ba_\la,\infty))=1- \la \ba_\la \geq 1-1/\bn_\la$, whence
$K^\la_t(x)=1$. This last assertion comes from the facts that
$K^\la_t(x)$ takes its values in 
$\{k/(2\bm_\la+1) : k\in \{0,\dots,2\bm_\la+1\}$ and that $2\bm_\la+1<\bn_\la$.

\vip

Since we will allow $\bm_\la$ to be arbitrarily close to $\bn_\la$,
$Z^\la_t(x)=1$ will imply, roughly, that the cluster 
containing $\lfloor \bn_\la x \rfloor$ is macroscopic, i.e. has a length
of order $\bn_\la$.

\vip
 
We will study the $FF(\mu_S,\mu_M^\la)$-process through
$(D^\la_t(x),Z_t^\la(x))_{x\in \rr,t\geq 0}$. 
The main idea is that for $\la>0$ very small:

\vip

$\bullet$ if $Z^\la_t(x)=z\in(0,1)$, 
then $|D^\la_t(x)|\simeq 0$ and the (rescaled)
cluster containing
$x$ is microscopic (in the sense that the non-rescaled cluster
is small when compared to $\bn_\la$), but we control the local
density of occupied sites around $x$, which resembles $\nu_S((0,\ba_\la z))$.
Observe that this density tends to $1$ as $\la \to 0$ 
for all $z\in (0,1)$ under $(H_S(\infty))$, while it remains bounded
as $\la \to 0$ for all $z\in (0,1)$ under $(H_S(BS))$.

\vip

$\bullet$ if $Z^\la_t(x)=1$ and $D_t^\la(x)=[a,b]$, then
the (rescaled) cluster containing $x$ is macroscopic and has a length
equal to $b-a$, or $|C(\eta^\la_{\ba_\la t},
\lfloor \bn_\la x \rfloor)|\simeq (b-a)\bn_\la$ in the original scales.

\subsection*{Summary}
Assume $(H_S(\beta))$ for some $\beta \in [0,\infty)\cup\{\infty,BS\}$. 

\vip

$\bullet$ We accelerate time by the factor $\ba_\la$,
defined by  $\la\ba_\la=\nu_S((\la \ba_\la,\infty))$ if 
$\beta \in [0,\infty)\cup\{\infty\}$ and by 
$\ba_\la=T_S$ if $\beta=BS$.

\vip

$\bullet$ Our space scale is $\bn_\la= \lfloor 1/(\la\ba_\la) \rfloor$.

\vip

$\bullet$ If $\beta \in [0,\infty)$, we will only study 
the rescaled clusters $(D^\la_t(x))_{t\geq 0,x\in\rr}$, see
(\ref{dlambda}). 

\vip

$\bullet$ If $\beta \in \{\infty,BS\}$, we will study the rescaled clusters
$(D^\la_t(x))_{t\geq 0,x\in\rr}$, as well as the local densities of occupied
sites $(Z^\la_t(x))_{t\geq 0,x\in\rr}$, see (\ref{mla}-\ref{zlambda}).

\section{Main result in the case $\beta=\infty$}\label{mri}
\setcounter{equation}{0}

\subsection{Definition of the limit process}

We describe the limit process in the case where
$\beta=\infty$.  It is exactly the same process as in the Poisson
case studied in \cite{bf}. 
We consider a Poisson measure $\pi_M(dt,dx)$ on $[0,\infty) \times \rr$, with 
intensity measure $dt dx$, whose marks correspond to matches.

\begin{defin}\label{dflffp}
A process 
$(Z_t(x),D_t(x),H_t(x))_{t\geq 0,x\in \rr}$ 
with values in $\rr_+\times \cI\times \rr_+$ such that a.s., 
for all $x\in\rr$, $(Z_t(x),H_t(x))_{t\geq 0}$ is c\`adl\`ag,
is said to be a $LFF(\infty)$-process
if a.s., for all $t\geq 0$, all $x \in \rr$,
\begin{align}\label{eqlffp}
\left\{ 
\begin{array}{l} 
Z_t(x)=  \displaystyle \intot \indiq_{\{Z_s (x) < 1\}}ds - 
\intot \int_\rr \indiq_{\{ Z_\sm(x)=1,y \in D_{\sm}(x)\}}\pi_M(ds,dy),\\
H_t(x)= \displaystyle 
\intot Z_\sm(x)\indiq_{\{Z_\sm(x)<1\}} \pi_M(ds\times \{x\}) 
- \intot  \indiq_{\{H_s (x) > 0 \}}ds, 
\end{array}
\right.
\end{align}
where $D_t(x) = [L_t(x),R_t(x)]$, with
\begin{align*}
L_t(x) =& \sup\{ y\leq x:\; Z_t(y)<1 \hbox{ or } H_t(y)>0 \},\\
R_t(x) =& \inf\{ y\geq x:\; Z_t(y)<1 \hbox{ or } H_t(y)>0 \} 
\end{align*}
and where $D_{t-}(x)$ is defined in the same way.
\end{defin}

\subsection{Formal dynamics}

Let us explain the dynamics of this process. We consider $T>0$ fixed
and set $\cA_T= \{x \in \rr:\; \pi_M([0,T]\times\{x\})> 0\}$.
For each $t\geq 0$, $x\in \rr$, $D_t(x)$ stands for the
occupied cluster containing $x$. We call this cluster is {\it microscopic} 
if $D_t(x)=\{x\}$. We have $D_t(x)=D_t(y)$ for all 
$y \in D_t(x)$.
\vip

{\it 1. Initial condition.}
We have $Z_0(x)=H_0(x)=0$
and $D_0(x)=\{x\}$ for all $x\in \rr$.

\vip

{\it 2. Occupation of vacant zones.}
We consider here $x\in \rr\setminus \cA_T$. Then we have 
$H_t(x)=0$ for all $t\in [0,T]$.
When $Z_t(x)<1$, then $D_t(x)=\{x\}$ 
and $Z_t(x)$ stands for the {\it local density of occupied sites}
around $x$ (or rather for a suitable function of this local density). Then 
$Z_t(x)$ grows linearly until it reaches $1$, as described by the first term
on the RHS of the first equation in (\ref{eqlffp}). When $Z_t(x)=1$, 
the cluster containing $x$ is macroscopic and is described
by $D_t(x)$.

\vip

{\it 3. Microscopic fires.} Here we assume that $x\in\cA_T$ and that the 
corresponding mark
of $\pi_M$ happens at some time $t$ where $Z_\tm(x)<1$. In such a case,
the cluster containing $x$ is microscopic. Then we set $H_t(x)=Z_\tm(x)$,
as described by the first term on the RHS of the second equation of
(\ref{eqlffp}) and we leave unchanged the value of $Z_t(x)$. 
We then let $H_t(x)$ decrease linearly until it reaches $0$,
see the second term on the RHS of the second equation in (\ref{eqlffp}).
At all times where $H_t(x)>0$, the site $x$ acts like a barrier (see Point 5.
below).

\vip

{\it 4. Macroscopic fires.} 
Here we assume that $x\in\cA_T$ and that the corresponding mark
of $\pi_M$ happens at some time $t$ where $Z_\tm(x)=1$. This means that
the cluster containing $x$ is macroscopic and thus this mark destroys
the whole component $D_\tm(x)$, that is for all $y\in D_\tm(x)$, we set 
$D_t(y)=\{y\}$, $Z_t(y)=0$. This is described by the second term on the RHS
of the first equation in (\ref{eqlffp}). 

\vip

{\it 5. Clusters.} Finally the definition of the clusters
$(D_t(x))_{x\in\rr}$ becomes more clear: these
clusters are delimited by zones with local density smaller than $1$
(i.e. $Z_t(y)<1$)
or by sites where a microscopic fire has (recently) started (i.e. $H_t(y)>0$).

\vip

For $A>0$, we call $(Z_t^A(x),D_t^A(x),H_t^A(x))_{t\geq 0,x \in [-A,A]}$
the finite box version of the $LFF(\infty)$-process: 
it has the same dynamics as the 
true $LFF(\infty)$-process, but we restrict the space of tree positions
to $x\in [-A,A]$. See Section \ref{pri} for a more precise definition.
On Figure \ref{figLFFinfty}, a typical path of this finite box 
$LFF(\infty)$-process is discussed.
See also Algorithm \ref{algo1} (with the function $F_S(z,v)=z$) below.

\begin{figure}[b] 
\fbox{
\begin{minipage}[c]{0.95\textwidth}
\centering
\includegraphics[width=13cm]{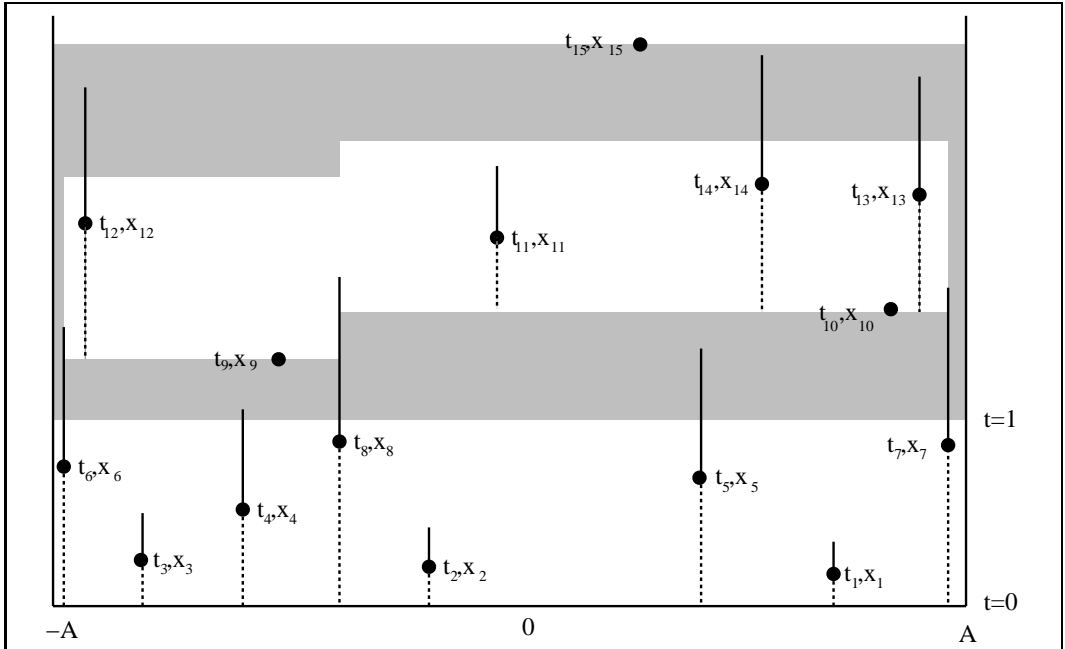}
\caption{$LFF(\infty)$-process in a finite box.}
\label{figLFFinfty}
\vip
\parbox{13.3cm}{
\footnotesize{
The marks of $\pi_M$ (matches) are represented as $\bullet$'s.
The filled zones represent zones in which $Z_t^A(x)=1$ and $H_t^A(x)=0$,
that is macroscopic clusters. The plain vertical segments represent
the sites where $H_t^A(x)>0$. In the rest of the space, we always have
$Z_t^A(x)<1$.
Until time $1$, all the particles are microscopic. The $8$ first
matches fall in that zone. Thus 
at each of these marks, a process $H^A$ starts and its life-time
equals the instant where it has started. For example
the segment above $(t_1,x_1)$ ends at time $2t_1$: we draw a dotted segment
from $(0,x_1)$ to $(t_1,x_1)$ and then a plain vertical segment above
$(t_1,x_1)$ with the same length.
At time $1$, all the clusters where
there has been no mark become macroscopic and merge together.
But this is limited by vertical segments. Here we have at time $1$ the
clusters $[-A,x_6]$, $[x_6,x_4]$, $[x_4,x_8]$, $[x_8,x_5]$, $[x_5,x_7]$ and
$[x_7,A]$. The segment above $(t_4,x_4)$ ends at time $2t_4$ and thus at this
time the clusters $[x_6,x_4]$ and $[x_4,x_8]$ merge into 
$[x_6,x_8]$.
The $9$-th mark
falls in the (macroscopic) zone $[x_6,x_8]$ and thus destroys it immediately.
This zone $[x_6,x_8]$ will become macroscopic again only at time
$t_9+1$. Then a process $H^A$ starts at $x_{12}$ at time $t_{12}$:
we draw a dotted segment from $(t_9,x_{12})$ to $(t_{12},x_{12})$ and
then  a plain vertical segment above $(t_{12},x_{12})$
with the same length ($Z^A_{t_{12}-}(x_{12})=t_{12}-t_9$ 
because $Z^A_{t_9}(x_{12})$ has been set to $0$).
The segment $[x_8,x_7]$ has been destroyed at time $t_{10}$
and thus will remain microscopic until $t_{10}+1$. As a consequence,
the only macroscopic clusters at time $t_9+1$ are $[-A,x_{12}]$,
$[x_{12},x_8]$ and  $[x_7,A]$. Then the zone $[x_8,x_7]$ becomes macroscopic
(but there have been marks at $x_{13}, x_{14}$), so that at time $t_{10}+1$, 
we get the macroscopic clusters $[-A,x_{12}]$, $[x_{12},x_{14}]$,  
$[x_{14},x_{13}]$ and $[x_{13},A]$. These clusters merge by pairs, at times
$2t_{12}-t_9$, 
$2t_{13}-t_{10}$ and $2t_{14}-t_{10}$, etc.

Here we have $0 \in (x_{11},x_{15})$ and thus 
$Z^A_t(0)= t$ for $t \in [0,1]$, $Z^A_t(0)=1$ for $t \in [1,t_{10})$, then
$Z^A_t(0)= t-t_{10}$ for $t \in [t_{10},t_{10}+1)$, then 
$Z^A_t(0)=1$ for $t \in [t_{10}+1,t_{15})$,\dots
We also see that $D^A_t(0)=\{0\}$ for $t \in [0,1)$, 
$D^A_t(0)=[x_8,x_5]$ for $t\in [1,2t_5)$, 
$D^A_t(0)=[x_8,x_7]$ for $t\in [2t_5,t_{10})$, 
$D^A_t(0)=\{0\}$ for $t\in [t_{10},t_{10}+1)$, $D^A_t(0)=[x_{12},x_{14}]$
for $t \in [t_{10}+1,2t_{12}-t_9)$, $D^A_t(0)=[-A,x_{14}]$
for $t \in [2t_{12}-t_9,2t_{14}-t_{10})$, ...
Of course, $H^A_t(0)=0$ for all $t\geq 0$, but for example $H^A_t(x_{11})=0$ for
$t\in[0, t_{11})$, $H^A_t(x_{11})=2t_{11}-t_{10}-t$ for 
$t\in [t_{11},2t_{11}-t_{10})$
and then $H^A_t(x_{11})=0$ for $t\in [2t_{11}-t_{10},\infty)$.
}}
\end{minipage}
}
\end{figure}

\subsection{Well-posedness}
The existence and 
uniqueness of the $LFF(\infty)$-process has been proved 
in \cite[Theorem 3]{bf}. We will provide here a simpler proof, which
also works for the case where $\beta=BS$.

\begin{theo}\label{wpinfty}
For any Poisson measure $\pi_M(dt,dx)$ 
on $[0,\infty)\times\rr$
with intensity measure $dt dx$, there a.s. exists a unique 
$LFF(\infty)$-process.
Furthermore, it can be constructed graphically
and its restriction to any finite
box $[0,T]\times[-n,n]$ can be perfectly simulated.
\end{theo}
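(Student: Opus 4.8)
The plan is to construct the $LFF(\infty)$-process by a graphical construction that localizes the dynamics, exactly as one does for the discrete $FF(\mu_S,\mu_M)$-process in Proposition \ref{gcgff}, and then to pass from the finite-box approximations to the infinite-volume process. First I would fix $T>0$ and observe that the marks of $\pi_M$ on $[0,T]\times\rr$ form a locally finite set; so on any bounded spatial window $[-n,n]$ there are only finitely many matches before time $T$. The key structural remark is that, just as a permanently-vacant site blocks interaction in the discrete model, here a position $x$ with $Z_t(x)<1$ (equivalently $D_t(x)=\{x\}$) or with $H_t(x)>0$ acts as a barrier: the cluster $D_t(y)$ of any $y$ can never extend past such an $x$. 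Since at time $t=0$ every site is a barrier ($Z_0\equiv 0$), and since between consecutive matches the only thing that happens is the deterministic linear growth of the $Z(x)$'s and linear decay of the $H(x)$'s, the evolution on a window can be reconstructed from the finitely many marks inside a slightly larger window, provided we can always find barriers bounding that window. The point is that at any time $t\le T$ there are (a.s.) arbitrarily far sites $x$ with no mark of $\pi_M$ in $[0,t]\times\{x\}$, hence with $Z_t(x)\le t<\infty$... but here we must be careful, because for $t\ge 1$ a site with no match has $Z_t(x)=1$ and is \emph{not} a barrier. This is the first genuine subtlety, and I address it next.

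To get barriers past time $1$ I would use the matches themselves: almost surely, for any $A>0$ and any $t\in[0,T]$, there exist $x_-<-A<A<x_+$ and times $s_\pm\le t$ with a mark of $\pi_M$ at $(s_\pm,x_\pm)$ arbitrarily recently, i.e. with $t-s_\pm$ as small as we like. Indeed the marks of $\pi_M$ in $[t-\varepsilon,t]\times([x_+-1,x_+]\cup[x_--1,x_-])$ are nonempty with probability $1$ for a suitable choice of $x_\pm$, by the Borel--Cantelli argument applied to disjoint windows. If such a match falls when the local density is $<1$ it starts an $H$-process of lifetime $Z_{s_\pm-}(x_\pm)\le 1$, which then keeps $x_\pm$ a barrier during $[s_\pm,s_\pm+Z_{s_\pm-}(x_\pm)]$; if it falls on a macroscopic cluster it resets $Z$ to $0$ there, after which that site is a barrier until it regrows. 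Either way one produces, at every time up to $T$, a barrier on each side of $[-A,A]$, with the bookkeeping of ``how far out one must look'' being a.s. finite for each $(A,T)$. With this in hand the finite-box processes $(Z^A,D^A,H^A)$ are well-defined and unique by a direct induction on the successive marks (between marks the system of ODEs is explicitly solvable), and they are \emph{consistent}: the restriction of the $A'$-box process to $[-A,A]$ agrees with the $A$-box process up to the first time information from outside $[-A,A]$ could have reached $[-A,A]$, which by the barrier argument is a time that tends to $+\infty$ as $A'\to\infty$. Taking $A'\to\infty$ along a sequence yields a well-defined process on all of $\rr\times[0,\infty)$.

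It then remains to check that the limiting object indeed solves \eqref{eqlffp} and that it is the unique solution. Plugging the construction into the integral equations is essentially a verification that the two ODEs and the jump rules I used in the induction are precisely the integral form \eqref{eqlffp}; the definition of $D_t(x)=[L_t(x),R_t(x)]$ via the barrier sets is literally the one in Definition \ref{dflffp}. For uniqueness I would argue by localization again: given any two solutions and any $(A,T)$, the barrier argument shows that both are, on $[-A,A]\times[0,T']$ for $T'$ up to the first ``breach time,'' determined by the marks inside a fixed finite window together with the ODEs, hence equal; letting the window grow forces equality everywhere. Finally, perfect simulability in a finite box $[0,T]\times[-n,n]$ follows because the construction uses only finitely many marks and finitely many explicitly computable ODE solutions and merge/destruction times — one simulates the marks of $\pi_M$ in a large enough (but a.s. finite, by the barrier estimate) window and runs Algorithm \ref{algo1}.

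The main obstacle, I expect, is precisely the barrier control after the critical time $t=1$: unlike the discrete model, where permanently vacant sites are a.s. dense and static, here a barrier must be actively \emph{maintained} by matches, and one must quantify — uniformly on $[0,T]$ — how far out one has to look to find a site that has been ``recently hit.'' Making this ``recently hit, hence still a barrier'' estimate precise (and checking the required a.s. finiteness of the look-out distance, with the right Borel--Cantelli bookkeeping) is the technical heart of the proof; everything else is the routine ODE/induction verification of \eqref{eqlffp} and a consistency-of-finite-boxes argument.
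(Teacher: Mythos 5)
Your overall architecture coincides with the paper's: localize between ``barrier'' sites where $Z<1$ or $H>0$, build the process between barriers by the finite-box induction of Algorithm \ref{algo1}, deduce uniqueness and perfect simulation, and control how far out one must look. However, the technical heart is missing, and the route you sketch for it would not work. The paper's proof (Proposition \ref{loc}) rests on an elementary but decisive quantitative fact that your case analysis misses: a match at $(\tau,x)$ makes $x$ a barrier for a time interval of length bounded below by a fixed constant. Indeed, if $Z_{\tau-}(x)<3/4$ then $Z_t(x)<1$ for all $t\in[\tau,\tau+1/4]$; if $Z_{\tau-}(x)\in[3/4,1)$ then $H_\tau(x)=Z_{\tau-}(x)\geq 1/2$, so $H>0$ for at least $1/2$; and if $Z_{\tau-}(x)=1$ then $Z$ is reset to $0$ and stays $<1$ for time $1$. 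You only credit the microscopic case with a barrier of duration $Z_{\tau-}(x)$, which can be arbitrarily small (overlooking that in that case $Z$ itself remains $<1$ for the complementary time $1-Z_{\tau-}(x)$), and this is exactly what drives you to demand marks ``arbitrarily recently'' before every $t$. That scheme cannot produce what the localization needs: a single spatial location (or bounded strip) that is a barrier for \emph{all} $t\in[0,T]$ simultaneously, since for a fixed column $\pi_M$ has only finitely many marks before $T$, so no fixed $x_\pm$ can be hit within $\varepsilon$ of every $t$; and letting $x_\pm$ depend on $(t,\varepsilon)$ destroys the fixed-window localization and the consistency argument (there is no finite speed of propagation here, so ``the first time information could have reached $[-A,A]$'' is not a quantity that grows with $A'$ — either a strip blocks interaction throughout $[0,T]$ or it does not).

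With the constant lower bound in hand, the paper's bookkeeping becomes a single event per unit strip: for $a\in\zz$, let $\Omega_a$ be the event that the marks of $\pi_M$ in $[0,T]\times[a,a+1)$, augmented by the times $0$ and $T$, have all consecutive gaps $<1/4$. On $\Omega_a$ the strip contains a barrier at every $t\in[0,T]$, $q_T=\Pr(\Omega_a)>0$ does not depend on $a$, and the $\Omega_a$, $a\in\zz$, are independent, so a.s. infinitely many occur on each side of any finite box; existence, uniqueness, perfect simulation on $[0,T]\times[-n,n]$ (simulate $\pi_M$ on $[0,T]\times[a_1,a_2]$ with $\Omega_{a_1}\cap\Omega_{a_2}$ realized and $a_1+1<-n<n<a_2$, then run Algorithm \ref{algo1}), and even the quantitative comparison with the finite-box process all follow at once. (A minor slip besides: $D_t(x)=\{x\}$ is equivalent to ``$Z_t(x)<1$ \emph{or} $H_t(x)>0$'', not to $Z_t(x)<1$ alone.) So you should replace the ``arbitrarily recent hit'' argument by the constant-duration barrier lemma and the strip events $\Omega_a$; as written, the step you yourself identify as the technical heart is both unproven and set up in a way that cannot be made uniform in $t$.
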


The $LFF(\infty)$-process $(Z_t(x),D_t(x),H_t(x))_{t\geq 0,x\in \rr}$
is furthermore Markov, since it solves a well-posed time homogeneous
Poisson-driven S.D.E.

\subsection{The convergence result}
Recall Subsection \ref{nocv}.

\begin{theo}\label{converge1}
Assume $(H_M)$ and $(H_S(\infty))$. Recall that $\ba_\la$, $\bn_\la$
and $\bm_\la$ were defined in (\ref{ala})-(\ref{nla})-(\ref{mla}).
Consider, for each $\la\in (0,1]$, the process
$(Z^\la_t(x),D^\la_t(x))_{t\geq 0,x\in\rr}$ associated with
the $FF(\mu_S,\mu_M^\la)$-process,
see Definition \ref{gff}, (\ref{dlambda}) and (\ref{zlambda}). 
Consider also the $LFF(\infty)$-process
$(Z_t(x),D_t(x),H_t(x))_{t\geq 0,x\in \rr}$.

(a) For any $T>0$, any finite subset $\{x_1,\dots,x_p\}\subset \rr$, 
$(Z^\la_t(x_i),D^\la_t(x_i))_{t\in [0,T],i=1,\dots,p}$ goes in law to 
$(Z_t(x_i),D_t(x_i))_{t\in [0,T],i=1,\dots,p}$, in 
$\dd([0,T], \rr\times\cI\cup\{\emptyset\})^p$, 
as $\la$ tends to $0$. Here
$\dd([0,\infty), \rr\times \cI\cup\{\emptyset\})$ 
is endowed with the distance $\bd_T$.

(b) For any finite subset  $\{(t_1,x_1),\dots,(t_p,x_p)\}\subset 
[0,\infty)\times \rr$, with $t_k\ne 1$ for $k=1,\dots,p$,
$(Z^\la_{t_i}(x_i),D^\la_{t_i}(x_i))_{i=1,\dots,p}$ goes in law to 
$(Z_{t_i}(x_i),D_{t_i}(x_i))_{i=1,\dots,p}$ in $(\rr\times\cI\cup\{\emptyset\})^p$.
Here $\cI\cup\{\emptyset\}$ is endowed with $\bdelta$.

(c) Recall Notation \ref{phipsi}-(ii). For all $t>0$, 
$$
\left(\frac{\psi_S\left(1-1/|C(\eta^\la_{\ba_\la t},0)| \right)}{\ba_\la} 
\indiq_{\{|C(\eta^\la_{\ba_\la t},0)|\geq 1\}}\right) \land 1 
$$
goes in law to $Z_t(0)$ as $\la\to 0$.
\end{theo}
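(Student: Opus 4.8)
The plan is to establish (a) first and to read off (b) and (c) from it. For (a) I would proceed by a finite-box approximation on both sides. Fix $T>0$ and $x_1,\dots,x_p$, all lying in some $(-A,A)$. On the limit side, Theorem \ref{wpinfty} lets one build the $LFF(\infty)$-process inside $[-A,A]$ from the finitely many marks of $\pi_M$ in $[0,T]\times[-A,A]$, and one checks that, on an event of probability tending to $1$ as $A\to\infty$, this finite-box process $(Z^A,D^A,H^A)$ coincides on $[0,T]\times\{x_1,\dots,x_p\}$ with the true $LFF(\infty)$-process (interaction cannot cross a site where $H^A>0$ or $Z^A<1$). On the discrete side, I would compare the $FF(\mu_S,\mu^\la_M)$-process with its restriction to the box $\lb-\lfloor A\bn_\la\rfloor,\lfloor A\bn_\la\rfloor\rb$ obtained by discarding interactions leaving the box, and show that with probability close to $1$ uniformly in small $\la$ the two agree on the relevant windows up to rescaled time $\ba_\la T$. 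This localization of the discrete process --- producing, on each side of each $x_i\bn_\la$, a site that blocks propagation long enough --- is the step I expect to be the main obstacle, precisely because fires are non-local and the model is not monotone, so that there is no cheap domination argument.

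Granting the localization, (a) reduces to the convergence of a large but finite system, which I would control through the following ingredients. First, $(H_M)$ gives that the rescaled times and positions of matches falling in the box during $[0,\ba_\la T]$ converge in law to a Poisson measure on $[0,T]\times[-A,A]$ with intensity $dt\,dx$: per site the probability of a match before rescaled time $t$ is $\int_0^{t/\bn_\la}\mu^1_M((u,\infty))du\simeq t/\bn_\la$, there are $\simeq\bn_\la$ sites per unit length, and multiple marks on one site are negligible. Second, a law of large numbers over the $2\bm_\la+1$ sites of the window gives, on the event that the window has not been hit by a fire, $K^\la_t(x)\simeq\nu_S((0,\ba_\la t))$ and hence $Z^\la_t(x)\simeq t\wedge 1$, which reproduces the growth term in $(\ref{eqlffp})$; here $(\ref{mla})$, i.e. $\bm_\la\nu_S((\ba_\la z,\infty))\to\infty$, is exactly what makes a typical microscopic cluster much smaller than the window, so that a microscopic fire perturbs $K^\la$ negligibly and leaves $Z^\la$ essentially unchanged (Point 3 of the formal dynamics). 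Third, when a match hits $x$ with $Z^\la_{t-}(x)<1$, the burnt microscopic cluster leaves $x$ vacant, and the rescaled time for its neighbourhood to refill enough to stop acting as a barrier is $\simeq Z^\la_{t-}(x)$, which produces the variable $H$ and its linear decay; when instead $Z^\la_{t-}(x)=1$ (all window sites occupied) the whole macroscopic cluster $D^\la_{t-}(x)$ burns, giving the destruction term in $(\ref{eqlffp})$. Patching these descriptions together over the finitely many marks, and using that between marks the pair $(Z^\la,D^\la)$ follows a stable deterministic evolution up to small errors, yields convergence in $\bd_T$ to the finite-box $LFF(\infty)$-process; letting $A\to\infty$ gives (a). The merging of microscopic clusters into macroscopic ones at $t=1$ is delicate (it is the reason $t_k\ne1$ is assumed in (b)) and requires extra care, since $D^\la$ is only controlled in the $L^1$-in-time sense $\bdelta_T$.

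For (b): the $Z$-component converges uniformly on $[0,T]$ and $t\mapsto Z_t(x_i)$ is a.s. continuous at a fixed $t_i$ (it jumps only at Poisson times, which a.s. avoid $t_i$), so $Z^\la_{t_i}(x_i)\to Z_{t_i}(x_i)$; for the $D$-component one argues similarly, noting that for fixed $t_i\ne1$ the cluster $D_{t_i}(x_i)$ is a.s. a continuity point (marks, $H$-extinctions and macroscopic merges a.s. avoid $t_i$), which upgrades the $\bdelta_T$-convergence to convergence at $t_i$. For (c): apply (b) with $p=1$ and $(t_1,x_1)=(t,0)$ and relate the window density to the size of the cluster actually containing $0$. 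On the microscopic event $|C(\eta^\la_{\ba_\la t},0)|$ is of order $1/\nu_S((\ba_\la t,\infty))$ up to an order-one random factor, so $1-1/|C(\eta^\la_{\ba_\la t},0)|$ differs from $K^\la_t(0)$ by a quantity of order $\nu_S((\ba_\la t,\infty))$ only; the point is that under $(H_S(\infty))$ the tail $\nu_S((\cdot,\infty))$ decays so fast that changing its value by an order-one factor moves its argument by $o(\ba_\la)$, so $\psi_S(1-1/|C(\eta^\la_{\ba_\la t},0)|)/\ba_\la$ and $\psi_S(K^\la_t(0))/\ba_\la=Z^\la_t(0)$ share the same limit $Z_t(0)$; on the macroscopic event the same fast-decay property forces both quantities to $1$. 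Apart from the localization, everything above is elementary: laws of large numbers for i.i.d. renewal increments and Poisson approximation.
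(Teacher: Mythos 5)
Your architecture is the same as the paper's: localize both the discrete and the limit process in a finite box (this is the content of Propositions \ref{plocla} and \ref{loc}), couple the matches with a Poisson measure (Proposition \ref{coupling1}), use a law of large numbers over the window of width $2\bm_\la+1$ to get $Z^\la_t\simeq t\land 1$ between fires (Lemma \ref{densitiesinfty}), identify the regeneration time after a microscopic fire as $\simeq Z^\la_{t-}(x)$ (Lemma \ref{convtheta}), and then run an induction over the finitely many marks; points (b) and (c) are then extracted essentially as you describe, the fast decay in $(H_S(\infty))$ absorbing the order-one randomness of the microscopic cluster size in (c).

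There is, however, a genuine gap in the step you dismiss as ``patching these descriptions together over the finitely many marks''. In the limit dynamics a site $x$ where a microscopic fire occurred keeps acting as a cluster boundary \emph{after} the barrier $H$ has expired, as long as the densities on its two sides differ (this is the condition $\tH_t(x)>0$ through the terms $1-Z_t(x_\pm)$), and this asymmetry can persist through an arbitrary finite number of later macroscopic fires hitting the adjacent cells alternately on the left and on the right. To close the induction one must show that, in this situation, the discrete window $x_\la$ still contains both an occupied and a vacant site at the relevant times --- otherwise the discrete clusters on the two sides of $x$ could merge (or the window could be wiped out and fully regenerate) while the limit process still separates them. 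This is exactly Case~3 of Step~6 in Lemma \ref{ggg2}, and it is not a consequence of either of your two quantitative ingredients (the density LLN and the barrier-duration estimate); the paper needs a dedicated ``persistent effect of microscopic fires'' lemma (the ping-pong Lemma \ref{pingpong2}, prepared in Subsection \ref{sspp2}), whose proof tracks how the alternating fires erode the window from its two ends, using that consecutive fire delays are $<1$ while every second gap exceeds $1$. Your proposal contains no substitute for this argument, and without it the induction over marks does not close. A related, more minor omission: when patching you must also keep the barrier-expiry times $t_1+\Theta^\la_{t_0,t_1}\simeq 2t_1-t_0$ away from the match times; the paper does this by including the ``echo'' set $\cS_M=\{2t-s:\ s,t\in\cT_M,\ s<t\}$ in the separation event $\Omega_M(\alpha)$ together with the approximation event $\Omega^S_4(\gamma,\la)$, a point your sketch does not address.
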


Point (c) will allow us to check some estimates on the cluster-size 
distribution.
Since we deal with finite-dimensional
marginals in space, it is quite clear that the process $H$ does not
appear in the limit, since for each $x\in \rr$, a.s., for all $t\geq 0$,
$H_t(x)=0$. (Of course, it is false that a.s., for all $x\in \rr$, 
all $t\geq 0$, $H_t(x)=0$).

\vip

We cannot guarantee the convergence
in law of $D_t^\la(0)$ to $D_t(0)$ at time $t=1$. 
This is due to the fact that for a zone $I_\la$ of length of order $\bn_\la$,
the probability that a seed falls on each site of $I_\la$ during $[0,\ba_\la t]$
tends to $0$ if $t<1$, to some nontrivial value if $t=1$,
and to $1$ if $t>1$.
We believe that this is really not important and we decided to keep
this definition of the $LFF(\infty)$-process despite this light defect.

\subsection{Heuristic arguments}

Let us explain here roughly the reasons why Theorem \ref{converge1}
holds true. We consider, for $\la>0$ very small, a
$FF(\mu_S,\mu_M^\la)$-process
$(\eta^\la_t(i))_{t\geq 0,i\in\zz}$ and the associated processes
$(Z^\la_t(x),D^\la_t(x))_{t\geq 0,x\in\rr}$.

\vip

{\it 0. Matches.} The times and positions at which matches fall
will tend, in our scales, to the marks of a Poisson measure with
intensity measure $1$. A hint for this is the following.
Consider e.g. the domain
$[0,T]\times [0,1]$, which corresponds to 
$[0,\ba_\la T] \times \lb 0,\bn_\la\rb$.
The probability that two matches fall on the same site during $[0,\ba_\la T]$
is very small. Thus the number of matches falling in 
$[0,\ba_\la T] \times \lb 0,\bn_\la\rb$ has approximately
a Binomial distribution with
parameters $\bn_\la$ and $\nu_M([0,\ba_\la T])$. Since
$$\bn_\la\nu_M([0,\ba_\la T])\simeq \frac{1}{\la \ba_\la}\left[\int_0^{\ba_\la T} 
\la \mu_M^1((\la \ba_\la T,\infty)) dt\right]
\to T$$ 
as $\la\to 0$, 
the asymptotic number of matches falling in $[0,T]\times [0,1]$ should
have a Poisson distribution with parameter $T$.

\vip

{\it 1. Initial condition.}
For all $x\in\rr$, 
$(Z^\la_0(x),D^\la_0(x))= (0,\emptyset) \simeq (0,\{x\})$
(recall that $\psi_S(0)=0$).

\vip

{\it 2. Occupation of vacant zones.} 
Assume that a zone $[a,b]$ becomes 
completely vacant at some time $t$ (because it has been destroyed
by a fire). 

\vip

(i) For $s\in [0,1)$ and if no fire starts on $[a,b]$ during 
$[t,t+s]$, we have $D^\la_{t+s}(x) \simeq [x \pm 1/(\bn_\la
\nu_S(\ba_\la s, \infty))]\simeq \{x\}$ 
and $Z^\la_{t+s}(x)\simeq s$ for all $x\in [a,b]$.

\vip

Indeed, $D^\la_{t+s}(x)\simeq [x-X/\bn_\la, x+Y/\bn_\la]$,
where $X$ and $Y$ are approximately geometric random variables with parameter
$\nu_S((\ba_\la s,\infty))$. (Recall that for any $t\geq 0$ and for any site,
$\nu_S$ is the law of the time we have to wait until the next seed 
falls). Thus $D^\la_{t+s}(x) \simeq [x \pm 1/(\bn_\la
\nu_S((\ba_\la s, \infty))]\simeq \{x\}$ due to $(H_S(\infty))$, since
$\nu_S((\ba_\la s, \infty)) >> \nu_S((\ba_\la,\infty))\simeq \bn_\la$.
For the same reasons, it holds that $K^\la_{t+s}(x) \simeq \nu_S((0,\ba_\la s))$,
whence $Z^\la_{t+s}(x) \simeq s$.

\vip

(ii) If no fire starts on $[a,b]$ during $[t,t+1]$, then 
$Z^\la_{t+1}(x)\simeq 1$ and all the sites in $[a,b]$ are occupied 
(with very high probability) just after time $t+1$. 

\vip

Indeed,
we have $(b-a)\bn_\la$ sites and each of them is occupied
at time $t+1+\e$ with approximate probability $\nu_S((0,\ba_\la(1+\e)])$,
so that all of them are occupied with approximate
probability $(\nu_S((0,\ba_\la(1+\e))))^{(b-a)\bn_\la}\simeq 
\exp(-(b-a)\nu_S((\ba_\la(1+\e),\infty ))/\nu_S((\ba_\la,\infty)))$,
which tends to 
$1$ as $\la\to 0$ for any $\e>0$ by $(H_S(\infty))$.

\vip

{\it 3. Microscopic fires.} 
Assume that a fire starts at some place $x$
at some time $t$, with $Z_{\tm}^\la(x)=z\in (0,1)$.
Then the possible clusters on the left and
right of $x$ cannot be connected during (approximately) $[t,t+z]$,
but can be connected after (approximately) $t+z$.

\vip

Indeed, the match falls in a zone with approximate density 
$\nu_S((\ba_\la z, \infty))$, so that it should destroy a zone 
$A$ of approximate length 
$1/\nu_S((\ba_\la z, \infty))<<\bn_\la$.
The probability that a fire starts again 
in $A$ after $t$ is very small.  Thus the probability that
$A$ is completely occupied at time $t+s$
is approximately equal to
$(\nu_S((0,\ba_\la s]))^{1/\nu_S((\ba_\la z,\infty))}
\simeq \exp\left(-  \nu_S((\ba_\la s,\infty))/\nu_S((\ba_\la z,\infty))\right)$.
When $\la\to 0$, this quantity 
tends to $0$ if $s<z$ and to $1$ if $s>z$ thanks to $(H_S(\infty))$.

\vip

{\it 4. Macroscopic fires.} 
Assume now that a fire starts at some place $x$,
at some time $t$ and that $Z^\la_{t-}(x) \simeq 1$, so that
$D^\la_{t-}(x)$ is macroscopic (that is its length is of order $1$
in our scales, or of order $\bn_\la$ in the original process). 
This will thus make vacant the zone $D^\la_{t-}(x)$. Such a 
(macroscopic) zone
needs a time of order $1$ to be completely occupied, see
Point 2.

\vip

{\it 5. Clusters.} For $t\geq 0$, $x\in \rr$, the cluster $D_t^\la(x)$
resembles $[x\pm 1/(\bn_\la\nu_S((\ba_\la z, \infty)))]\simeq \{x\}$ if 
$Z_t^\la(x)=z\in (0,1)$.
We then say that $x$ is microscopic.
Macroscopic clusters are delimited either by
microscopic zones, or by 
sites where there has been recently a microscopic fire.

\subsection{Cluster-size distribution}
We will deduce from Theorem \ref{converge1} 
the following estimates on the cluster-size distribution.

\begin{cor}\label{co1}
Assume $(H_M)$ and $(H_S(\infty))$. Recall that
$\ba_\la$ and $\bn_\la$ were defined in (\ref{ala}) and (\ref{nla}). Let
$(Z_t(x),D_t(x),H_t(x))_{t\geq 0, x\in \rr}$ be 
a $LFF(\infty)$-process.
For each $\la\in (0,1]$, let $(\eta^\la_t(i))_{t\geq 0, i \in \zz}$ be a 
$FF(\mu_S,\mu_M^\la)$-process.

(i) For some $0<c_1<c_2$,
for all $t\geq 5/2$, all $0< a < b < 1$,
\begin{align*}
&\lim_{\la\to 0}  
\Pr\left(|C(\eta^\la_{\ba_\la t},0)| \in 
[1/\nu_S((\ba_\la a,\infty)), 1/\nu_S((\ba_\la b,\infty))]\right) \\
=&\Pr\left(Z_t(0)\in [a,b] \right)
\in [c_1(b-a),c_2 (b-a)].
\end{align*}

(ii) For some $0<c_1<c_2$ and $0< \kappa_1 <\kappa_2$, for all
$t \geq 3/2$, all $B>0$,
$$
\lim_{\la\to 0}  
\Pr\left(|C(\eta^\la_{\ba_\la t},0)|
\geq B \bn_\la  \right)=\Pr\left(|D_t(0)|\geq B \right)
\in [c_1 e^{-\kappa_2 B}, c_2 e^{-\kappa_1 B}].
$$
\end{cor}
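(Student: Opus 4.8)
The plan is to reduce both bounds, via Theorem \ref{converge1}, to two-sided estimates on the laws of the limit quantities $Z_t(0)$ and $|D_t(0)|$, and then to establish those estimates directly on the $LFF(\infty)$-process. For part (i), I would first rewrite the event. Recall from Notation \ref{phipsi}(ii) that $\psi_S$ is the increasing bijection inverse to $t\mapsto\nu_S((0,t))$, and that $\nu_S((0,t))+\nu_S((t,\infty))=1$. Hence, for an integer $n\geq 2$, applying the increasing map $t\mapsto\nu_S((0,t))$ shows that $\psi_S(1-1/n)/\ba_\la\in[a,b]$ is equivalent to $n\in[1/\nu_S((\ba_\la a,\infty)),1/\nu_S((\ba_\la b,\infty))]$; moreover, since $0<a<b<1$, on this event $\psi_S(1-1/n)/\ba_\la<1$, so the truncation $\land 1$ and the indicator $\indiq_{\{n\geq 1\}}$ that appear in the random variable of Theorem \ref{converge1}(c) are inactive. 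Thus $\{|C(\eta^\la_{\ba_\la t},0)|\in[1/\nu_S((\ba_\la a,\infty)),1/\nu_S((\ba_\la b,\infty))]\}$ equals $\{W_\la\in[a,b]\}$, where $W_\la$ denotes that random variable. For part (ii) it is even simpler: by construction $|D^\la_t(0)|=(|C(\eta^\la_{\ba_\la t},0)|-1)/\bn_\la$, so $\{|C(\eta^\la_{\ba_\la t},0)|\geq B\bn_\la\}$ differs from $\{|D^\la_t(0)|\geq B\}$ only by an event contained in $\{B-1/\bn_\la\leq|D^\la_t(0)|\leq B\}$.

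By Theorem \ref{converge1}(c), $W_\la\to Z_t(0)$ in law, and by Theorem \ref{converge1}(b) (applicable because $t\geq 3/2\neq 1$), $D^\la_t(0)\to D_t(0)$ in law for $\bdelta$, hence $|D^\la_t(0)|\to|D_t(0)|$ in law since the length map is $1$-Lipschitz for $\bdelta$ (note $\bdelta([a,b],\emptyset)=b-a$). To turn these into convergence of the relevant probabilities I must rule out atoms: for fixed $z\in(0,1)$, $\{Z_t(0)=z\}$ forces a macroscopic fire to hit $0$ at the exact time $t-z$, which has probability $0$ because the time-marginal of $\pi_M$ is diffuse, so $Z_t(0)$ has no atom in $(0,1)$; likewise, for $B>0$, $\{|D_t(0)|=B\}$ forces one of the (diffusely distributed) endpoints $L_t(0),R_t(0)$ to take a prescribed value. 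Since $0<a<b<1$ and $B>0$, it follows that $\Pr(W_\la\in[a,b])\to\Pr(Z_t(0)\in[a,b])$ and $\Pr(|D^\la_t(0)|\geq B)\to\Pr(|D_t(0)|\geq B)$, which yields the stated limit identities; the off-by-one correction in (ii) is negligible because $1/\bn_\la\to0$ and $|D_t(0)|$ has no atom at $B$.

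It remains to prove the two-sided bounds for the limit process. For the density bound on $Z_t(0)$, I would exploit the regeneration structure of $s\mapsto Z_s(0)$ on a realization carrying no mark of $\pi_M$ on the line $\{x=0\}$: it rises at unit speed to $1$, then stays at $1$ until a mark of $\pi_M$ falls in the macroscopic cluster $D_{s-}(0)$, at which instant it is reset to $0$. Consequently $\{Z_t(0)\leq z\}$ is exactly the event that $0$ is hit by a macroscopic fire during $[t-z,t]$, so $\Pr(Z_t(0)\in[a,b])$ equals the probability that the most recent macroscopic fire at $0$ before time $t$ occurs in $[t-b,t-a]$. The conditional rate of such fires at a time $s$ with $Z_s(0)=1$ is the current macroscopic length $|D_s(0)|$, which is bounded above (marks of $\pi_M$ rain at unit rate per unit length, so an over-long cluster would already have been cut) and bounded below in probability (a bounded time after it last reformed, the cluster containing $0$ has length of order one with probability bounded away from $0$, uniformly for times $\geq 3/2$); since $t\geq 5/2$ keeps $[t-b,t-a]$ safely inside the macroscopic regime, which starts at time $1$, one gets $\Pr(Z_t(0)\in[a,b])\in[c_1(b-a),c_2(b-a)]$ by a first-moment argument for the upper bound together with a matching positive-probability construction for the lower bound. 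For the exponential bound, by the left/right symmetry it suffices to estimate $\Pr(R_t(0)\geq s)$; this event requires $Z_t(y)=1$ and $H_t(y)=0$ for every $y\in(0,s)$, and any mark of $\pi_M$ at $(u,y)$ with $y\in(0,s)$ and $u$ in a bounded window before $t$ forbids this (it leaves $Z_t(y)<1$ if it burnt a macroscopic zone, or $H_t(y)>0$ if it hit a microscopic zone within its lifetime). Comparing these ``forbidding'' marks with Poisson processes of intensity bounded above and below on $(0,s)\times[t-1,t]$ yields $\Pr(R_t(0)\geq s)\in[c\, e^{-Cs},C e^{-cs}]$, and hence $\Pr(|D_t(0)|\geq B)\in[c_1 e^{-\kappa_2 B},c_2 e^{-\kappa_1 B}]$.

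The main obstacle is this last step, and specifically the two genuinely geometric facts it relies on: that a macroscopic cluster containing a fixed point is, uniformly in (large enough) time, neither too short (needed to lower-bound the fire rate seen at $0$) nor too long (needed to upper-bound it and to obtain the exponential tail of $|D_t(0)|$), together with the requisite control of the dependence among the many obstructions delimiting a cluster. These are precisely the estimates already carried out for the Poisson model in \cite{bf}, and since the $LFF(\infty)$-process appearing here is literally the same object, those arguments transfer essentially verbatim; Steps~1--2 above are the only new ingredients, and they are routine.
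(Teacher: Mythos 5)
Your reduction of both points to the limit process is exactly the paper's: you invoke Theorem \ref{converge1}-(b),(c), check that $Z_t(0)$ has no atom in $[0,1)$ for $t>1$ and that $|D_t(0)|$ has no atom (this is the paper's Lemma \ref{zunif}-(i),(ii), proved the same way), and translate the event on $|C(\eta^\la_{\ba_\la t},0)|$ through $\psi_S$; your handling of the truncation $\land 1$ and of the off-by-one in $|D^\la_t(0)|$ is if anything more careful than the paper's. The difference lies in how the two-sided bounds on the laws of $Z_t(0)$ and $|D_t(0)|$ are obtained. The paper proves them from scratch in Lemma \ref{zunif}: the upper bounds are as in your sketch (any mark in a window of length $1/4$ before $t$ creates an obstruction, via the three-case argument of Proposition \ref{loc}, Step 1 — which you state a bit loosely, since a mark hitting a zone with small $Z$ leaves $H_t=0$ but $Z_t<1$, so the shortness of the window is what saves the dichotomy; the density upper bound is the first-moment estimate you describe), while the lower bounds, which are the real content, come from an explicit construction: no mark at all in $[t-3/2,t]\times[0,B]$ together with a staircase of five well-spaced marks on each flank of $[0,B]$, each within $1/4$ of the previous one, which keeps a barrier alive on both sides so that $[0,B]$ is sheltered from outside fires for a duration greater than $1$ and is therefore fully connected at time $t$; the density lower bound then follows from $\Pr[|D_{t-b}(0)|\geq 1]\geq c$ and the independence of $D_{t-b}(0)$ from the marks falling after time $t-b$. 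Your own sketch of these lower bounds (comparing the obstructing marks with Poisson processes of bounded intensity on $(0,s)\times[t-1,t]$) would not suffice as written, because the absence of marks inside that box does not prevent a fire ignited outside $(0,s)$ from sweeping through it; but you correctly single this out as the hard geometric input and defer it to \cite{bf}. Since the $LFF(\infty)$-process here is literally the process of \cite{bf} (the paper says so, and Lemma \ref{zunif} is presented as an extension of \cite[Lemma 17]{bf}), that citation is legitimate; what it buys you is brevity, whereas the paper's self-contained Lemma \ref{zunif} buys one argument that simultaneously covers $\beta=BS$ (through Lemma \ref{lblb}), which is why the authors chose to reprove it.
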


This results shows that there is a {\it phase transition} around the 
{\it critical size} $\bn_\la$: the cluster-size distribution changes 
of shape at $\bn_\la$. 

\vip

Consider the case of Example 2, where 
$\mu_S((t,\infty)) \stackrel \infty \sim e^{-t^\alpha}$. 
Then 
$\ba_\la \sim  (\log (1/\la))^{1/\alpha}$
and $\bn_\la \sim 1/ [\la (\log (1/\la))^{1/\alpha} ]$. 
Very roughly, Corollary \ref{co1} proves that when $\la\to 0$, the law of 
$|C(\eta^\la,0)|$, for large times, resembles
$$
\frac{[\log(1+x)]^{1/\alpha-1}}{(1+x)[\log(1/\la)]^{1/\alpha}}
\indiq_{\{x\in [0,\bn_\la]\}}dx
+ (1/\bn_\la) e^{- x/\bn_\la } \indiq_{\{x\geq 0\}} dx.
$$
The first term corresponds approximately 
to the law of $1/\nu_S((\ba_\la U,\infty))$,
for $U$ uniformly distributed on $[0,1]$ and the second term is
an exponential law with mean $\bn_\la$.

\vip

The main idea is that two types of clusters are present: macroscopic clusters,
of which the size is of order $\bn_\la\sim \la^{-1}[\log(1/\la)]^{-1/\alpha}$, 
with an exponential-like distribution;
and microscopic clusters, of which the size is smaller than $\bn_\la$,
with a law with shape $\log(1+x)^{1/\alpha-1}/(1+x)$.

\section{Main result in the case $\beta=BS$}\label{mrbs}
\setcounter{equation}{0}

This case is slightly more complicated than the case $\beta=\infty$.
The limit process is essentially the same, except that 
the height of the barriers (vertical segments in Figure \ref{figLFFinfty}) 
are more random.

\subsection{Law of the heights of the barriers}
Start at time $0$ with all sites vacant. Let $u \in (0,1)$. 
Assume that a match falls at site $0$ at time $T_S u$ 
and neglect all other fires. Call $\Theta_u$ the time needed for the
destroyed zone to be completely regenerated and $\theta_u$ the law
of $\Theta_u / T_S$. Clearly, $\theta_u$ is supported by $[0,1]$. 
We will show in Lemma \ref{deftheta} 
below that $\theta_u$ can be defined as follows.

\begin{defin}\label{defF}
Assume $(H_S(BS))$.
For $t,s \in [0,\infty)$, we denote by 
$$
g_S(t,s)=\Pr[N^S_{T_S t}>0,N^S_{T_S(t+s)}>N^S_{T_S t}],
$$
where $(N^S_t)_{t\geq 0}$ is a $SR(\mu_S)$-process. For $u \in (0,1)$,
we consider the probability measure $\theta_u$ on $[0,1]$ defined
by
$$
\forall \; h\in [0,1], \quad \theta_u([0,h])=\nu_S((T_S u, T_S))
+ \left(\frac{\nu_S((T_S u, T_S))}{1-g_S(u,h)}\right)^2
g_S(u, h).
$$
Finally, we consider a function $F_S:[0,1]\times[0,1]\mapsto [0,1]$
such that for each $u\in [0,1]$ and for $V$ a uniformly distributed
random variable on $[0,1]$, the law of $F_S(u,V)$ is $\theta_u$.
We can choose $F_S$ in such a way that for each $u\in[0,1]$,
$v\mapsto F_S(u,v)$ is nondecreasing.
\end{defin}

Let $u\in [0,1]$ be fixed. Since $\mu_S([0,T_S])=1$, there holds 
$g_S(u, 1)=\nu_S([0,T_Su])$, whence $\theta_u([0,1])=1$.
To check that $h\mapsto \theta_u([0,h])$ is nondecreasing, it suffices
to observe that $h\mapsto g(u,h)$ is nondecreasing.
Notice that $\theta_u(\{0\})=\nu_S((T_S u, T_S))$: this corresponds to the
situation where nothing has been destroyed because the match has fallen 
on an empty site. For $F_S(u,.)$, one can e.g. use
the generalized inverse function of $\theta_u([0,.])$.

\subsection{Definition of the limit process}

Let $\pi_M(dt,dx)$ be a Poisson measure
on $[0,\infty)\times\rr$
with intensity measure $dt dx$, whose marks correspond to matches.
We also consider an i.i.d. sequence $(V_k)_{k \geq 1}$ of uniformly
distributed random variables on $[0,1]$, independent
of $\pi_M$. If $\pi_M(dt,dx)=\sum_{k\geq 1}\delta_{(T_k,X_k)}$, we (abusively)
write $\pi_M(dt,dx,dv)=\sum_{k\geq 1}\delta_{(T_k,X_k,V_k)}$. 
Observe that $\pi_M(dt,dx,dv)$ is a Poisson measure 
on $[0,\infty)\times\rr \times [0,1]$ with intensity measure $dt dx dv$.

\begin{defin}\label{dflffpbs}
A process 
$(Z_t(x),D_t(x),H_t(x))_{t\geq 0,x\in \rr}$ 
with values in $\rr_+\times \cI\times \rr_+$ such that a.s., 
for all $x\in\rr$, $(Z_t(x),H_t(x))_{t\geq 0}$ is c\`adl\`ag,
is said to be a $LFF(BS)$-process
if a.s., for all $t\geq 0$, all $x \in \rr$,
\begin{align}\label{eqlffpbs}
\left\{ 
\begin{array}{l} 
Z_t(x)=  \displaystyle \intot \indiq_{\{Z_s (x) < 1\}}ds - 
\intot \int_\rr \indiq_{\{ Z_\sm(x)=1,y \in D_{\sm}(x)\}}\pi_M(ds,dy),\\
H_t(x)= \displaystyle 
\intot \int_0^1 F_S(Z_\sm(x),v)\indiq_{\{Z_\sm(x)<1\}} 
\pi_M(ds\times \{x\}\times dv) 
- \intot  \indiq_{\{H_s (x) > 0 \}}ds, 
\end{array}
\right.
\end{align}
where $D_t(x) = [L_t(x),R_t(x)]$, with
\begin{align*}
L_t(x) =& \sup\{ y\leq x:\; Z_t(y)<1 \hbox{ or } H_t(y)>0 \},\\
R_t(x) =& \inf\{ y\geq x:\; Z_t(y)<1 \hbox{ or } H_t(y)>0 \} 
\end{align*}
and where $D_{t-}(x)$ is defined in the same way.
\end{defin}

The difference with the $LFF(\infty)$-process is that when a match falls
at $(t,x)$ with $Z_{t-}(x)<1$, we choose $H_t(x)$
according to the law $\theta_{Z_{t-}(x)}$, instead
of simply setting $H_t(x)=Z_{t-}(x)$.

\subsection{Formal dynamics}

Let us explain the dynamics of this process. We consider $T>0$ fixed
and set $\cA_T= \{x \in \rr:\; \pi_M([0,T]\times\{x\})> 0\}$.
For each $t\geq 0$, $x\in \rr$, $D_t(x)$ stands for the
occupied cluster containing $x$. We call this cluster is {\it microscopic} 
if $D_t(x)=\{x\}$. We have $D_t(x)=D_t(y)$ for all 
$y \in D_t(x)$.
\vip

{\it 1. Initial condition.}
We have $Z_0(x)=H_0(x)=0$
and $D_0(x)=\{x\}$ for all $x\in \rr$.

\vip

{\it 2. Occupation of vacant zones.}
We consider here $x\in \rr\setminus \cA_T$. Then we have 
$H_t(x)=0$ for all $t\in [0,T]$.
When $Z_t(x)<1$, then $D_t(x)=\{x\}$ 
and $Z_t(x)$ stands for the local density of occupied sites around $x$
(or rather for a suitable function of this density)
Then 
$Z_t(x)$ grows linearly until it reaches $1$, as described by the first term
on the RHS of the first equation in (\ref{eqlffpbs}). When $Z_t(x)=1$,
the cluster containing $x$ is macroscopic and is described
by $D_t(x)$.

\vip

{\it 3. Microscopic fires.} Here we assume that $x\in\cA_T$ and that the 
corresponding mark
of $\pi_M$ happens at some time $t$ where $Z_\tm(x)<1$. In such a case,
the cluster containing $x$ is microscopic. Then we set $H_t(x)=F_S(Z_\tm(x),V)$,
for some uniformly distributed $V$ on $[0,1]$
as described by the first term on the RHS of the second equation of
(\ref{eqlffpbs}).
We then let $H_t(x)$ decrease linearly until it reaches $0$,
see the second term on the RHS of the second equation in (\ref{eqlffpbs}).
At all times where $H_s(x)>0$, the site $x$ acts like a barrier (see Point 5.
below). All this means that at $x$, there is a barrier during
$[t,t+H_t(x))$, where $H_t(x)$ is chosen at random, according to the
law $\theta_{Z_{t-}(x)}$.

\vip

{\it 4. Macroscopic fires.} 
Here we assume that $x\in\cA_T$ and that the corresponding mark
of $\pi_M$ happens at some time $t$ where $Z_\tm(x)=1$. This means that
the cluster containing $x$ is macroscopic and thus this mark destroys
the whole component $D_\tm(x)$, that is for all $y\in D_\tm(x)$, we set 
$D_t(y)=\{y\}$, $Z_t(y)=0$. This is described by the second term on the RHS
of the first equation in (\ref{eqlffpbs}). 

\vip

{\it 5. Clusters.} Finally the clusters
$(D_t(x))_{x\in\rr}$ are delimited by zones with density smaller than $1$
(i.e. $Z_t(y)<1$)
or by sites where a microscopic fire has (recently) started (i.e. $H_t(y)>0$).

\vip

A typical path of a finite-box version
$(Z_t^A(x),D_t^A(x),H_t^A(x))_{t\geq 0,x\in [-A,A]}$ 
of the $LFF(BS)$-process is discussed
on Figure \ref{figLFFBS}. It is very similar to Figure \ref{figLFFinfty}:
the only difference is that each time there is a bullet
falling at some $(t,x)$ in a white zone, the height of the segment
above $(t,x)$ is chosen at random, according to the law $\theta_{Z_{t-}(x)}$.
And $Z_{t-}(x)$ equals the time passed since $x$ was involved in a macroscopic
fire (the case $LFF(\infty)$ corresponds to the law $\theta_z=\delta_z$).
See also Algorithm \ref{algo1} below.

\begin{figure}[b] 
\fbox{
\begin{minipage}[c]{0.95\textwidth}
\centering
\includegraphics[width=13cm]{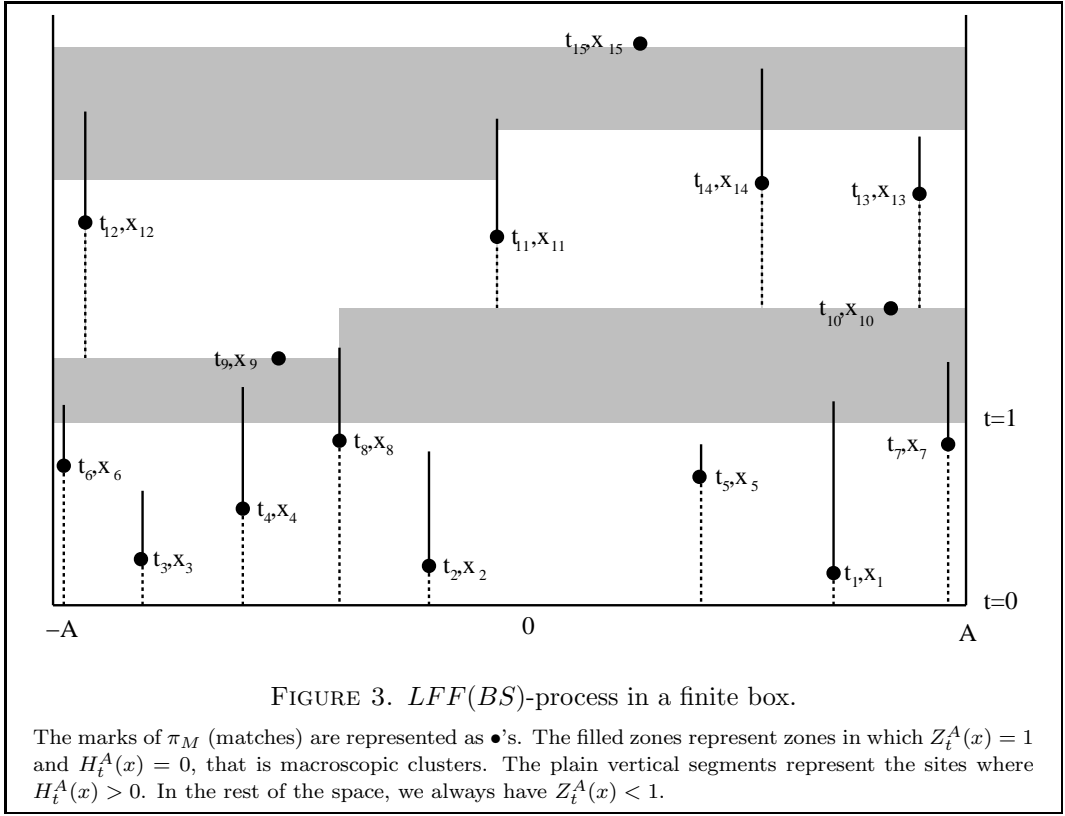}
\caption{$LFF(BS)$-process in a finite box.}
\label{figLFFBS}
\vip
\parbox{13.3cm}{
\footnotesize{
The marks of $\pi_M$ (matches) are represented as $\bullet$'s.
The filled zones represent zones in which $Z_t^A(x)=1$ and $H_t^A(x)=0$,
that is macroscopic clusters. The plain vertical segments represent
the sites where $H_t^A(x)>0$. In the rest of the space, we always have
$Z_t^A(x)<1$.
}}
\end{minipage}
}
\end{figure}

\subsection{Well-posedness}
We will prove the following result.

\begin{theo}\label{wpbs}
For any Poisson measure $\pi_M(dt,dx,dv)$  on $[0,\infty)\times\rr \times [0,1]$
with intensity measure $dt dx dv$ (and for $\pi_M(dt,dx)=\int_{v\in [0,1]}
\pi_M(dt,dx,dv)$), there a.s. exists a unique 
$LFF(BS)$-process.
Furthermore, it can be constructed graphically
and its restriction to any finite
box $[0,T]\times[-n,n]$ can be perfectly simulated.
\end{theo}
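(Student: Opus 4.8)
The plan is to mimic the proof of Theorem \ref{wpinfty} (well-posedness of the $LFF(\infty)$-process), which the authors promise is designed to carry over to the $BS$ case. First I would set up the \emph{finite-box} version $(Z^A_t(x),D^A_t(x),H^A_t(x))_{t\ge 0, x\in[-A,A]}$: for fixed $A>0$, only finitely many marks of $\pi_M$ fall in any strip $[0,T]\times[-A,A]$ a.s., so one can construct the process by induction on the successive marks $(T_k,X_k,V_k)$. Between two consecutive marks the dynamics is deterministic --- the local densities $Z^A$ grow linearly toward $1$, macroscopic clusters $D^A$ grow by merging, and the barrier heights $H^A$ decrease linearly toward $0$ --- so on each such interval the triple is determined by its left endpoint value. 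At a mark $(T_k,X_k,V_k)$ one reads off $Z^A_{T_k-}(X_k)$: if it equals $1$, a \emph{macroscopic fire} resets $D^A$ and $Z^A$ to $\{y\},0$ on the whole burnt component; if it is $<1$, a \emph{microscopic fire} sets $H^A_{T_k}(X_k)=F_S(Z^A_{T_k-}(X_k),V_k)$, which by Definition \ref{defF} is a well-defined number in $[0,1]$, leaving $Z^A$ unchanged. This produces a unique càdlàg process on $[-A,A]$, and by construction it can be perfectly simulated (one only needs the finitely many marks, plus the $V_k$'s and the explicit function $F_S$, together with the arithmetic of linear growth and merging). The only genuinely new ingredient compared with $LFF(\infty)$ is the randomized barrier height, and this is harmless: $F_S(\cdot,\cdot)$ is a fixed measurable function, so replacing $H_t(x)=Z_{t-}(x)$ by $H_t(x)=F_S(Z_{t-}(x),V)$ changes nothing structural.

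Next I would pass from the finite box to $\rr$. The key point, exactly as in the $\beta=\infty$ case, is a \emph{localization / no-influence-from-afar} argument: at any time $t\le T$, a.s.\ there are points $x$ arbitrarily far out with $Z^A_s(x)<1$ throughout $[0,t]$ --- concretely, any $x$ that has never been burnt has $Z^A_s(x)=s\land 1$, and between two marks of $\pi_M$ on a given vertical line the value of $Z$ at a point not yet macroscopic stays $<1$; more to the point, the set of $x$ where no mark of $\pi_M$ has yet fallen and $Z^A<1$ is dense enough to cut $\rr$ into finite pieces. A point $x$ with $Z^A_\cdot(x)<1$ on $[0,t]$ (or with $H^A_\cdot(x)>0$) acts as a barrier: it appears in neither $L_t(y)$ nor $R_t(y)$ as being crossed, so the restriction of the process to $[-A,A]$ depends only on the marks of $\pi_M$ inside $[0,t]\times[-A,A]$ and is unaffected by enlarging the box. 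Hence the finite-box processes are consistent as $A\uparrow\infty$, and their projective limit defines \emph{the} $LFF(BS)$-process; uniqueness follows because any solution of \eqref{eqlffpbs} must, by the same localization, agree with the finite-box construction on every $[-A,A]$. The graphical construction is then simply this: draw the marks of $\pi_M$ with their labels $V_k$, and for each vertical line locally solve the one-site ODE-with-jumps, gluing horizontally across barriers.

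I expect the main obstacle to be the same as in the $LFF(\infty)$ case: making rigorous the claim that, almost surely and for every $t$, there are enough ``barrier'' points (points with $Z<1$ or $H>0$) to partition $\rr$ into finite segments, so that the dynamics genuinely localizes and the infinite-volume process is well-defined and unique. This requires a small-probability estimate --- roughly, the probability that a long interval $[x,x+\ell]$ is entirely macroscopic ($Z\equiv 1$) at time $t$ with no recent microscopic fire decays fast enough in $\ell$ --- together with a Borel--Cantelli argument to get barriers arbitrarily far out uniformly on $[0,T]$. Since the authors emphasize that the proof of Theorem \ref{wpinfty} was engineered to also cover $\beta=BS$, I would simply invoke that argument, noting that the randomization through $F_S$ does not affect the barrier estimates (if anything $\theta_u(\{0\})=\nu_S((T_Su,T_S))>0$ makes microscopic fires sometimes create no barrier at all, which is exactly the same phenomenon as a match falling on a vacant site and must be accommodated, but the regeneration-time bound still holds since $\theta_u$ is supported on $[0,1]$). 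A secondary, purely bookkeeping, point is to verify measurability of $(u,v)\mapsto F_S(u,v)$ and that the resulting $H$ is càdlàg; this is immediate from choosing $F_S(u,\cdot)$ to be the generalized inverse of $h\mapsto\theta_u([0,h])$, which is nondecreasing and left-continuous, hence jointly measurable, and from the fact that between marks $H$ evolves by the explicit linear flow.
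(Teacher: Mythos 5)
Your overall skeleton is the paper's own: a finite-box process built mark by mark (the paper's Algorithm \ref{algo1}), then localization by ``barrier'' points with $Z<1$ or $H>0$, independence over disjoint unit cells and a Borel--Cantelli-type argument to get barriers arbitrarily far out on $[0,T]$, which yields existence, uniqueness and perfect simulation simultaneously for $\beta=\infty$ and $\beta=BS$ (Proposition \ref{loc}). But there is a genuine gap precisely at the one point that is new in the $BS$ case. You assert that ``the randomization through $F_S$ does not affect the barrier estimates'' and propose to ``simply invoke'' the $LFF(\infty)$ argument; the justification you give ($\theta_u$ is supported on $[0,1]$) bounds barrier heights from \emph{above}, which is useless here --- what the localization needs is a \emph{lower} bound. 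In the $\infty$ case a match at a site with $Z_{\tau-}\geq 3/4$ deterministically creates a barrier of height $\geq 3/4$; in the $BS$ case the height is $F_S(Z_{\tau-},V)$, and since $\theta_z(\{0\})=\nu_S((T_S z,T_S))>0$ the barrier may be zero (or a priori arbitrarily small) with positive probability, possibly with probability tending to something bad as $z\uparrow 1$. If that were the case, the event ``marks fall often enough in $[a,a+1)$'' would no longer guarantee a barrier in $(a,a+1)$ at all times, and the whole cutting argument would collapse. What is needed, and what the paper supplies as Lemma \ref{lblb}, is a uniform estimate: there exists $v_0<1$ such that $F_S(z,v)\geq 1/2$ for all $z\in[3/4,1)$ and all $v\in[v_0,1]$; its proof is a genuine (if short) computation, $\theta_z([0,1/2))\leq \nu_S([0,T_S/2]\cup[3T_S/4,T_S])=:v_0<1$, which crucially uses $\supp\nu_S=[0,T_S]$ and the monotone choice of $F_S(z,\cdot)$. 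One then restricts attention to marks of $\pi_M$ with $V\geq v_0$ and runs your renewal-of-barriers argument with these marks only. Your proposal never produces such a uniform bound, and ``same phenomenon as a match on a vacant site'' does not substitute for it.

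A secondary inaccuracy: you claim that the set of points where no mark of $\pi_M$ has fallen and $Z^A<1$ is ``dense enough to cut $\rr$ into finite pieces.'' This only works for $t<1$; after time $1$ every never-burnt, never-marked point has $Z=1$ and is not a barrier. The barriers that survive for all $t\in[0,T]$ must be manufactured by the fires themselves: on the event that in each unit cell $[a,a+1)$ the marks with $V\geq v_0$ arrive with time gaps $<1/4$, every such mark keeps its site non-crossable for at least $1/4$ units of time afterwards (distinguishing $Z_{\tau-}<3/4$, $Z_{\tau-}\in[3/4,1)$ via Lemma \ref{lblb}, and $Z_{\tau-}=1$), and this event has positive probability and is independent across integer-indexed cells. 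That is the correct quantitative input behind the sentence you left as ``a small-probability estimate plus Borel--Cantelli.''
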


The $LFF(BS)$-process $(Z_t(x),D_t(x),H_t(x))_{t\geq 0,x\in \rr}$
is furthermore Markov, since it solves a well-posed time homogeneous
Poisson-driven S.D.E.

\subsection{The convergence result}
We are now in a position to state the main result of this section.
Recall Subsection \ref{nocv}.

\begin{theo}\label{convergebs}
Assume $(H_M)$ and $(H_S(BS))$. Recall that $\ba_\la=T_S$, 
$\bn_\la=\lfloor 1/(\la T_S) \rfloor$
and let $\bm_\la$ satisfy (\ref{mla}).
Consider, for each $\la\in (0,1]$, the process
$(D^\la_t(x),Z^\la_t(x))_{t\geq 0,x\in\rr}$ associated with
the $FF(\mu_S,\mu_M^\la)$-process $(\eta^\la_t(i))_{t\geq 0, i\in \zz}$,
see Definition \ref{gff}, (\ref{dlambda}) and (\ref{zlambda}). 
Consider also the $LFF(BS)$-process
$(Z_t(x),D_t(x),H_t(x))_{t\geq 0,x\in \rr}$.

(a) For any $T>0$, any finite subset $\{x_1,\dots,x_p\}\subset \rr$, 
$(Z^\la_t(x_i),D^\la_t(x_i))_{t\in [0,T],i=1,\dots,p}$ goes in law to 
$(Z_t(x_i),D_t(x_i))_{t\in [0,T],i=1,\dots,p}$, in 
$\dd([0,T], \rr\times\cI\cup\{\emptyset\})^p$, 
as $\la$ tends to $0$. Here
$\dd([0,\infty), \rr\times \cI\cup\{\emptyset\})$ 
is endowed with the distance $\bd_T$.

(b) For any finite subset  $\{(t_1,x_1),\dots,(t_p,x_p)\}\subset 
[0,\infty)\times \rr$, 
$(Z^\la_{t_i}(x_i),D^\la_{t_i}(x_i))_{i=1,\dots,p}$ goes in law to 
$(Z_{t_i}(x_i),D_{t_i}(x_i))_{i=1,\dots,p}$ in $(\rr\times\cI\cup\{\emptyset\})^p$.
Here $\cI\cup\{\emptyset\}$ is endowed with $\bdelta$.

(c) For any $t\geq 0$, any $k \in \nn$, there holds
$$
\lim_{\la\to 0} \Pr\left[|C(\eta^\la_{T_S t},0)|=k \right] = 
\E \left[q_k(Z_t(0)) \right],
$$
where, for $z\in [0,1]$, 
\begin{align}\label{defqk}
\left\{
\begin{array}{lcr}
q_0(z)=\nu_S((zT_S,T_S)),&&\\
q_k(z)=k [\nu_S((zT_S,T_S))]^2 [\nu_S((0,zT_S))]^k &\hbox{ if }& k\geq 1.
\end{array}
\right.
\end{align}
\end{theo}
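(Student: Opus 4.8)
The plan is to adapt the scheme used for Theorem~\ref{converge1} (and, before it, for the Poisson case of~\cite{bf}); the only genuinely new ingredient is the randomness of the barrier heights, encoded by the family $(\theta_u)_{u\in[0,1]}$ of Definition~\ref{defF}. First I would localise: on a bounded time window $[0,T_ST]$ the restriction of the $FF(\mu_S,\mu_M^\la)$-process to $i\in\lb-\lfloor\bn_\la A\rfloor,\lfloor\bn_\la A\rfloor\rb$ depends, by the graphical construction behind Proposition~\ref{gcgff}, only on the seeds and matches inside a slightly larger box, because interaction cannot cross a site that stays vacant; a finite-box version $(Z^A_t(x),D^A_t(x),H^A_t(x))_{x\in[-A,A]}$ of the $LFF(BS)$-process has the same property, coinciding with the full process on $[-A',A']$ up to a time that grows with $A$. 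So it suffices, for each fixed $A,T$, to prove convergence in law of the rescaled discrete process on $[-A,A]$, for $\bd_T$, to the finite-box $LFF(BS)$-process, and I would do this through an explicit coupling: the match times and positions in the box, rescaled by $T_S$ in time and $\bn_\la$ in space, converge to the points of $\pi_M(dt,dx)$ on $[0,T]\times[-A,A]$ (Binomial-to-Poisson, using $(H_M)$ and $\bn_\la\nu_M^\la([0,T_ST])\to T$), with probability tending to $1$ no site receives two matches, and each match is matched with, on the discrete side, the rescaled length of the zone it destroys and the rescaled time that zone needs to become entirely re-occupied, and on the limit side with the independent uniform variable $V_k$ feeding $F_S$. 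The heart of the proof is then to check, step by step exactly as in the case $\beta=\infty$, that these two descriptions agree asymptotically.

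Several of these steps are identical to the case $\beta=\infty$, and in fact simpler. The occupation of a vacant macroscopic zone: a zone burnt at time $0$ is surely entirely re-occupied at time $T_S$, because the forward recurrence time of an $SR(\mu_S)$-process is a.s.\ $<T_S$; at any earlier time $T_St$, $t<1$, each of its (of order $\bn_\la$) sites is vacant with the fixed positive probability $\nu_S((T_St,T_S))$, so it is not. Thus macroscopic zones become occupied in rescaled time exactly $1$ and the $t^\infty$ dichotomy of Section~\ref{hscales} holds here without any randomness --- this is exactly why, contrary to Theorem~\ref{converge1}(b), one may now keep the time $t=1$ among the $t_k$'s (in the case $\beta=\infty$ a zone of length $\bn_\la B$ is fully occupied at rescaled time $1$ only with probability $\to e^{-B}\in(0,1)$, whereas here $\nu_S((T_S,\infty))=0$). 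The destruction of a macroscopic zone by a match, and the description of $D^\la$ as the family of components delimited by microscopic zones and by barriers, are handled as in the case $\beta=\infty$.

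The main obstacle is the analysis of the barriers left by microscopic fires. When a match falls at $j=\lfloor\bn_\la x\rfloor$ at time $T_St$ with $Z^\la_{t-}(x)=z\in(0,1)$, the local density around $j$ is, by~(\ref{zlambda}) and the definition of $\psi_S$, close to $\nu_S((0,T_Sz))$, so the destroyed cluster is distributed like the component of the origin in an i.i.d.\ Bernoulli configuration of parameter $\nu_S((0,T_Sz))$, hence of size of order $1$ (geometric of parameter $\nu_S((T_Sz,T_S))$), microscopic after rescaling. The delicate point is to identify the law, after division by $T_S$, of the time $\Theta^\la$ needed for this finite zone to become entirely occupied again: it must converge to $\theta_z$. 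This is the reason for Definition~\ref{defF} and is exactly the content of Lemma~\ref{deftheta}, the two terms in the formula for $\theta_u$ accounting respectively for the event that the match fell on a vacant site (barrier of height $0$, probability $\nu_S((T_Su,T_S))$) and, on the complementary event, for the combination through $g_S$ of the (size-biased) geometric length of the destroyed zone with the common regeneration law of its sites. Inserting this into the coupling takes some care: $z=Z^\la_{t-}(x)$ is itself random and only asymptotically pinned down by the local density; a further match might fall inside the destroyed zone while it regenerates, but this has probability $O(\bn_\la^{-1})$ and is discarded; and one must check that while the barrier is up the zones on its two sides behave as in the limit, namely cannot merge across $j$ before the destroyed zone is fully re-occupied and do merge immediately afterwards.

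Granting all this, parts (a) and (b) follow from the coupling, (b) at the deterministic times $t_i$ being legitimate because a.s.\ none of them is a jump time of the limit process (recall that $t=1$ is not exceptional here). For part (c) I would first apply (b) at the single point $(t,0)$, obtaining $Z^\la_t(0)\to Z_t(0)$ in law, and then establish a purely local estimate. Since $\bm_\la\la\ba_\la\to0$, with probability tending to $1$ no match has fallen on the $2\bm_\la+1$ sites around $0$ during $[0,T_St]$, so that window has been regenerating quietly since the last macroscopic fire affecting it; its sites are therefore i.i.d.\ Bernoulli with parameter equal to the local density, which converges to $\nu_S((0,T_SZ_t(0)))$, consistently with $Z^\la_t(0)\to Z_t(0)$. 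On $\{Z_t(0)=z<1\}$ the cluster of $0$ lies inside this window with probability $\to1$ and is distributed like the origin-component of an i.i.d.\ Bernoulli($\nu_S((0,T_Sz))$) configuration; counting the $k$ admissible positions of an occupied run of length $k$ through $0$ bordered by two vacant sites gives $\Pr[|C(\eta^\la_{T_St},0)|=k\mid Z^\la_t(0)=z]\to q_k(z)$ with $q_k$ as in~(\ref{defqk}), $q_0(z)=\nu_S((zT_S,T_S))$ being simply the probability that $0$ is vacant. Since $q_k$ is continuous on $[0,1]$ with $q_k(1)=0$, a dominated-convergence argument then yields $\Pr[|C(\eta^\la_{T_St},0)|=k]\to\E[q_k(Z_t(0))]$.
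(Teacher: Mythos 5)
Your overall route is the paper's route: localisation to a finite box, coupling of the match marks with $\pi_M$, identification of the regeneration law of a microscopically burnt zone with $\theta_z$ (this is exactly Lemma \ref{deftheta}, and in the bounded-support case it is an identity in law, not only an asymptotic statement), and for (c) the exact geometric computation giving $q_k$ conditionally on the time elapsed since the last macroscopic clearing. The one place where your sketch goes wrong, and it is precisely the technical heart of the paper's proof, is your description of what a microscopic fire does afterwards: ``the zones on its two sides \ldots cannot merge across $j$ before the destroyed zone is fully re-occupied and do merge immediately afterwards.'' This is not the limit dynamics, and the corresponding discrete statement is not what one needs to prove. In the $LFF(BS)$-process a site $x$ hit by a microscopic fire keeps acting as a barrier not only while $H_t(x)>0$ but also whenever one of the two adjacent cells has density $Z_t(c_\pm)<1$; since after the fire the left and right cells may subsequently be burnt by \emph{different} macroscopic fires at different times, the site can go on separating macroscopic clusters long after its regeneration time, through an arbitrarily long alternation of left/right burnings. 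On the discrete side one must then show that the window $x_\la$ of $2\bm_\la+1$ sites around $\lfloor \bn_\la x\rfloor$ still contains a vacant site adjacent to an occupied one at all such times, so that the discrete fire burning, say, $c_+$ does not spill over into $c_-$. This is the ``persistent effect of microscopic fires'' of Subsection \ref{sspp}: one isolates the alternating configurations $(PP)$, proves the ping-pong Lemma \ref{pingpong} (conditionally on $\Theta_{t_0,t_1}>t_2-t_1$, after successive fires at $\pm\bm_\la$ the window retains a vacant--occupied--vacant pattern with probability tending to $1$), and feeds it into the favorable event $\Omega^S_1(\la)$ used in Step 6 of the induction (Lemma \ref{ggg}). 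Without this ingredient the step-by-step comparison you invoke breaks down exactly at the first time the limit process keeps two cells separated by a site whose barrier $H$ has already expired, while your argument would let the discrete clusters merge there.

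A second, much smaller divergence: the paper does not couple the discrete regeneration time with ``the uniform variable $V_k$ feeding $F_S$''; it attaches to each match location $X_q$ a dedicated i.i.d.\ family of seed processes $(N^{S,q}_t(i))$, builds the discrete process from these in the window $(X_q)_\la$, and defines the limit barrier height as the very random variable $\Theta^q_{T_q-Z_{T_q-}(X_q),T_q}$ computed from the same seeds (Lemma \ref{couplagelimite} checks this still yields an $LFF_A(BS)$-process). This makes the barrier heights literally equal on both sides rather than merely close in law, which is what allows the deterministic induction of Lemma \ref{ggg}; your looser formulation would force you to control an additional approximation at every microscopic fire. For part (c) your conditioning argument is essentially the paper's (Lemma \ref{densitiesbs}-(ii) plus independence of the last clearing time from the relevant seeds), only stated with an unnecessary limiting step where the identity is exact.
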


Here we have no problem with $t=1$: for the discrete process
(in the absence of fires), all the sites are occupied at time $T_S$ (which 
corresponds to time $1$ after normalization). Point (c) will be useful
to prove some estimates about the cluster-size distribution.
Observe that for $z\in (0,1)$, $q_k(z)$ is 
the probability that the cluster around $0$
has the size $k$ at time $T_S z$ in the absence of fires, 
if seeds fall according to i.i.d. $SR(\mu_S)$-processes.

\subsection{Heuristic arguments}

Let us explain roughly the reasons why Theorem \ref{convergebs}
holds true.
We consider a
$FF(\mu_S,\mu_M^\la)$-process
$(\eta^\la_t(i))_{t\geq 0, i \in \zz}$ and the corresponding  processes
$(Z^\la_t(x),D^\la_t(x))_{t\geq 0,x\in\rr}$.
We assume below that $\la$ is very small.

\vip

{\it 0. Matches.} As in the case $\beta=\infty$, 
the times and positions at which matches fall
will tend, in our scales, to the marks of a Poisson measure with
intensity measure $1$.

\vip

{\it 1. Initial condition.}
We have, for all $x\in\rr$, 
$(Z^\la_0(x),D^\la_0(x))= (0,\emptyset) \simeq (0,\{x\})$.

\vip

{\it 2. Occupation of vacant zones.} 
Assume that a zone $[a,b]$ becomes 
completely vacant at some time $t$ (because it has been destroyed
by a fire). 

\vip

(i) For $s\in [0,1)$ and if no fire starts on $[a,b]$ during $[t,t+s]$
(or $[T_S t, T_S(t+s)]$ in the original scales)
the density of vacant sites in $[a,b]$ at time $t+s$ should clearly
resemble $\nu_S((0,T_S s))$. Hence for $x\in [a,b]$, $Z^\la_t(x)\simeq
\psi_S(\nu_S((0,T_S s)))=s$ and $D^\la_{t+s}(x) \simeq \{x\}$.

\vip

(ii) If no fire starts on $[a,b]$ during $[t,t+1]$ (or $[T_S t, T_S(t+1)]$ 
in the original scales), then all the sites of $[a,b]$ become occupied
at time $t+1$ (recall that $\nu_S((0,T_S])=1$).

\vip

{\it 3. Microscopic fires.} 
Assume that a fire starts at some place $x$
at some time $t$, with $Z_{\tm}^\la(x)=z\in (0,1)$.
Then the possible clusters on the left and
right of $x$ cannot be connected during (approximately) $[t,t+\Theta_z T_S]$,
but can be connected after (approximately) $t+\Theta_z T_S$, where $\Theta_z$ 
follows
approximately the law $\theta_z$. Indeed, $\theta_z$ is designed for that:
consider a zone where the density of occupied sites is $z$
and assume that the sites are exchangeable in this zone.
Pick at random a cluster in this zone. The law of its size depends on $z$.
Then $\theta_z$ is the law of the time needed for a seed to fall on
each sites of this cluster (divided by $T_S$).

\vip

{\it 4. Macroscopic fires.} 
Assume now that a fire starts at some place $x$,
at some time $t$ and that $Z^\la_{t-}(x) \simeq 1$, so that
$D^\la_{t-}(x)$ is macroscopic (that is its length is of order $1$
in our scales, or of order $\bn_\la$ in the original process). 
This will thus make vacant the zone $D^\la_{t-}(x)$. Such a 
(macroscopic) zone
needs a time of order $1$ to be completely occupied, see
Point 2.

\vip

{\it 5. Clusters.} For $t\geq 0$, $x\in \rr$, there are some
vacant sites in the neighborhood of $x$ if $Z^\la_t(x)<1$ (then we say that
$x$ is microscopic),
or if there has been (recently) a microscopic fire at $x$ (see Point 3).
Now macroscopic clusters are delimited either by
microscopic zones, or by 
sites where there has been recently a microscopic fire.

\subsection{Cluster-size distribution}
We will deduce from Theorem \ref{convergebs} 
the following estimates on the cluster-size distribution.

\begin{cor}\label{cobs}
Assume $(H_M)$ and $(H_S(BS))$. Recall that
$\ba_\la$ and $\bn_\la$ were defined in (\ref{ala}) and (\ref{nla}). Let
$(Z_t(x),D_t(x),H_t(x))_{t\geq 0, x\in \rr}$ be 
a $LFF(BS)$-process.
For each $\la\in (0,1]$, let $(\eta^\la_t(i))_{t\geq 0,i\in\zz}$ be a 
$FF(\mu_S,\mu_M^\la)$-process.

(i) For some $0<c_1<c_2$,
for all $t\geq 5/2$, all $k \in \{0,1,\dots\}$,
$$
\lim_{\la\to 0}  
\Pr\left(|C(\eta^\la_{T_S t},0)|=k\right) \in [c_1 q_k,c_2 q_k],
$$
where $q_0=\int_0^1 \nu_S((T_S z, T_S))dz$ and 
$q_k=k\int_0^1 [\nu_S((T_S z, T_S))]^2[\nu_S((0,T_S z))]^k dz$ for $k\geq 1$.

(ii) For some $0<c_1<c_2$ and $0< \kappa_1 <\kappa_2$, for all
$t \geq 3/2$, all $B>0$,
$$
\lim_{\la\to 0}  
\Pr\left(|C(\eta^\la_{T_S t},0)|
\geq B \bn_\la  \right)=\Pr\left(|D_t(0)|\geq B \right)
\in [c_1 e^{-\kappa_2 B}, c_2 e^{-\kappa_1 B}].
$$
\end{cor}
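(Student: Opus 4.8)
The plan is to read both estimates off Theorem \ref{convergebs} and then reduce them to two properties of the limit $LFF(BS)$-process: an exponential tail bound for $|D_t(0)|$, and two-sided bounds for the density of the absolutely continuous part of $\cL(Z_t(0))$ on $[0,1)$. For part (i), Theorem \ref{convergebs}(c) gives $\lim_{\la\to0}\Pr(|C(\eta^\la_{T_S t},0)|=k)=\E[q_k(Z_t(0))]$; since $\nu_S((T_S,T_S))=0$ one has $q_k(1)=0$ for every $k$, so the atom of $Z_t(0)$ at $1$ does not contribute and $\E[q_k(Z_t(0))]=\int_{[0,1)}q_k(z)\,\cL(Z_t(0))(dz)$, where $\int_0^1 q_k(z)\,dz$ is exactly the constant $q_k$ of the statement. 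Hence (i) holds once one shows that, uniformly in $t\geq 5/2$, the restriction of $\cL(Z_t(0))$ to $[0,1)$ is absolutely continuous with a density $f_t$ satisfying $c_1\leq f_t\leq c_2$ a.e.\ on $[0,1)$, for then the claimed sandwich of $\E[q_k(Z_t(0))]$ between $c_1 q_k$ and $c_2 q_k$ is immediate. For part (ii), Theorem \ref{convergebs}(b) gives $D^\la_t(0)\to D_t(0)$ in law in $(\cI\cup\{\emptyset\},\bdelta)$, hence $|D^\la_t(0)|\to|D_t(0)|$ in law since the length functional $|\cdot|$ (with the convention $|\emptyset|=0$) is $1$-Lipschitz for $\bdelta$; since $\{|C(\eta^\la_{T_S t},0)|\geq B\bn_\la\}=\{|D^\la_t(0)|\geq B-1/\bn_\la\}$, since $1/\bn_\la\to0$, and since $|D_t(0)|=R_t(0)-L_t(0)$ has no atom in $(0,\infty)$ (its endpoints are measurable functions of the continuously distributed mark positions of $\pi_M$), one gets $\Pr(|C(\eta^\la_{T_S t},0)|\geq B\bn_\la)\to\Pr(|D_t(0)|\geq B)$ for every $B>0$. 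Thus (ii) reduces to $\Pr(|D_t(0)|\geq B)\in[c_1 e^{-\kappa_2 B},c_2 e^{-\kappa_1 B}]$, uniformly in $t\geq 3/2$ and $B>0$.

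The exponential tail of $|D_t(0)|$ is the core of the matter, and I would establish it on the finite-box graphical construction of Theorem \ref{wpbs}. For the upper bound, the point is that if $D_t(0)$ has length $\geq B$ then one of its halves, say $[0,R_t(0))$ with $R_t(0)\geq B/2$, carries $Z_t\equiv1$ and $H_t\equiv0$; a mark of $\pi_M$ at $(s,y)$ with $y$ in that half and $s\in(t-1,t)$ cannot have $Z_{s-}(y)=1$ (it would burn the macroscopic cluster and, less than one unit of time remaining, leave $Z_t(y)<1$), so every such mark must hit a not-yet-macroscopic zone and then produce a barrier that dies before $t$; quantifying this ``clean strip'' constraint through a Poisson/percolation estimate over a space-time region of area of order $B$, using that obstacles (burnt zones and barriers surviving up to $t$) appear with positive spatial density, gives probability at most $c_2 e^{-\kappa_1 B}$. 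For the lower bound I would exhibit a favourable event for $\pi_M$, of probability of order $e^{-\kappa_2 B}$: no mark in a band of width $\geq B$ about $0$ during a time-window of length slightly more than $1$ ending at $t$, together with marks just outside that band, which forces $0$ to lie at time $t$ in a macroscopic cluster of length $\geq B$; here $t\geq 3/2$ is used so that such a window lies after the first macroscopic time. In particular $\sup_{s\geq 3/2}\E[|D_s(0)|]=\sup_{s\geq 3/2}\int_0^\infty\Pr(|D_s(0)|\geq B)\,dB<\infty$, a fact also needed in (i).

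For the density of $Z_t(0)$, the first equation of (\ref{eqlffpbs}) shows that $s\mapsto Z_s(0)$ increases at unit speed while below $1$ and jumps down to $0$ exactly when a macroscopic fire burns the cluster containing $0$; moreover, right after such a burn the point $0$ stays microscopic for a full unit of time, so for $t\geq1$ and $0<z<z+\e<1$ the event $\{Z_t(0)\in(z,z+\e)\}$ coincides, up to a null set, with the existence of a macroscopic fire burning $0$ during $(t-z-\e,t-z)$ --- necessarily unique, and then the last one burning $0$ before $t$. Writing this indicator as an integral against $\pi_M$, taking expectations and using that the compensator of $\pi_M$ is $ds\,dy$ (legitimate because $\E[|D_s(0)|]<\infty$), one gets the exact formula $\Pr(Z_t(0)\in(z,z+\e))=\int_{t-z-\e}^{t-z}\E[|D_s(0)|\,\indiq_{\{Z_s(0)=1\}}]\,ds$, so the density of $Z_t(0)$ at $z$ equals $\E[|D_{t-z}(0)|\,\indiq_{\{Z_{t-z}(0)=1\}}]$. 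The upper bound $c_2$ follows from $\sup_{s\geq3/2}\E[|D_s(0)|]<\infty$; for the lower bound $c_1>0$ it suffices to show $\inf_{s\geq3/2}\Pr(Z_s(0)=1,\ |D_s(0)|\geq\delta)>0$ for some fixed $\delta>0$, once more by a favourable-configuration argument on $\pi_M$ (a short clean band around $0$ flanked by marks, yielding at time $s$ a macroscopic cluster of length $\geq\delta$). The threshold $t\geq5/2=1+3/2$ is precisely what makes the relevant instant $s=t-z$ range over $[\,3/2,\cdot\,]$ for every $z\in[0,1)$, so that these uniform bounds on $\E[|D_s(0)|\,\indiq_{\{Z_s(0)=1\}}]$ apply.

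I expect the main obstacle to be the ``configuration-of-$\pi_M$'' inputs, namely the two exponential bounds on $\Pr(|D_t(0)|\geq B)$ and the uniform lower bound $\inf_{s\geq3/2}\Pr(Z_s(0)=1,|D_s(0)|\geq\delta)>0$: whether a given mark is effective --- whether it burns a macroscopic cluster, creates a long-lived barrier, or does nothing --- depends on the process's own state $Z_{s-}$, so, unlike in a plain percolation estimate, one cannot merely count marks of $\pi_M$ but has to run the finite-box graphical construction of Theorem \ref{wpbs} and argue with stopping times. Once these uniform (in $s\geq3/2$) controls of $|D_s(0)|$ are in hand, the reductions and the compensation computation are routine.
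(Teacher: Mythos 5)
Your reductions are exactly the ones the paper makes: part (ii) is Theorem \ref{convergebs}-(b) plus the absence of atoms for $|D_t(0)|$, part (i) is Theorem \ref{convergebs}-(c) plus the remark that $q_k(1)=0$ neutralizes the atom of $Z_t(0)$ at $1$, and everything then rests on two facts about the limit process: a two-sided exponential tail for $|D_t(0)|$ (lower bound for $t\geq 3/2$) and the two-sided bound $c_1(b-a)\leq\Pr(Z_t(0)\in[a,b])\leq c_2(b-a)$ on $[0,1)$ for $t\geq 5/2$ --- precisely Lemma \ref{zunif}-(ii)--(v) of the paper. Your treatment of the law of $Z_t(0)$ is a mild (and slightly sharper) variant: the paper bounds $\Pr(Z_t(0)\in[a,b])$ above by $\int_{t-b}^{t-a}\E[|D_s(0)|]\,ds$ via Markov's inequality and below by $\Pr[\pi_M((t-b,t-a]\times D_{t-b}(0))\geq 1]\geq \Pr[|D_{t-b}(0)|\geq 1](1-e^{-(b-a)})$, whereas you push the compensation formula to an exact density identity; both routes consume only the same two tail estimates, so nothing is gained or lost there.

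The genuine gap is that those tail estimates --- the mathematical heart of the corollary --- are left at sketch level, and your sketch stalls exactly where you say it does: whether a mark of $\pi_M$ produces a lasting obstruction depends on $Z_{s-}$ at its location (indeed $\theta_u(\{0\})>0$, so a mark in a microscopic zone may leave no trace at all), and your proposed remedy (running the graphical construction with stopping times) is both vaguer and heavier than what is needed. The paper's resolution is a single state-independent device, Lemma \ref{lblb}: there is $v_0<1$ with $F_S(z,v)\geq 1/2$ for all $z\in[3/4,1)$ and $v\geq v_0$, so (Step 1 of the proof of Proposition \ref{loc}) \emph{any} mark $(\tau,X,V)$ with $V\geq v_0$ forces $Z_t(X)<1$ or $H_t(X)>0$ for all $t\in[\tau,\tau+1/4]$, whatever the configuration (if $Z_{\tau-}(X)<3/4$ the zone stays microscopic another $1/4$; if $Z_{\tau-}(X)\in[3/4,1)$ the barrier has height $\geq 1/2$; if $Z_{\tau-}(X)=1$ the site is reset). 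With this, $\{|D_t(0)|\geq B\}$ forces the absence of marks with $V\geq v_0$ in $[t-1/4,t]\times[0,B/2]$ or its mirror image, and the upper bound $2e^{-(1-v_0)B/8}$ is a bare Poisson computation, with no conditioning on the state. The same device is what makes your lower-bound event work: ``marks just outside the band'' must be a \emph{cascade} --- the paper uses five marks with $V\geq v_0$ at staggered times (consecutive gaps $<1/4$) in $[-1,0]$ and in $[B,B+1]$ --- because a single guaranteed obstruction lasts only $1/4$ of a time unit while $[0,B]$ must be shielded from external fires for more than a full unit (a mark-free band alone does not suffice, since fires ignited outside can burn into it); this is also where $t\geq 3/2$ enters. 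Supplying this $v_0$-mechanism turns both of your sketched estimates into elementary computations and closes the proposal.
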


Consider the case of Example 1, where $\mu_S=\delta_1$, $T_S=1$ 
and $\nu_S(dt)=\indiq_{[0,1]}(t)dt$.
Then $\bn_\la  \sim 1/\la$ and one can check that
$q_0= 1/2$ and $q_k=2k/[(k+1)(k+2)(k+3)]$ for $k\geq 1$.

Corollary \ref{cobs} shows the presence of two regimes: for $\la>0$
very small, there are some finite (uniformly in $\la$)
clusters, as described in Point (i) and some clusters of order $1/\la$,
as described in Point (ii). Roughly, for $\la>0$ very small,
the cluster-size distribution resembles, for large times,
$$
\sum_{k\geq 0} q_k \delta_k(dx) + \la e^{- \la x}\indiq_{\{x\geq 0\}} dx.
$$

\section{Main results when $\beta\in (0,\infty)$}\label{mrbeta}
\setcounter{equation}{0}

\subsection{Definition of the limit process} \label{dflffbeta}
Surprisingly, the limit process in this case is more natural 
than in the previous cases, in the sense 
that there are only macroscopic clusters and thus no microscopic fires: 
heavy tails can sometimes produce natural objects. This is due to the fact that
for $\beta < \infty$, the scale space $\bn_\la$ is correct for all times.
We describe the limit forest fire process 
by a {\it graphical construction}.
The limit
forest fire process $(Y_t(x))_{x\in \rr, t\geq 1}$ will take its
values in $\{0,1\}$. In some sense, $Y_t(x)=0$ means that 
there is no tree at $x$ at time $t$. 

\vip

For $(Y(x))_{x\in \rr}$ with values in $\{0,1\}$, we define
the occupied component around $x\in \rr$ as
\begin{align}\label{defcyx}
C(Y,x):=[l(Y,x),r(Y,x)]
\end{align}
where $l(Y,x)=\sup\{y\leq x:\;Y(y)=0\}$ and $r(Y,x)=\inf\{y\geq x:\;Y(y)=0\}$.
If $Y(x)=0$, this implies $C(Y,x)=\{x\}$.

\vip

We consider a Poisson measure $\pi_M(dt,dx)$
on $[0,\infty)\times\rr$
with intensity measure $dt dx$, whose marks correspond to matches.
We also introduce a Poisson measure $\pi_S(dt,dx,dl)$ on 
$[0,\infty)\times\rr\times[0,\infty)$, independent of $\pi_M$, with intensity
measure $dt dx \beta (\beta+1) l^{-\beta-2} dl$. 
Roughly, when $\pi_S$ has mark $(\tau,X,L)$, this means that no seed fall on
$X$ during $[\tau-L,\tau]$. In all the other zones, seeds fall 
{\it continuously}.

\vip

We now handle the construction on a fixed time interval $[0,T]$.

\vip

First, we set $Y_t^0(x)=\indiq_{\{\pi_S(\{(s,x,l)\ : \; s>t,s-l<0\})=0\}}$ 
for all $t\in [0,T]$, all $x\in \rr$. Observe that for all $x\in \rr$, 
$t\mapsto Y_t^0(x)$ is non-decreasing on $[0,T]$.
Since $\int_0^\infty\int_0^\infty \indiq_{\{s>T,s-l<0\}} \beta(\beta+1)l^{-\beta-2}
dl ds >0$, one can clearly find an unbounded family $\{\chi_i\}_{i\in \zz}
\subset \rr$ such that for all $t\in [0,T]$,  all $i\in \zz$, 
$Y_t^0(\chi_i)=0$. 
We take the convention that for all $i\in \zz$, $\chi_i \leq \chi_{i+1}$,
$\chi_0 \leq 0 <\chi_1$, 
$\lim_{-\infty} \chi_i=-\infty$
and $\lim_{\infty} \chi_i=\infty$. 

\vip

We now handle the construction on each box $[0,T]\times[\chi_i,\chi_{i+1}]$ 
separately.
Let thus $i$ be fixed. The Poisson measure $\pi_M$ has a.s. a finite
number $n_i$ of marks 
$(\rho_{1}^{i}, \alpha_{1}^{i}),\dots,(\rho_{n_i}^{i}, \alpha_{n_i}^{i})$ 
in $[0,T]\times[\chi_i,\chi_{i+1}]$,
ordered in such a way that $0<\rho_{1}^{i}<\dots< \rho_{n_i}^{i}$.

\vip

We consider the occupied cluster $I^i_1=C(Y^0_{\rho_{1}^i-}, \alpha_{1}^{i})$ 
(which is included in $[\chi_i,\chi_{i+1}]$ by construction). 
For $(t,x)\in [0,T]\times[\chi_i,\chi_{i+1}]$, we set
$Y_t^1(x)=\indiq_{\{\pi_S(\{(s,x,l)\, : \; s>t,s-l<\rho_1^i\})=0\}}$
if $(t,x)\in [\rho^i_1,T]\times I^i_1$ and $Y^1_t(x)=Y^0_t(x)$ else. 

\vip

Assume that for some $k=2,\dots,n_i$,  
$(Y^{k-1}_t(x))_{t\in[0,T],x\in[\chi_i,\chi_{i+1}]}$ 
has been built and consider the occupied cluster
$I^i_{k}=C(Y^{k-1}_{\rho_{k}^i-}, \alpha_{k}^{i})$ (which is still included in
$[\chi_i,\chi_{i+1}]$).
For $(t,x)\in [0,T]\times[\chi_i,\chi_{i+1}]$, we define $Y^k_t(x)$ 
by setting
$Y^k_t(x)= \indiq_{\{\pi_S(\{(s,x,l)\, : \; s>t,s-l<\rho_k^i\})=0\}}$ 
if $(t,x)\in [\rho^i_k,T]\times I^i_k$
and $Y^k_t(x)=Y^{k-1}_t(x)$ else.

\vip

We finally set $Y_t(x)=Y^{n_i}_t(x)$ for all $t\in [0,T]$, 
all $x\in [\chi_i,\chi_{i+1}]$.
Doing this for each $i$, this defines a process 
$(Y_t(x))_{t\in [0,T],x\in\rr}$. 

\vip

A typical path of the $LFF(\beta)$-process 
is drawn and discussed on Figure \ref{figLFFbeta}, from which 
the following remark is clear.

\begin{figure}[b]
\fbox{
\begin{minipage}[c]{0.95\textwidth}
\centering
\includegraphics[width=13cm]{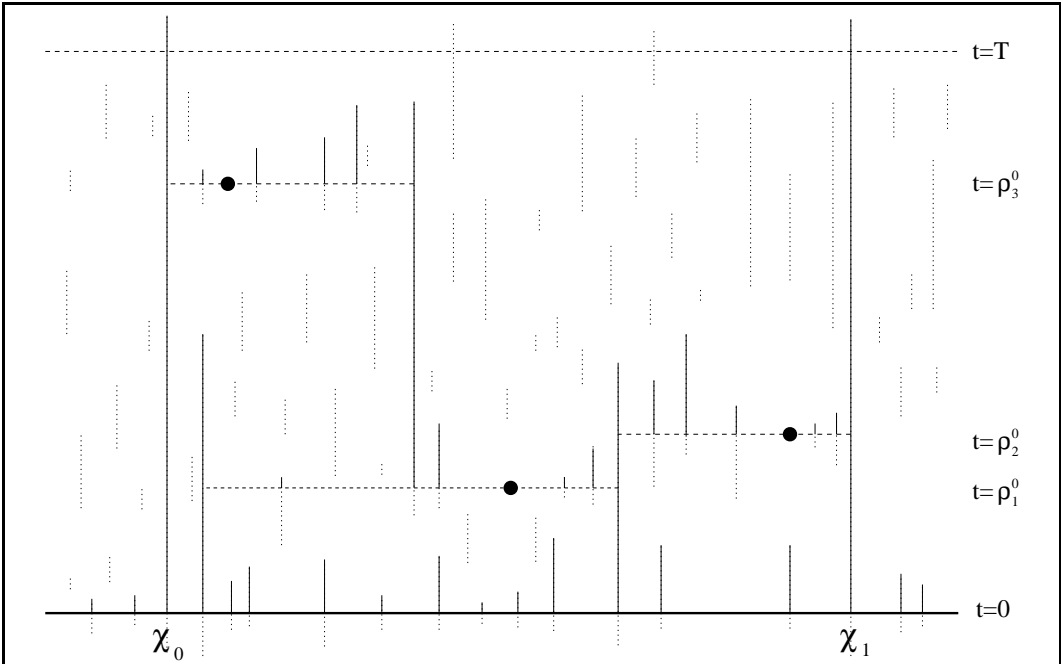}
\caption{$LFF(\beta)$-process with $\beta\in(0,\infty)$.}
\label{figLFFbeta}
\vip
\parbox{13.3cm}{
\footnotesize{
The plain segments represent vacant sites and the occupied
clusters are delimited by these segments. The marks of $\pi_M$
(matches) are represented as $\bullet$'s.

{\bf Step 0.} First, we draw on the whole space $[0,\infty)\times \rr$ 
all the $\bullet$'s and we draw a vertical dotted segment from 
$(\tau-L,X)$ to $(\tau,X)$ when $\pi_S$ has a mark at $(\tau,X,L)$.
Of course, such segments are infinitely many so that it is not possible
to draw all of them on a figure.

{\bf Step 1.} For each of these dotted segments that encounter the axis
$t=0$, we overwrite in plain its part above $t=0$.
Then we denote by $\chi_0$ and $\chi_1$ the first places on the left 
and right of $0$ such that plain segments go beyond $T$. At this stage, 
we have built $(Y^0_t(x))_{t\in [0,T], x \in \rr}$.

{\bf Step 3.} At time $\rho^0_1$, we consider the component $I_1^0$ 
(between plain segments) where the match $\bullet$ falls. 
Then, for each dotted 
segment (lying in $I_1^0$) that encounters the axis $t=\rho^0_1$, 
we overwrite in plain its part above $t=\rho_1^0$. At this stage, 
we have built $(Y^1_t(x))_{t\in [0,T], x \in [\chi_0,\chi_1]}$.

{\bf Step 3.}  At time $\rho^0_2$, we consider the component $I_2^0$ 
(between plain segments) where the match $\bullet$ falls. 
Then, for each dotted 
segment (lying in $I_2^0$) that encounters the axis $t=\rho^0_2$, 
we overwrite in plain its part above $t=\rho_2^0$. 
We have built $(Y^2_t(x))_{t\in [0,T], x \in [\chi_0,\chi_1]}$.

And so on...

{\bf Remark.} If we draw a vertical dotted segment from
$(\tau-L,X)$ to $(\tau,X)$ when $\pi_S$ has a mark at $(\tau,X,L)$ only
if $L>\delta$, and if $\delta>0$ is smaller than 
$\min\{\rho_1^0,\rho_2^0-\rho_1^0,
\rho_3^0-\rho_2^0\}$, then we get the exact values of $Y_t(x)$
for all $x \in [\chi_0,\chi_1]$ and all $t \in [0,T]\setminus ([0,\delta]\cup
[\rho_1^0,\rho_1^0+\delta]\cup[\rho_2^0,\rho_2^0+\delta]
\cup[\rho_3^0,\rho_3^0+\delta])$.

}}
\end{minipage}
}
\end{figure}

\begin{rem}\label{ribeta}
(i) If we build the process 
using some larger final time $T'>T$, this does not change the values of the
process on 
$[0,T]\times\rr$. 
Thus the process can be extended to $[0,\infty)\times\rr$.

(ii) For $\delta>0$, denote by 
$\pi_S^\delta$ the restriction of $\pi_S$ to $[0,\infty)\times \rr\times
[\delta,\infty)$. The sequence $(\chi_i)_{i\in \zz}$ clearly depends
only on $\pi_S^T$. Then for each $i\in \zz$, we denote by
$\cT_M^{i,T}=\{t\in [0,T]\, : \; \pi_M(\{t\}\times 
[\chi_i,\chi_{i+1}])>0)\cup \{0\}$
and by $\delta_{i,T}= \inf_{s,t \in \cT_M^{i,T}, s\ne t} |t-s|$.
Then for all $\delta\in (0,\delta_{i,T}\land T)$,
all $x\in [\chi_i,\chi_{i+1}]$ and all $t\in [0,T]\setminus 
\cup_{s\in \cT_M^{i,T}}[s,s+\delta]$, the value of
$Y_t(x)$ depends only on $\pi_M,\pi_S^\delta$.
\end{rem}

Observe that for all $t\geq 0$, $\{Y_t = 0\}$ is countable
and for all $t>0$ such that $\pi_M(\{t\}\times \rr)=0$,
$\{Y_t = 0\}$ is discrete (it has no accumulation point).

\begin{prop}\label{wpbeta}
Let $\pi_M,\pi_S$ be two independent Poisson measures 
on $[0,\infty)\times\rr$ and $[0,\infty)\times\rr\times[0,\infty)$
with intensity measures $dt dx$ and
$dt dx \beta (\beta+1) l^{-\beta-2} dl$. 
There a.s. exists a unique 
$LFF(\beta)$-process $(Y_t(x))_{t\geq 0,x\in \rr}$.
It can be simulated exactly on any finite box $[0,T]\times[-n,n]$.
For each $t\geq 0$ and $x\in \rr$, we will denote by
$D_t(x)=C(Y_t,x)$, recall (\ref{defcyx}).
\end{prop}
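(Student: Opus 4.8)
The plan is to treat the three assertions in turn. The process is \emph{defined} by the graphical construction of Subsection~\ref{dflffbeta}, so ``existence'' amounts to checking that this construction is a.s.\ meaningful, while ``uniqueness'' amounts to checking that its output does not depend on the two arbitrary ingredients it uses --- the separating sequence $(\chi_i)_{i\in\zz}$ and the horizon $T$. The main obstacle is this last point: proving that a permanently vacant site genuinely decouples the dynamics on its two sides for the \emph{entire} recursion. Exact simulability will then follow from Remark~\ref{ribeta}.

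\emph{The construction is a.s.\ well defined.} Since $t\mapsto Y^0_t(x)$ is nondecreasing on $[0,T]$, a site $x$ has $Y^0_t(x)=0$ for all $t\in[0,T]$ iff $Y^0_T(x)=0$, i.e.\ iff $\pi_S$ has a mark $(s,x,l)$ with $s-l<0$ and $s>T$. The restriction of $\pi_S$ to $\{s-l<0,\ s>T\}$ is a Poisson measure, and its image under $(s,x,l)\mapsto x$ is a Poisson point process on $\rr$ with intensity $\big(\int_T^\infty\!\int_s^\infty\beta(\beta+1)l^{-\beta-2}\,dl\,ds\big)dx=T^{-\beta}\,dx$, which is positive and finite; hence its set of atoms is a.s.\ locally finite and unbounded in both directions, and we take $(\chi_i)_{i\in\zz}$ to be that set. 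As $\pi_M$ is independent of $\pi_S$ with intensity $dt\,dx$, it a.s.\ puts finitely many marks in each (finite-area) box $[0,T]\times[\chi_i,\chi_{i+1}]$, none at $t=0$ or on a line $\{x=\chi_i\}$, and no two at the same time, so the recursion is a.s.\ unambiguous. Finally one checks by induction on $k$ that $Y^k_t(\chi_i)=Y^k_t(\chi_{i+1})=0$ for all $t\in[0,T]$: the point is that a site $\chi$ carrying a mark $(s_0,\chi,l_0)$ with $s_0-l_0<0<T<s_0$ satisfies, for every $\rho\in[0,T]$ and $t\in[0,T]$, $\pi_S(\{(s,\chi,l):s>t,\ s-l<\rho\})\geq1$, so every update value it can receive is $\indiq_{\{\pi_S(\cdots)=0\}}=0$; hence such a site stays vacant throughout. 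In particular each $I^i_k=C(Y^{k-1}_{\rho^i_k-},\alpha^i_k)$ is a well-defined bounded subinterval of $[\chi_i,\chi_{i+1}]$.

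\emph{Independence of the choices.} Horizon-independence is exactly Remark~\ref{ribeta}(i). For the sequence $(\chi_i)$: if $(\chi_i)$ and $(\chi'_j)$ are both admissible for a given $T$ (admissibility being only permanent vacancy on $[0,T]$), then so is $(\chi_i)\cup(\chi'_j)$, and it suffices to compare the construction relative to one sequence with the one relative to a refinement. Inside a fixed $(\chi_i)$-box, each refining point $\chi'_j$ is, by the separation property above, vacant throughout the $(\chi_i)$-recursion; therefore every burnt cluster $I^i_k$, being delimited by vacant sites and containing $\alpha^i_k$, is contained in the single sub-box containing $\alpha^i_k$, and the step-$k$ update modifies only that sub-box. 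Hence the matches in distinct sub-boxes never interact, the $(\chi_i)$-recursion restricted to a sub-box is literally the refined recursion on that sub-box, and the two constructions coincide a.s. Together with Remark~\ref{ribeta}(i) this produces a single process $(Y_t(x))_{t\geq0,x\in\rr}$ --- existence and uniqueness. The stated structure of $\{Y_t=0\}$ follows since, away from the discrete set of times carrying a mark of $\pi_M$, the vacant sites are exactly those lying in the seed-free interval active at the last preceding fire, a locally finite set.

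\emph{Exact simulation on $[0,T]\times[-n,n]$.} One first locates the two atoms of the Poisson process of the first step (restricted to $l>T$) bracketing $[-n,n]$, by scanning outward; since that process is locally finite this terminates a.s., and it reduces the problem to finitely many boxes, each carrying finitely many marks of $\pi_M$. Within a box, Remark~\ref{ribeta}(ii) says that for $\delta\in(0,\delta_{i,T}\land T)$ the values $Y_t(x)$ with $t$ outside the $\delta$-neighbourhoods of the finitely many match-times depend only on $\pi_M$ and on $\pi_S^\delta$, which has finitely many relevant marks; running the recursion on this finite data gives these values exactly, and letting $\delta\downarrow0$, together with the right-continuity of $t\mapsto Y_t(x)$ (easily checked from the defining formula), recovers the whole path. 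Finally $D_t(x):=C(Y_t,x)$ is read off from $Y_t$.
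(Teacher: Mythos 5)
Your proposal is correct and follows essentially the paper's route: the paper simply declares the proposition ``obvious from the previous construction'' (with Remark~\ref{ribeta} supplying the consistency in the horizon $T$), and your argument just makes explicit the verifications implicit there — that the permanently vacant sites form an a.s.\ locally finite, unbounded Poisson set of intensity $T^{-\beta}dx$, that they stay vacant through every step of the recursion, and that the output does not depend on the choice of the separating family or of $T$. These added details are accurate and welcome, but they do not constitute a different method.
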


This proposition is obvious from the previous construction.
Of course, we can build {\it exactly} the process on any finite
box, but we cannot draw it {\it exactly}: when a match falls
in some occupied cluster $I$ at some time $t$, the set 
$\{x\in I:\;Y_t(x)=0\}$
is dense in $I$ (but $\{x\in I:\;Y_{t+\e}(x)=0\}$ is finite for all small
$\e>0$).

\subsection{On the Markov property}
The $LFF(\beta)$-process $(Y_t(x))_{t\geq 0}$
is clearly not Markov, in particular because
the heights of the barriers are not exponentially
distributed. The aim of this subsection is to
build a Markov process that contains more information
than $(Y_t(x))_{t\geq 0}$. 

\vip

Let the Poisson measures $\pi_M$ and $\pi_S$ be given. 
Write $\pi_S=\sum_{k\geq 1} \delta_{(t_k,x_k,l_k)}$ and introduce
$\pi_S^1=\sum_{k\geq 1} \delta_{(t_k-l_k,x_k,l_k)}\indiq_{\{t_k-l_k>0\}}$
and $\pi^0_S=\sum_{k\geq 1} \delta_{(t_k,x_k,l_k)}\indiq_{\{t_k-l_k<0\}}$.
Observe that $\pi^0_S$ and $\pi^1_S$ are independent. Furthermore,
$\pi^1_S$ has a mark $(\tau,X,L)$ if and only if there is a dotted
vertical segment from $(\tau,X)$ to $(\tau+L,X)$ (with $\tau>0$) and
$\pi^0_S$ has a mark $(\tau,X,L)$ if and only if there is a dotted
vertical segment from $(\tau-L,X)$ to $(\tau,X)$ (with $\tau-L<0<\tau$). 
One can easily check that $\pi^1_S$ is a Poisson measure
on $[0,\infty)\times\rr\times(0,\infty)$ with intensity
measure $dt dx \beta(\beta+1)l^{-\beta-2}dl$.
We set, for $x\in \rr$,
$$
\Gamma_0(x)= \int_0^\infty\int_0^\infty s \pi^0_S(ds\times\{x\}\times dl),
$$
which represents the height above $0$ of the dotted (or plain)
vertical segment at $x$ that crosses the axis $t=0$, with of course
$\Gamma_0(x)=0$ if there is no such dotted segment.
We then introduce, for $x\in \rr$ and $t\geq 0$,
$$
\Gamma_t(x)= \Gamma_0(x) + \intot \int_0^\infty \max\{l-\Gamma_\sm(x),0\}
\pi_S^1(ds\times\{x\}\times dl)  - \intot \indiq_{\{\Gamma_s(x)>0\}} ds,
$$
which represents the height above $t$ of the dotted (or plain)
vertical segment at $x$ that crosses the horizontal axis with ordinate
$t$, with $\Gamma_t(x)=0$ if there is no such dotted segment.
Indeed, $\Gamma_t(x)$ clearly decreases linearly when it is positive,
and jumps from $\Gamma_\sm(x)$ to $\max\{\Gamma_\sm(x),l\}$
when $\pi^1_S$ has a mark at $(s,x,l)$.
Using the fact that a.s., for all $x\in \rr$, there is at
most one dotted segment at $x$, it is possible to 
replace $\max\{l-\Gamma_\sm(x),0\}$ by $l$.
Finally, we define, for $x\in \rr$ and $t\geq 0$,
$$
\left\{
\begin{array}{l}
H_t(x)=\Gamma_0(x) +\displaystyle \intot \int_\rr \indiq_{\{y \in \stackrel \circ
C(Y_\sm,x)\}} 
\Gamma_\sm(x) \pi_M(ds,dy) -\displaystyle\intot \indiq_{\{H_s(x)>0\}} ds ,\\
Y_t(x)=\indiq_{\{H_t(x)=0\}},
\end{array}
\right.
$$
where $\stackrel \circ C(Y_\sm,x)$ stands for the interior of $C(Y_\sm,x)$.
Then  $H_t(x)$ is the height above $t$ of the
plain segment at $x$ that crosses the horizontal axis with ordinate $t$
(with $H_t(x)=0$ if there no such plain segment), and thus
$(Y_t(x))_{t\geq 0}$ is the $LFF(\beta)$-process.
Indeed, since we overwrite in plain all the dotted segments that cross the axis
$t=0$, we clearly have $H_0(x)=\Gamma_0(x)$.
Then $H_t(x)$ decreases linearly when it is positive, and jumps
to $\Gamma_\sm(x)$ when $x$ is involved in a fire at some time $s$
(whence necessarily $H_\sm(x)=0$): 
recall that we then overwrite in plain the dotted
segment at $x$ that crosses the horizontal axis with ordinate $s$,
of which the height above $s$ it given by $\Gamma_\sm(x)$.

\vip

The process $(\Gamma_t(x),H_t(x),Y_t(x))_{t\geq 0,x\in \rr}$ is Markov,
since it solves a well-posed homogeneous Poisson-driven S.D.E.

\subsection{The convergence result}

We now state our main result in the case $\beta \in (0,\infty)$.
We use Subsection \ref{nocv}.

\begin{theo}\label{converge2}
Assume $(H_M)$ and $(H_S(\beta))$ for some $\beta \in (0,\infty)$.
Consider, for each $\la\in (0,1]$, the process
$(D^\la_t(x))_{t\geq 0,x\in\rr}$ associated with
the $FF(\mu_S,\mu_M^\la)$-process,
see Definition \ref{gff} and (\ref{dlambda}).
Consider also a $LFF(\beta)$-process $(Y_t(x))_{t\geq 0, x\in \rr}$ 
and the associated $(D_t(x))_{t\geq 0, x\in \rr}$.

(a) For any $T>0$, any finite subset $\{x_1,\dots,x_p\}\subset \rr$, 
$(D^\la_t(x_i))_{t\in [0,T],i=1,\dots,p}$ goes in law to 
$(D_t(x_i))_{t\in [0,T],i=1,\dots,p}$ in $\dd([0,T],\cI)^p$,
as $\la \to 0$. Here
$\dd([0,T],\cI)$ is endowed with $\bdelta_{T}$.

(b) For any finite subset  $\{(t_1,x_1),\dots,(t_p,x_p)\}\subset 
(0,\infty)\times \rr$, 
$(D^\la_{t_i}(x_i))_{i=1,\dots,p}$ goes in law to 
$(D_{t_i}(x_i))_{i=1,\dots,p}$ in $\cI ^p$, $\cI$ being endowed with $\bdelta$.
\end{theo}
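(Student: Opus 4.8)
The plan is to identify the $LFF(\beta)$-process with the image of a pair of independent Poisson measures $(\pi_M,\pi_S)$ under the deterministic, measurable graphical construction of Subsection \ref{dflffbeta}, to extract from the $FF(\mu_S,\mu_M^\la)$-process a discrete analogue of this pair, to prove that the discrete analogue converges in law to $(\pi_M,\pi_S)$, and to transfer this through an a.s.\ continuity property of the construction. Throughout we fix $T>0$ and $A>0$; it suffices to prove (a) for $x_1,\dots,x_p\in[-A,A]$. Since the $LFF(\beta)$-process is built independently on the boxes $[\chi_i,\chi_{i+1}]$, and occupied components of the $FF$-process interact only through occupied sites, a preliminary step reduces everything to a single finite box of the $FF$-process: using Proposition \ref{gcgff} together with the estimate $\nu_S((\ba_\la T,\infty))\simeq\la\ba_\la T^{-\beta}$ (hence $\simeq T^{-\beta}$ all-time-vacant sites per macroscopic unit length), one shows that, after enlarging the surrounding box if necessary, there are with high probability sites of $\zz$ on both sides of $\lb-\lfloor\bn_\la A\rfloor,\lfloor\bn_\la A\rfloor\rb$ staying vacant throughout $[0,\ba_\la T]$, and standard geometric-tail estimates show that no cluster meeting $[-A,A]$ becomes longer than $C\bn_\la$ (up to a logarithmic factor) with high probability.

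From the discrete data I would form: the rescaled match measure $\pi_M^\la$, obtained by putting a unit mass at $(t^M/\ba_\la,\,i/\bn_\la)$ for each jump time $t^M$ of $N^M(i)$; and, for a threshold $\delta>0$, the rescaled long-gap measure $\pi_S^{\la,\delta}$, obtained by putting a unit mass at $(t^S/\ba_\la,\,i/\bn_\la,\,g/\ba_\la)$ for each inter-seed gap of $N^S(i)$ (including the one straddling $0$) of length $g\ge\ba_\la\delta$ ending at time $t^S$. The core estimate is that $(\pi_M^\la,\pi_S^{\la,\delta})$ converges in law to $(\pi_M,\pi_S^\delta)$, the latter being the restriction of $\pi_S$ to third coordinate $\ge\delta$. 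For $\pi_M^\la$ this is the convergence of a superposition of sparse renewal processes: by $(H_M)$ two matches fall on a given site during $[0,\ba_\la T]$ with probability $o(\la\ba_\la)$, and $\bn_\la\nu_M^\la((0,\ba_\la t))\to t$, so the Laplace functional converges to that of a Poisson measure with intensity $dt\,dx$. For $\pi_S^{\la,\delta}$, elementary renewal theory and stationarity give that the long-gap right-endpoints form a stationary point process of intensity $\sim m_S^{-1}\mu_S((\ba_\la l,\infty))$ per unit of original time for gaps longer than $\ba_\la l$; by $(H_S(\beta))$ (so $\nu_S((\cdot,\infty))$ is regularly varying of index $-\beta$, Lemma \ref{hsimplieshsbeta} and the appendix) and the monotone density theorem, using that $m_S^{-1}\mu_S((t,\infty))$ is the density of $\nu_S((t,\infty))$, one gets $\mu_S((\ba_\la l,\infty))\sim m_S\beta(\ba_\la l)^{-1}\nu_S((\ba_\la l,\infty))$; multiplying by $\ba_\la$ and by $\bn_\la\simeq1/(\la\ba_\la)$ and using $\la\ba_\la=\nu_S((\ba_\la,\infty))$ with $\nu_S((\ba_\la l,\infty))/\nu_S((\ba_\la,\infty))\to l^{-\beta}$, the intensity of such long gaps per unit of rescaled space and time tends to $\beta l^{-\beta-1}=\int_l^\infty\beta(\beta+1)u^{-\beta-2}\,du$. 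Independence of the seed processes across sites, plus negligibility of two long gaps on one site, upgrades this to convergence to $\pi_S^\delta$, and independence of the $N^M$'s from the $N^S$'s gives joint convergence to independent $(\pi_M,\pi_S^\delta)$.

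On a fixed box and with gaps $\ge\delta$, the construction of Subsection \ref{dflffbeta} is a finite algorithm: it sees finitely many matches and finitely many long gaps, fixes finitely many barriers, and treats the matches in increasing time order. By Remark \ref{ribeta} its output $D^\delta$ coincides with $D=(C(Y_t,x))$ off the short time intervals following matches, so $\bdelta_T(D^\delta,D)\le C\delta$ and $D^\delta\to D$ as $\delta\to0$; and this algorithm depends continuously on the positions, times and lengths of the relevant marks except when two match times coincide, a match time equals a gap endpoint, or a match lands on a cluster boundary — a $(\pi_M,\pi_S^\delta)$-null event — so $(\pi_M,\pi_S^\delta)\mapsto D^\delta|_{[0,T]\times[-A,A]}$ is a.s.\ continuous for $\bdelta_T$. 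It then remains to compare the true rescaled clusters $D^\la$ with the clusters $\widehat D^{\la,\delta}$ obtained by feeding $(\pi_M^\la,\pi_S^{\la,\delta})$ into the same algorithm: the discrepancies are that a burnt zone in the $FF$-process regenerates not instantaneously but in a time bounded by its next seed gap, which is $<\ba_\la\delta$ whenever we have discarded it, shifting cluster endpoints in time by $O(\delta)$; that the sites made vacant by the discarded short gaps have density $o_\la(1)$ per unit macroscopic length at times not immediately after a fire; and that matches on already-vacant sites, or two matches on one cluster very close in time, have negligible effect. Hence $\E[\bdelta_T(D^\la,\widehat D^{\la,\delta})]\le C\delta+o_\la(1)$. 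Combining a Skorokhod coupling of $(\pi_M^\la,\pi_S^{\la,\delta})\to(\pi_M,\pi_S^\delta)$, the a.s.\ continuity ($\widehat D^{\la,\delta}\to D^\delta$), the bound just stated, and $\bdelta_T(D^\delta,D)\to0$, and letting first $\la\to0$ then $\delta\to0$, yields (a). Part (b) follows from (a) together with the a.s.\ right-continuity of $s\mapsto D_s(x)$ at each fixed $t_i$ (no match at $t_i$ a.s.) and a uniform-in-$\la$ equicontinuity estimate $\sup_\la\E[\int_{t_i}^{t_i+\e}\bdelta(D^\la_s(x),D^\la_{t_i}(x))\,ds]\to0$ as $\e\to0$; note that, in contrast with Theorem \ref{converge1}, the space scale $\bn_\la$ is adapted to all times, so $t=1$ plays no special role here.

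I expect the main obstacle to lie in the last comparison step: rigorously coupling the long-range fire dynamics of the discrete model to the finite graphical algorithm inside the box, quantifying how a discrete cluster regenerating after a fire shadows the interval produced by the construction, and ruling out, uniformly in $\la$, unexpected discrete fire chains that rearrange clusters in a way the finite algorithm does not reproduce. The point-process convergence of the second paragraph is the regular-variation heart of the argument but is essentially routine given the appendix on regularly varying functions, and the continuity of the finite algorithm is a soft combinatorial verification.
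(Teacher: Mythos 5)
Your overall architecture is in fact the same as the paper's: keep only inter-seed gaps of length at least $\ba_\la\delta$, identify the rescaled matches and long gaps with $\pi_M$ and with $\pi_S$ restricted to $\{l\ge\delta\}$, compare the discrete dynamics pathwise with the truncated graphical construction inside a box delimited by all-time-vacant sites, and let $\la\to0$ before $\delta\to0$. Your intensity computations reproduce Lemma \ref{estimus} and Lemma \ref{ilestbalezefourniax}, and your weak-convergence-plus-Skorokhod packaging is an acceptable substitute for the explicit site-by-site couplings of Propositions \ref{coupling1} and \ref{coupseed}. The genuine gap is exactly the step you flag as the main obstacle: the bound $\E[\bdelta_T(D^\la,\widehat D^{\la,\delta})]\le C\delta+o_\la(1)$ cannot be obtained by the additive per-source accounting you give, because the fire dynamics is recursive. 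What a match destroys is the occupied cluster at that instant, which depends on all earlier fires; if the discrete process and the truncated algorithm disagree, even at a single boundary site, about the extent of one ignited cluster, then the zones made vacant differ, the long-gap barriers that are subsequently reactivated differ, and the discrepancy can propagate through later fires into macroscopic disagreement at times far from the exceptional set. One therefore has to prove \emph{exact} agreement (up to one lattice site) of the ignited clusters at every match, not an $O(\delta)$ error per fire. The paper secures this by fixing in advance separation constants ($c_M$, $c_S$, $\delta$, $\alpha$) so that, on a high-probability event, fire times are well separated from each other, from the fixed times $t_k$, and from the finitely many long-gap endpoints, and then by an induction over the match times (the properties $(H_k)$ and $(H_k^*)$ in Step 5 of the proof of Theorem \ref{converge2}) which maintains the exact identity $\eta^\la_{\ba_\la t}(i)=\inf_{x\in i_\la}Y_t(x)$ for all sites of the box and all $t$ outside the set $\cS$; only the Lebesgue measure of $\cS$ then enters the $\bdelta_T$ estimate. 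Without such a stabilization mechanism your comparison bound is unsupported.

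A secondary, repairable, weakness concerns (b): deducing it from (a) via the asserted uniform equicontinuity $\sup_\la\E[\int_{t_i}^{t_i+\e}\bdelta(D^\la_s(x_i),D^\la_{t_i}(x_i))\,ds]\to0$ requires uniform-in-$\la$ control of cluster lengths and merge rates, which is not available at this point (the cluster-size estimates of Corollary \ref{co2} are a consequence of the theorem, not an input). The paper avoids this entirely by adding the requirement that the $t_k$ lie outside $\cS$ to the favorable event, so that the same pathwise estimate yields (a) and (b) simultaneously; you should do likewise.
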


\subsection{Heuristic arguments}

We assume below that $\la>0$ is very small.

\vip

{\it 0. Matches.} Exactly as in the case $\beta=\infty$, 
we hope that matches will fall, in our scales, according
to a Poisson measure with intensity $1$ (in mean, $1$ match per
unit of time per unit of space, which corresponds to $1$ match per
$\bn_\la$ sites during $[0,\ba_\la]$ in the original scales).

\vip

{\it 1. Occupation of vacant zones.} Consider a zone $[a,b]$ (or
$\lb \lfloor a \bn_\la\rfloor,\lfloor b \bn_\la\rfloor\rb$ in the original
scales). At time $0$, this zone is completely empty.
In this zone, each site will be empty at time $t$ if no seed has
fallen during $[0,t]$ (or $[0,\ba_\la t]$ in the original scale).
This occurs with probability $\nu_S((\ba_\la t,\infty))$.
Thus in the absence of fires, 
the number of empty sites in $[a,b]$ at time $t$ follows
a binomial distribution with parameters $(b-a)\bn_\la$ and 
$\nu_S((\ba_\la t,\infty))$. Recalling (\ref{ala}), (\ref{nla}) and 
$(H_S(\beta))$,
we see that $(b-a)\bn_\la\nu_S((\ba_\la t,\infty))
\simeq (b-a)\nu_S((\ba_\la t,\infty))/\nu_S((\ba_\la,\infty))
\to (b-a) t^{-\beta}$.
Hence the number of empty sites in $[a,b]$ at time $t$ follows
approximately a Poisson law with parameter $(b-a)t^{-\beta}$
(when neglecting fires).

\vip

The link with the $LFF(\beta)$-process is simple: for any $a<b$ and
any $t>0$, the random variable 
$\pi_S(\{(s,x,l)\, : \; x\in [a,b], s>t, s-l<0)\})$
follows a Poisson law with parameter
$\int_{t}^\infty ds \int_a^b dx \int_{s}^\infty \beta(\beta+1)l^{-\beta-2}dl
=(b-a)t^{-\beta}$.

\vip

{\it 2. Fires.} Now when a match falls at some place, this destroys
the whole occupied cluster. The destroyed cluster is then treated as in 
Point 1.

\subsection{Cluster-size distribution}

We aim here to estimate the law of the occupied cluster around $0$.
No phase transition occurs here.

\begin{cor}\label{co2}
Let $\beta \in (0,\infty)$. Assume $(H_M)$ and $(H_S(\beta))$. Recall that
$\ba_\la$ and $\bn_\la$ were defined in (\ref{ala}) and (\ref{nla}). 
Consider the $LFF(\beta)$-process $(Y_t(x))_{t\geq 0, x\in \rr}$
and the associated $(D_t(x))_{t\geq 0, x\in \rr}$. 
For each $\la\in (0,1]$, let $(\eta^\la_t(i))_{t\geq 0,i\in\zz}$ be a 
$FF(\mu_S,\mu_M^\la)$-process. 
There are
some constants $0<c_1<c_2$ and $0<\kappa_1<\kappa_2$
such that for all $t\geq 1$, all $B>0$,
$$
\lim_{\la\to 0} \Pr\left[|C(\eta^\la_{\ba_\la t},0)| \geq B\bn_\la \right]=
\Pr\left[|D_t(0)| \geq B \right] \in [c_1 e^{-\kappa_2 B}, c_2 e^{-\kappa_1 B}]. 
$$
\end{cor}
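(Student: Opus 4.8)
The statement has two parts. The first is the identity $\lim_{\la\to 0}\Pr[|C(\eta^\la_{\ba_\la t},0)|\geq B\bn_\la]=\Pr[|D_t(0)|\geq B]$, for which the plan is to invoke Theorem \ref{converge2}(b). Since $D^\la_t(0)=\bn_\la^{-1}C(\eta^\la_{\ba_\la t},0)$ by (\ref{dlambda}), one has $\{|C(\eta^\la_{\ba_\la t},0)|\geq B\bn_\la\}=\{|D^\la_t(0)|\geq B-\bn_\la^{-1}\}$, and Theorem \ref{converge2}(b) (applied with $p=1$ and $(t_1,x_1)=(t,0)$, which is allowed since $t\geq 1>0$) gives $D^\la_t(0)\to D_t(0)$ in law in $(\cI\cup\{\emptyset\},\bdelta)$. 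As $I\mapsto|I|$ is $\bdelta$-Lipschitz and $\bn_\la\to\infty$, this yields $\Pr[|D^\la_t(0)|\geq B-\bn_\la^{-1}]\to\Pr[|D_t(0)|\geq B]$ at every $B>0$ which is a continuity point of the law of $|D_t(0)|$. To obtain the identity for all $B>0$, it remains to check that this law has no atom on $(0,\infty)$: $t$ being fixed, a.s.\ $\pi_M(\{t\}\times\rr)=0$, so $\{Y_t=0\}$ is discrete and, on $\{Y_t(0)=1\}$, the endpoints $l(Y_t,0)$ and $r(Y_t,0)$ are positions of marks of $\pi_S$, whose joint law is atom-free.

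The second part is the two-sided estimate $\Pr[|D_t(0)|\geq B]\in[c_1e^{-\kappa_2 B},c_2e^{-\kappa_1 B}]$, uniformly in $t\geq 1$ and $B>0$, for the $LFF(\beta)$-process of Subsection \ref{dflffbeta}. Using the $x\mapsto-x$ invariance of the construction together with $\{|D_t(0)|\geq B\}\subseteq\{Y_t\equiv 1\text{ on }[0,B/2)\}\cup\{Y_t\equiv 1\text{ on }(-B/2,0]\}$ and $\{Y_t\equiv 1\text{ on }[-B/2,B/2]\}\subseteq\{|D_t(0)|\geq B\}$, matters reduce to bounding $\Pr[Y_t\equiv 1\text{ on }[0,b)]$ above and below by $c\,e^{\mp\kappa b}$, uniformly in $t\geq 1$ and $b\geq 1$ (the range $b<1$ being handled by monotonicity in $b$ and positivity of $\Pr[|D_t(0)|\geq 1]$). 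For the lower bound I would exhibit an explicit favourable event built from the independence of $\pi_M$ and $\pi_S$: no mark of $\pi_M$ in a window $[-2b,2b]\times(t-1,t]$, two flanking sites (one in $(-2b,-b)$, one in $(b,2b)$) kept vacant over $[t-1,t]$ by marks of $\pi_S$ so that no fire coming from outside $[-2b,2b]$ can reach $[-b,b]$, and a $\pi_S$-configuration on $[-b,b]$ over the last unit of time on which $[-b,b]$ regenerates completely; the relevant Poisson intensities are $t$-independent (they come from quantities such as $\int_t^\infty\!\int_{s-(t-1)}^\infty\beta(\beta+1)l^{-\beta-2}\,dl\,ds=1$), and on this event $Y_t\equiv 1$ on $[-b,b]$.

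For the upper bound I would use that matches fall at unit rate. Fixing $\delta>0$ appropriately (one may take $\delta=1$), the event that no mark of $\pi_M$ falls in $[0,b)\times(t-\delta,t]$ already has probability $e^{-\delta b}$. On the complementary event, if $[0,b)$ is nevertheless fully occupied at time $t$, then (by a Poisson concentration estimate, once the density of occupied sites is known to be bounded below over $(t-\delta,t)$) a number of order $\delta b$ of the marks of $\pi_M$ falling there ignite genuine clusters; each such fire produces a point of $[0,b)$ whose last regeneration started after $t-\delta$, and the tail $\beta(\beta+1)l^{-\beta-2}$ of $\pi_S$ forces this point to be vacant at time $t$ with probability at least $1-e^{-\delta^{-\beta}}$. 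Since these points are distinct and $Y_t\equiv 1$ on $[0,b)$ requires all of them to be reoccupied, the contribution of this event is again at most $e^{-c\delta b}$, and adding the two cases gives $\Pr[Y_t\equiv 1\text{ on }[0,b)]\leq c\,e^{-\kappa b}$.

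The step I expect to be the main obstacle is the uniformity in $t\geq 1$. Bounding $D_t(0)$ through the spacing of the permanently vacant sites $\chi_i$ of the graphical construction of Subsection \ref{dflffbeta} only yields a bound that degrades as $t\to\infty$ (those sites become sparse), and, because of the lack of monotonicity stressed in the introduction, $Y_t$ cannot simply be dominated by an $LFF(\beta)$-process restarted from the empty or the full configuration at time $t-1$. One therefore has to establish a genuine equilibrium-type control valid for all $t\geq 1$: that the density of occupied sites of the $LFF(\beta)$-process stays in a fixed interval $[c,1-c]\subset(0,1)$, and that the configuration near $0$ regenerates over bounded time windows uniformly in $t$. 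This is exactly the input feeding the lower bound on the density used in the upper tail estimate, and the control of fires entering a window over the last unit of time — without any assumption on the configuration at time $t-1$ — used in the lower tail estimate; once it is granted, the remainder is Poisson bookkeeping on the graphical construction.
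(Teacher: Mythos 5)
Your first part (the identity of the limits via Theorem \ref{converge2}-(b) and the absence of atoms for $|D_t(0)|$, obtained by localizing to the a.s.\ finitely many relevant marks of $\pi_S$) is correct and is essentially the paper's Step 1. The gap is in the exponential bounds, and it is exactly the point you yourself flag as ``the main obstacle''. The flaw in your lower bound is that a mark $(\tau,X,L)$ of $\pi_S$ with $\tau>t$ and $\tau-L<t-1$ does \emph{not} keep the site $X$ vacant on $[t-1,t]$: in the graphical construction a dotted segment only records the absence of seeds, and $X$ is vacant at time $s$ only if that segment has been overwritten in plain, i.e.\ only if $X$ was involved in a fire (or time $0$) during $[\tau-L,s]$. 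The ``crosses time $0$'' route has intensity $t^{-\beta}$ per unit length, hence is not uniform in $t\geq 1$, and whether a fire hits $X$ at a suitable earlier time cannot be imposed as an extra independent Poisson condition -- this is precisely the non-monotonicity problem. Consequently your favourable event does not imply $Y_t\equiv 1$ on $[-b,b]$: a match falling outside $[-2b,2b]$ during $(t-1,t]$ may ignite a cluster that runs through your (occupied) flanking sites into $[-b,b]$ and leaves vacant points there at time $t$. Symmetrically, your upper bound needs each match in $[0,b)\times(t-\delta,t]$ to burn a cluster of length of order $1$ (so that the burnt zone contains, with probability bounded below, a $\pi_S$-mark producing a vacancy surviving to time $t$); this is the uniform-in-$t$ density/equilibrium control that you postulate but do not prove, and it is of the same difficulty as the statement itself.

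The paper closes this hole without any equilibrium control, by a local, pathwise dichotomy which is the key lemma missing from your proposal. For each unit box it introduces an event $\Omega_{t,a}$ -- exactly one match, falling in a central sub-box of $[t-1,t]\times[a,a+1]$, two $\pi_S$-segments spanning $[t-1,t]$ located near the two edges of $[a,a+1]$, and all other spanning segments in the box of length $<1/4$ -- whose probability $p>0$ does not depend on $t\geq 1$ nor on $a$, and proves that on $\Omega_{t,a}$, \emph{whatever} the configuration entering the box, one of the two designated sites $X_g,X_d$ is vacant throughout $[t-1/2,t]$: if $[X_g,X_d]$ is connected just before the match, the match vacates both endpoints and their long segments keep them vacant past $t$; if it is disconnected, the gap can only sit at $X_g$ or $X_d$, since all other long-lived gaps in the box have been excluded and any fire entering from outside must have burnt through one of these endpoints. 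Because $\Omega_{t,a}$ depends only on the restriction of $(\pi_M,\pi_S)$ to the box, these events are independent over disjoint boxes, which immediately yields the upper bound $\Pr[|D_t(0)|\geq B]\leq 2(1-p)^{B/2-1}$; and in the lower bound it is $\Omega_{t,-1}\cap\Omega_{t,B}$ that supplies the genuinely vacant sentinels on both sides of $[0,B]$ which your dotted segments cannot provide, so that together with the event ``no match in $[0,B]\times[t-1/2,t]$ and no $\pi_S$-segment spanning $[t-1/2,t]$ inside $[0,B]$'' (of probability $e^{-(1/2+2^\beta)B}$) one gets $[0,B]\subset D_t(0)$. Without this dichotomy, or a proven substitute for it, your argument does not close.
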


\section{Main results when $\beta=0$}\label{mrz}
\setcounter{equation}{0}

\subsection{Definition of the limit process}

In this case, the limiting process is trivial:
we consider a Poisson measure $\pi_S$ on $\rr$ with intensity measure
$dx$ and we put, for all $t\geq 0$, all $x\in \rr$,
$$
Y_t(x)=\indiq_{\{\pi_S(x)=0\}}.
$$
Denote by $\{\chi_i\}_{i\in \zz}$ the marks of $\pi_S$
with the convention that 
$\dots<\chi_{-1}<\chi_0< 0 < \chi_1<\chi_2<\dots$.
Then for all $t\geq 0$, all $i\in \zz$, 
recalling (\ref{defcyx}), $C(Y_t,x)=[\chi_i,\chi_{i+1}]$
for all $x\in (\chi_i,\chi_{i+1})$ and $C(Y_t,\chi_i)=\{\chi_i\}$.
Matches fall according to a Poisson measure $\pi_M(dt,dx)$ 
on $[0,\infty)\times\rr$
with intensity measure $dt dx$. 

\vip
The $LFF(0)$-process $(Y_t(x))_{t\geq 0,x\in \rr}$ is obviously Markov
and the following statement is trivial.

\begin{prop}\label{wpzero}
Let $\pi_S$ be a Poisson measure on $\rr$ with intensity measure $dx$.
There a.s. exists a unique $LFF(0)$-process $(Y_t(x))_{t\geq 0,x\in \rr}$.
It can be simulated exactly on any finite box $[0,T]\times[-n,n]$.
For each $t\geq 0$ and $x\in \rr$, we will denote by $D_t(x)=C(Y_t,x)$
the occupied cluster around $x$ (see (\ref{defcyx})).
\end{prop}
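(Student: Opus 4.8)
The statement is essentially immediate from the explicit formula defining the $LFF(0)$-process, so the plan is simply to record a few almost-sure regularity properties of the driving Poisson measure $\pi_S$ and then read off the four assertions.

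First I would observe that, since $\pi_S$ is a Poisson measure on $\rr$ with intensity measure $dx$, almost surely $\pi_S$ is a purely atomic locally finite measure all of whose atoms have mass $1$, its support $\{\chi_i\}_{i\in\zz}$ has no accumulation point, and $\pi_S((0,\infty))=\pi_S((-\infty,0))=+\infty$. On this almost-sure event the atoms may be enumerated as $\cdots<\chi_{-1}<\chi_0< 0 <\chi_1<\chi_2<\cdots$ with $\lim_{i\to\pm\infty}\chi_i=\pm\infty$, and $Y_t(x)=\indiq_{\{\pi_S(x)=0\}}$ is a well-defined $\{0,1\}$-valued quantity for every $t\geq 0$ and $x\in\rr$, not depending on $t$, with $\{x:\;Y_t(x)=0\}=\{\chi_i\}_{i\in\zz}$ discrete. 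From (\ref{defcyx}) this gives at once $C(Y_t,x)=[\chi_i,\chi_{i+1}]$ for $x\in(\chi_i,\chi_{i+1})$ and $C(Y_t,\chi_i)=\{\chi_i\}$.

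Next, existence follows because the explicitly defined $(Y_t(x))_{t\geq 0,x\in\rr}$ above trivially meets the definition of an $LFF(0)$-process, and uniqueness is immediate: the $LFF(0)$-process is prescribed by the closed-form expression $Y_t(x)=\indiq_{\{\pi_S(x)=0\}}$ (there is no equation to be solved, and $\pi_M$ plays no role), so any two $LFF(0)$-processes driven by the same $\pi_S$ agree identically on the almost-sure event isolated above. One then simply sets $D_t(x)=C(Y_t,x)$, which is well-posed by the same reasoning.

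Finally, for the exact simulation on a box $[0,T]\times[-n,n]$ I would sample $\pi_S$ restricted to $[-n-1,n+1]$: the number of its atoms there is Poisson with parameter $2n+2$, and conditionally on that number the atoms are independent and uniformly distributed on $[-n-1,n+1]$, so this is an exact finite procedure; it determines $Y_t(x)$ and $D_t(x)=C(Y_t,x)$ for every $t\in[0,T]$ and every $x\in[-n,n]$, the unit margin being there only to capture the clusters touching $\pm n$. Since $Y_t$ does not depend on $t$, nothing else is required. There is genuinely no hard step; the only points needing (routine) care are the standard almost-sure properties of $\pi_S$ used above and the boundary-cluster bookkeeping in the simulation --- exactly as the paper already signals by calling the statement trivial.
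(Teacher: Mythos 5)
Your proposal is correct and follows exactly the route the paper intends: the paper offers no proof at all ("the following statement is trivial"), and your write-up simply makes explicit the almost-sure regularity of $\pi_S$ (locally finite, unit atoms, unbounded support on both sides) from which existence, uniqueness and exact simulability of $Y_t(x)=\indiq_{\{\pi_S(x)=0\}}$ are read off. The only imprecise point is the side-claim that sampling $\pi_S$ on $[-n-1,n+1]$ determines $D_t(x)$ for all $x\in[-n,n]$: the first atom beyond $\pm n$ may lie outside a unit margin, so to get the boundary clusters one should instead sample successive spacings until passing $\pm n$ (still an exact, a.s. finite procedure); this does not affect the proposition itself, which only asserts exact simulation of $Y$ on the box.
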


Of course, fires do not appear in the construction. Hence
it is not necessary to introduce $\pi_M$. However, it allows us to
keep in mind that fires do occur. But these fires generate empty zones
that are immediately regenerated.
The main idea is that in our scales: 
on the great majority of sites, seeds fall almost continuously
for all times; but there are {\it rare} sites where the first seed 
will never fall.
Hence when there is a fire, this always concerns a zone where seeds fall
{\it continuously}, so that one does not observe the fire at the limit.
A typical path of the $LFF(0)$-process is commented
on Figure \ref{figLFFzero}.

\begin{figure}[b] 
\fbox{
\begin{minipage}[c]{0.95\textwidth}
\centering
\includegraphics[width=10cm]{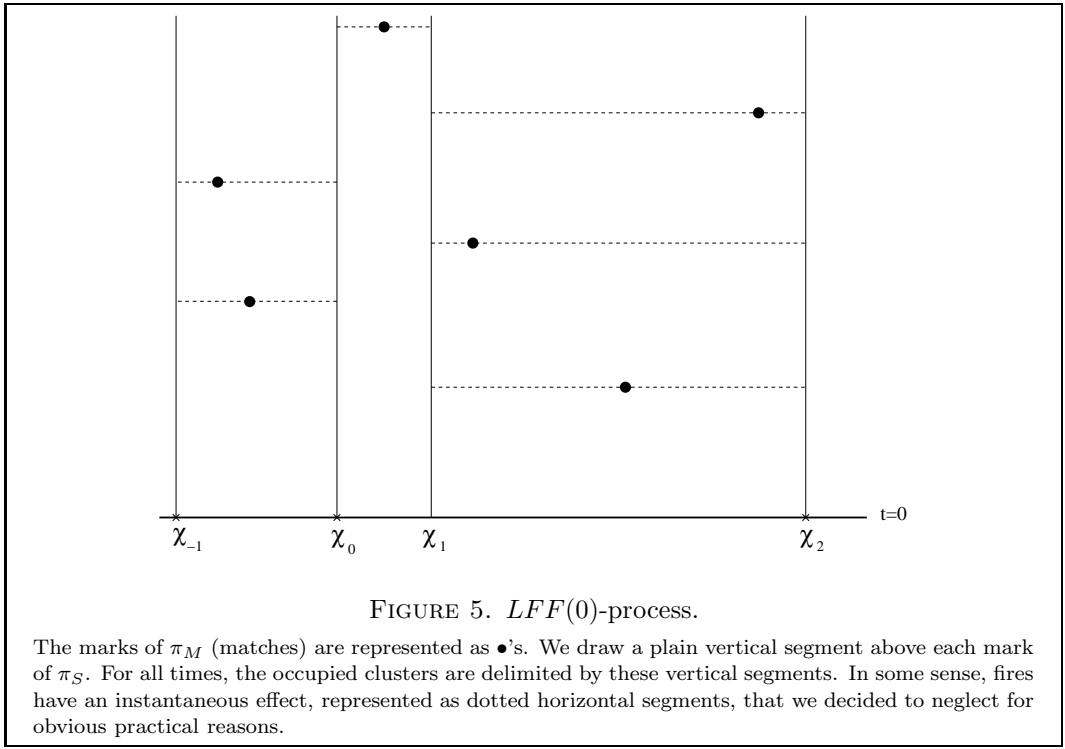}
\caption{$LFF(0)$-process.}
\label{figLFFzero}
\vip
\parbox{13.3cm}{
\footnotesize{
The marks of $\pi_M$ (matches) are represented as $\bullet$'s.
We draw a plain vertical segment above each mark of $\pi_S$.
For all times, the occupied clusters are delimited by these vertical
segments. In some sense,  
fires have an instantaneous effect,
represented 
as dotted horizontal segments, that we decided to neglect for 
obvious practical reasons. 
}}
\end{minipage}
}
\end{figure}

\subsection{The convergence result}
We now state our last main result, using Subsection
\ref{nocv}.

\begin{theo}\label{converge3}
Assume $(H_M)$ and $(H_S(0))$.
Consider, for each $\la\in (0,1]$, the process
$(D^\la_t(x))_{t\geq 0,x\in\rr}$ associated to
the $FF(\mu_S,\mu_M^\la)$-process,
see Definition \ref{gff} and (\ref{dlambda}).
Consider also the $LFF(0)$-process $(Y_t(x))_{t\geq 0, x\in \rr}$ 
and the associated $(D_t(x))_{t\geq 0, x\in \rr}$.

(a) For any $T>0$, any finite subset $\{x_1,\dots,x_p\}\subset \rr$, 
$(D^\la_t(x_i))_{t\in [0,T],i=1,\dots,p}$ goes in law to 
$(D_t(x_i))_{t\in [0,T],i=1,\dots,p}$ in $\dd([0,T],\cI)^p$
as $\la \to 0$. Here
$\dd([0,\infty),\cI)$ is endowed with $\bdelta_{T}$.

(b) For any finite subset  $\{(t_1,x_1),\dots,(t_p,x_p)\}\subset 
(0,\infty)\times \rr$, 
$(D^\la_{t_i}(x_i))_{i=1,\dots,p}$ goes in law to 
$(D_{t_i}(x_i))_{i=1,\dots,p}$ in $\cI ^p$, $\cI$ being endowed with $\bdelta$.
\end{theo}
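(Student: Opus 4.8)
The plan is to exploit the fact that the $LFF(0)$-limit is trivial: fires leave no macroscopic trace, so everything reduces to understanding the rescaled set of \emph{deep holes}, that is, of the sites $i\in\zz$ on which no seed has fallen up to the accelerated observation time. As in the proofs of Theorems \ref{converge1}, \ref{convergebs} and \ref{converge2}, I would first localize: by the graphical construction behind Proposition \ref{gcgff}, the interaction in the $FF(\mu_S,\mu_M^\la)$-process cannot cross a site that remains vacant on $[0,\ba_\la T]$, and such sites are a.s.\ infinitely many; it thus suffices to work in a finite box $\lb-\lfloor A\bn_\la\rfloor,\lfloor A\bn_\la\rfloor\rb$, to prove convergence there, and to check that the random boundary clusters are negligible in the limit. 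This is the framework already in place in Sections \ref{exunidisc}--\ref{prcsd}, which I would reuse.

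The first ingredient is the convergence of the rescaled deep holes to $\pi_S$. For the seeds-only dynamics a site $i$ is vacant at time $\ba_\la t$ iff $N^S_{\ba_\la t}(i)=0$, an event of probability $\nu_S((\ba_\la t,\infty))$ that is independent over $i$; since $\ba_\la,\bn_\la$ were defined so that $\bn_\la\nu_S((\ba_\la,\infty))\to1$ and $\nu_S((\cdot,\infty))$ is slowly varying under $(H_S(0))$, one gets $\bn_\la\nu_S((\ba_\la t,\infty))\to1$ for every $t>0$. Hence $\{\,i/\bn_\la:\ N^S_{\ba_\la t}(i)=0\,\}$ converges in law to a Poisson process of intensity $dx$, jointly over finitely many times: the deep-hole sets are nested in $t$ and, once rescaled, asymptotically constant because $\bn_\la\big(\nu_S((\ba_\la s,\infty))-\nu_S((\ba_\la t,\infty))\big)\to0$ for $0<s<t$. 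Denoting by $\tilde\eta^\la$ the fire-free process driven by the same seeds, its vacant set is exactly the deep-hole set, so its rescaled clusters $\bn_\la^{-1}C(\tilde\eta^\la_{\ba_\la t},\lfloor\bn_\la x\rfloor)$ converge to the $LFF(0)$-clusters $D_t(x)=C(Y_t,x)$ of Proposition \ref{wpzero}; this identifies the limit.

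The main work, and the step I expect to be the main obstacle, is to show that the fires are asymptotically invisible on the macroscopic scale, i.e.\ that $\bdelta_{T}\big(D^\la_\cdot(x_j),\ \bn_\la^{-1}C(\tilde\eta^\la_{\ba_\la\cdot},\lfloor\bn_\la x_j\rfloor)\big)\to0$ in probability for each $x_j$. One inclusion is automatic: coupling $\eta^\la$ and $\tilde\eta^\la$ through the same seeds gives $\eta^\la\le\tilde\eta^\la$, hence $C(\eta^\la_{\ba_\la t},i)\subseteq C(\tilde\eta^\la_{\ba_\la t},i)$, and $|C(\eta^\la_s,i)|$ is stochastically dominated by a geometric variable of parameter $\nu_S((s,\infty))$. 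The hard direction is that the ``extra'' vacant sites of $\eta^\la$ --- those emptied by a fire and not yet re-occupied at time $\ba_\la t$ --- do not split a macroscopic cluster. The key estimate to establish is that the probability that a fixed site is such an extra vacant site at time $\ba_\la t$ is $o(1/\bn_\la)$; combined with a union bound over the $O(\bn_\la)$ sites of a macroscopic cluster and with tightness of the number of fires hitting a given cluster on $[0,\ba_\la T]$ (a first-moment bound: the expected number of matches falling in a cluster on $[0,\ba_\la T]$ is of order $\la\ba_\la T$ times the cluster size), this makes the effect of fires vanish. The delicate point is the re-occupation estimate after a fire: one must combine the cluster-size control above with the finiteness of $m_S=\int_0^\infty t\,\mu_S(dt)$ and with Karamata-type asymptotics for the slowly varying tail $\nu_S((\cdot,\infty))$ --- the material of the appendix on regularly varying functions --- to show that a burned zone re-fills fast enough on the scale $\ba_\la$. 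I expect this estimate to take up most of the argument, though it should be lighter than its analogues in the cases $\beta=\infty$ and $\beta=BS$, where one must moreover track microscopic densities and the heights of barriers.

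Lastly, the matches rescale to $\pi_M$: outside an event of vanishing probability the number of matches in $[0,\ba_\la T]\times\lb0,\bn_\la\rb$ is Binomial with parameters $\bn_\la$ and $\nu_M^\la([0,\ba_\la T])$, and $\bn_\la\nu_M^\la([0,\ba_\la T])=\bn_\la\la\int_0^{\ba_\la T}\mu_M^1((\la s,\infty))\,ds\to T$ under $(H_M)$, so the rescaled match point process converges to a Poisson measure of intensity $dt\,dx$; $\pi_M$ does not enter the $LFF(0)$-dynamics but carrying it along makes the limiting statement a joint one to which one can pass harmlessly. Assembling the three ingredients inside the finite-box reduction gives part (a) for the metric $\bdelta_{T}$ on $\dd([0,T],\cI)^p$, and part (b) --- the fixed-time marginals at times $t_i>0$ --- follows along the same lines, the estimates of the third ingredient being applied at the single times $\ba_\la t_i$ rather than uniformly on $[0,\ba_\la T]$.
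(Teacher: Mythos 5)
Your overall plan (compare with the fire-free process driven by the same seeds, show the rescaled deep holes converge to $\pi_S$, then argue that fires leave no trace) is a reasonable route, but the step you yourself flag as the heart of the matter is precisely where the proposal has a genuine gap, and the justification you sketch for it does not capture the actual mechanism. You claim that the probability that a fixed site is an ``extra vacant'' site at time $\ba_\la t$ is $o(1/\bn_\la)$, to be obtained from ``finiteness of $m_S$ and Karamata-type asymptotics showing a burned zone re-fills fast enough.'' Two problems. First, the naive version of this estimate is false at the required order: if you condition on the site being burned at a (random) time $\ba_\la s$ and simply invoke the stationary residual time, the probability of no seed during $(\ba_\la s,\ba_\la t]$ is of order $\nu_S((\ba_\la(t-s),\infty))\sim\la\ba_\la\sim 1/\bn_\la$ under $(H_S(0))$, i.e.\ exactly the critical order, not $o(1/\bn_\la)$. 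The gain of a factor $o(1)$ comes only from the requirement that a seed fell \emph{before} the burn (the site had to be occupied), via the slow-variation cancellation $\nu_S((\ba_\la u,\infty))-\nu_S((\ba_\la t,\infty))=o(\la\ba_\la)$ for $0<u<t$; you never identify this cancellation, and without it the union bound over the $O(\bn_\la)$ sites of a cluster does not close. Second, the burn time is not an exogenous time: whether and when site $i$ burns depends on the seed process at $i$ itself (the site must be occupied and connected to the match), so one cannot directly apply stationarity of the residual; one has to decompose over the matches (which are independent of the seeds), confine the possible ignition points to the deep-hole-delimited interval around $i$ (which is independent of the seed process \emph{at} $i$), and only then run the renewal computation. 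None of this is in the proposal, and it is exactly the technical content that a complete proof must supply.

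For comparison, the paper's proof avoids the fixed-time per-site estimate altogether. Proposition \ref{csz} couples the seeds with $\pi_S$ through a per-site dichotomy depending only on that site's own seed process: with probability $1-o(\la\ba_\la)$ per site, either no seed at all falls during $[0,\ba_\la T]$ (a deep hole, matched with a mark of $\pi_S$), or a seed falls in every window of length $\ba_\la\delta$; the residual event has probability $o(\la\ba_\la)$ by the elementary estimates of Lemmas \ref{estimusz} and \ref{seedszero} (no Karamata integral theorem is needed), and the union bound over $O(\bn_\la)$ sites is clean precisely because the good event involves no matches and no neighbouring sites. Combined with the match coupling of Proposition \ref{coupling1} and a localization between two deep holes surrounding the $x_k$, the rest of the argument is deterministic on the coupled event: outside the exceptional time set $\cS=\cup_{s\in\cT_M}[s,s+\delta]$, every non-deep-hole site between the two flanking deep holes has received a seed since the last match, so $\bdelta(D^\la_t(x_k),D_t(x_k))\leq 4/\bn_\la$, while on $\cS$ one uses the crude bound $2A$ and $|\cS|\leq K\delta$; the fixed times $t_k$ a.s.\ avoid $\cS$, giving (a) and (b) simultaneously. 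Your route could in principle be completed, but the missing estimate would essentially force you to reconstruct this per-site dichotomy (or an equivalent decomposition over match times), so as written the proposal defers rather than proves the main point.
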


\subsection{Heuristic arguments}

The only difference with the case $\beta\in (0,\infty)$ is the following.
In some sense, for each site $i$, in our scales,
either seeds fall {\it continuously}
on $i$, or the first seed never falls on $i$. 
A first hint for this is
the following.

\vip

Consider a zone $[a,b]$. At time $0$, this zone is completely vacant.
Fix $T>0$. Then in the absence of fires,
the number of vacant sites in $[a,b]$ at time $T$
(or in $\lb \lfloor a \bn_\la\rfloor,\lfloor b \bn_\la\rfloor\rb$ 
at time $\ba_\la T$ 
in the original scales)
follows a binomial distribution with parameters $(b-a)\bn_\la$
and $\nu_S((\ba_\la T,\infty))$. Observe now that for any value of $T>0$,
using $(H_S(0))$, (\ref{ala}) and (\ref{nla}),
$(b-a)\bn_\la\nu_S((\ba_\la T,\infty))\simeq (b-a)\nu_S((\ba_\la T,\infty))
/ \nu_S((\ba_\la ,\infty)) \to (b-a)$.
Hence the number of sites that are still vacant at time $T$ 
follows approximately a Poisson
distribution with parameter $(b-a)$. Since this parameter does not decrease
with $T$, this means that in our scales, sites are either immediately occupied
or vacant forever.

\subsection{Cluster-size distribution}

Since the $LFF(0)$-process is very simple, we obtain of course some 
more precise information on the asymptotic cluster-size distribution.

\begin{rem}\label{co3}
Assume $(H_M)$ and $(H_S(0))$.
For each $\la\in (0,1]$, let a $FF(\mu_S,\mu_M^\la)$-process
$(\eta^\la_t(i))_{t\geq 0,i\in\zz}$ be given,
see Definition \ref{gff}.
Consider the $LFF(0)$-process $(Y_t(x))_{t\geq 0, x\in \rr}$ 
and the associated $(D_t(x))_{t\geq 0, x\in \rr}$.
Then for $t>0$, $B>0$,
$$
\lim_{\la\to 0} \Pr\left[|C(\eta^\la_{\ba_\la t},0)|\geq B\bn_\la \right]
= \Pr\left[|D_t(0)| \geq B \right]= \int_B^\infty xe^{-x}dx=(B+1)e^{-B}.
$$
\end{rem}

No proof is needed here:  $xe^{-x}\indiq_{\{x>0\}}$ is just
the density of $|D_t(0)|=\chi_1-\chi_0$.
The convergence in law of $|C(\eta^\la_{\ba_\la t},0)|/\bn_\la=
|D^\la_t(0)|$ to $|D_t(0)|$ follows from Theorem \ref{converge3}.

\section{On some other modelling choices}\label{discu}
\setcounter{equation}{0}

For $\mu$ a probability law on $(0,\infty)$, we say that 
$N_t=\sum_{n\geq 1} \indiq_{\{X_1+\dots+X_n \leq t\}}$ is a natural renewal process
with parameter $\mu$, or a $NR(\mu)$-process in short, if the random variables
$X_i$ are i.i.d. with law $\mu$. 
When extending the traditional forest fire model (where all the renewal
processes are Poisson), we had to make some choices.

\vip

1. Matches can fall according to some i.i.d.
(i) $SR(\mu^\la_M)$-processes,
(ii) $NR(\mu^\la_M)$-processes.

\vip

2. Seeds can fall according to some i.i.d.
(i) $SR(\mu_S)$-processes,
(ii) $NR(\mu_S)$-processes.

\vip

3. When a fire destroys an occupied component $\lb a,b\rb$, we can
(i) keep the i.i.d. renewal processes governing seeds as they are,
(ii) forget everything and make start some new i.i.d. renewal processes 
governing seeds in the zone $\lb a,b\rb$.

\vip

Recall that when dealing with Poisson processes,
choosing (i) or (ii)  in Points 1, 2, 3 does not change the law of the 
$FF(\mu_S,\mu^\la_M)$-process.

\vip

From the point of view of modelling, it seemed more natural to choose
(i) in Points 1 and 2: this is the only way that time $0$ does not
play a special role. We also decided to choose (i) in Point 3, because
its seems more close to applications. 
Let us discuss briefly what could happen with other choices.

\vip

First, for matches (Point 1), choosing (i) does not play a fundamental role.
Indeed, in our scales,
only $0$ or $1$ match can fall on each site. Thus our results
should extend, without difficulty, to the choice 1-(ii), replacing $(H_M)$
by the assumption $\mu_M^\la((0,t))\simeq \la t$ as $\la \to 0$ 
(together with some additional regularity conditions
if we want a strong coupling as in Proposition \ref{coupling1}).

\vip

Next, we believe that in Point 2, our results should still hold
if choosing (ii) when $\beta=\infty$. 
In the case where $\mu_S$ has a bounded support, one would have
to assume some regularity on $\mu_S$ (the case $\mu_S=\delta_1$ is trivial)
and to modify the dynamics of the $LFF(BS)$-process (the law $\theta_u$
should also depend on time).
Our study would completely break down when $\beta\in [0,\infty)$. In the latter
case, the situation would be quite intricate and we are not
able to predict scales (and, {\it a fortiori}, to predict some limit process).
Let us explain briefly the situation. If $\beta=\infty$, then
$\nu_S$ and $\mu_S$ have a similar tail (see example 2). Thus the
time and space scales we have considered will fit both $\nu_S$ and $\mu_S$.
On the contrary, if $\beta\in [0,\infty)$, the tails of $\mu_S$ and $\nu_S$
are really different. Consequently, if we accelerate time according to $\mu_S$
(in order that for a $NR(\mu_S)$-process, the cluster containing the site 
$0$ burns before time $1$ with a positive probability), 
then this will be too slow for larger times
(when a fire destroys a cluster $(a,b)$, this zone 
$(a,b)$ will never regenerate).

\vip

Finally, in Point 3, we also believe that choosing (ii) would not change 
our results when $\beta=\infty$ and not change too much the situation
when $\mu_S$ has a bounded support.
When $\beta\in [0,\infty)$, we expect
that this would not change time/space scales, but we would have to modify 
the limit processes. For example if $\beta=0$, we expect that 
each time a fire burns a zone $(a,b)$, this zone would 
regenerate immediately, {\it except} in a random number of sites,
that follows a Poisson distribution with parameter $(b-a)$.
Next if $\beta\in (0,\infty)$, then when a fire burns a zone $(a,b)$ at some
time $t$, we would have to pick another  
Poisson measure $\pi_S^{(a,b),t}(ds,dx,dl)$
on $[t,\infty)\times(a,b)\times(0,\infty)$, independent of everything else
and use this Poisson measure above $(a,b)$ instead of the original
$\pi_S$.

\part{Proofs}

\section{Graphical construction of the discrete process}\label{exunidisc}
\setcounter{equation}{0}

The goal of this section is to prove Proposition \ref{gcgff} by
using a {\it graphical construction}.

\begin{preuve} {\it of Proposition \ref{gcgff}.}
Our aim is to prove that for any $T>0$, a.s., the values of the
$FF(\mu_S,\mu_M)$-process $(\eta_t(i))_{t\in [0,T],i\in \zz}$ 
are uniquely determined by $(N^S_t(i),N^M_t(i))_{t\geq 0,i\in \zz}$.
Recall that $\nu_S(dx)=m_S^{-1}\mu_S((x,\infty))dx$ and 
$\nu_M(dx)=m_M^{-1}\mu_M((x,\infty))dx$, where $m_S$ and $m_M$ are the 
expectations of $\mu_S$ and $\mu_M$.
We consider $h_0>0$ such that $\nu_S([2h_0,\infty))>0$ (if $\nu_S$ 
has an unbounded support, any value of $h_0$ is possible) and we
put $c_0=\nu_S((2h_0,\infty)) \nu_M((0,h_0))>0$. We also set
$K=\lfloor T/h_0 \rfloor$.

\vip

For $n\in \zz$, we consider the event $\Omega_{n,T}$, on which
the following conditions are
satisfied:

\vip

(i) $N^S_{h_0}(n)=0$,

\vip 
(ii) $\forall$ $i\in \lb 1,K\rb$, $N^S_{(i+1) h_0}(n+i)=N^S_{(i-1) h_0}(n+i)$,

\vip 
(iii) $\forall$ $i\in \lb 1,K\rb$, $N^M_{i h_0}(n+i)>N^M_{(i-1) h_0}(n+i)$.

\vip

We first observe that for any $n \in \zz$, using the
stationarity of the renewal processes,
$$
\Pr[\Omega_{n,T}]= \nu_S((h_0,\infty)) c_0^K =:c_T>0.
$$

Next we prove that necessarily, 
$$
\Omega_{n,T} \subset \left\{\forall t \in [0,T], \exists i \in \lb n,n+K\rb,
\eta_t(i)=0 \right\}.
$$ 
This is not hard: (i) implies that $\eta_t(n)=0$ for $t \in [0,h_0]$,
since no seed falls on $n$ during this interval. Point (iii) implies
that for $i\in  \lb 1,K\rb$, a match falls on 
$n+i$ during $((i-1)h_0,i h_0]$ and Point (ii) guarantees us that no seed 
falls on $n+i$
during  $((i-1)h_0,(i+1)h_0]$, whence the site $n+i$ is necessarily vacant
during (at least) $(i h_0,(i+1)h_0]$.
Consequently, on $\Omega_{n,T}$, there is always at least one vacant site
in $\lb n,n+K\rb$ during $[0,h_0] \cup (\cup_{i=1,\dots,K} (i h_0,(i+1)h_0])
\supset [0,T]$ (with our choice for $K$, we have $(K+1)h_0\geq T$).

\vip

Hence conditionally on $\Omega_{n,T}$, during $[0,T]$,
the fires starting on the right of $n+K$ do not affect the values
of the forest fire process on the left of $n$;
and the fires starting on the left of $n$ do not affect the values
of the forest fire process on the right of $n+K$.
 
\vip

Since $\Pr[\Omega_{n,T}]=c_T>0$, 
we can find $\dots<n_{-1}<n_0<0<n_1<n_2\dots$ 
such that $\cap_{l\in \zz}\Omega_{n_l,T}$ is realized
(use that $\Omega_{n,T}$ is independent of $\Omega_{m,T}$ if $|m-n|>K$).

\vip

We deduce that for any $i \in \zz$,
the values of $(\eta_t(i))_{t\in [0,T]}$ are entirely determined
by the values of $(N^S_t(j),N^M_t(j))_{t\in [0,(K+1)h_0]}$ for a
finite number of $j$'s, namely (at most) $j\in  \lb n_k,n_l+K \rb$,
where $k<l$ satisfy $n_k+K < i < n_l$.

\vip

We have shown that for any $T>0$, $(\eta_t(i))_{t\geq 0,i\in\zz}$ 
is entirely and uniquely defined by the values of 
$(N^S_t(i),N^M_t(i))_{t\in [0,(K+1)h_0],i\in \zz}$.
\end{preuve}

\section{Convergence of matches}\label{pr1}
\setcounter{equation}{0}

In this section, we consider
any function $\la \mapsto \ba_\la$ bounded from below and such that
$\bn_\la=\lfloor 1/(\la \ba_\la)\rfloor \to \infty$.
For $A>0$, we set $A_\la = \lfloor A \bn_\la \rfloor$ and 
$I_A^\la=\lb -A_\la,A_\la\rb$.
For $i\in \zz$, we set $i_\la=[i/\bn_\la, (i+1)/\bn_\la)$.
The following result will be used to prove our four main theorems.

\begin{prop}\label{coupling1}
Assume $(H_M)$. Let $A>0$ and $T>0$ be fixed. We can find, 
for any $\la \in (0,1]$, 
a coupling between a Poisson measure 
$\pi_M(dt,dx)$ on $[0,\infty)\times \rr$ with intensity measure
$dtdx$
and a family of i.i.d.
$SR(\mu_M^\la)$-processes 
$(N_t^{M,\la}(i))_{i\in \zz,t\geq 0}$ such that for
$$
\Omega^M_{A,T}(\la):=\left\{\forall t\in [0,T],\;
\forall i\in I_A^\la, \;
\Delta N^{M,\la}_{\ba_\la t}(i) \ne 0 \; \hbox{iff} \; 
\pi_M(\{t\}\times i_\la) \ne 0
\right\},
$$
there holds $\lim_{\la\to 0}\Pr[\Omega^M_{A,T}(\la)]=1$.
\end{prop}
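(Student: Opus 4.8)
The statement is a coupling result: for fixed $A,T$, as $\lambda\to 0$ we want a Poisson measure $\pi_M$ on $[0,\infty)\times\rr$ and i.i.d. $SR(\mu_M^\la)$-processes $(N^{M,\la}(i))_i$ living on the same probability space, such that with high probability the marks of $\pi_M$ in the window $[0,T]\times[-A,A]$ correspond exactly (in the rescaled coordinates $t\mapsto \ba_\la t$, $i\mapsto i_\la$) to the jumps of the $N^{M,\la}(i)$, $i\in I_A^\la$, during $[0,\ba_\la T]$. The natural approach is to build both objects from the same source of randomness and then estimate the probability of the (few) ``bad'' configurations.

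\textbf{Step 1: reduce to a Bernoulli array.} For each $i\in\zz$ let $p_i^\la=\nu_M^\la((0,\ba_\la])=\la\int_0^{\ba_\la}\mu_M^1((\la s,\infty))ds$ be the probability that a $SR(\mu_M^\la)$-process has at least one jump in $[0,\ba_\la]$; by $(H_M)$ and $\bn_\la\simeq 1/(\la\ba_\la)$ one has $\bn_\la p_i^\la\to 1$, more precisely $p_i^\la = \la\ba_\la(1+o(1)) = \bn_\la^{-1}(1+o(1))$ uniformly. The key probabilistic facts I would isolate are: (a) the probability that some site $i\in I_A^\la$ receives \emph{two or more} matches during $[0,\ba_\la T]$ is $O(|I_A^\la|\cdot(\la\ba_\la)^2)=O(\bn_\la\cdot\bn_\la^{-2})=O(1/\bn_\la)\to 0$ — this uses that the stationary delay $\nu_M^\la$ and the inter-arrival $\mu_M^\la$ both scale like $\la^{-1}$, so two renewals in a window of length $\ba_\la\ll\lambda^{-1}$ is quadratically unlikely; (b) the probability that $\pi_M$ puts two marks in a single rescaled cell $i_\la$ (which has width $1/\bn_\la$, so area $T/\bn_\la$) is likewise $O(1/\bn_\la)$. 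On the complement of these bad events, both the family $(\indiq_{\{N^{M,\la}_{\ba_\la T}(i)>0\}})_{i\in I_A^\la}$ and the family $(\indiq_{\{\pi_M([0,T]\times i_\la)>0\}})_{i\in I_A^\la}$ are determined by, and essentially \emph{equal to}, a single array of independent Bernoulli variables once we match parameters.

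\textbf{Step 2: the coupling.} I would construct $\pi_M$ and the renewal processes as deterministic functions of i.i.d. uniforms. Concretely: for each $i\in I_A^\la$ draw $U_i\sim\mathcal U([0,1])$ and an independent auxiliary source $\omega_i$. Use $\omega_i$ alone to build the $SR(\mu_M^\la)$-process $N^{M,\la}(i)$ (and for $i\notin I_A^\la$ build it from independent randomness, irrelevant to the event). Now define a \emph{thinned} Poisson-type mark on the cell $i_\la$: put exactly one mark of $\pi_M$ in $[0,T]\times i_\la$ iff $N^{M,\la}_{\ba_\la T}(i)$ has exactly one jump (using $\omega_i$), with its time taken to be (the rescaling of) that jump time and its location uniform in $i_\la$ driven by $U_i$; handle the exceptional cases (zero or $\ge 2$ jumps) by a standard completion so that the resulting point process is \emph{exactly} Poisson with intensity $dtdx$ on $[0,T]\times I_A^\la$ — this is the usual trick of coupling a Bernoulli$(p)$ to a Poisson$(\lambda')$ count with $\lambda'$ close to $p$, done cell by cell, where the discrepancy contributes only to the bad event. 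The point is that the Bernoulli array $(\indiq_{\{N^{M,\la}_{\ba_\la T}(i)>0\}})_i$ has success probability $p_i^\la$ while the target Poisson cell-occupancy array has ``at least one'' probability $1-e^{-T/\bn_\la}$; since $|p_i^\la-(1-e^{-T/\bn_\la})|=O(1/\bn_\la^2)$ summed over $|I_A^\la|=O(\bn_\la)$ cells gives total variation $O(1/\bn_\la)$, a maximal coupling of the two arrays agrees outside an event of probability $O(1/\bn_\la)$. On $\Omega^M_{A,T}(\la)$ — the intersection of ``no double renewals'', ``no double Poisson marks in a cell'', and ``Bernoulli arrays agree'' — the equivalence $\Delta N^{M,\la}_{\ba_\la t}(i)\neq 0 \iff \pi_M(\{t\}\times i_\la)\neq 0$ holds by construction (we literally placed the Poisson mark at the rescaled renewal time), and $\Pr[\Omega^M_{A,T}(\la)]\geq 1-O(1/\bn_\la)\to 1$.

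\textbf{Main obstacle.} The genuinely delicate point is the \emph{temporal} matching: it is not enough that the occupancy \emph{patterns} agree, we need the single Poisson mark's time to coincide with the single renewal jump time within cell $i_\la$. The clean way is to not use a generic Poisson measure but to \emph{define} $\pi_M$ on $I_A^\la$ from the renewals as above, then argue it is Poisson on the window; extending it to a genuine Poisson measure on all of $[0,\infty)\times\rr$ agreeing with this on $[0,T]\times I_A^\la$ is routine (add an independent Poisson measure on the complement, adjusted by the standard thinning/superposition identities). One must be a little careful that the conditional law of the renewal jump time given exactly one jump in $[0,\ba_\la]$, rescaled by $1/\ba_\la$, is \emph{not} uniform on $[0,T']$ — but this does not matter, because the definition of $\Omega^M_{A,T}(\la)$ only asks for the \emph{equivalence of having a mark}, at the same time $t$, and we are free to take the Poisson mark's time to be whatever the renewal time is. So the ``intensity $dtdx$'' requirement only constrains the \emph{count} distribution per cell and the independence across cells, both of which are arranged by the cell-wise Bernoulli-to-Poisson coupling. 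Apart from this bookkeeping, everything reduces to the two $O(1/\bn_\la)$ second-moment estimates in Step 1, which follow directly from $(H_M)$ and the stationarity of $SR(\mu_M^\la)$ recalled in Subsection~\ref{srp}.
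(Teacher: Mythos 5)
There is a genuine gap, and it sits exactly at the point you wave away in your ``Main obstacle'' paragraph. You claim that the requirement ``intensity $dtdx$'' only constrains the per-cell count distribution and the independence across cells, so that you may place the single Poisson mark of the cell $[0,T]\times i_\la$ at (the rescaling of) the renewal jump time. This is false: for a Poisson measure with intensity $dtdx$, conditionally on having exactly one mark in $[0,T]\times i_\la$, the time of that mark is uniform on $[0,T]$. The renewal jump time, conditioned on exactly one jump of $N^{M,\la}(i)$ in $[0,\ba_\la T]$ and rescaled by $1/\ba_\la$, has density $g_{\la,T}(t)$ proportional to $\mu_M^1((\la\ba_\la(T-t),\infty))\,\mu_M^1((\la\ba_\la t,\infty))$ on $[0,T]$, which is not uniform (you acknowledge this yourself). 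If you nevertheless install the mark at that time, what you build is a Poisson measure whose intensity on each cell is $\bn_\la T\,g_{\la,T}(t)\,dt\,dx$ rather than $dt\,dx$, so the object you produce does not satisfy the statement, which demands a genuine Poisson measure with intensity $dtdx$. The count-level Bernoulli-to-Poisson matching cannot repair the time marginal.

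The missing step is precisely what the paper does: conditionally on both cell counts being $1$, couple the uniform time $T_1^{i,\la}$ of the Poisson mark with the rescaled renewal time $\tT_1^{i,\la}/\ba_\la$ by a maximal coupling of the two densities (Lemma \ref{gcou}-(ii)), so that they are \emph{equal} with conditional probability $r_T(\la)=\int_0^T\min(1/T,g_{\la,T}(t))\,dt\to 1$; the per-cell failure probability is then of order $\la\ba_\la\,\e_T(\la)$ with $\e_T(\la)\to0$, and since there are of order $2A/(\la\ba_\la)$ cells the total failure probability is $o(1)$, exactly as in the paper's Step 6. Note that $\Omega^M_{A,T}(\la)$ requires the jump time and the mark time to coincide exactly, so ``close'' densities must be upgraded to an exact-equality coupling; this is why the extra step cannot be skipped. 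A secondary, less serious point: your per-cell error bounds $O(1/\bn_\la^2)$ (for double renewals, and for the gap between $p_i^\la$ and $1-e^{-T/\bn_\la}$) are not justified under $(H_M)$ alone, since they depend on the behaviour of $\mu_M^1$ near $0$; only $o(\la\ba_\la)$ per cell is available, but that is all one needs, and it is what the paper's $\e_T(\la)$ bookkeeping uses.
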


This means that in our scales, with a high probability, the matches 
used in the discrete processes can be prescribed by a Poisson measure,
as in the limit processes.

\begin{proof}
We divide the proof into several steps. Observe that 
$$
B_\la:=\cup_{i\in I_A^\la} i_\la
=[-A_\la/\bn_\la,(A_\la+1)/\bn_\la )
$$
(which is approximately $[-A,A]$).
It of course suffices to build 
$\pi_M$ restricted to $[0,T]\times B_\la$
and the family $(N_t^{M,\la}(i))$ for $i\in I_A^\la$ and 
$t\in [0,\ba_\la T]$. 

\vip

{\bf Step 1.} First, we observe that a possible way to build $\pi_M$ 
(restricted to $[0,T]\times B_\la$) is the following: 

\vip

(i) Consider a family of i.i.d. r.v. $(Z_i^\la)_{i\in I_A^\la}$
following a Poisson distribution
with parameter $T |i_\la|= T/\bn_\la$.

\vip

(ii) For each $i$ with $Z_i^\la > 0$, 
pick some i.i.d. r.v. $(T^{i,\la}_1,X^{i,\la}_1)
,\dots,(T^{i,\la}_{Z_i^\la},X^{i,\la}_{Z_i^\la})$ 
with uniform law on $[0,T]\times i_\la$ (conditionally on $Z_i^\la$).

\vip

Set finally $\pi_M=\sum_{i\in I_A^\la} \sum_{k=1}^{Z_i^\la}
\delta_{(T^{i,\la}_k,X^{i,\la}_k)}$.

\vip

{\bf Step 2.} Next, we note it is possible to build the family 
$(N_t^{M,\la}(i))_{i\in I_A^\la, t\in [0,\ba_\la T]}$ as follows:
introduce $q_k(\la,T)=\Pr[N^{M,\la}_{\ba_\la T}(i)=k]$ and 
$\zeta_k^{\la,T}(dt_1,\dots,dt_k)$ the law of the $k$ jump instants of 
$N^{M,\la}(i)$ in $[0,\ba_\la T]$ conditionally on $\{N^{M,\la}_{\ba_\la T}(i)=k\}$.

\vip

(i) Consider a family of i.i.d. r.v. $(\tZ_i^\la)_{i\in I_A^\la}$ 
with law $(q_k(\la,T))_{k \geq 0}$.

\vip

(ii) For each $i$ with $\tZ_i^\la > 0$, 
pick $(\tT^{i,\la}_1,\dots,\tT^{i,\la}_{\tZ_i^\la})$ according to 
$\zeta_{\tZ_i^\la}^{\la,T}(dt_1,\dots,dt_{\tZ_i^\la})$ 
(conditionally on $\tZ_i^\la$).

\vip

Set finally 
$N_t^{M,\la}(i)=\sum_{k=1}^{\tZ^\la_i} \indiq_{\{t \geq \tT^{i,\la}_k\}}$ 
for $t\in [0,\ba_\la T]$, $i\in I_A^\la$.

\vip

{\bf Step 3.} We show in this step that for each 
$i\in I_A^\la$, one can couple $Z_i^\la$
(as in Step 1-(i)) and $\tZ_i^\la$ (as in Step 2-(i)) in such a way that
$$
\Pr[Z_i^\la=\tZ^\la_i =0] \geq  1 - \la \ba_\la T(1+\e_T(\la))
\quad \hbox{ and } \quad  
\Pr[Z_i^\la=\tZ^\la_i=1] \geq \la \ba_\la T (1-\e_T(\la)),
$$
where $\lim_{\la \to 0}\e_T(\la)=0$. Below, the function $\e_T$ may change
from line to line.

\vip

It is classically possible (see Lemma \ref{gcou}-(i)) 
to build a coupling in such a way that
\begin{align*}
\Pr[Z_i^\la=\tZ^\la_i=0] \geq& \Pr[Z_i^\la=0] \land \Pr [\tZ_i^\la=0], \\
\Pr[Z_i^\la=\tZ^\la_i=1] \geq&  \Pr[Z_i^\la=1] \land \Pr [\tZ_i^\la=1].
\end{align*}
We now use $(H_M)$: recalling that $\int_0^\infty \mu^1_M((s,\infty))ds=1$,
\begin{align*}
\Pr [\tZ_i^\la=0] =& \nu_M^\la((\ba_\la T,\infty))=\la\int_{\ba_\la T}^\infty 
\mu^1_M((\la s,\infty))ds = \int_{\la \ba_\la T}^\infty 
\mu^1_M((u,\infty))du \\
=& 1 - \int_0^{\la \ba_\la T}\mu^1_M((u,\infty))du
\geq 1 - \la\ba_\la T.
\end{align*}
Since $\Pr [Z_i^\la=0]=e^{-T/\bn_\la}=e^{-T /\lfloor 1/(\la\ba_\la)\rfloor} 
= 1 - \la\ba_\la T(1+\e_T(\la))$, this concludes the proof of the 
first lower-bound. Next, recalling Definition \ref{sr1}
and $(H_M)$,
\begin{align*}
\Pr [\tZ_i^\la=1]=&\int_0^{\ba_\la T} \mu_M^\la((\ba_\la T-s,\infty)) 
\nu^\la_M(ds)\\
=&\int_0^{\ba_\la T} \mu_M^1((\la (\ba_\la T-s),\infty)) 
\la \mu^1_M((\la s,\infty))ds \\
=& \int_0^{\la \ba_\la T} \mu_M^1((\la \ba_\la T- u,\infty)) 
\mu^1_M((u,\infty))du = \la \ba_\la T (1-\e_T(\la)),
\end{align*}
since $\la \ba_\la \to 0$ as $\la \to 0$.
But now $\Pr [Z_i^\la=1]=(T/\bn_\la)e^{-T/\bn_\la}= \la\ba_\la T(1-\e_T(\la))$, 
because $\bn_\la =
\lfloor 1/(\la\ba_\la) \rfloor$ and this concludes the step.

\vip

{\bf Step 4.} We now check that for each 
$i\in I_A^\la$, conditionally 
on $\{Z_i^\la=\tZ_i^\la=1\}$, we can couple $T_1^{i,\la}$ and $\tT_1^{i,\la}$ 
(see Steps 1-(ii) and 2-(ii))
in such a way that for 
$$
r_T(\la)= \Pr\left[T_1^{i,\la}=\tT_1^{i,\la} /\ba_\la
\;\vert\;Z_i^\la=\tZ_i^\la=1 \right],
$$ 
there holds $\lim_{\la\to 0}r_T(\la)=1$. We first recall that $T_1^{i,\la}$
is uniformly distributed on $[0,T]$ (conditionally on $\{Z^\la_i=1\}$). 
We next remark that the conditional law
of $\tT_1^{i,\la}$ knowing $\{\tZ_i^\la=1\}$ 
(which we called $\zeta_1^{\la,T}$)
is nothing but 
\begin{align*}
\zeta_1^{\la,T}(dt)=&\frac{\nu_M^\la(dt)\mu_M^\la((\ba_\la T-t, \infty))
\indiq_{\{t \in [0,\ba_\la T]\}}}{\int_0^{\ba_\la T} \mu_M^\la((\ba_\la T-s,\infty)) 
\nu^\la_M(ds)} \\
=& \frac{\mu_M^1((\la(\ba_\la T - t),\infty))\la \mu^1_M((\la t, \infty)) 
\indiq_{\{t\in [0,\ba_\la T]\}}}{\la \ba_\la T (1-\e_T(\la))}dt,
\end{align*}
where we used the same computations as in Step 3. Consequently, the 
conditional law of $\tT_1^{i,\la}/\ba_\la $ knowing $\{\tZ_i^\la=1\}$ 
has a density $g_{\la,T}$ of the form
$$
g_{\la,T}(t)= \frac{1+\e_T(\la)}{T}\mu_M^1((\la \ba_\la (T - t),\infty))
\mu^1_M((\la \ba_\la t, \infty)) \indiq_{\{t\in [0,T]\}}.
$$
Observe that $\lim_{\la \to 0} g_{\la,T}(t)=T^{-1}\indiq_{\{t\in[0,T]\}}$,
since $\la\ba_\la\to 0$.
Hence, classical arguments (see Lemma \ref{gcou}-(ii)) 
show that conditionally on
$\{Z_i^\la=\tZ_i^\la=1\}$, we can couple $T_1^{i,\la}$ and $\tT_1^{i,\la}$ in such
a way that
$$
\Pr\left[T_1^{i,\la}=\tT_1^{i,\la} /\ba_\la
\;\vert\;Z_i^\la=\tZ_i^\la=1 \right] \geq \int_0^T \min\left(\frac{1}{T},
g_{\la,T}(t) \right) dt,
$$
which tends to $1$ as $\la\to 0$ by dominated convergence.

\vip

{\bf Step 5.} We finally may build the complete coupling.

\vip

(i) For each $i\in I_A^\la$, we consider some coupled 
random variables $(Z_i^\la,\tZ_i^\la)$ as in Step 3.

\vip

(ii) For  $i \in I_A^\la$ such that $Z_i^\la=\tZ_i^\la=0$,
there is nothing to do.

\vip

(iii) For $i \in I_A^\la$ such that $Z_i^\la=\tZ_i^\la=1$,
couple $T_1^{i,\la}$ and $\tT_1^{i,\la}$ as in Step 4 and pick $X_1^{i,\la}$
uniformly in $i_\la$. 

\vip

(iv) If $i \in I_A^\la$ does not meet one of the two
above conditions (ii) and (iii), then we build $(T^{i,\la}_1,X^{i,\la}_1),
\dots,(T^{i,\la}_{Z_i^\la},X^{i,\la}_{Z_i^\la})$
and $\tT^{i,\la}_1,\dots,\tT^{i,\la}_{\tZ_i^\la}$ 
in any way (e.g., follow the rules
of Step 1-(ii) and Step 2-(ii) independently).

\vip

(v) Set $\pi_M=\sum_{i\in I_A^\la} \sum_{k=1}^{Z_i^\la}
\delta_{(T^{i,\la}_k,X^{i,\la}_k)}$ and 
$N_t^{M,\la}(i)=\sum_{k=1}^{\tZ^\la_i} \indiq_{\{t \geq \tT^{i,\la}_k\}}$ 
for $i\in I_A^\la$, $t\in [0,T \ba_\la]$.

\vip

{\bf Step 6.} Define the event
$$
\tOmega^M_{A,T}(\la)= \bigcap_{i \in I_A^\la}
\left(\{Z_i^\la=\tZ_i^\la=0\}\cup \{Z_i^\la=\tZ_i^\la=1,\;
T_1^{i,\la}=\tT^{i,\la}_1/\ba_\la\} \right).
$$
Then we have $\tOmega^M_{A,T}(\la) \subset \Omega^M_{A,T}(\la)$ 
(where $\Omega^M_{A,T}(\la)$ was defined in the statement).
Indeed, on $\tOmega^M_{A,T}(\la)$, for any $i\in I_A^\la$, $t\in [0,T]$, we have
$\Delta N_{\ba_\la t}^{M,\la}(i) \ne 0$ iff ($\tZ_i^\la=1$ and 
$\ba_\la t = \tT^{i,\la}_1$) iff  ($Z_i^\la=1$ and 
$t = T^{i,\la}_1$) iff $\pi_M(\{t\}\times i_\la)>0$.

\vip

Finally, using Steps 3 and 4 and that $|I_A^\la|=2A_\la+1$,
\begin{align*}
\Pr[\Omega^M_{A,T}(\la)] \geq& 
\left( 
\Pr[Z_0^\la=\tZ_0^\la=0] + \Pr\left[Z_0^\la=\tZ_0^\la=1,\;\;
T_1^{0,\la}=\tT_1^{0,\la} /\ba_\la \right]
\right)^{2A_\la+1}\\
\geq& \big(1-\la\ba_\la T (1+\e_T(\la))
+ \la\ba_\la T (1-\e_T(\la))r_T(\la)  \big)^{2A_\la+1}.
\end{align*}
Recall that $\lim_{\la\to 0} \e_T(\la) =0$, that 
$\lim_{\la\to 0} r_T(\la) =1$ and that 
$A_\la \leq A /(\la\ba_\la)$. Hence for some (other) function $\e_T$
with limit $0$ at $0$,
$$
\Pr[\Omega^M_{A,T}(\la)]  \geq \left(1-\la\ba_\la T
\e_T(\la)\right)^{2A/(\la\ba_\la)+1}.
$$
This last quantity tends to $1$ as $\la\to 0$, which concludes the proof.
\end{proof}

\section{Convergence proof when $\beta \in (0,\infty)$}\label{prbeta}
\setcounter{equation}{0}

We split this section into three parts. First, we handle some
preliminary computations on $SR(\mu_S)$-processes. Next, we
show how to couple the set of times/locations where no seed fall 
(in the discrete model) with
the Poisson measure $\pi_S$. Then we conclude the convergence proof.
In the whole section, we assume $(H_M)$ and $(H_S(\beta))$ for some 
$\beta \in (0,\infty)$. We recall that $\ba_\la$ and $\bn_\la$ are defined
in (\ref{ala}) and (\ref{nla}).
For $A>0$, we set $A_\la = \lfloor A \bn_\la \rfloor$ and 
$I_A^\la=\lb -A_\la,A_\la\rb$.
For $i\in \zz$, we set $i_\la=[i/\bn_\la, (i+1)/\bn_\la)$.

\subsection{Preliminary computations}

First, we will need the following estimate.

\begin{lem}
\label{estimus}
For any $l \in(0,\infty)$ fixed, $\mu_S((\ba_\la l,\infty)) 
\sim m_S \beta l^{-\beta-1} \la $ as $\la \to 0$.
\end{lem}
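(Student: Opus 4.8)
The plan is to reduce the statement to a single asymptotic relation between the tails of $\mu_S$ and of $\nu_S$, and then to substitute $x=\ba_\la l$. First I would record that $(H_S(\beta))$ means precisely that $x\mapsto \nu_S((x,\infty))$ is (continuous, positive, and) regularly varying at infinity with index $-\beta$, since $\lim_{x\to\infty}\nu_S((tx,\infty))/\nu_S((x,\infty))=t^{-\beta}$ for every $t>0$; write $\nu_S((x,\infty))=x^{-\beta}L(x)$ with $L$ slowly varying. Because $\nu_S$ has the non-increasing density $t\mapsto m_S^{-1}\mu_S((t,\infty))$, the monotone density theorem for regularly varying functions (see the appendix on regular variation) gives
\[
m_S^{-1}\mu_S((x,\infty)) \sim \beta\, x^{-\beta-1}L(x) = \beta\, x^{-1}\nu_S((x,\infty)) \qquad (x\to\infty),
\]
i.e. $\mu_S((x,\infty))\sim m_S\,\beta\, x^{-1}\nu_S((x,\infty))$. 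If one prefers to avoid quoting that theorem, the same estimate follows elementarily: for $0<a<1<b$, bounding $\int_{ax}^{x}m_S^{-1}\mu_S((t,\infty))\,dt$ and $\int_{x}^{bx}m_S^{-1}\mu_S((t,\infty))\,dt$ using the monotonicity of $t\mapsto\mu_S((t,\infty))$ yields $\frac{1-b^{-\beta}}{b-1}\le\liminf_{x\to\infty}\frac{x\mu_S((x,\infty))}{m_S\nu_S((x,\infty))}\le\limsup_{x\to\infty}\frac{x\mu_S((x,\infty))}{m_S\nu_S((x,\infty))}\le\frac{a^{-\beta}-1}{1-a}$, and letting $a\uparrow 1$ and $b\downarrow 1$ both bounds tend to $\beta$.

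Next I would substitute $x=\ba_\la l$. Since $l\in(0,\infty)$ is fixed and $\ba_\la\to\infty$ as $\la\to 0$ (noted just after (\ref{ala})), one has $\ba_\la l\to\infty$, so the previous step gives $\mu_S((\ba_\la l,\infty))\sim m_S\beta\,(\ba_\la l)^{-1}\nu_S((\ba_\la l,\infty))$ as $\la\to 0$. It then remains to compute $\nu_S((\ba_\la l,\infty))$: by regular variation of $\nu_S((\cdot,\infty))$ with index $-\beta$ together with $\ba_\la\to\infty$, $\nu_S((\ba_\la l,\infty))\sim l^{-\beta}\nu_S((\ba_\la,\infty))$; and by the very definition of $\ba_\la=\phi_S(1/\la)$ (Notation \ref{phipsi} and (\ref{ala})), $\nu_S((\ba_\la,\infty))=\la\ba_\la$. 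Combining,
\[
\mu_S((\ba_\la l,\infty)) \sim m_S\beta\,(\ba_\la l)^{-1}\, l^{-\beta}\,\la\ba_\la = m_S\,\beta\, l^{-\beta-1}\,\la ,
\]
which is the claim.

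The only genuinely delicate point is the first step, i.e. passing from the regularly varying tail $\nu_S((x,\infty))$ to its density $m_S^{-1}\mu_S((x,\infty))$ --- morally, differentiating a regularly varying function, which is not valid in general. What rescues us is the monotonicity of $t\mapsto\mu_S((t,\infty))$, which is exactly the hypothesis of the monotone density theorem (and of the elementary squeeze above); without it one could not recover the behaviour of $\mu_S((x,\infty))$ from that of $\nu_S((x,\infty))$ alone. Everything else is routine bookkeeping with the identity $\la\ba_\la=\nu_S((\ba_\la,\infty))$ and the Karamata-type relation $\nu_S((\ba_\la l,\infty))\sim l^{-\beta}\nu_S((\ba_\la,\infty))$.
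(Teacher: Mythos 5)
Your proof is correct and follows essentially the same route as the paper: the paper bounds $\mu_S((\ba_\la l,\infty))/\la$ between averages of $m_S^{-1}\mu_S((\cdot,\infty))$ over the windows $[\ba_\la l,\ba_\la(l+\alpha)]$ and $[\ba_\la(l-\alpha),\ba_\la l]$, uses $(H_S(\beta))$ together with $\la\ba_\la=\nu_S((\ba_\la,\infty))$, and lets $\alpha\to 0$ --- which is exactly your elementary monotonicity squeeze, merely performed directly at the scale $\ba_\la$ rather than stated first at a general $x\to\infty$ and then specialized. One small caveat: the paper's appendix on regularly varying functions does not contain the monotone density theorem, so it is your self-contained squeeze, not that citation, that carries the argument.
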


\begin{proof}
Recall that $\mu_S((t,\infty))dt=m_S \nu_S(dt)$. 
For $\alpha>0$, one may write, using the monotonicity of $x\mapsto
\mu_S((x,\infty))$,
\begin{align*}
\frac{\mu_S((\ba_\la l, \infty))}{\la} \geq& \frac{1}{\alpha \la \ba_\la} 
\int_{\ba_\la l}^{\ba_\la (l + \alpha)} \mu_S((x,\infty))dx \\
=& \frac{m_S}{\alpha \la \ba_\la}
[\nu_S ((\ba_\la l, \infty)) - \nu_S((\ba_\la (l+\alpha), \infty))] \\
=&  \frac{m_S}{\alpha} \left[\frac{\nu_S ((\ba_\la l, \infty))}
{\nu_S ((\ba_\la , \infty))} -\frac{\nu_S ((\ba_\la (l+\alpha), \infty))}
{\nu_S ((\ba_\la , \infty))} 
\right].
\end{align*}
For the last equality, we used that by definition, 
$\nu_S((\ba_\la,\infty))=\la \ba_\la$. Due to $(H_S(\beta))$, we deduce that
for any $\alpha>0$,
$$
\liminf_{\la \to 0} \frac{\mu_S((\ba_\la l, \infty))}{\la} 
\geq\frac{m_S}{\alpha} \left[l^{-\beta}- (l+\alpha)^{-\beta}\right]
\geq m_S \beta (l + \alpha)^{-\beta-1}.
$$
One gets an upper bound by the same way: for any 
$\alpha \in (0,l)$, 
$$
\limsup_{\la \to 0}
\frac{\mu_S((\ba_\la l, \infty))}{\la} \leq \limsup_{\la \to 0}
\frac{1}{\alpha \la \ba_\la} \int_{\ba_\la (l-\alpha)}^{\ba_\la l} 
\mu_S((x,\infty))dx \leq  
m_S \beta (l - \alpha)^{-\beta-1}.
$$
We have proved that for any $\alpha\in (0,l)$, 
$$
m_S \beta (l + \alpha)^{-\beta-1}
\leq \liminf_{\la \to 0} \frac{\mu_S((\ba_\la l, \infty))}{\la} 
\leq \limsup_{\la \to 0} \frac{\mu_S((\ba_\la l, \infty))}{\la} \leq  
m_S \beta (l - \alpha)^{-\beta-1}.
$$ 
Making $\alpha$ tend to $0$ allows us to conclude. 
\end{proof}

Next, we compute the asymptotic probability that on a given site, 
no seed
fall during some {\it large} time interval. By large, we mean
with a length of order $\ba_\la$.

\begin{lem}\label{ilestbalezefourniax}
Let $(T_n)_{n \in \zz}$ be a $SR(\mu_S)$-process (see Subsection \ref{srp}). 
For $\la>0$, $t\geq 0$
and $l>0$, we set
$$
S^{\la}_t(l)= \# \{n \in \zz\, : \; T_n \in [0,\ba_\la t], \; T_{n}-T_{n-1} 
\geq \ba_\la l  \},
$$
which represents the number of {\it delays} with length greater than  
$\ba_\la l$ that end in $[0,\ba_\la t]$. 

(i) For $t>0$ and $l>0$ fixed, as $\la \to 0$, 
$\Pr\left[S^{\la}_t(l)=1\right] \sim t \la \ba_\la \beta l^{-\beta-1}$.

(ii) For $t>0$ and $l>0$ fixed, $\limsup_{\la \to 0} 
(\la \ba_\la)^{-2} \Pr\left[S^{\la}_t(l) \geq 2\right] < \infty$.

(iii) On the event $\{S^\la_t(l)=1\}$, we put $\tau:=T_n$ and 
$L=T_n-T_{n-1}$, where 
$n$ is the unique index such that
$T_n \in [0,t]$ and $T_n-T_{n-1}\geq \ba_\la l$. For all 
$s\in [0,t]$, all $x \in (l,\infty)$,
$\lim_{\la \to 0}\Pr[\tau/\ba_\la \leq s,\; L/\ba_\la \geq x \;
\vert \; S^\la_t(l)=1]= (s/t) (x/l)^{-\beta-1}$ .
\end{lem}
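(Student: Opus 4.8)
The plan is to establish all three assertions by the first‑ and second‑(factorial‑)moment method, exploiting the explicit structure of the stationary renewal process recalled in Definition~\ref{sr2}: the inter‑arrival intervals lying entirely to the right of $0$ are i.i.d.\ with law $\mu_S$, the interval straddling $0$ has the size‑biased law $\zeta_{\mu_S}(dx)=m_S^{-1}x\,\mu_S(dx)$ with $0$ placed uniformly inside it, and the point process of renewal instants is stationary of intensity $1/m_S$, so that $\E[N_s]=s/m_S$ for every $s\ge 0$ (this last identity, together with the elementary bound $\E[\tilde N(u)]=u/m_S$ for the renewal count after the straddling interval, is the only extra input I would borrow from \cite{ct}). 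Writing $S^\la_t(l)=\indiq_{\{T_1\le \ba_\la t,\ X_0\ge \ba_\la l\}}+\sum_{n\ge 2}\indiq_{\{T_n\le \ba_\la t,\ X_{n-1}\ge \ba_\la l\}}$ — the first term being the straddling interval, the rest involving i.i.d.\ intervals — I would compute $\E[S^\la_t(l)]$ piece by piece.

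For the sum over $n\ge 2$, conditioning on $X_{n-1}$ (independent of $T_1$ and of $X_1+\dots+X_{n-2}$) and using $\sum_{k\ge 0}\Pr[T_1+X_1+\dots+X_k\le u]=\E[N_u]=u/m_S$, one obtains $\E[\sum_{n\ge2}\cdots]=m_S^{-1}\int_{\ba_\la l}^{\ba_\la t}(\ba_\la t-x)\,\mu_S(dx)$. For the $n=1$ term, using $T_1=UX_0$ with $U$ uniform and $X_0\sim\zeta_{\mu_S}$, one gets $m_S^{-1}\big(\int_{\ba_\la(l\land t)}^{\ba_\la t}x\,\mu_S(dx)+\ba_\la t\,\mu_S((\ba_\la(l\lor t),\infty))\big)$. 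An integration by parts turns both quantities into linear combinations of $\mu_S((\ba_\la l,\infty))$, $\ba_\la l\,\mu_S((\ba_\la l,\infty))$, $\nu_S((\ba_\la l,\infty))$ and $\nu_S((\ba_\la t,\infty))$, after which one substitutes Lemma~\ref{estimus} ($\mu_S((\ba_\la l,\infty))\sim m_S\beta l^{-\beta-1}\la$) and $(H_S(\beta))$ in the form $\nu_S((\ba_\la u,\infty))\sim u^{-\beta}\la\ba_\la$ (which follows from $\nu_S((\ba_\la,\infty))=\la\ba_\la$ and the uniform convergence theorem for regularly varying functions recalled in the appendix). The $l^{-\beta}$‑type and $t^{-\beta}$‑type contributions of the two pieces cancel exactly, leaving $\E[S^\la_t(l)]\sim t\la\ba_\la\,\beta l^{-\beta-1}$; this cancellation (and keeping the size‑biasing of the straddling interval honest) is the one genuinely delicate computational point.

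To pass from the expectation to $\Pr[S^\la_t(l)=1]$ and to prove (ii), I would bound the second factorial moment. Expanding $\E[S^\la_t(l)(S^\la_t(l)-1)]$ as a double sum over pairs of necessarily distinct inter‑arrival intervals, then using the strong renewal/Markov property to factor out one of the two ``long delay'' events, the bound $\E[\tilde N(u)]\le C(1+u)$, the estimate $\mu_S((\ba_\la l,\infty))=O(\la)$ and part (i), one sees that the total is $O\big(\mu_S((\ba_\la l,\infty))\cdot(1+\ba_\la t)\cdot\E[S^\la_{t'}(l)]\big)=O(\la)\,O(\ba_\la)\,O(\la\ba_\la)=O((\la\ba_\la)^2)$. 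Then $\Pr[S^\la_t(l)\ge 2]\le\frac12\E[S^\la_t(l)(S^\la_t(l)-1)]=O((\la\ba_\la)^2)$, which is (ii); and since $\Pr[S^\la_t(l)=1]=\E[S^\la_t(l)]-\E[S^\la_t(l)\indiq_{\{S^\la_t(l)\ge2\}}]$ with $0\le\E[S^\la_t(l)\indiq_{\{S^\la_t(l)\ge2\}}]\le\E[S^\la_t(l)(S^\la_t(l)-1)]$, part (i) follows. I expect this second‑moment estimate to be where most of the (routine but lengthy) work sits.

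Finally, for (iii), I would observe that for $x>l$ and $s\le t$ the counter $\#\{n:T_n\in[0,\ba_\la s],\ X_{n-1}\ge\ba_\la x\}$ equals $\indiq_{\{\tau/\ba_\la\le s,\ L/\ba_\la\ge x\}}$ on $\{S^\la_t(l)=1\}$, while off that event it is dominated by $S^\la_t(l)$ and vanishes unless $S^\la_t(l)\ge 2$. Hence $\Pr[S^\la_t(l)=1,\ \tau/\ba_\la\le s,\ L/\ba_\la\ge x]=\E[S^\la_s(x)]+O((\la\ba_\la)^2)\sim s\la\ba_\la\,\beta x^{-\beta-1}$, the asymptotics of $\E[S^\la_s(x)]$ coming from the computation of (i) applied with $(s,x)$ in place of $(t,l)$. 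Dividing by $\Pr[S^\la_t(l)=1]\sim t\la\ba_\la\,\beta l^{-\beta-1}$ yields the announced conditional limit $(s/t)(x/l)^{-\beta-1}$.
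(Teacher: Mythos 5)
Your proposal is correct, and it follows the same first/second-moment skeleton as the paper's proof, but the two key estimates are carried out differently. For the mean, the paper never decomposes into straddling plus i.i.d.\ intervals: it first treats the case $l\geq t$, where only the interval straddling $0$ can contribute, and then deduces the general case by cutting $[0,\ba_\la t]$ into subintervals of length at most $\ba_\la l$ and invoking stationarity of the renewal point process; this yields the exact identity $\E[S^\la_t(l)]=\ba_\la t\,\mu_S((\ba_\la l,\infty))/m_S$ with nothing to cancel. Your direct route actually gives the very same exact identity: the $n=1$ and $n\geq 2$ contributions add up to $m_S^{-1}\ba_\la t\,\mu_S((\ba_\la l,\infty))$ before any integration by parts, so the ``delicate cancellation'' you single out is purely algebraic, and the only asymptotic input needed is Lemma \ref{estimus} --- in particular, the appendix contains no uniform convergence theorem for regularly varying functions (only Lemma \ref{hsimplieshsbeta}), but none is needed since for fixed $l,t$ the pointwise limit in $(H_S(\beta))$ suffices; also note that your parenthetical ``$\E[\tilde N(u)]=u/m_S$'' for the pure renewal count should be the bound $\E[\tilde N(u)]\leq C(1+u)$, which is what you actually use. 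Your route does require the stationary identity $\E[N_u]=u/m_S$ from \cite{ct}, which the paper's subdivision trick avoids. For (ii), the paper conditions on the end of the first long delay and bounds the residual probability by re-embedding a stationary delay (its $f(\la,l,s)\leq \Pr[S^\la_{s+1}(l)\geq 1]+\la\ba_\la$), whereas you bound the second factorial moment via a double sum, the Markov property at the earlier long delay and the renewal-function bound; both give $O((\la\ba_\la)^2)$ with comparable effort, and your inequality $\E[S^\la_t(l)\indiq_{\{S^\la_t(l)\geq 2\}}]\leq \E[S^\la_t(l)(S^\la_t(l)-1)]$ neatly replaces the paper's deterministic bound $S^\la_t(l)\leq 1+t/l$. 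For (iii) you are in fact slightly more careful than the paper, which writes the conditional probability as the exact ratio $\Pr[S^\la_s(x)=1]/\Pr[S^\la_t(l)=1]$; your explicit $O((\la\ba_\la)^2)$ error control is what rigorously justifies that step.
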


\begin{proof}
Let us recall that the $SR(\mu_S)$-process $(T_n)_{n \in \zz}$ 
is built as follows: one considers an i.i.d. sequence 
$(X_i)_{i \in \zz\setminus \{0\}}$ of $\mu_S$-distributed r.v.,
$X_0$ a $x\mu_S(dx)/m_S$-distributed r.v. and $U$ uniformly distributed
on $[0,1]$. Then we set $T_0=-(1-U) X_0$, $T_1=UX_0$ and  for all $n \geq 1$,
$T_{n+1}=T_n+X_{n}$ and $T_{-n}=T_{-n+1}-X_{-n}$. We also introduce,
for $\la>0$, $l>0$ and $0\leq s \leq t$
$$
S^{\la}_{s,t}(l)= \# \{n \in \zz\, : \; T_n \in [\ba_\la s,\ba_\la t], \; 
T_{n}-T_{n-1} \geq \ba_\la l  \}.
$$

\vip

{\bf Step 1.} First assume that $l\geq t$. Then by construction, 
$S^\la_t(l)\in\{0,1\}$ and 
$\{S^\la_t(l)= 1\}=\{T_1 \leq \ba_\la t,\; T_1-T_0 \geq \ba_\la l\}=
\{UX_0 \leq \ba_\la t,\; X_0 \geq \ba_\la l\}$. Hence
\begin{align*}
\E[S^\la_t(l)] =& \Pr \left[ UX_0 \leq \ba_\la t,\; X_0 \geq \ba_\la l\right]\\
=& \int_{\ba_\la l}^\infty \frac{x \mu_S(dx)}{m_S} \int_0^1 du \indiq_{\{ux
\leq \ba_\la t\}} \\
=& \int_{\ba_\la l}^\infty \frac{x \mu_S(dx)}{m_S} \frac{\ba_\la t}{x}
= \frac{\ba_\la t}{m_S} \mu_S((\ba_\la l,\infty)).
\end{align*}
We used here that since $l\geq t$, for $x\geq \ba_\la l$, there holds 
$\ba_\la t / x \leq 1$.

\vip

{\bf Step 2.} We now show that for any $l>0$, any $t\geq 0$,
$$
\E[S^\la_t(l)]= \frac{\ba_\la t}{m_S} \mu_S((\ba_\la l,\infty)).
$$
Consider $n\geq 1$ such that $t/n\leq l$ and observe that $S^\la_t(l)=
\sum_{i=0}^{n-1} S^\la_{it/n,(i+1)t/n}(l)$. By stationarity, 
we have $\E[S^\la_{it/n,(i+1)t/n}(l)]=\E[S^\la_{t/n}(l)]$ for $i=0,\dots,n-1$, 
which is nothing but
$\frac{\ba_\la t}{n m_S} \mu_S((\ba_\la l,\infty))$ by Step 1. 
The conclusion follows by linearity of expectation.

\vip

{\bf Step 3.} We now check Point (ii). Let
$\rho_1=\inf \{T_n \; : \; n\in \nn, T_n-T_{n-1}\geq \ba_\la l, T_n>0 \}$ and
$\rho_2=\inf \{T_n \; : \; n\in \nn, T_n-T_{n-1}\geq \ba_\la l, T_n>\rho_1\}$. 
Then
$\Pr[S^\la_t(l)\geq 2]= \Pr[\rho_2 \leq \ba_\la t]$. We also observe that
$\Pr[\rho_1 \leq \ba_\la t]=\Pr[S^\la_t(l)\geq 1]\leq \E[S^\la_t(l)]
= \ba_\la t \mu_S((\ba_\la l,\infty))/m_S$.
Denote by $\zeta_{\la,l}$ the law of $\rho_1/\ba_\la$. Then a renewal 
argument shows that
$$
\Pr[S^\la_t(l)\geq 2]=\int_0^{t} \zeta_{\la,l}(dr) f(\la,l,t-r),
$$
where 
$$
f(\la,l,s)= \Pr[\exists\; n \geq 1; X_n\geq \ba_\la l; 
X_1+\dots+X_n\leq \ba_\la s].
$$
We can rewrite this as (recall that $T_1=UX_0\sim \nu_S$)
\begin{align*}
f(\la,l,s)=& \Pr[\exists\; n \geq 1; X_n\geq \ba_\la l; 
UX_0+X_1+\dots+X_n\leq \ba_\la s+UX_0]\\
\leq&  \Pr[\exists\; n \geq 0; X_n\geq \ba_\la l; 
UX_0+X_1+\dots+X_n\leq \ba_\la(s+1)] + \Pr[UX_0\geq \ba_\la]\\
= &   \Pr[S_{s+1}^\la(l)\geq 1] + \nu_S((\ba_\la,\infty))\\
= &   \frac{\ba_\la (s+1)}{m_S} 
\mu_S((\ba_\la l,\infty)) + \la \ba_\la
\end{align*}
thanks to Step 2. As a conclusion,
\begin{align*}
\Pr[S^\la_t(l)\geq 2]\leq& \left(\frac{\ba_\la (t+1)}{m_S} 
\mu_S((\ba_\la l,\infty)) + \la \ba_\la \right) 
\int_0^{t} \zeta_{\la,l}(dr) \\
=& \left(\frac{\ba_\la (t+1)}{m_S} 
\mu_S((\ba_\la l,\infty)) + \la \ba_\la \right) \Pr[\rho_1 \leq \ba_\la T]\\
\leq& \left(\frac{\ba_\la (t+1)}{m_S} \mu_S((\ba_\la l,\infty)) 
+ \la \ba_\la \right)
\frac{\ba_\la t}{m_S} \mu_S((\ba_\la l,\infty)).
\end{align*}
Due to Lemma \ref{estimus}, this last term is equivalent to 
$(\la \ba_\la)^2[(t+1)\beta l^{-\beta-1} +1 ]t\beta l^{-\beta-1}$, 
from which Point (ii) follows.

\vip

{\bf Step 4.} Steps 2 and 3 imply Point (i). Indeed, we clearly have
$\Pr[S^\la_t(l)=1] \leq \E[S^\la_t(l)]=\frac{\ba_\la t}{m_S} 
\mu_S((\ba_\la l,\infty))\sim t \la \ba_\la \beta l^{-\beta-1}$
by Lemma \ref{estimus}. 
Next, using that $S_t^\la(l) \leq 1+t/l$ by construction, 
\begin{align*}
\Pr[S^\la_t(l)=1] =& \E[S^\la_t(l)\indiq_{\{S^\la_t(l) =1\}}]
= \E[S^\la_t(l)]-\E[S^\la_t(l)\indiq_{\{S^\la_t(l) \geq 2\}}] \\
\geq& \frac{\ba_\la t}{m_S} \mu_S((\ba_\la l,\infty)) 
- (1+t/l)\Pr[S^\la_t(l) \geq 2].
\end{align*}
Point (ii) allows us to conclude easily.

\vip

{\bf Step 5.} It remains to check (iii). We thus fix
$0\leq s \leq t$ and $0<l<x$. Then as $\la\to 0$,
$$
\Pr\left[\tau/\ba_\la <s,\;L/\ba_\la>x \;\vert\; S^\la_t(l)=1 \right]
= \frac{\Pr\left[S^\la_s(x)=1 \right]}{\Pr\left[ S^\la_t(l)=1 \right]}
\sim \frac{s \la \ba_\la \beta x^{-\beta-1}}{t \la \ba_\la \beta l^{-\beta-1}}
=(s/t)(x/l)^{-\beta-1}
$$
due to Point (i).
\end{proof}

\subsection{Coupling of seeds}

We aim to couple the Poisson measure 
$\pi_{S}(dt,dx,dl)$ used to define the $LFF(\beta)$-process with
times/places where seeds do not fall in the $FF(\mu_S,\mu_M^\la)$-process.
We would like that roughly, $\pi_S(\{t\}\times i_\la \times\{l\})=1$ if and
only if no seed falls on $i$ during $[\ba_\la (t-l), \ba_\la t]$ 
(and if this is the maximal interval, that is seeds fall in $i$ at times
$\ba_\la(t-l)$ and $\ba_\la t$). We have to consider the finite
Poisson measure $\pi_S$ restricted to the set $\{l>\delta\}$, for some 
arbitrarily small $\delta>0$. 

\begin{prop}\label{coupseed}
Let $A>0$, $T>0$, $\alpha>0$  and 
$\delta>0$ be fixed. For any $\la \in (0,1]$,
it is possible to find a coupling between a Poisson
measure $\pi_S(dt,dx,dl)$ on $[0,\infty)\times
\rr \times  [0,\infty)$ with intensity measure 
$\beta(\beta+1) l^{-\beta-2}dt dx dl$
and an i.i.d. family of $SR(\mu_S)$-processes
$(\tT_n^i)_{i\in \zz,n \in \zz}$ (recall Subsection \ref{srp})
in such a way that for
\begin{align*}
&S^{\la}_T(\delta,i)= \pi_S([0,T]\times i_\la \times [\delta,\infty)),\\
&\tS^{\la}_T(\delta,i)= \#\left\{n \geq 1\, : \; \tT_n^i \in [0,\ba_\la T], \;
\tT_n^i-\tT_{n-1}^i \geq \ba_\la \delta \right\},
\end{align*}
setting 
\begin{align*}
\Omega^S_{A,T,\delta,\alpha}(\la):=
\bigcap_{i \in I_A^\la} \Big(
\left\{ S^{\la}_T(\delta,i)=\tS^{\la}_T(\delta,i)=0 \right\}
\cup \Big\{S^{\la}_T(\delta,i)=\tS^{\la}_T(\delta,i)=1, \hskip1cm& \\
\left|\tau^\la_T(\delta,i)-\frac{\ttau^\la_T(\delta,i)}{\ba_\la}\right|+
\left|L^\la_T(\delta,i)-\frac{\tL^\la_T(\delta,i)}{\ba_\la}\right|<\alpha
\Big\}\Big),&
\end{align*}
there holds 
$$
\lim_{\la \to 0} \Pr(\Omega^S_{A,T,\delta,\alpha}(\la))=1.
$$
On the event $\{S^{\la}_T(\delta,i)=\tS^{\la}_T(\delta,i)=1\}$, we have denoted
by $(\tau^\la_T(\delta,i),L^\la_T(\delta,i))$ the unique element 
$(t,l) \in [0,T]\times [\delta,\infty)$ such that 
$\pi_S(\{t\}\times i_\la \times \{l\} )=1$ and we have put 
$\ttau^\la_T(\delta,i)=\tT_n^i$ 
and $\tL^\la_T(\delta,i)=\tT_n^i-\tT_{n-1}^i$, where
$n\geq 1$ is the unique element of $\nn$ 
such that $\tT_n^i \in [0,\ba_\la T]$ and
$\tT_n^i-\tT_{n-1}^i \geq \ba_\la \delta$.
\end{prop}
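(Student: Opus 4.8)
The plan is to mirror, almost line by line, the proof of Proposition \ref{coupling1}: the long delays of the seed renewal processes --- i.e.\ delays of length at least $\ba_\la\delta$ ending before time $\ba_\la T$ --- will play the role there played by matches, and the finite intensity measure $\beta(\beta+1)l^{-\beta-2}dt\,dx\,dl$ restricted to $\{l\geq\delta\}$ will play the role of $dt\,dx$. As in that proof, it suffices to build $\pi_S$ restricted to $[0,T]\times B_\la\times[\delta,\infty)$, where $B_\la=\cup_{i\in I_A^\la}i_\la$, together with the processes $(\tT^i_n)_n$ restricted to $\{\tT^i_n\in[0,\ba_\la T]\}$ for $i\in I_A^\la$, everything being extended independently afterwards. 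I would build these cell by cell, independently over $i\in I_A^\la$ (the cells of $\pi_S$ being independent, the renewal processes being i.i.d.): first a ``count'', then --- if it is positive --- the marks, exactly as in Steps 1, 2 and 5 of the proof of Proposition \ref{coupling1}.

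First I would record the one-cell asymptotics. The count $S^\la_T(\delta,i)=\pi_S([0,T]\times i_\la\times[\delta,\infty))$ is Poisson with parameter $\mu^S_\la:=\bn_\la^{-1}\int_0^T dt\int_\delta^\infty\beta(\beta+1)l^{-\beta-2}dl=T\beta\delta^{-\beta-1}/\bn_\la$, so since $\bn_\la=\lfloor 1/(\la\ba_\la)\rfloor$ and $\la\ba_\la\to0$,
$$
\Pr[S^\la_T(\delta,i)=0]=1-T\beta\delta^{-\beta-1}\la\ba_\la(1+\e(\la)),\qquad \Pr[S^\la_T(\delta,i)=1]=T\beta\delta^{-\beta-1}\la\ba_\la(1+\e(\la)),
$$
with $\e(\la)\to0$ (the function $\e$ changing from line to line). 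The variable $\tS^\la_T(\delta,i)$ is exactly the quantity denoted $S^\la_t(l)$ in Lemma \ref{ilestbalezefourniax}, built from $(\tT^i_n)_n$, taken with $t=T$ and $l=\delta$; hence Lemma \ref{ilestbalezefourniax}(i)--(ii) gives $\Pr[\tS^\la_T(\delta,i)=1]=T\beta\delta^{-\beta-1}\la\ba_\la(1+\e(\la))$ and $\Pr[\tS^\la_T(\delta,i)\geq2]=O((\la\ba_\la)^2)$, whence also $\Pr[\tS^\la_T(\delta,i)=0]=1-T\beta\delta^{-\beta-1}\la\ba_\la(1+\e(\la))$. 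Moreover Lemma \ref{ilestbalezefourniax}(iii) shows that, conditionally on $\{\tS^\la_T(\delta,i)=1\}$, the pair $(\ttau^\la_T(\delta,i)/\ba_\la,\tL^\la_T(\delta,i)/\ba_\la)$ converges in law as $\la\to0$ to the atom-free law $\Xi_\delta$ of a pair $(\cT,\cL)$ with $\cT$ uniform on $[0,T]$, $\Pr[\cL\geq x]=(x/\delta)^{-\beta-1}$ for $x\geq\delta$, and $\cT,\cL$ independent. Finally I would observe that $\Xi_\delta$ is also, for each $\la$, exactly the conditional law of $(\tau^\la_T(\delta,i),L^\la_T(\delta,i))$ given $\{S^\la_T(\delta,i)=1\}$, since a single mark of $\pi_S$ in $[0,T]\times i_\la\times[\delta,\infty)$ is distributed proportionally to $dt\,dx\,\beta(\beta+1)l^{-\beta-2}dl$.

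Then I would set up the coupling cell by cell. For each $i$, the standard coupling (Lemma \ref{gcou}(i)) realizes $S^\la_T(\delta,i)$ and $\tS^\la_T(\delta,i)$ on one space with $\Pr[S^\la_T(\delta,i)=\tS^\la_T(\delta,i)=0]\geq\Pr[S^\la_T(\delta,i)=0]\land\Pr[\tS^\la_T(\delta,i)=0]$ and $\Pr[S^\la_T(\delta,i)=\tS^\la_T(\delta,i)=1]\geq\Pr[S^\la_T(\delta,i)=1]\land\Pr[\tS^\la_T(\delta,i)=1]$; on $\{S^\la_T(\delta,i)=\tS^\la_T(\delta,i)=1\}$ I would then couple the time and length coordinates of the two marks, using the weak convergence of the conditional law of $(\ttau^\la_T(\delta,i)/\ba_\la,\tL^\la_T(\delta,i)/\ba_\la)$ to $\Xi_\delta$ together with the fact that $\Xi_\delta$ is itself the conditional law of $(\tau^\la_T(\delta,i),L^\la_T(\delta,i))$: a classical argument (partition a large ball of $\rr^2$ into $\Xi_\delta$-continuity sets of diameter $<\alpha$, then apply Lemma \ref{gcou}) produces a coupling with
$$
r_\la:=\Pr\Big[\Big|\tau^\la_T(\delta,i)-\tfrac{\ttau^\la_T(\delta,i)}{\ba_\la}\Big|+\Big|L^\la_T(\delta,i)-\tfrac{\tL^\la_T(\delta,i)}{\ba_\la}\Big|<\alpha\ \Big|\ S^\la_T(\delta,i)=\tS^\la_T(\delta,i)=1\Big]\longrightarrow1.
$$
Because the cells are coupled independently and $\Omega^S_{A,T,\delta,\alpha}(\la)$ is exactly the intersection over $i\in I_A^\la$ of the corresponding single-cell ``good'' events, combining the above estimates with $r_\la\to1$ gives a single-cell lower bound $1-T\beta\delta^{-\beta-1}\la\ba_\la\,\eta(\la)$ with $\eta(\la)\to0$, and since $|I_A^\la|=2A_\la+1\leq 2A/(\la\ba_\la)+1$,
$$
\Pr[\Omega^S_{A,T,\delta,\alpha}(\la)]\geq\big(1-T\beta\delta^{-\beta-1}\la\ba_\la\,\eta(\la)\big)^{2A/(\la\ba_\la)+1}\longrightarrow1.
$$

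The Poisson/renewal bookkeeping of the second paragraph and the assembly of the third are routine, being almost verbatim copies of what is done for matches. The step that needs care is the mark coupling, i.e.\ upgrading the distribution-function convergence of Lemma \ref{ilestbalezefourniax}(iii) to a genuine $\alpha$-close coupling; here, in contrast with Proposition \ref{coupling1}, I do not expect to get exact equality of $\tau^\la_T(\delta,i)$ and $\ttau^\la_T(\delta,i)/\ba_\la$, only approximate equality, which is precisely what the statement asks for. It is in fact enough that $1-r_\la\to0$: the coupling of the counts already contributes the factor $\la\ba_\la$ in front, so that even after raising the single-cell bound to the power $O(1/(\la\ba_\la))$ it still tends to $1$. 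The only genuinely new ingredient beyond the argument for matches is Lemma \ref{ilestbalezefourniax}, which is already established.
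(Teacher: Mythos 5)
Your proposal is correct and follows essentially the same route as the paper: the same cell-by-cell decomposition into counts plus conditional marks, the coupling of counts via Lemma \ref{gcou}-(i) with the one-cell asymptotics supplied by Lemma \ref{ilestbalezefourniax}-(i)-(ii), the identification of the limiting conditional mark law (uniform time, Pareto-type length) as the exact conditional law of the Poisson mark, an $\alpha$-close coupling of the marks based on the weak convergence in Lemma \ref{ilestbalezefourniax}-(iii), and finally the product bound $(1-\la\ba_\la\,\eta(\la))^{O(1/(\la\ba_\la))}\to1$. The only cosmetic difference is that the paper realizes the approximate mark coupling via the Skorokhod representation (Lemma \ref{gcou}-(iii)) rather than a continuity-set partition, which changes nothing of substance.
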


\begin{proof} We fix $T>0$, $A>0$, $\delta>0$ and $\alpha>0$.
We divide the proof into several steps. Observe that 
$B_\la:=\cup_{i\in I_A^\la} i_\la
=[-A_\la/\bn_\la,(A_\la+1)/\bn_\la )$
(which is approximately $[-A,A]$).
It of course suffices to build 
$\pi_S$ restricted to $[0,T]\times B_\la \times [\delta,\infty)$
(we abusively still denote by $\pi_S$ this restriction)
and the family $(\tT_n^i)$ for $i\in I_A^\la$ and 
$n \geq 0$ (with our notation, we have $\tT_0^i\leq 0\leq \tT_1^i$).

\vip

{\bf Step 1.} A possible way to build $\pi_S$
(restricted to $[0,T]\times B_\la \times [\delta,\infty)$) is the following.

\vip

(i) Consider a family of i.i.d. r.v. 
$(S_T^{\la}(\delta,i))_{i\in I_A^\la}$ 
following a Poisson distribution
with parameter $ T |i_\la| \int_\delta^\infty \beta(\beta+1) l^{-\beta-2}
dl=  \beta \delta^{-\beta-1} T/\bn_\la$.

\vip

(ii) For each $i\in I_A^\la$ with $S_T^{\la}(\delta,i) > 0$,
pick some i.i.d. r.v. $\{(T^{i,\la}_k,X^{i,\la}_k,L_k^{i,\la})\}_{k=1, \dots, 
S_T^{\la}(\delta,i)}$ with density
$\indiq_{\{t\in [0,T], x\in i_\la, l>\delta\}} (\beta+1) \bn_\la \delta^{\beta+1} 
l^{-\beta-2}/T$.

\vip

Put 
$\pi_S=\sum_{i\in I_A^\la} \sum_{k=1}^{S_T^{\la}(\delta,i)}
\delta_{(T^{i,\la}_k,X^{i,\la}_k, L^{i,\la}_k)}$.

\vip

{\bf Step 2.} Next, we note it is possible to build the family 
$(\tT_n^i)_{i\in I_A^\la, n \geq 0}$ as follows: denote
by $q_k(\la)=\Pr[\tS^\la_T(\delta,i)=k]$ and by
$\Lambda_k^\la$ the law of $(\tT_n^i)_{n\geq 0}$ conditionally on
$\{\tS^\la_T(\delta,i)=k\}$.

\vip

(i) Consider a family of i.i.d. r.v. 
$(\tS^\la_T(\delta,i))_{i\in I_A^\la}$
with law $(q_k(\la))_{k \geq 0}$.

\vip

(ii) For each $i\in I_A^\la$, pick $(\tT_n^i)_{n\geq 0}$
according to $\Lambda_{\tS^\la_T(\delta,i)}^\la$ (conditionally on 
$\tS^\la_T(\delta,i)$).

\vip

{\bf Step 3. } For each $i$, it is possible to couple 
$S^\la_T(\delta,i)$ and $\tS^\la_T(\delta,i)$, distributed  
as in Step 1-(i) and Step 2-(i),
in such a way that
\begin{align*}
\Pr[S^\la_T(\delta,i)=\tS^\la_T(\delta,i)=0]\geq& 1-\la\ba_\la
\beta \delta^{-\beta-1} T (1+\e_{T,\delta}(\la)),\\
\Pr[S^\la_T(\delta,i)=\tS^\la_T(\delta,i)=1] \geq& \la\ba_\la
\beta \delta^{-\beta-1} T (1-\e_{T,\delta}(\la)),
\end{align*}
where $\lim_{\la \to 0} \e_{T,\delta}(\la)=0$. It is classically possible
(see Lemma \ref{gcou}-(i)) to build a coupling in such a way that
\begin{align*}
\Pr[S^\la_T(\delta,i)=\tS^\la_T(\delta,i)=0] &\geq 
\Pr(S^\la_T(\delta,i)=0) \land \Pr(\tS^\la_T(\delta,i)=0),\\
\Pr[S^\la_T(\delta,i)=\tS^\la_T(\delta,i)=1] &\geq 
\Pr(S^\la_T(\delta,i)=1) \land \Pr(\tS^\la_T(\delta,i)=1).
\end{align*}
First, we infer from Lemma \ref{ilestbalezefourniax} that
$\Pr(\tS^\la_T(\delta,i)=0)\geq 1-\la\ba_\la
\beta \delta^{-\beta-1} T (1+\e_{T,\delta}(\la))$ and 
$\Pr(\tS^\la_T(\delta,i)=1)\geq \la\ba_\la
\beta \delta^{-\beta-1} T (1-\e_{T,\delta}(\la))$. Next, since 
$S^\la_T(\delta,i)$ follows a Poisson distribution with parameter 
$\beta \delta^{-\beta-1} T/\bn_\la\sim \la \ba_\la \beta \delta^{-\beta-1}T$, 
we have
$\Pr(S^\la_T(\delta,i)=0)= e^{- \beta \delta^{-\beta-1} T/\bn_\la}\geq 1-\la\ba_\la
\beta \delta^{-\beta-1} T (1+\e_{T,\delta}(\la))$ and there holds
$\Pr(S^\la_T(\delta,i)=1)= [\beta \delta^{-\beta-1} T/\bn_\la] 
e^{- \beta \delta^{-\beta-1} T/\bn_\la}\geq \la\ba_\la \beta \delta^{-\beta-1} T 
(1-\e_{T,\delta}(\la))$. This concludes the step.

\vip 

{\bf Step 4.} We now check that for each 
$i\in I_A^\la$, conditionally 
on $\{S^{\la}_T(\delta,i)=\tS^{\la}_T(\delta,i)=1\}$, we can couple 
$(T^{i,\la}_1,L^{i,\la}_1,X^{i,\la,1})$ and 
$(\tT_n^i)_{n\geq 0}$ in such a way
that for (see the statement)
$$
r_{T,\delta,\alpha}(\la)=
\Pr\left[\left.
\left|\tau^\la_T(\delta,i)-\frac{\ttau^\la_T(\delta,i)}{\ba_\la}\right|+
\left|L^\la_T(\delta,i)-\frac{\tL^\la_T(\delta,i)}{\ba_\la}\right|<\alpha 
\; \right\vert \; Z_i^\la=\tZ_i^\la=1 \right],
$$
there holds $\lim_{\la \to 0} r_{T,\delta,\alpha}(\la)=1$.

\vip

To this end, consider $(\tT_n^i)_{n\geq 0}$ with law $\Lambda_1^\la$ 
(recall Step 2). Denote by $p_{\delta,T}^\la(dt,dl)$ the law of 
$(\ttau_T^\la(\delta,i)/\ba_\la,\tL^\la_T(\delta,i)/\ba_\la)$ 
(under $\Lambda_1^\la$). We know from Lemma \ref{ilestbalezefourniax}-(iii) 
that $p_{\delta,T}^\la(dt,dl)$ goes weakly, as $\la\to 0$, to 
$p_{\delta,T}(dt,dl):=
T^{-1}(\beta+1)\delta^{\beta+1}l^{-\beta-2} \indiq_{\{t\in [0,T], l\geq \delta\}} 
dtdl$. Indeed, observe that $p_{\delta,T}([0,s]\times [x,\infty))=
(s/T)(x/\delta)^{-\beta-1}$ for $s\in[0,T]$ and $x>\delta$. 

\vip

But $p_{\delta,T}(dt,dl)$ is nothing but the law of 
$(\tau_T^\la(\delta,i),L^\la_T(\delta,i))=(T^{i,\la}_1,L^{i,\la}_1)$ 
conditionally on $\{S_T^\la(\delta,i)=1\}$ (recall Step 1-(ii)). 
We easily conclude: 
first, we couple 
$(\ttau_T^\la(\delta,i)/\ba_\la,\tL^\la_T(\delta,i)/\ba_\la)$ and
$(\tau_T^\la(\delta,i),L^\la_T(\delta,i))$ in such a way
that they are close to each other (with a distance smaller than $\alpha$)
with high probability (tending to $1$ when $\la \to 0$), using Lemma 
\ref{gcou}-(iii). Then we choose
$X^{i,\la}_1$ at random, uniformly in $i_\la$, 
independently of everything else
and finally, we pick $(\tT_n^i)_{n\geq 0}$ 
conditionally on  $\{\tS_T^\la(\delta,i)=1\}$ and
$(\ttau_T^\la(\delta,i),\tL^\la_T(\delta,i))$.

\vip

{\bf Step 5.} We finally may build the complete coupling.

\vip

(i) For each $i\in I_A^\la$, consider some coupled r.v.
$(S_T^\la(\delta,i),\tS_T^\la(\delta,i))$ as in Step 3.

\vip

(ii) For $i \in I_A^\la$ such that  
$S^{\la}_T(\delta,i)=\tS^{\la}_T(\delta,i)=1$, couple 
$(T^{i,\la}_1,L^{i,\la}_1,X^{i,\la}_1)$ and $(\tT_n^i)_{n\geq 0}$ as in Step 4.

\vip

(iii) For $i \in I_A^\la$ not meeting the above 
condition (ii), follow the rules of Step 1-(ii) to build 
$(T^{i,\la}_k,X^{i,\la}_k,L^{i,\la}_k)_{1\leq k 
\leq S_T^{\la}(\delta,i)}$ and the rules of Step 2-(ii) to build 
$\{\tT^i_n\}_{n\geq 0}$ (e.g. independently).

\vip

This defines $\{\tT^i_n\}_{n\geq 0,i\in I_A^\la}$ and
$\pi_S:=\sum_{i\in I_A^\la} \sum_{k=1}^{S_T^{\la}(\delta,i)}
\delta_{(T^{i,\la}_k,X^{i,\la}_k, L^{i,\la}_k)}$.

\vip

{\bf Step 6.} With this coupling, using Steps 3 and 4 
and that $|I_A^\la|=2A_\la+1$,
\begin{align*}
\Pr[\Omega^S_{A,T,\delta,\alpha}] \geq&
\Big( 
\Pr\left[S_T^\la(\delta,i)=\tS_T^\la(\delta,i)=0\right]
+ \Pr\Big[S_T^\la(\delta,i)=\tS_T^\la(\delta,i)=1, \\
&\hskip2.5cm |\tau^\la_T(\delta,i)-\ttau^\la_T(\delta,i)/\ba_\la| 
+ L^\la_T(\delta,i)-\tL^\la_T(\delta,i)/\ba_\la|< \alpha  \Big]
\Big)^{2A_\la+1}\\
&\geq \big(1-\la\ba_\la\beta \delta^{\beta-1} T (1+\e_{T,\delta}(\la))
+\la\ba_\la\beta \delta^{\beta-1} T (1-\e_{T,\delta}(\la))r_{T,\delta,\alpha}(\la)  
\big)^{2A_\la+1}. 
\end{align*}
Recall that $\lim_{\la\to 0} \e_{T,\delta}(\la) =0$, that 
$\lim_{\la\to 0} r_{T,\delta,\alpha}(\la) =1$ and that 
$A_\la \leq A /(\la\ba_\la)$. Hence for some function $\e_{T,\delta,\alpha}$
with limit $0$ at $0$,
$$
\Pr[\Omega^S_{A,T,\delta,\alpha}]  \geq \left(1-\la\ba_\la\beta \delta^{\beta-1} 
T \e_{T,\delta,\alpha}(\la)\right)^{2A/(\la\ba_\la)+1}.
$$
This last quantity tends to $1$ as $\la\to 0$, which concludes the proof.
\end{proof}

\subsection{Convergence}

We are now able to conclude. Intuitively, the situation is clear: 
using Proposition \ref{coupling1}, we couple the time/positions 
at which matches fall in the $LFF(\beta)$-process with those of the
$FF(\mu_S,\mu^\la_M)$-process; and using Proposition \ref{coupseed},
we couple the time/positions at which no seed fall in the $LFF(\beta)$-process
with time/positions at which no seed fall during a time interval
of length of order $\ba_\la$ in the $FF(\mu_S,\mu^\la_M)$-process.
Then we only have to show carefully that this is sufficient to couple
the $FF(\mu_S,\mu^\la_M)$-process and the $LFF(\beta)$-process in such
a way that they remain close to each other. But there are
many technical problems: our couplings concern only finite boxes $[0,T]\times
[-A,A]$, do not allow to treat {\it small} time intervals with no seed falling,
etc.
We thus have to localize the processes in space and time
and to work on an event (with
high probability) on which everything works as desired.

\vip

\begin{preuve} {\it of Theorem \ref{converge2}.}
We fix $T>0$, $x_1<\dots<x_p$ and $t_1,\dots,t_p\in [0,T]$. 
We introduce $B>0$ such that $-B<x_1<x_p<B$. We fix $\e>0$ and $a>0$.
Our aim is to check that for all $\la>0$
small enough, there exists a coupling between a $FF(\mu_S,\mu^\la_M)$-process
$(\eta^\la_t(i))_{t\geq 0,i\in \zz}$
and a $LFF(\beta)$-process $(Y_t(x))_{t\geq 0, x\in \rr}$ 
such that, recalling
(\ref{dlambda})
and Proposition \ref{wpbeta}, there holds
\begin{align}\label{cqv}
\Pr\left[\sum_{k=1}^p\bdelta_{T}(D^\la(x_k),D(x_k)) 
+\sum_{k=1}^p \bdelta(D^\la_{t_k}(x_k),D_{t_k}(x_k)) \geq a
\right] \leq \e.
\end{align}
This will of course conclude the proof.

\vip

{\bf Step 1.} Consider two independent 
Poisson measures $\pi_S(dt,dx,dl)$ with intensity measure 
$\beta(\beta+1)l^{-\beta-2} dtdxdl$ and $\pi_M(dt,dx)$ 
with intensity measure $dtdx$. Set, for $A>B$, 
\begin{align*}
\Omega^{S,1}_{A,T}:= &\{\pi_S(\{(t,x,l)\, : \;x\in [B,A], t>T+1,l>t+1\})>0\}\\
&\cap \{\pi_S(\{(t,x,l)\, : \;x\in [-A,-B], t>T+1,l>t+1\})>0\}.
\end{align*}
A simple computation shows that
$$
\Pr[\Omega^{S,1}_{A,T}] \geq 1-2 \exp\left(-\int_B^A dx \int_{T+1}^\infty dt 
\int_{t+1}^\infty \beta(\beta+1)l^{-\beta-2} \right),
$$
so that we can choose $A$ large enough in such a way that 
$\Pr[\Omega^{S,1}_{A,T}]\geq 1-\e/6$. This will ensure us that there
are $\chi_g \in [-A,-B]$ and $\chi_d \in [A,B]$ with $Y_t(\chi_g)=Y_t(\chi_d)=0$
for all $t\in [0,T+1]$ (recall Figure \ref{figLFFbeta}). 
This fixes the value of $A$ for the whole proof.

Next we consider $T_0>T+1$ large enough, so that for
$$
\Omega^{S,2}_{A,T,T_0}=\left\{\pi_S(\{(t,x,l)\, 
: \;t>T_0,t-l<T+1,x\in [-A,A]\}) =0
\right\},
$$
$\Pr[\Omega^{S,2}_{A,T,T_0}]\geq 1-\e/6$. This is possible,
because 
$$
\Pr[\Omega^{S,2}_{A,T,T_0}]=\exp\left(-\int_{-A}^A dx \int_{T_0}^\infty dt 
\int_{t-(T+1)}^\infty \beta(\beta+1)l^{-\beta-2}dl \right),
$$ 
which clearly tends
to $1$ as $T_0$ increases to infinity.
This will ensure us that all the dotted vertical segments in $[-A,A]$
that intersect $[0,T+1]$ end before $T_0$ (see Figure \ref{figLFFbeta}).
This fixes the value of $T_0$ for the whole
proof.

\vip

Next we call $\cX_M = \{x \in [-A,A]\,: \; \pi_M([0,T]\times \{x\})>0\}$ 
and $\cT_M=\{t \in [0,T]\,: \; \pi_M(\{t\}\times[-A,A])>0\}\cup\{0\}$.
Classical results about Poisson measures allow us to choose
$K_M>0$ (large) and $c_M>0$ (small) in such a way that for
\begin{align*}
\Omega^{M,1}_{K_M,c_M} = \Big\{|\cT_M| \leq K_M, \, 
\min_{t,s \in \cT_M, s\ne t} |t-s| > c_M,\; 
\min_{t\in\cT_M,k=1,\dots,p}|t-t_k|>c_M, \\
\; \min_{x\in\cX_M,k=1,\dots,p}|x-x_k|>c_M 
\Big\},
\end{align*}
there holds $\Pr\left[\Omega^{M,1}_{K_M,c_M} \right] \geq 1-\e/6$.

\vip

We can now fix $\delta>0$ for the whole proof, in such a way that 
$$
\delta<c_M/4 \quad \hbox{and} \quad \delta<a/(8ApK_M).
$$
We use this $\delta$ to cutoff the Poisson measure $\pi_S$ (in order
that it has only a finite number of marks) without affecting the
values of the $LFF(\beta)$-process in the zone under study. 

\vip

Next, we consider the finite Poisson measure
$\pi_S^{A,\delta,T_0}$ defined as the restriction of 
$\pi_S$ to the set $[0,T_0]\times[-A,A]\times[\delta,\infty)$.
We define $\cX_S^\delta=\{x\in [-A,A]\,:\; \pi_S([0,T_0]\times \{x\}
\times[\delta,\infty))>0 \}$ and 
$$
\cT^\delta_S=  
\left(\bigcup_{(t,x,l) \in \;{\rm supp}\; \pi_S^{A,\delta,T_0}} \{t,t-l\} \right)
\cap[0,T].
$$
Then for $K_S>0$ large enough and $c_S>0$ small enough, the event
\begin{align*}
\Omega^{S,3}_{K_S,c_S,\delta} =& \Big\{|\cT_S^\delta| \leq  K_S, \, 
\min_{t \in \cT_M, s \in \cT_S^\delta} {|t-s| } > c_S,\; 
\min_{t \in \cT_S^\delta, k=1,\dots,p} {|t-t_k| } > c_S, \\
&\min_{x,y \in \cX_S^\delta, x\ne y} {|x-y| } > c_S,\;
\min_{x \in \cX_S^\delta, y \in \cX_M} {|x-y| } > c_S,\;
\min_{x \in \cX_S^\delta, k=1,\dots,p} {|x-x_k| } > c_S
\Big\}. 
\end{align*}
satisfies $\Pr[\Omega^{S,3}_{K_S,c_S,\delta}] \geq 1 - \e/6$.

\vip

Finally, we fix $\alpha>0$ in such a way that 
$$
\alpha<c_S/4, \quad \alpha<c_M/4,  \quad
\alpha<1/2 \quad \hbox{and} \quad
\alpha< a/(8Ap(2K_S+K_M)).
$$

\vip

{\bf Step 2.} Using Proposition \ref{coupling1}, we know that
for all $\la>0$ small enough, it is possible to couple a family
of i.i.d. $SR(\mu^\la_M)$-processes $(N^{M,\la}_t(i))_{t\geq 0, 
i \in \zz}$ with $\pi_M$ in such a way that
$$
\Omega^M_{A,T}(\la):=\left\{\forall t\in [0,T],\;
\forall i\in I_A^\la, \;
\Delta N^{M,\la}_{\ba_\la t}(i) \ne 0 \; \hbox{iff} \; 
\pi_M(\{t\}\times i_\la) \ne 0
\right\}
$$
satisfies $ \Pr[\Omega^M_{A,T}(\la)]\geq 1-\e/6$. We infer from 
Proposition \ref{coupseed} that for all $\la>0$
small enough, it is possible to couple an i.i.d. family of 
$SR(\mu_S)$-processes $(\tT_n^i)_{i\in \zz,n \geq 0}$ with $\pi_S$
in such a way that for
\begin{align*}
&S^{\la}_{T_0}(\delta,i)= \pi_S([0,T_0]\times\{i_\la\}\times[\delta,\infty)), \\
&\tS^{\la}_{T_0}(\delta,i)= \#\left\{n \geq 1\, : 
\; \tT_n^i \in [0,\ba_\la T_0], \;
\tT_n^i-\tT_{n-1}^i \geq \ba_\la \delta \right\},
\end{align*}
setting 
\begin{align*}
\Omega^S_{A,T_0,\delta,\alpha}(\la):=
\bigcap_{i \in I_A^\la} \Big(
\left\{ S^{\la}_{T_0}(\delta,i)=\tS^{\la}_{T_0}(\delta,i)=0 \right\}
\cup \Big\{S^{\la}_{T_0}(\delta,i)=\tS^{\la}_{T_0}(\delta,i)=1, \hskip1cm& \\
\left|\tau^\la(\delta,i)-\frac{\ttau^\la(\delta,i)}{\ba_\la}\right|+
\left|L^\la(\delta,i)-\frac{\tL^\la(\delta,i)}{\ba_\la}\right|<\alpha
\Big\}\Big),&
\end{align*}
there holds
$\Pr(\Omega^S_{A,T_0,\delta,\alpha}(\la))\geq 1- \e/6$.
On the event $\{S^{\la}_{T_0}(\delta,i)=\tS^{\la}_{T_0}(\delta,i)=1\}$, 
we have denoted
by $(\tau^\la(\delta,i),L^\la(\delta,i))$ the unique element
$(t,l) \in [0,T_0]\times [\delta,\infty)$ such that
$\pi_S(\{t\}\times i_\la \times \{l\} )=1$ and we have put 
$\ttau^\la(\delta,i)=\tT_n^i$ 
and $\tL^\la(\delta,i)=\tT_n^i-\tT_{n-1}^i$, where
$n\geq 1$ is the unique element of $\nn$ 
such that $\tT_n^i \in [0,\ba_\la T_0]$ and
$\tT_n^i-\tT_{n-1}^i \geq \ba_\la \delta$.
We put $N_t^S(i)=\sum_{n\geq 1} \indiq_{\{\tT^i_n\geq t\}}$ 
for all $i\in \zz$, all $t\geq 0$,
which is a family of i.i.d. $SR(\mu_S)$-processes in the sense of 
Definition \ref{sr1}, see Subsection \ref{srp}.

\vip

{\bf Step 3.} We work with the $FF(\mu_S,\mu^{\la}_M)$-process 
$(\eta^\la_t(i))_{t\geq 0,i\in\zz}$ built from $(N^S_t(i))_{t\geq 0,i\in \zz}$
and $(N^{M,\la}_t(i))_{t\geq 0,i\in \zz}$ and the $LFF(\beta)$-process 
$(Y_t(x))_{t\geq 0, x\in \rr}$ built from $\pi_S$ and $\pi_M$,
all these processes being coupled as in Step 2. We consider the 
associated clusters $(D^\la_t(x))_{t\geq 0,x\in \rr}$ and 
$(D_t(x))_{t\geq 0,x\in \rr}$, see (\ref{dlambda}) and Proposition \ref{wpbeta}.
We will work on the event
$$
\Omega_\la=\Omega^{S,1}_{A,T}\cap \Omega^{S,2}_{A,T,T_0} \cap 
\Omega^{M,1}_{K_M,c_M}\cap  
\Omega^{S,3}_{K_S,c_S,\delta}\cap \Omega^M_{A,T}(\la)\cap 
\Omega^{S}_{A,T_0,\delta,\alpha}(\lambda).
$$
Thanks to the previous steps, we know that $\Pr[\Omega_\la]\geq 1-\e$
for all $\la>0$ small enough.
We introduce 
$$
\cS= (\cup_{t \in \cT_M} [t,t+\delta+\alpha] ) 
\cup (\cup_{t \in \cT_S^\delta} [t-\alpha,t+\alpha] ). 
$$
We will prove in the next steps that for $\la>0$ small enough,  
on $\Omega_\la$, 
for all $k=1,\dots,p$, for all $t\in [0,T]$,
\begin{align}\label{objb}
\bdelta(D^\la_t(x_k),D_t(x_k))\leq 4 / \bn_\la + 2A \indiq_{\{t \in \cS\}},
\end{align}
which will imply that
$$
\bdelta_T(D^\la(x_k),D(x_k))\leq 4T / \bn_\la + 2A |\cS|.
$$
This will conclude the proof. Indeed, on $\Omega_\la$, we know that
$t_1,\dots,t_p$ do not belong to $\cS$ 
(thanks to $\Omega^{S,3}_{K_S,c_S,\delta}$ and $\Omega^{M,1}_{K_M,c_M}$ and 
since $c_S>\alpha$ and $c_M>\delta+\alpha$) and that the Lebesgue measure of
$\cS$ is smaller than $K_M \delta + (2K_S+K_M) \alpha$. Thus on $\Omega_\la$,
since $\delta< a/(8ApK_M)$ and $\alpha< a/(8Ap(2K_S+K_M))$,
\begin{align*}
&\sum_{k=1}^p\bdelta_{T}(D^\la(x_k),D(x_k)) 
+\sum_{k=1}^p \bdelta(D^\la_{t_k}(x_k),D_{t_k}(x_k))\\
\leq& p[2A(K_M \delta + (2K_S+K_M) \alpha)+ 4T/\bn_\la + 4/\bn_\la]
\leq a/2 +(4T+4)p/\bn_\la,
\end{align*}
which is smaller than $a$ for all $\la>0$ small enough. This implies
(\ref{cqv}) for all $\la>0$ small enough.

\vip

{\bf Step 4.} Here we localize the processes, on the event $\Omega_\la$. 
Due to $\Omega^{S,1}_{A,T}$,
we know that $\pi_S$ has some marks $(\tau_g,\chi_g,L_g)$ and 
$(\tau_d,\chi_d,L_d)$ such that $-A<\chi_g<-B$, $B<\chi_d<A$, $\tau_g>T+1$,
$\tau_d>T+1$, $L_g>\tau_g+1$ and $L_d>\tau_d+1$. This implies,
by definition of the $LFF(\beta)$-process, that
$Y_t(\chi_g)=Y_t(\chi_d)=0$ for all $t\in [0,T+1]$.
Consequently, for all $t\in [0,T]$ and all $x\in [\chi_g,\chi_d]\supset 
[-B,B]$, we 
have $D_t(x)\subset [\chi_g,\chi_d]$. 

\vip

Set now $g_\la= \lfloor \bn_\la\chi_g \rfloor$ and 
$d_\la=  \lfloor \bn_\la\chi_d \rfloor$. These are those sites 
of $I_A^\la \subset \zz$ 
such that $\chi_g \in (g_\la)_\la$ and $\chi_d \in (d_\la)_\la$. 
We claim
that on $\Omega_\la$, for all $t\in [0,\ba_\la T]$, $\eta_{t}^\la(g_\la)=
\eta_{t}^\la(d_\la)=0$. Consequently on $\Omega_\la$,
we clearly have $C(\eta_{\ba_\la t}^\la, i) 
\subset \lb g_\la+1,d_\la-1\rb$ for all $t\in [0,T]$ and all 
$i \in \lb g_\la+1,d_\la-1\rb$.

\vip

Indeed, consider e.g. the case of $d_\la$. 
Due to $\Omega^S_{A,T_0,\delta,\alpha}(\la)$
and since $S^\la_{T_0}(\delta,d_\la)>0$ (because
$\pi_S$ has the mark $(\tau_d,\chi_d,L_d)$ that falls in
$[0,T_0]\times (d_\la)_\la\times[\delta,\infty)$),
we deduce that 
$S^\la_{T_0}(\delta,d_\la)=\tS^\la_{T_0}(\delta,d_\la)=1$ and that
$|\ttau^\la(\delta,d_\la)/\ba_\la - \tau_d|+
|\tL^\la(\delta,d_\la)/\ba_\la - L_d|<\alpha<1$. But no seed falls
on $d_\la$, by definition, during
$(\ttau^\la(\delta,d_\la)-\tL^\la(\delta,d_\la),\ttau^\la(\delta,d_\la))$.
This last interval contains $[0,\ba_\la T]$: since $\alpha<1/2$,
$\ttau^\la(\delta,d_\la)\geq\ba_\la(\tau_d-\alpha)
\geq\ba_\la(T+1-\alpha)>\ba_\la T$ 
and $\ttau^\la(\delta,d_\la)-\tL^\la(\delta,d_\la)
\leq \ba_\la(\tau_d-L_d+2\alpha)  
\leq  \ba_\la( -1+2\alpha)<0$. This proves that  
$\eta^\la_t(d_\la)=0$ for all $t\in [0,\ba_\la T]$.

\vip

Using furthermore $\Omega^{S,2}_{A,T,T_0}(0)$,
we deduce that on $\Omega_\la$,
$(Y_t(x),D_t(x))_{t\in [0,T], x\in [\chi_g,\chi_d]}$ is completely 
determined by the values of
$\pi_S$ and $\pi_M$ restricted to the boxes 
$[0,T_0]\times [\chi_g,\chi_d] \times
(0,\infty)$ and  $[0,T]\times [\chi_g,\chi_d]$.
By the same way, 
$(\eta^\la_t(i))_{t \in [0, \ba_\la T], i \in \lb g_\la,d_\la\rb}$
is completely determined
by $(N_t^S(i),N_t^{M,\la}(i))_{t \in [0,\ba_\la T], i \in \lb g_\la,d_\la\rb}$.
And we recall that $[-B,B]\subset  [\chi_g,\chi_d] \subset
[-A,A]$.

\vip

{\bf Step 5.} In this whole step, we work on $\Omega_\la$.
We denote by $(\rho_i,\alpha_i)_{i=1,\dots,n}$ the marks of $\pi_M$
in $[0,T]\times[\chi_g,\chi_d]$, 
ordered chronologically ($0=\rho_0<\rho_1<\dots<\rho_n<T$).
For each $k$, we recall that in the $FF(\mu_S,\mu_M^\la)$-process,
there is match falling at time $\ba_\la \rho_k$ on the site
$\lfloor \bn_\la \alpha_k \rfloor$ (recall $\Omega^M_{A,T}(\la)$
and that $x\in i_\la$ iff $i= \lfloor \bn_\la x \rfloor$).
Furthermore, these are the only fires in 
$[0,\ba_\la T]\times \lb g_\la, d_\la \rb$.
For $k=0,\dots,n$, let us  consider the properties
\begin{align*}
(H_k):& \quad \forall \; i \in \lb g_\la, d_\la\rb, \quad
\eta^\la_{\ba_\la \rho_k}(i) = \inf_{\; x \in i_\la} Y_{\rho_k}(x);\\
(H^*_k):& \quad \forall \;i \in \lb g_\la, d_\la\rb, \; \forall \;
t \in [\rho_k ,\rho_{k+1})\setminus \cS, \quad
\eta^\la_{\ba_\la t}(i) = \inf_{x \in i_\la} Y_{t}(x).
\end{align*}
We observe that $(H_0)$ holds: for any $i\in \zz$, $\eta^\la_{0}(i)=0$
and $\inf_{\; x \in i_\la} Y_0(x)=0$
because the set $\{x\in \rr\, : \;Y_0(x)=0\}$ is a.s. dense in $\rr$. Indeed,
recall that 
$Y_0(x)=0$ as soon as $\pi_S(\{(t,x,l)\, : \; l>t\})>0$ and that 
$\int_0^\infty dt \int_t^\infty \beta (\beta+1)l^{-\beta-2}dl = \infty$.

\vip

We are going 
to prove that for $k\in\{0,\dots,n-1\}$, $(H_k)$ implies $(H^*_k)$ and
$(H_{k+1})$. Assume thus that 
$(H_k)$ holds for some $k \in\{0,\dots,n-1\}$. 
We first prove that $(H^*_k)$ holds.

\vip

We recall that 
for all $i\in \lb g_\la,d_\la\rb$,
$S^\la_{T_0}(\delta,i)=\tS^\la_{T_0}(\delta,i)$ is either $0$ or $1$. 
On $\{S^\la_{T_0}(\delta,i)=\tS^\la_{T_0}(\delta,i)=1\}$,
we have $|\tau^\la(\delta,i) - \ttau^\la(\delta,i)/\ba_\la| < \alpha$ 
and  $|L^\la(\delta,i) - \tL^\la(\delta,i)/\ba_\la| < \alpha$. 
Recalling furthermore $\Omega^{M,1}_{K_M,c_M}$ 
and $\Omega^{S,3}_{K_S,c_S,\delta}$, 
using that $\alpha < c_M/4$, we deduce that: 

\vip

$\bullet$ either $\tau^\la(\delta,i)$ 
and $\ttau^\la(\delta,i)/\ba_\la$ both belong to the same interval 
$(\rho_{q(i)},\rho_{q(i)+1})$ for some $q(i)\in \{0,\dots,n-1\}$ or are
both greater than $\rho_{n}$ (then we say that $q(i)=n$);

\vip

$\bullet$ either $\tau^\la(\delta,i)-L^\la(\delta,i)$  and  
$(\ttau^\la(\delta,i)-\tL^\la(\delta,i))/\ba_\la$ both
belong to the same interval $(\rho_{q'(i)},\rho_{q'(i)+1})$ 
for some $q'(i)\in \{0,\dots,n-1\}$ or are 
both greater than $\rho_{n}$ (then we adopt the convention
that $q'(i)=n$), or are both smaller than $0$ (then we say that $q'(i)=-1$).

\vip

We next observe that since $\delta<c_M$, the values of 
$(Y_t(x),D_t(x))_{t \in [0,T]
\setminus \cup_{s \in \cT_M}[s,s+\delta],x\in [\chi_g,\chi_d]}$ 
depends on $\pi_S$ only through its restriction to
$[0,T_0]\times [\chi_g,\chi_d]\times[\delta,\infty)$. Furthermore,
for any $t\in [0,T]\setminus\cup_{s \in \cT_M}[s,s+\delta]$ 
and any $x\in [\chi_g,\chi_d]$, $D_t(x)$ has its extremities in $\cX^\delta_S$.
Have a look
at Figure \ref{figLFFbeta} 
and use the fact that all the dotted segments with length 
smaller than $\delta$ cannot concern two fires. See also 
Remark \ref{ribeta}-(ii).

\vip

We now distinguish several situations 
to prove $(H_k^*)$. We use, in all the cases below, that 
there are no fires in the time interval $(\rho_k,\rho_{k+1})$ 
in the $LFF(\beta)$-process
in the box $[\chi_g,\chi_d]$ and no fire during 
$(\ba_\la\rho_k,\ba_\la\rho_{k+1})$ for the
$FF(\mu_S,\mu^\la_M)$-process in the box $\lb g_\la,d_\la\rb$, 
recall $\Omega^M_{A,T}(\la)$. Let $i \in \lb g_\la,d_\la\rb$.

\vip

{\bf Case (a):} $\eta^\la_{\ba_\la \rho_k}(i) = 1$. 
Then by $(H_k)$, $\inf_{x\in i_\la}Y_{\rho_k}(x)=1$. An obvious
monotonicity argument shows that 
for all $t \in (\rho_k, \rho_{k+1})$, 
$\eta^\la_{\ba_\la t }(i)=\inf_{x\in i_\la}Y_{t}(x)=1$.

\vip

{\bf Case (b):} $\eta^\la_{\ba_\la \rho_k}(i)=0$ and $S^\la_{T_0}(\delta,i)=
\tS^\la_{T_0}(\delta,i)=0$. Then $\inf_{x\in i_\la}Y_t(x)=1$ 
for all $t\in [\rho_k+\delta,\rho_{k+1})$, because in $i_\la$,
there is no dotted segment with length greater than $\delta$ that 
intersect $[0,T]$ (see Figure \ref{figLFFbeta}).
Next, $\tS^\la_{T_0}(\delta,i)=0$ means that all the delays
we wait for a seed (on the site 
$i$ during $[0,\ba_\la T_0]$) 
are smaller than $\ba_\la \delta$.
Consequently, $\eta^\la_{\ba_\la t}(i)=1$ for all $t \in [\rho_k+\delta,
\rho_{k+1} )$. Hence $\eta^\la_{\ba_\la t }(i)=\inf_{x\in i_\la}Y_{t}(x)=1$ 
for all $t\in [\rho_k+\delta, \rho_{k+1})\supset (\rho_k, \rho_{k+1})
\setminus \cS$.

\vip

{\bf Case (c):} $\eta^\la_{\ba_\la \rho_k}(i)=0$ and $S^\la_{T_0}(\delta,i)=
\tS^\la_{T_0}(\delta,i)=1$ and $q(i)<k$. 
Then $\inf_{x\in i_\la}Y_t(x)=1$
for all $t\in [\rho_k+\delta,\rho_{k+1})$, because the only dotted segment
in $i_\la$ with length greater than $\delta$ that intersects $[0,T]$ has
ended before $\rho_k$ (because $q(i)<k$).
Next, the only delay (between two seeds on $i$ during $[0,\ba_\la T]$) 
greater than $\ba_\la \delta$
is ended before $\ba_\la \rho_k$ (because $q(i)<k$), so that
$\eta^\la_{\ba_\la t}(i)=1$ for all $t \in [\rho_k+\delta,
\rho_{k+1} )$. Hence $\eta^\la_{\ba_\la t }(i)=\inf_{x\in i_\la}Y_{t}(x)=1$ 
for all $t\in [\rho_k+\delta, \rho_{k+1})\supset (\rho_k, \rho_{k+1})
\setminus \cS$.

\vip

{\bf Case (d):} $\eta^\la_{\ba_\la \rho_k}(i)=0$, $S^\la_{T_0}(\delta,i)=
\tS^\la_{T_0}(\delta,i)=1$ and $q'(i)\geq k$.  
Then $\inf_{x\in i_\la}Y_t(x)=1$
for all $t\in [\rho_k+\delta,\rho_{k+1})$. Indeed, the only dotted segment
in $i_\la$ with length greater than $\delta$ that intersects $[0,T]$ starts
(strictly) after $\rho_{k}$ (because $q'(i)\geq k$). 
Next, the only delay (between two seeds on $i$ during $[0,\ba_\la T]$) 
greater than $\ba_\la \delta$
will start strictly after $\ba_\la \rho_{k}$ (because $q'(i)\geq k$), so that
$\eta^\la_{\ba_\la t}(i)=1$ for all $t \in [\rho_k+\delta,
\rho_{k+1} )$. Hence $\eta^\la_{\ba_\la t }(i)=\inf_{x\in i_\la}Y_{t}(x)=1$ 
for all $t\in [\rho_k+\delta, \rho_{k+1})\supset (\rho_k, \rho_{k+1})
\setminus \cS$.

\vip

{\bf Case (e):} $\eta^\la_{\ba_\la \rho_k}(i)=0$ and $S^\la_{T_0}(\delta,i)=
\tS^\la_{T_0}(\delta,i)=1$ and $q'(i)< k \leq q(i)$.
Then $\eta^\la_{\ba_\la t}(i)=0$ for all $t\in [\rho_k,
(\ttau^\la(\delta,i)/\ba_\la) \land \rho_{k+1})$ 
and $\eta^\la_{\ba_\la t}(i)=1$ for all 
$t\in [(\ttau^\la(\delta,i)/\ba_\la)\land \rho_{k+1},\rho_{k+1})$ 
(because no seed fall
on $i$ during $[\ttau^\la(\delta,i)-\tL^\la(\delta,i),\ttau^\la(\delta,i))
\ni \rho_k$
and a seed falls on $i$ at time $\ttau^\la(\delta,i)$).
By $(H_k)$, we also know that $\inf_{x\in i_\la} Y_{\rho_{k}}(x)=0$.
Calling $(\tau^\la(\delta,i),x_0,L^\la(\delta,i))$ the only mark
of $\pi_S$
that falls in $[0,T_0]\times i_\la \times [\delta,\infty)$, we claim
that necessarily, $Y_{\rho_{k}}(x_0)=0$. Indeed, all the other dotted segments
in $i_\la$ that intersect $[0,T]$ have a length smaller than $\delta<c_M\leq
\rho_k-\rho_{k-1}$.
Thus if $\inf_{x\in i_\la}Y_{\rho_k-}(x)=0$, necessarily,
$Y_{\rho_k-}(x_0)=0$ and thus  $Y_{\rho_k}(x_0)=0$. If now 
$\inf_{x\in i_\la}Y_{\rho_k-}(x)=1$, then $i_\la$ is connected at time time
$\rho_k-$, whence the fire at time $\rho_k$ 
burns completely $i_\la$ (because $\inf_{x\in i_\la} Y_{\rho_{k}}(x)=0$
by assumption), so that in particular, $Y_{\rho_k}(x_0)=0$.
Then we have to separate two situations.

\vip

$\bullet$ If $\tau^\la(\delta,i)< \rho_{k}+\delta$, then we easily deduce
that $\inf_{x\in i_\la} Y_t(x)=1$ for $t\in [\rho_k+\delta,\rho_{k+1})$.
Recalling that  $\eta^\la_{\ba_\la t}(i)=1$ for all 
$t\in [(\ttau^\la(\delta,i)/\ba_\la)\land \rho_{k+1},\rho_{k+1})$ and that
$|\ttau^\la(\delta,i)/\ba_\la -\tau^\la(\delta,i)|<\alpha$, 
we easily conclude that $\eta^\la_{\ba_\la t}(i)=1$ for 
$t\in [\rho_k+\alpha+\delta, \rho_{k+1})$. Thus $\eta^\la_{\ba_\la t}(i)
= \inf_{x\in i_\la} Y_{t}(x)$ for $t \in [\rho_{k}+\delta+\alpha,\rho_{k+1})
\supset [\rho_{k},\rho_{k+1})\setminus \cS$.

\vip

$\bullet$ If now $\tau^\la(\delta,i) \geq \rho_{k}+\delta$, 
then we have, by construction,  
$\inf_{x\in i_\la} Y_t(x)=0$ for 
$t\in [\rho_k, \tau^\la(\delta,i) \land \rho_{k+1})$
and $\inf_{x\in i_\la} Y_t(x)=1$ for $t\in [\tau^\la(\delta,i) \land \rho_{k+1},
\rho_{k+1})$. Recalling the values of $\eta^\la_{\ba_\la t}(i)$ and that 
$|\ttau^\la(\delta,i)/\ba_\la -\tau^\la(\delta,i)|<\alpha$, one easily
concludes that $\eta^\la_{\ba_\la t}(i)
= \inf_{x\in i_\la} Y_{t}(x)$ for $t \in [\rho_{k},\rho_{k+1})\setminus \cS$
(because $\tau^\la(\delta,i) \in \cT^\delta_S$ whence 
$[\tau^\la(\delta,i)-\alpha,\tau^\la(\delta,i)+\alpha] \subset \cS$).

\vip

We have proved $(H_k^*)$ and this implies that 
$$
\forall \; i\in \lb g_\la,d_\la\rb, \quad
\eta^\la_{\ba_\la \rho_{k+1}-}(i)
=\inf_{x\in i_\la} Y_{\rho_{k+1}-}(x).
$$

It remains to prove $(H_{k+1})$.

\vip

Consider the ignited cluster
$[a,b]=D_{\rho_{k+1}-}(\alpha_{k+1})$ in the $LFF(\beta)$-process. 
Then the ignited cluster in the $FF(\mu_S,\mu^\la_M)$-process
at time $\ba_\la \rho_{k+1}$ (due to a match falling on the site
$\lfloor \bn_\la \alpha_{k+1} \rfloor$)
is nothing but 
$I_{k+1}^\la:=\{i \in \lb g_\la,d_\la\rb
\,: \;i_\la \subset D_{\rho_{k+1}-}(\alpha_{k+1})\}$,
at least if $\la$ is small enough (such that $1/\bn_\la<c_S$). 
Indeed, we have
$\eta^\la_{\ba_\la \rho_{k+1}-}(i)=\inf_{x\in i_\la}Y_{\rho_{k+1}-}(x)=1$ 
for all $i$ such that $i_\la \subset D_{\rho_{k+1}-}(\alpha_{k+1})$
and (on the two boundary sites) 
$\eta^\la_{\ba_\la \rho_{k+1}-}(i)=\inf_{x\in i_\la}Y_{\rho_{k+1}-}(x)=0$
for $i$ such that $i_\la \not\subset D_{\rho_{k+1}-}(\alpha_{k+1})$
with $i_\la \cap D_{\rho_{k+1}-}(\alpha_{k+1})\ne \emptyset$. And
for $\la>0$ small enough (such that $1/\bn_\la < c_S$),
$\lfloor \bn_\la \alpha_{k+1} \rfloor
\in I_{k+1}^\la$ (because $[a+1/\bn_\la,b-1/\bn_\la]\subset I_{k+1}^\la$ 
by the previous study,
because $D_{\rho_{k+1}-}(\alpha_{k+1})=[a,b]$ has its extremities $a,b$ in 
$\cX^\delta_S$, because
$\alpha_{k+1}\in \cX_M$ and because the
distance between $\cX^\delta_S$ and $\cX_M$ is greater than $c_S$,
recall $\Omega^{S,3}_{K_S,c_S,\delta}$, so that
actually, $\alpha_{k+1}\in [a+c_S,b-c_S]$).

\vip

Then on the one hand, for all
$i\in \lb g_\la,d_\la\rb$, we have 
$\inf_{x\in i_\la}Y_{\rho_{k+1}}(x)=\inf_{x\in i_\la}Y_{\rho_{k+1}-}(x)$ if
$i_\la \cap D_{\rho_{k+1}-}(\alpha_{k+1}) =\emptyset$
and $\inf_{x\in i_\la}Y_{\rho_{k+1}}(x)=0$ if 
$i_\la \cap D_{\rho_{k+1}-}(\alpha_{k+1}) \neq \emptyset$:
the first case is obvious
and the second one follows from
the fact that a.s., $\pi_S(\{(t,x,l)\, : \; 
t \geq \rho_{k+1}, x\in i_\la\cap D_{\rho_{k+1}-}(\alpha_{k+1}), t-l<\rho_{k+1}
\})=\infty$ (but this concerns marks $(t,x,l)$
with a very small length $l>0$).

\vip

On the other hand,  for all $i\in \lb g_\la,d_\la\rb$, we have
$\eta^\la_{\rho_{k+1}}(i)=\eta^\la_{\rho_{k+1}-}(i)$ if $i \notin I_{k+1}^\la$
and $\eta^\la_{\rho_{k+1}}(i)=0$ if $i \in I_{k+1}^\la$.

\vip

As a conclusion, for all $i\in \lb g_\la,d_\la\rb$, 

\vip

$\bullet$ if $i_\la \subset D_{\rho_{k+1}-}(\alpha_{k+1})$, i.e.
if $i \in I_{k+1}^\la$, then we have seen that
$\eta^\la_{\ba_\la \rho_{k+1}}(i)= 0 = \inf_{x\in i_\la}Y_{\rho_{k+1}}(x)$;

\vip

$\bullet$ if $i_\la \cap D_{\rho_{k+1}-}(\alpha_{k+1}) = \emptyset$
(hence $i\notin I_{k+1}^\la$), then we
have seen that 
$\eta^\la_{\ba_\la \rho_{k+1}}(i)= \eta^\la_{\ba_\la \rho_{k+1}-}(i) 
=\inf_{x\in i_\la}Y_{\rho_{k+1}-}(x) = \inf_{x\in i_\la}Y_{\rho_{k+1}}(x)$;

\vip

$\bullet$ if $i \notin I_{k+1}^\la$ but 
$i_\la \cap D_{\rho_{k+1}-}(\alpha_{k+1})\neq \emptyset$,
then we have seen that $\inf_{x\in i_\la} Y_{\rho_{k+1}}(x)=0$
and $\eta^\la_{\ba_\la \rho_{k+1}}(i)=0$ because
$\eta^\la_{\ba_\la \rho_{k+1}-}(i)=0$ (since then $i$ lies at the boundary
of $I_{k+1}^\la$).

\vip

Hence $(H_{k+1})$ holds.

\vip

{\bf Step 6.}  We finally can prove (\ref{objb})
on $\Omega_\la$ and this will conclude the proof.
First, we know from Step 4 that for all $t\in [0,T]$, all $k=1,\dots,p$,
$D_t(x_k)\subset[\chi_g,\chi_d]\subset [-A,A]$ and that $C(\eta^\la_{\ba_\la t},
\lfloor \ba_\la x_k \rfloor)\subset \lb g_\la+1,d_\la-1\rb$
whence $D^\la_t(x_k)\subset [-A,A]$ (because 
$(g_\la+1)/\bn_\la \geq \chi_g \geq -A$
and $(d_\la-1)/\bn_\la \leq \chi_d \leq A$). This obviously implies that 
$\bdelta(D_t(x_k),D^\la_t(x_k))\leq 2A$.

\vip

Next, Step 5 implies that for all $t\in [0,T]\setminus \cS$
(or rather for all $t\in[0,\rho_n)\setminus \cS$, 
but the extension is straightforward),
for all $i\in \lb g_\la,d_\la\rb$, 
$\eta^\la_{\ba_\la t}(i) = \inf_{x \in i_\la} (Y_{t}(x))$.
This implies that for all $t\in [0,T]\setminus \cS$,
for all $k=1,\dots,p$, $\bdelta(D_t^\la(x_k),D_t(x_k))\leq
4/\bn_\la$ as desired. 

Indeed, assume that $D_t(x_k)=[a,b]\subset [\chi_g,\chi_d]$
for some $t\in [0,T]\setminus \cS$. Recall that
$a,b \in \cX_S^\delta$.
We have $Y_t(y)=1$ for all $y\in (a,b)$ and $Y_t(a)=Y_t(b)=0$. 
Hence we deduce
that $\eta^\la_{\ba_\la t}(i)=1$ for all 
$i\in \lb \lfloor a \bn_\la \rfloor+1,\lfloor b \bn_\la \rfloor-1\rb$
and that
$\eta^\la_{\ba_\la t}(\lfloor a \bn_\la \rfloor)=
\eta^\la_{\ba_\la t}(\lfloor b \bn_\la \rfloor )=0$. Next, we observe that
for $\la>0$ small enough,
$\lfloor a \bn_\la \rfloor < \lfloor x_k \bn_\la \rfloor 
< \lfloor b \bn_\la \rfloor$. Indeed, on $\Omega_\la$, we have, 
since $a,b\in \cX_S^\delta$, $|x_k-a|>c_S$ and $|b-x_k|>c_S$.
We finally obtain
$C(\eta^\la_{\ba_\la t},
\lfloor x_k \bn_\la \rfloor)=\lb \lfloor a \bn_\la \rfloor+1
,\lfloor b \bn_\la \rfloor-1\rb$, whence 
$D^\la_t(x_k)=[(\lfloor a \bn_\la \rfloor+1)/\bn_\la, 
(\lfloor b \bn_\la \rfloor-1)/\bn_\la ]$. Recalling that 
$D_t(x)=[a,b]$, one easily deduces that
$\bdelta(D_t^\la(x_k),D_t(x_k))\leq 4/\bn_\la$.
\end{preuve}

\section{Cluster-size distribution when $\beta\in (0,\infty)$}

This section is entirely devoted to the 

\begin{preuve} {\it of Corollary \ref{co2}.}
We thus fix $\beta \in (0,\infty)$ and assume $(H_M)$ and $(H_S(\beta))$. 
For each $\la>0$, 
we consider a $FF(\mu_S,\mu_M^\la)$-process $(\eta^\la_t(i))_{t\geq 0,i\in\zz}$.
Let also  $(Y_t(x))_{t\geq 0, x\in \rr}$ be a $LFF(\beta)$-process.
We know from Theorem \ref{converge2} that $|C(\eta^\la_t,0)|/\bn_\la$
goes in law to $|D_t(0)|$, for any $t>0$.
In Step 1 below, we will check that for $t>0$, the law of $|D_t(0)|$
does not charge points. Thus for any $B\geq 0$, $t>0$, we will have
$\lim_{\la \to 0} \Pr[|C(\eta^\la_t,0)| \geq \bn_\la B] = \Pr[|D_t(0)|\geq B]$.
In Steps 2 to 6, we will check that there are some
constants $0<c_1<c_2$ and $0<\kappa_1<\kappa_2$ such that if $t>1$, for any 
$B\geq 2$, $\Pr[|D_t(0)|>B] \in [c_1 e^{-\kappa_2 B}, c_2 e^{-\kappa_1 B}]$.
One immediately checks that this implies 
$\Pr[|D_t(0)|>B] \in [c_1 e^{-2\kappa_2} e^{-\kappa_2 B}, (c_2 \lor e^{2\kappa_1})
e^{-\kappa_1 B}]$
for all $t>1$,
$B>0$ and this will conclude the proof.

\vip

{\bf Step 1.} The goal of this step is to check that for any $t>0$ fixed,
the law of $|D_t(0)|$ does not charge points.

\vip

Consider the first mark $(T_d,\chi_d,L_d)$ of $\pi_S$ 
on the right of $0$ ($\chi_d>0$) such that $[0,t]\subset [T_d-L_d,T_d]$.
Consider a similar mark  $(T_g,\chi_g,L_g)$ of $\pi_S$ with $\chi_g<0$.
\vip
Then $Y_s(\chi_g)=Y_s(\chi_d)=0$ for all $s\in [0,t]$,
so that fires falling outside $[\chi_g,\chi_d]$ cannot affect $0$
during $[0,t]$.
\vip
Next, denote by $(T_M,X_M)$ the instant/position of the last match falling 
before $t$ in $[\chi_g,\chi_d]$. Then a.s., $t-T_M>0$,
and $D_t(0)$ is of the form $[a,b]$, for some 
marks $(T_a,a,L_a)$ and $(T_b,b,L_b)$ of $\pi_S$ satisfying 
$\chi_g\leq a < 0< b < \chi_d$, $T_a-L_a<T_M$, $T_b-L_b<T_M$,
$T_a>t$ and $T_b>t$. There are a.s. a finite number of such marks
(because a.s., $\int_t^\infty ds \int_{s-T_M}^\infty  \beta(\beta+1)l^{-\beta-2}dl 
=(t-t_M)^{-\beta} <\infty$),
and their (spatial) positions clearly have densities, whence the result.

\vip

{\bf Step 2.} For $t>1$, $a\in \rr$, we consider the event
$\Omega_{t,a}$ defined as follows, see Figure \ref{figOmegatA}:

\vip

(i) $\pi_M$ has exactly one mark $(T_M,X_M)$ in $[t-1,t]\times [a,a+1]$
and there holds $(T_M,X_M) \in [t-2/3,t-1/2]\times [a+1/4,a+3/4]$;

\vip

(ii) $\pi_S$ has one mark $(T_g,X_g,L_g)$ such that $T_g-L_g<t-1<t<T_g$ and
$X_g \in [a,a+1/4]$ and one mark 
$(T_d,X_d,L_d)$ such that $T_d-L_d<t-1<t<T_d$ and
$X_d \in [a+3/4,a+1]$ (recalling Figure \ref{figLFFbeta}, there are
dotted vertical segments in $[a,a+1/4]$ and in 
$[a+3/4,a+1]$ that run across $[t-1,t]$);

\vip

(iii) all the other marks $(T,X,L)$ of $\pi_S$ with $X \in [a,a+1]$
and $[T-L,T]\cap [t-1,t] \ne \emptyset$ satisfy $L<1/4$ (recalling Figure 
\ref{figLFFbeta}, 
all the other vertical dotted segments in $[a,a+1]$ that intersect 
$[t-1,t]$ have a length smaller than $1/4$).

\begin{figure}[b] 
\fbox{
\begin{minipage}[c]{0.95\textwidth}
\centering
\includegraphics[width=8cm]{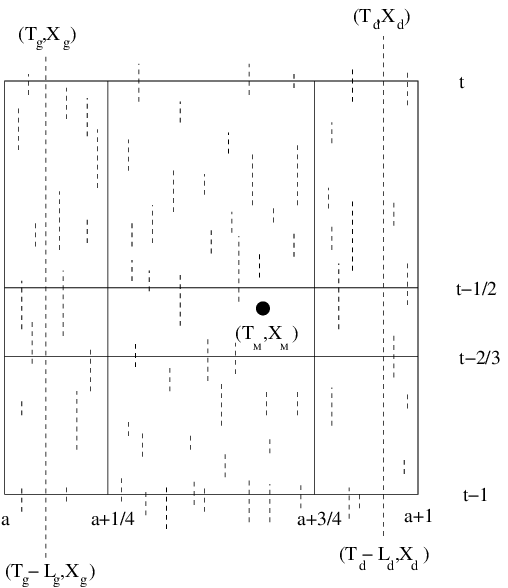}
\caption{The event $\Omega_{t,A}$.}
\label{figOmegatA}
\end{minipage}
}
\end{figure}

\vip

{\bf Step 3.} In this step, we prove 
that on $\Omega_{t,a}$, we have either $Y_s(X_g)=0$
for all $s\in [t-1/2,t]$ or $Y_s(X_d)=0$ for all $s\in [t-1/2,t]$. 
We distinguish two situations.

\vip

$\bullet$ First assume that $[X_g,X_d]$ is
connected at time $T_M-$ (that is $Y_{T_M-}(x)=1$ for all $x\in [X_g,X_d]$). 
Since $X_M \in [X_g,X_d]$, the fire 
destroys the cluster and thus we deduce that $Y_s(X_g)=0$ for
all $s \in [T_M,T_g) \supset[t-1/2,t] $ and that $Y_s(X_d)=0$ for
all $s \in [T_M,T_d) \supset[t-1/2,t]$.

\vip

$\bullet$ Next assume that $[X_g,X_d]\subset [a,a+1]$
is not connected at time $T_M-$ (that is, there is some
$x_0 \in [X_g,X_d]$ such that $Y_{T_M-}(x_0)=0$).
Then we claim that either $Y_{T_M-}(X_g)=0$ 
(then $Y_s(X_g)=0$ for all $s \in [T_M,T_g)\supset [t-1/2,t]$) or
$Y_{T_M-}(X_d)=0$ 
(then $Y_s(X_d)=0$ for all $s \in [T_M,T_d)\supset [t-1/2,t]$).
Indeed, recall that all the dotted segments that intersect $[t-1,t]$
in $(X_g,X_d)$ have a length smaller than $1/4$.
Thus if $[X_g,X_d]$ is disconnected at time $T_M-$ due to a fire
that started before $t-1$, it can be only with $x_0=X_g$ or $x_0=X_d$,
whence the conclusion. 
But if now $[X_g,X_d]$ is disconnected at time $T_M-$ due to a fire
that started at some time $\tau \in [t-1,T_M)$ at some place
$\chi \notin [a,a+1]$ (since there are no fires in
$[a,a+1]$ during $[t-1,T_M)$), this necessarily also concerns one of the
extremities $X_g$ or $X_d$ of $[X_g,X_d]$. Thus in any case,
we obtain either $Y_{T_M-}(X_g)=0$ or $Y_{T_M-}(X_d)=0$ as desired.

\vip
{\bf Step 4.} Let us prove that $p:=\Pr[\Omega_{t,a}]>0$.
This value will obviously does not depend on $a\in \rr$, $t\geq 1$,
by homogeneity in $(s,x)$ of the Poisson measures $\pi_M(ds,dx)$
and $\pi_S(ds,dx,dl)$.
Define the zones 
\begin{align*}
&A_M=(t-2/3,t-1/2)\times (a+1/4,a+3/4),\\
&B_M= ((t-1,t)\times (a,a+1)) \setminus A_M, \\
&A_S=\{(s,x,l), \; x\in (a,a+1/4), s>t>t-1>s-l\},\\
&B_S=\{(s,x,l), \; x\in (a+3/4,a+1), s>t>t-1>s-l\},\\
&C_S=\{(s,x,l), \; x\in (a+1/4,a+3/4), s>t>t-1>s-l\},\\
&D_S=\{(s,x,l), \; x\in (a,a+1), 
[s-l,s]\cap[t-1,t]\ne \emptyset, l>1/4 \} \setminus (A_S\cup B_S\cup C_S).
\end{align*}
The zones $A_M$ and $B_M$ are disjoint and for  
$\zeta_M(ds,dx)=dsdx$,
$\zeta_M(A_M) = 1/12$ and $\zeta_M(B_M) = 11/12$.
The zones $A_S,B_S,C_S,D_S$ are also disjoint and simple computations
show that, for $\zeta_S(ds,dx,dl)=\beta(\beta+1)l^{-\beta-2}dsdxdl$,
$\zeta_S(A_S)=\zeta_S(B_S)=1/4$, $\zeta_S(C_S)= 1/2$
and $\zeta_S(D_S)=4^\beta(5\beta+1)-1$. Consequently,
recalling that $\pi_M$ and $\pi_S$ are independent Poisson measures
with intensity measures $\zeta_M$ and $\zeta_S$,
\begin{align*}
\Pr[\Omega_{t,a,\delta}]&=\Pr\left(\pi_M(A_M)=1,\pi_M(B_M)=0,
\pi_S(A_S)=\pi_S(B_S)=1,\pi_S(C_S)=\pi_S(D_S)=0 \right)\\
&=\zeta(A_M)e^{-\zeta_M(A_M)} e^{-\zeta_M(B_M)}\zeta_S(A_S)e^{-\zeta_S(A_S)}
\zeta_S(B_S)e^{-\zeta_S(B_S)} e^{-\zeta_S(C_S)}e^{-\zeta_S(D_S)}\\
&=(1/12)e^{-1/12}e^{-11/12}(1/4)^2e^{-1/2}e^{-4^\beta(5\beta+1)+1}=:p>0.
\end{align*}

\vip

{\bf Step 5.} We clearly have, for any $t\geq 1$, any
$B\geq 2$, 
$$
\{|D_t(0)|\geq B\}\subset \{\forall \; x \in [0,B/2], \; Y_t(x)=1\} 
\cup \{\forall \; x \in [-B/2,0], \; Y_t(x)=1\},
$$
whence $\Pr[|D_t(0)|\geq B] \leq 2\Pr[\forall \; x \in [0,B/2], \; Y_t(x)=1]$ 
by symmetry.
Furthermore, Step 3 implies that 
$$
\{\forall \; x \in [0,B/2], \; Y_t(x)=1\} 
\subset \Omega_{t,0}^c\cap \Omega_{t,1}^c\cap 
\dots \cap \Omega_{t,\lfloor B/2 -1\rfloor}^c.
$$
Using then Step 4 (and some obvious independence arguments), we get
$$
\Pr[|D_t(0)|\geq B] \leq 2 (1-p)^{\lfloor B/2-1 \rfloor+1}
\leq 2 (1-p)^{B/2-1}.
$$
Consequently, for all $t\geq 1$, all $B\geq 2$, $\Pr[|D_t(0)|\geq B]
\leq c_2 e^{-\kappa_1 B}$, with $c_2=2/(1-p)$ and $\kappa_1=-[\log(1-p)]/2$.

\vip

{\bf Step 6.} Next, we consider the event $\tOmega_{t,B}$ on which:

\vip

(i) $\pi_M([t-1/2,t]\times [0,B])=0$;

\vip

(ii) all the marks $(T,X,L)$ of $\pi_S$ with $X \in [0,B]$ satisfy
either $T<t$ or $T-L>t-1/2$)
(this means that there is no dotted vertical segment running across
$[t-1/2,t]$ in $[0,B]$).

\vip

An easy computation as in Step 4 implies that
\begin{align*}
\Pr[\tOmega_{t,B}]=&\exp\left(-\int_{t-1/2}^t\int_0^B dsdx - 
\int_t^\infty ds \int_0^B dx \int_{s-t+1/2}^\infty dl \beta(\beta+1)l^{-\beta-2}
\right)\\
=& \exp\left(-B/2- 2^\beta B \right).
\end{align*}

We claim that on 
$\Omega_{t,-1}\cap\tOmega_{t,B}\cap \Omega_{t,B}$, there holds
$[0,B]\subset D_t(0)$, whence $|D_t(0)|\geq B$.
Indeed, we know from Step 3 that there is $\chi_0\in [-1,0]$
and $\chi_1 \in [B,B+1]$ such that $Y_s(\chi_0)=Y_s(\chi_1)=0$
for all $s\in [t-1/2,1]$. Thus the fires starting outside $[\chi_0,\chi_1]$
do not affect the zone $[\chi_0,\chi_1]$ during $[t-1/2,t]$.
Furthermore, there are no fires starting
in $[\chi_0,\chi_1]$ during $[t-1/2,t]$. At last, since all the dotted segments
in $[0,B]$ intersecting $\{t\}$ have started after $t-1/2$.
We easily conclude that $Y_t(x)=1$ for all $x\in [0,B]$.

Using finally some obvious
independence arguments, we get
$$
\Pr[|D_t(0)|\geq B] \geq \Pr[\Omega_{t,-1}\cap\tOmega_{t,B}\cap \Omega_{t,B}]
\geq p^2  \exp\left(-B/2- 2^\beta B \right)= c_1e^{-\kappa_2 B},
$$
with $c_1=p^2$ and $\kappa_2=1/2+2^\beta$.
\end{preuve}

\section{Convergence proof when $\beta=0$}\label{przero}
\setcounter{equation}{0}

This case is simpler than the case $\beta\in (0,\infty)$, 
but a little work is however needed.
We also divide the section into three parts: preliminaries,
coupling of seeds and convergence proof.
In the whole section, we assume $(H_M)$ and $(H_S(0))$.
We recall that $\ba_\la$ and $\bn_\la$ are defined
in (\ref{ala}) and (\ref{nla}).
For $A>0$, we set $A_\la = \lfloor A \bn_\la \rfloor$ and 
$I_A^\la=\lb -A_\la,A_\la\rb$.
For $i\in \zz$, we set $i_\la=[i/\bn_\la, (i+1)/\bn_\la)$.

\subsection{Preliminaries}

The proof will use the following estimate.

\begin{lem}\label{estimusz}
For any $l \in(0,\infty)$ fixed, we have 
$\lim_{\la \to 0} \la^{-1} \mu_S((\ba_\la l,\infty))=0$.
\end{lem}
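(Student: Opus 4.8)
The plan is to reuse the upper-bound half of the argument proving Lemma \ref{estimus}, which here degenerates because the relevant exponent is $\beta=0$ (i.e.\ $\nu_S((\cdot,\infty))$ is slowly varying). First I would recall the two ingredients already established under $(H_S(\beta))$: that $\ba_\la\to\infty$ as $\la\to 0$, and that, by the definition (\ref{ala}) of $\ba_\la$, one has $\la\ba_\la=\nu_S((\ba_\la,\infty))$. I would also use, as in Lemma \ref{estimus}, the identity $\mu_S((x,\infty))\,dx=m_S\,\nu_S(dx)$, so that $\int_a^b\mu_S((x,\infty))\,dx=m_S[\nu_S((a,\infty))-\nu_S((b,\infty))]$ for $0\le a\le b$, together with the monotonicity of $x\mapsto\mu_S((x,\infty))$.

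Then, fixing $l\in(0,\infty)$ and any $\alpha\in(0,l)$, the monotonicity of $x\mapsto\mu_S((x,\infty))$ gives, for every $\la$,
$$
\alpha\ba_\la\,\mu_S((\ba_\la l,\infty))\le\int_{\ba_\la(l-\alpha)}^{\ba_\la l}\mu_S((x,\infty))\,dx
= m_S\bigl[\nu_S((\ba_\la(l-\alpha),\infty))-\nu_S((\ba_\la l,\infty))\bigr].
$$
Dividing by $\alpha\la\ba_\la$ and using $\la\ba_\la=\nu_S((\ba_\la,\infty))$, this becomes
$$
0\le\frac{\mu_S((\ba_\la l,\infty))}{\la}\le\frac{m_S}{\alpha}\left[\frac{\nu_S((\ba_\la(l-\alpha),\infty))}{\nu_S((\ba_\la,\infty))}-\frac{\nu_S((\ba_\la l,\infty))}{\nu_S((\ba_\la,\infty))}\right].
$$

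Finally I would let $\la\to 0$. Since $\ba_\la\to\infty$, assumption $(H_S(0))$ applied with $x=\ba_\la$ and $t=l-\alpha$, and then $t=l$ (note $(l-\alpha)^0=l^0=1$, and since the limiting value $1$ is nonzero the reciprocal ratios converge as well), shows that both quotients on the right-hand side converge to $1$, so the bracket tends to $0$; hence $\limsup_{\la\to0}\la^{-1}\mu_S((\ba_\la l,\infty))\le 0$, and together with non-negativity this gives $\lim_{\la\to0}\la^{-1}\mu_S((\ba_\la l,\infty))=0$. There is no genuine obstacle here — the estimate is strictly easier than Lemma \ref{estimus}; the only points needing a word of care are to invoke $(H_S(0))$ in the right direction (i.e.\ that $\nu_S((c\ba_\la,\infty))/\nu_S((\ba_\la,\infty))\to 1$ for each fixed $c>0$) and to keep $\alpha<l$ so that the lower endpoint of the integral stays nonnegative.
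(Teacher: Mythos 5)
Your proof is correct and follows essentially the same route as the paper: bound $\mu_S((\ba_\la l,\infty))$ by the average of $\mu_S((x,\infty))$ over an interval of length proportional to $\ba_\la$ ending at $\ba_\la l$ (the paper simply fixes $\alpha=l/2$), rewrite the integral as $m_S$ times a difference of $\nu_S$-tails, divide by $\la\ba_\la=\nu_S((\ba_\la,\infty))$, and let $(H_S(0))$ force the two ratios to the common limit $1$. The choice of a general $\alpha\in(0,l)$ is an inessential variation, and your remarks on taking reciprocals in $(H_S(0))$ and keeping $\alpha<l$ are exactly the right points of care.
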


\begin{proof}
Using the monotonicity of $\mu_S((x,\infty))$
and since $\mu_S((x,\infty))dx=m_S \nu_S(dx)$,
\begin{align*}
\frac{\mu_S((\ba_\la l, \infty))}{\la} \leq& \frac{2}{\la \ba_\la l} 
\int_{\ba_\la l/2}^{\ba_\la l} \mu_S((x,\infty))dx \\
=& \frac{2m_S}{\la \ba_\la l}
[\nu_S ((\ba_\la l /2, \infty)) - \nu_S((\ba_\la l, \infty))] \\
=& \frac{2m_S}{l}
\left[\nu_S ((\ba_\la l /2, \infty))/\nu_S((\ba_\la , \infty))  - 
\nu_S ((\ba_\la l , \infty))/\nu_S((\ba_\la , \infty))\right].
\end{align*}
For the last equality, we used that by definition, 
$\nu_S((\ba_\la,\infty))=\la \ba_\la$. 
Using $(H_S(0))$, we easily conclude.
\end{proof}

The following statement contains some crucial
facts about accelerated $SR(\mu_S)$-processes under $(H_S(0))$.

\begin{lem}\label{seedszero}
Let $(T_n)_{n \geq 1}$ be a $SR(\mu_S)$-process (see
Subsection \ref{srp}).
For $\la>0$, $t\geq 0$ and $l>0$, we set
$$
R^{\la}_t(l)= \# \{n \geq 1\,: \; T_n \in [0,\ba_\la t], \; T_{n+1}-T_{n} 
\geq \ba_\la l  \},
$$
which represents the number of {\it delays} with length greater than  
$\ba_\la l$ that start in $[0,\ba_\la t]$. 

(i) For any $T>0$, $\Pr [T_1\geq \ba_\la T]=\nu_S((\ba_\la T,\infty))
\sim \la \ba_\la$ as $\la \to 0$.

(ii) For any $T>0$, any $l>0$, $\E[R^\la_T(l)]=\ba_\la T 
\mu_S((\ba_\la l,\infty))/m_S=o(\la\ba_\la)$ as $\la \to 0$.
\end{lem}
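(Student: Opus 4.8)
The plan is to reduce both parts to elementary facts about the stationary renewal process $(T_n)_{n\geq 1}$ of Definition \ref{sr1}, together with the defining relation of $\ba_\la$, the hypothesis $(H_S(0))$ and Lemma \ref{estimusz}. For (i), I would recall that by Definition \ref{sr1} the first epoch $T_1$ is $\nu_S$-distributed, so that $\Pr[T_1\geq \ba_\la T]=\nu_S((\ba_\la T,\infty))$ (the law $\nu_S$ being absolutely continuous, the endpoint plays no role). Since $\ba_\la$ solves $\la\ba_\la=\nu_S((\ba_\la,\infty))$ and $\ba_\la\to\infty$ as $\la\to 0$, applying $(H_S(0))$ with $t=T$ and $x=\ba_\la$ gives $\nu_S((\ba_\la,\infty))/\nu_S((\ba_\la T,\infty))\to T^0=1$, whence $\nu_S((\ba_\la T,\infty))\sim \nu_S((\ba_\la,\infty))=\la\ba_\la$; this settles (i).

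For (ii), set $X_n:=T_{n+1}-T_n$ and $N_t:=\#\{n\geq 1:\ T_n\leq t\}$, and write $R^\la_T(l)=\sum_{n\geq 1}\indiq_{\{T_n\leq \ba_\la T\}}\indiq_{\{X_n\geq \ba_\la l\}}$. By Definition \ref{sr1} the $(X_n)_{n\geq 1}$ are i.i.d. with law $\mu_S$ and, for each $n$, $X_n$ is independent of $T_n=T_1+X_1+\dots+X_{n-1}$; hence $\E[R^\la_T(l)]=\Pr[X_1\geq \ba_\la l]\,\sum_{n\geq 1}\Pr[T_n\leq \ba_\la T]=\mu_S((\ba_\la l,\infty))\,\E[N_{\ba_\la T}]$. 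I would then use the classical identity $\E[N_t]=t/m_S$ (the renewal measure of a stationary renewal process equals $m_S^{-1}$ times Lebesgue measure; see Cocozza-Thivent \cite{ct}, or check it in one line via Laplace transforms, since $\sum_{n\geq 1}\E[e^{-sT_n}]=\hat\nu_S(s)/(1-\hat\mu_S(s))$ and $\hat\nu_S(s)=(1-\hat\mu_S(s))/(m_S s)$ force $\int_0^\infty e^{-st}\E[N_t]\,dt=1/(m_S s^2)$). This gives the announced exact formula $\E[R^\la_T(l)]=\ba_\la T\,\mu_S((\ba_\la l,\infty))/m_S$, and then Lemma \ref{estimusz}, namely $\la^{-1}\mu_S((\ba_\la l,\infty))\to 0$, immediately yields $\E[R^\la_T(l)]=o(\la\ba_\la)$. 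A harmless point is that $\Pr[X_1\geq \ba_\la l]$ is really $\mu_S([\ba_\la l,\infty))$, which may differ from $\mu_S((\ba_\la l,\infty))$ only at one atom; since $\mu_S([\ba_\la l,\infty))\leq \mu_S((\ba_\la l/2,\infty))=o(\la)$ by Lemma \ref{estimusz}, the estimate $o(\la\ba_\la)$ is unaffected in any case.

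No step here poses a genuine difficulty; the only point requiring a moment's care is the exact computation of $\E[N_t]$, which is entirely classical, all the rest being immediate from the construction of $SR(\mu_S)$-processes, the choice (\ref{ala}) of $\ba_\la$, the hypothesis $(H_S(0))$ and Lemma \ref{estimusz}.
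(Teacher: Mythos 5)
Your proof is correct. Point (i) is argued exactly as in the paper (law of $T_1$, the relation $\la\ba_\la=\nu_S((\ba_\la,\infty))$, then $(H_S(0))$). For point (ii), however, you take a genuinely different and more direct route. The paper first completes $(T_n)_{n\geq 1}$ into a two-sided $SR(\mu_S)$-process, then reverses time about $\ba_\la T$ so that the delays \emph{starting} in $[0,\ba_\la T]$ become the delays \emph{ending} in $[0,\ba_\la T]$ for another $SR(\mu_S)$-process, and finally recycles Steps 1--2 of the proof of Lemma \ref{ilestbalezefourniax} (where the expectation of the ``ending'' count was computed via the law of $(U,X_0)$ and a stationarity/additivity argument). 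You instead exploit the fact that, in Definition \ref{sr1}, the forward increment $X_n=T_{n+1}-T_n$ is independent of $T_n=T_1+X_1+\dots+X_{n-1}$ — precisely because the counted delays start at renewal epochs, which is what makes a direct computation possible here while it fails for the ``ending'' count of Lemma \ref{ilestbalezefourniax} — so that $\E[R^\la_T(l)]=\mu_S([\ba_\la l,\infty))\,\E[N_{\ba_\la T}]$, and you conclude with the classical stationary-renewal identity $\E[N_t]=t/m_S$, which your Laplace-transform sketch does establish. What each approach buys: yours avoids the two-sided extension and the reversal, at the cost of invoking (or proving) $\E[N_t]=t/m_S$; the paper avoids that identity by reusing its earlier lemma. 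Your side remark on the endpoint (that the event $X_n\geq\ba_\la l$ gives $\mu_S([\ba_\la l,\infty))$ rather than $\mu_S((\ba_\la l,\infty))$, harmless for the $o(\la\ba_\la)$ conclusion since $\mu_S([\ba_\la l,\infty))\leq\mu_S((\ba_\la l/2,\infty))=o(\la)$ by Lemma \ref{estimusz}) is in fact slightly more careful than the paper's own statement of the exact formula.
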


\begin{proof}
Point (i) is immediate: $\nu_S$ is the law of $T_1$ and since
$\la\ba_\la=\nu_S((\ba_\la,\infty))$ by definition, one has 
$\nu_S((\ba_\la T,\infty))= \la \ba_\la
\nu_S((\ba_\la T,\infty))/ \nu_S((\ba_\la,\infty))$.
One concludes
using $(H_S(0))$.
Point (ii) is slightly more delicate. First, we complete the
$SR(\mu_S)$-process $(T_n)_{n \geq 1}$ in $(T_n)_{n \in \zz}$, see Subsection
\ref{srp}. Then we observe that since $T_0<0<T_1$,
$$
R^{\la}_T(l)= \# \{n \in \zz\, : \; T_n \in [0,\ba_\la T], \; T_{n+1}-T_{n} 
\geq \ba_\la l  \}.
$$
Next, we set $\tau_n=\ba_\la T -T_{-n}$ and we introduce $n_0$ 
such that $\tau_{n_0}<0<\tau_{n_0+1}$. We put
$\tT_n=\tau_{n_0+n}$. Then  $(\tT_{n})_{n \in \zz}$ 
is also a $SR(\mu_S)$-process (see Subsection \ref{srp}). 
We have
\begin{align*}
R^{\la}_T(l)
=& \# \{n \in \zz \, : \; \ba_\la T-T_n \in [0,\ba_\la T], \; (\ba_\la T-T_{n})-
(\ba_\la T -T_{n+1}) \geq \ba_\la l  \} \\
=&\# \{n \in \zz \, : \; \tT_{-n-n_0} \in [0,\ba_\la T], \; \tT_{-n-n_0}-
\tT_{-n-1-n_0} \geq \ba_\la l  \} \\
=&\# \{n \in \zz \, :\; \tT_{n} \in [0,\ba_\la T], \; \tT_{n}-
\tT_{n-1} \geq \ba_\la T  \}\\
=& \# \{n \geq 1 \, : \; \tT_{n} \in [0,\ba_\la T], \; \tT_{n}-
\tT_{n-1} \geq \ba_\la l  \}=:\tS^\la_T(l).
\end{align*}
We used that $\tT_0<0<\tT_1$ by construction. But $\tS^\la_T(l)$ 
is the number of
delays with length greater than $\ba_\la l$ that end in $[0,\ba_\la T]$,
for the $SR(\mu_S)$-process $(\tT_n)_{n\in\zz}$. Thus exactly as in the
proof of Lemma \ref{ilestbalezefourniax} (Steps 1 and 2), we get
$\E[\tS^\la_T(l)]=m_S^{-1}\ba_\la T \mu_S((\ba_\la l,\infty))$, so that 
$\E[R^\la_T(l)]=m_S^{-1}\ba_\la T \mu_S((\ba_\la l,\infty))$. 
Finally, Lemma \ref{estimusz} implies that $\E[R^\la_T(l)]=o(\la\ba_\la)$.
\end{proof}

\subsection{Coupling of seeds}

We aim here to couple the Poisson measure $\pi_S(dx)$ used to build
the $LFF(0)$-process with a family of $SR(\mu_S)$-processes, in such a way that
roughly:

\vip

$\bullet$ if $\pi_S(i_\la)>0$, then the first seed never falls on $i$;

\vip

$\bullet$ if $\pi_S(i_\la)=0$, then seeds fall almost continuously on $i$.

\vip

The precise statement is as follows.

\begin{prop}\label{csz}
Let $A>0$, $T>0$, $\delta>0$ be fixed. For any
$\la \in (0,1]$, it is possible to
find a coupling between a Poisson measure $\pi_S$ on $\rr$ with intensity 
measure $dx$ and a family $(N_t^S(i))_{t\geq 0, i \in \zz}$ of 
$SR(\mu_S)$-processes in such a way that for
\begin{align*}
\Omega^{S}_{A,T,\delta}(\la)=\bigcap_{i \in I_A^\la} \Big(  
\left\{\pi_S(i_\la)=0 , \inf_{t\in [0,T-\delta]} [N^S_{\ba_\la (t+\delta)}(i)-
N^S_{\ba_\la t}(i)]>0 \right\} \hskip1cm\\
\cup \left\{\pi_S(i_\la)=1, N^S_{\ba_\la T}(i)=0 
\right\}
\Big),
\end{align*}
there holds $\lim_{\la \to 0} \Pr[\Omega^{S}_{A,T,\delta}(\la)]=1$.
\end{prop}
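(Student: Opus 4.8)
The plan is to construct the coupling site by site. For each $i\in I_A^\la$, I want to couple a Bernoulli-type event governing whether $i$ carries a mark of $\pi_S$ with a property of an independent $SR(\mu_S)$-process $(\tT_n^i)_{n\geq 1}$, and then take $\pi_S$ and the family $(N^S(i))_{i}$ to be independent across $i$ (outside $I_A^\la$ one builds $\pi_S$ and the processes independently; it suffices to handle the finite box). Concretely, $\pi_S(i_\la)$ has a Poisson law with parameter $|i_\la|=1/\bn_\la\sim \la\ba_\la$, so $\Pr[\pi_S(i_\la)=0]=e^{-1/\bn_\la}=1-\la\ba_\la(1+\e_T(\la))$ and $\Pr[\pi_S(i_\la)=1]=(1/\bn_\la)e^{-1/\bn_\la}=\la\ba_\la(1-\e_T(\la))$. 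On the renewal side, the relevant ``good'' event for site $i$ is $G_i^\la:=\{\tT_1^i<\ba_\la\delta,\ \forall n\geq 1,\ \tT_{n+1}^i-\tT_n^i<\ba_\la\delta \text{ as long as }\tT_n^i\leq \ba_\la T\}$, which exactly says that on $[0,\ba_\la T]$ no delay of length $\geq\ba_\la\delta$ starts and the first seed comes before $\ba_\la\delta$, hence $\inf_{t\in[0,T-\delta]}[N^S_{\ba_\la(t+\delta)}(i)-N^S_{\ba_\la t}(i)]>0$. Its complement $B_i^\la$ is contained in $\{\tT_1^i\geq\ba_\la\delta\}\cup\{R^\la_T(\delta)\geq 1\}$, so by Lemma \ref{seedszero}(i)--(ii) (applied with $T$ and $l=\delta$), $\Pr[B_i^\la]\leq \nu_S((\ba_\la\delta,\infty))+\E[R^\la_T(\delta)]=\la\ba_\la\,\e_T(\la)$ with $\e_T(\la)\to 0$; here I use Lemma \ref{estimusz} to get the $o(\la\ba_\la)$ and $(H_S(0))$ to control $\nu_S((\ba_\la\delta,\infty))/\nu_S((\ba_\la,\infty))\to 1$ (which only gives $\nu_S((\ba_\la\delta,\infty))\sim\la\ba_\la$, not $o(\la\ba_\la)$ — so this term must be absorbed into the rarity of marks, see below).

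Next I would run a per-site coupling. For fixed $i$, I want to couple the indicator $\indiq_{\{\pi_S(i_\la)=1\}}$ (a Bernoulli$(p_\la)$ with $p_\la=\la\ba_\la(1-\e_T(\la))$, roughly) with $\indiq_{\{\text{the renewal process has a long delay or a late first jump on }[0,\ba_\la T]\}}$. The target matching is: $\{\pi_S(i_\la)=0\}\cap G_i^\la$ on the one hand, and $\{\pi_S(i_\la)=1\}\cap\{N^S_{\ba_\la T}(i)=0\}$ on the other. The event $\{N^S_{\ba_\la T}(i)=0\}=\{\tT_1^i\geq\ba_\la T\}$ has probability $\nu_S((\ba_\la T,\infty))\sim\la\ba_\la$ by Lemma \ref{seedszero}(i). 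Using Lemma \ref{gcou}(i) I can build a coupling of the two Bernoulli-type variables $\pi_S(i_\la)$ and the renewal process so that
\begin{align*}
\Pr\big[\pi_S(i_\la)=0,\ G_i^\la\big]&\geq \Pr[\pi_S(i_\la)=0]\land\Pr[G_i^\la]\geq 1-\la\ba_\la(1+\e_T(\la)),\\
\Pr\big[\pi_S(i_\la)=1,\ N^S_{\ba_\la T}(i)=0\big]&\geq \Pr[\pi_S(i_\la)=1]\land\Pr[N^S_{\ba_\la T}(i)=0]\geq \la\ba_\la(1-\e_T(\la)).
\end{align*}
One checks $\Pr[G_i^\la]=1-\Pr[B_i^\la]\geq 1-\la\ba_\la\e_T(\la)$ and $\Pr[N^S_{\ba_\la T}(i)=0]=\nu_S((\ba_\la T,\infty))=\la\ba_\la(1-\e_T(\la))$ by $(H_S(0))$, so both lower bounds hold. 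The point that needs care is that $\{\pi_S(i_\la)=0\}$ and $\{\pi_S(i_\la)=1\}$ are not complementary (there is also $\{\pi_S(i_\la)\geq 2\}$ with probability $O((\la\ba_\la)^2)$), and similarly the renewal side; but the coupling from Lemma \ref{gcou}(i) handles exactly this by allowing the ``bad'' residual mass.

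Finally, taking the per-site couplings independent across $i\in I_A^\la$ and defining $\Omega^S_{A,T,\delta}(\la)=\bigcap_{i\in I_A^\la}\big(\{\pi_S(i_\la)=0,G_i^\la\}\cup\{\pi_S(i_\la)=1,N^S_{\ba_\la T}(i)=0\}\big)$, which is contained in the event from the statement, independence gives
$$
\Pr[\Omega^S_{A,T,\delta}(\la)]\geq\Big(1-\la\ba_\la(1+\e_T(\la))+\la\ba_\la(1-\e_T(\la))\Big)^{2A_\la+1}=\big(1-\la\ba_\la\,\e_T(\la)\big)^{2A_\la+1},
$$
with a (new) function $\e_T(\la)\to 0$. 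Since $A_\la\leq A/(\la\ba_\la)$, this is at least $(1-\la\ba_\la\e_T(\la))^{2A/(\la\ba_\la)+1}\to 1$ as $\la\to 0$. The family $(N^S_t(i))_{t\geq 0}$ is extended to all $i\in\zz$ by taking independent $SR(\mu_S)$-processes outside $I_A^\la$, and $\pi_S$ outside $B_\la$ is an independent Poisson measure; this does not affect $\Omega^S_{A,T,\delta}(\la)$. The main obstacle is purely bookkeeping: verifying that $B_i^\la$ (late first jump, or some delay $\geq\ba_\la\delta$ starting in $[0,\ba_\la T]$) has probability $o(\la\ba_\la)$ relative to the $\la\ba_\la$-scale, i.e. combining Lemma \ref{seedszero}(ii) with Lemma \ref{estimusz}, and noting that $\Pr[\tT_1^i\geq\ba_\la\delta]=\nu_S((\ba_\la\delta,\infty))$ is $\sim\la\ba_\la$ but still enters only through the $\min$ in Lemma \ref{gcou}(i) against $\Pr[\pi_S(i_\la)=0]\geq 1-O(\la\ba_\la)$, so it does not spoil the bound — this last subtlety (that a ``large'' bad event on the renewal side is harmless because the matching Bernoulli event has probability close to $1$) is the only thing that requires a moment's thought.
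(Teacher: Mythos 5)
Your proof is correct and is essentially the paper's argument: the same per-site reduction to the finite box, the same coupling device (Lemma \ref{gcou}), the same estimates (Lemma \ref{seedszero}-(i)-(ii) together with $(H_S(0))$, which makes $\nu_S((\ba_\la\delta,\infty))$ and $\nu_S((\ba_\la T,\infty))$ both $\sim\la\ba_\la$ while $\E[R^\la_T(\delta)]=o(\la\ba_\la)$), and the same final product bound over the $2A_\la+1$ sites; the paper merely organizes it slightly differently, coupling $\pi_S(i_\la)$ with the single Bernoulli indicator $\indiq_{\{N^S_{\ba_\la T}(i)=0\}}$ and absorbing the small-delay property into a separate event $\bOmega^S_{A,T,\delta}(\la)$ of per-site probability $1-o(\la\ba_\la)$, whereas you build the trichotomy (good delays / late first seed / bad) directly into the coupling. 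The only slip is the line $\Pr[G_i^\la]\geq 1-\la\ba_\la\,\e_T(\la)$, which should read $1-\la\ba_\la(1+\e_{T,\delta}(\la))$ since $\Pr[\tT_1^i\geq\ba_\la\delta]\sim\la\ba_\la$ is not $o(\la\ba_\la)$; as you yourself note, this is harmless because only the weaker bound enters the displayed inequalities and the final computation.
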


\begin{proof}
We split the proof in several steps. As usual, it suffices to
build $\pi_S$ on $B_\la = \cup_{i \in I_A^\la} i_\la \simeq [-A,A]$ and 
to build $N^S_t(i)$ for $t\in [0,\ba_\la T]$ and $i \in I_A^\la$.
\vip

{\bf Step 1.} Denote by $(N^S_t)_{t\geq 0}$ a $SR(\mu_S)$-process and by
$(T_n)_{n\geq 1}$ its jump instants. Recall the notation of 
Lemma \ref{seedszero}. Then we observe that 
$\{N^S_{\ba_\la T}=0\}=\{T_1>\ba_\la T\}$ and
$$
\left\{\inf_{t\in [0,T-\delta]} [N_{\ba_\la (t+\delta)}^S-N^S_{\ba_\la t}]>0\right\}
=\{T_1<\ba_\la \delta,R_T^\la(\delta)=0\}.
$$
These two events are furthermore disjoint. By Lemma \ref{seedszero}, we deduce
that for some functions $\e_T(\la)$ and
$\e_{T,\delta}(\la)$ tending to $0$ when $\la\to 0$
$$
\Pr\left[\inf_{t\in [0,T-\delta]} [N_{\ba_\la (t+\delta)}^S-N^S_{\ba_\la t}]>0
\right]\geq 1 - \Pr[T_1>\ba_\la \delta]-\E[R_T^\la(\delta)] 
\geq 1-\la \ba_\la (1+\e_{T,\delta}(\la))
$$
and
$$
p_T(\la):=\Pr[N^S_{\ba_\la T}=0] = \Pr[T_1 > \ba_\la T]
=\la \ba_\la (1+ \e_{T}(\la)).
$$

{\bf Step 2.} Next, we prove that it is possible to couple a family
$(Z_i^\la)_{i \in I_A^\la}$ of i.i.d. Poisson-distributed random variables
with parameter $|i_\la|=1/\bn_\la$ and a family of 
$(\tZ_i^\la)_{i \in I_A^\la}$ of i.i.d. Bernoulli random variables with
parameter $p_T(\la)$ (see Step 1) in such a way that for
$$
\tOmega_{T,A}(\la)=\{\forall i \in I_A^\la,Z_i^\la=\tZ_i^\la \in \{0,1\}\},
$$
there holds $\lim_{\la \to 0} \Pr[\tOmega_{T,A}(\la)]=1$. As usual, this follows
from Lemma \ref{gcou}-(ii) and relies on the straightforward computations
(here the function $\e_T$ changes from line to line)
$$
\Pr[Z_i^\la=0]\land \Pr[\tZ_i^\la=0] = (e^{-1/\bn_\la})\land (1-p_T(\la))
\geq 1- \la\ba_\la (1+\e_{T}(\la)),
$$
recall that $\bn_\la \sim 1/(\la\ba_\la)$, and
$$
\Pr[Z_i^\la=1]\land \Pr[\tZ_i^\la=1] = (e^{-1/\bn_\la}/\bn_\la)
\land p_T(\la) \geq \la\ba_\la (1-\e_{T}(\la))
$$ 
from which
$$
\Pr[\tOmega_{T,A}(\la)] \geq \left[1- \la\ba_\la (1+\e_{T}(\la))
+\la\ba_\la (1-\e_{T}(\la)) \right]^{|I_A^\la|}
\geq \left[1- \la\ba_\la \e_{T}(\la) \right]^{|I_A^\la|}.
$$
This last quantity tends to $1$ as $\la\to 0$, because
$|I_A^\la|\sim 2A/(\la\ba_\la)$.

\vip

{\bf Step 3.} We finally build the complete coupling.

\vip

(a) Consider $(Z_i^\la,\tZ_i^\la)_{i \in I_A^\la}$ as in Step 2.

\vip

(b) For each $i \in I_A^\la$ such that $Z_i^\la>0$, pick 
some i.i.d. random variables $(X_1^{i,\la},\dots,X^{i,\la}_{Z_i^\la})$ 
uniformly distributed in $i_\la$. Then 
$\pi_S=\sum_{i \in I_A^\la} \sum_{k=1}^{Z_i^\la}\delta_{X_k^{i,\la}}$ is a Poisson
measure with intensity measure $dx$ on $B_\la=\cup_{i\in I_A^\la}i_\la$.

\vip

(c) For each $i \in I_A^\la$ such that $\tZ_i^\la=1$, set $N^S_{\ba_\la T}(i)=0$.
For each $i \in I_A^\la$ such that $\tZ_i^\la=0$, pick $(N^S_t(i))_{t\in
[0,\ba_\la T]}$ conditionally on $N^S_{\ba_\la T}(i)\ne 0$. This defines
a family of i.i.d. $SR(\mu_S)$ processes on $[0,\ba_\la T]$ (because
$\Pr[\tZ_i^\la=1]=p_T(\la)=\Pr[N^S_{\ba_\la T}(i)=0]$).

\vip

{\bf Step 4.} With this coupling, we have 
$\tOmega_{T,A}(\la) \cap \bOmega^S_{A,T,\delta}(\la) 
\subset \Omega^S_{A,T,\delta}(\la)$, where 
$$
\bOmega^S_{A,T,\delta}(\la) = \bigcap_{i\in I_A^\la} \left(
\inf_{t\in [0,T-\delta]} [N_{\ba_\la (t+\delta)}^S(i) -N^S_{\ba_\la t}(i)]>0
\hbox{ or }  N^S_{\ba_\la T}(i)=0 \right).
$$
It thus only remains to check that $\lim_{\la \to 0} 
\Pr[ \bOmega^S_{A,T,\delta}(\la)  ]=1$. But using Step 1 and recalling that 
$|I_A^\la|\sim 2A/(\la\ba_\la)$, we get
$$
\Pr[\bOmega^S_{A,T,\delta}(\la)] \geq \left[1-\la \ba_\la (1+\e_{T,\delta}(\la))
+ \la \ba_\la (1+ \e_T(\la)) \right]^{|I_A^\la|},
$$
which tends to $1$ as $\la\to 0$, as usual, since 
$|I_A^\la|\sim 2A/(\la\ba_\la)$.
\end{proof}

\subsection{Convergence}
We may now prove the convergence result in the case $\beta=0$.

\begin{preuve} {\it of Theorem \ref{converge3}.}
We fix $T>0$, $x_1<\dots<x_p$ and $t_1,\dots,t_p\in (0,T]$. 
We introduce $B>0$ such that $-B<x_1<x_p<B$. We fix $\e>0$ and $a>0$.
Our aim is to check that for $\la>0$
small enough, there exists a coupling between a $FF(\mu_S,\mu^\la_M)$-process
$(\eta^\la_t(i))_{t\geq 0, i \in \zz}$
and a $LFF(0)$-process $(Y_t(x))_{t\geq 0,x\in\rr}$ such that, 
recalling 
(\ref{dlambda})
and Proposition \ref{wpzero}, there holds
\begin{align}\label{cqv2}
\Pr\left[\sum_{k=1}^p\bdelta_{T}(D^\la(x_k),D(x_k)) 
+\sum_{k=1}^p \bdelta(D^\la_{t_k}(x_k),D_{t_k}(x_k)) \geq a
\right] \leq \e.
\end{align}
This will conclude the proof.

\vip

{\bf Step 1.} Consider two independent 
Poisson measures $\pi_S(dx)$ and $\pi_M(dt,dx)$ with intensity 
measures $dx$ and $dtdx$.
First, we consider $A>B$ large enough, in such a way that for
$$
\Omega^{S,1}_A=\{\pi_S([-A,-B])>0,\pi_S([B,A])>0\},
$$
there holds $\Pr\left(\Omega^{S,1}_A\right)\geq 1- \e/4$. 
This fixes the value of $A$.

Next we call $\cX_S=\{x\in [-A,A], \pi_S(\{x\})>0\}$,
$\cT_M = \{t \in [0,T] : \pi_M(\{t\}\times [-A,A])>0\}\cup\{0\}$ and
$\cX_M=\{x\in [-A,A], \pi_M([0,T]\times \{x\})>0\}$.
Classical results about Poisson measures allow us to choose
$K>0$ (large) and $c>0$ (small) in such a way that for
\begin{align*}
\Omega_{K,c} = \big\{|\cT_M| +|\cX_S| \leq K, \, 
\min_{t\in \cT_M, k=1,\dots,p} {|t-t_k| } > c, \min_{x,y\in \cX_S\cup \cX_M,x\ne y}
|x-y|>c, \\
\min_{x\in \cX_S\cup \cX_M, k=1,\dots,p}|x-x_k|>c
 \big\},
\end{align*}
there holds $\Pr\left[\Omega_{K,c} \right] \geq 1-\e/4$.
\vip

{\bf Step 2.} Next, we know from Proposition \ref{coupling1}
that for all $\la>0$ small enough, it is possible to couple a family
of i.i.d. $SR(\mu^\la_M)$-processes $(N^{M,\la}_t(i))_{t\geq 0, 
i \in \zz\}}$ with $\pi_M$
in such a way that for
$$
\Omega^M_{A,T}(\la):=\left\{\forall t\in [0,T],\;
\forall i\in I_A^\la, \;
\Delta N^{M,\la}_{\ba_\la t}(i) \ne 0 \; \hbox{iff} \; 
\pi_M(\{t\}\times i_\la) \ne 0
\right\},
$$
there holds  $\Pr[\Omega^M_{A,T}(\la)]\geq 1-\e/4$.

\vip

We now fix $\delta>0$ such that 
$$
\delta<c/4 \quad \hbox{and} \quad \delta<a/(4AKp).
$$

Proposition \ref{csz} tells us how to couple, for all $\la>0$ small
enough, a family
of i.i.d. $SR(\mu_S)$-processes $(N^{S}_t(i))_{t\geq 0, 
i \in \zz}$ with $\pi_S$ in such a way that for
\begin{align*}
\Omega^{S}_{A,T,\delta}(\la)=\bigcap_{i \in I_A^\la} \Big(  
\left\{\pi_S(i_\la)=0 , \inf_{t\in [0,T-\delta]} [N^S_{\ba_\la (t+\delta)}(i)-
N^S_{\ba_\la t}(i)]>0 \right\} \hskip1cm\\
\cup \left\{\pi_S(i_\la)=1, N^S_{\ba_\la T}(i)=0 
\right\}
\Big),
\end{align*}
there holds  $\Pr[\Omega^S_{A,T,\delta}(\la)]\geq 1-\e/4$.

\vip

{\bf Step 3.} We consider $\pi_M$, $\pi_S$, 
$(N^{S}_t(i))_{t\geq 0, i \in \zz\}}$ and $(N^{M,\la}_t(i))_{t\geq 0,i \in \zz\}}$
coupled as in Step 2. Then we build the corresponding 
$FF(\mu_S,\mu_M^{\la})$-process $(\eta_t^\la(i))_{t\geq 0, i \in \zz}$ 
and the associated rescaled clusters
$(D^\la_t(x))_{t\geq 0,x\in\rr}$, see (\ref{dlambda}) 
and we build the $LFF(0)$-process
associated to $\pi_S$ and the corresponding clusters 
$(D_t(x))_{t\geq 0,x\in\rr}$. We will work on the event
$$
\Omega_\la=\Omega^{S,1}_A\cap \Omega_{K,c}\cap \Omega^M_{A,T}(\la)\cap
\Omega^S_{A,T,\delta}(\la).
$$
We know that for all $\la>0$ small enough, $\Pr[\Omega_\la]\geq 1-\e$.
We introduce
$$
\cS=\cup_{t\in \cT_M} [t,t+\delta].
$$
We will prove in the next step 
that on $\Omega_\la$, for all $\la>0$ small enough,
for all $k\in\{1,\dots,p\}$, for all
$t\in [0,T]\setminus \cS$, 
\begin{align}\label{obj}
\bdelta(D^\la_t(x_k),D_t(x_k)) \leq 4/\bn_\la + 
2A\indiq_{\{t \in \cS\}},
\end{align}
which will imply that
$$
\bdelta_T(D^\la(x_k),D(x_k)) \leq 4T/\bn_\la + 
2A |\cS|.
$$
This will conclude the proof, since for $k=1,\dots,p$, $t_k \notin \cS$
(recall $\Omega_{K,c}$ and that $\delta<c$)
and since the Lebesgue measure of $\cS$ is smaller than $K\delta$
(recall $\Omega_{K,c}$ and that $\delta<c$). Thus (\ref{obj}) implies, since
$\delta<a/(4AKp)$,
\begin{align*}
\sum_{k=1}^p\bdelta_{T}(D^\la(x_k),D(x_k)) 
+\sum_{k=1}^p \bdelta(D^\la_{t_k}(x_k),D_{t_k}(x_k))
\leq& p \left[ 4T/\bn_\la +2A K\delta +4/\bn_\la  \right]\\
\leq& a/2+ 4p(T+1)/\bn_\la,
\end{align*}
which is smaller than $a$ for all $\la>0$ small enough.
Thus (\ref{cqv2}) holds for all $\la>0$ small enough.

\vip

{\bf Step 4.} It remains to check (\ref{obj}). 
In the whole step, we work on $\Omega_\la$.
Let thus $k\in \{1,\dots,p\}$ be fixed. Consider
the first marks $\chi_g,\chi_d$ of $\pi_S$ on the left and right of $x_k$.
Then by definition, we have $D_t(x_k)=[\chi_g,\chi_d]$ for all $t\in [0,T]$.
By $\Omega^{S,1}_A$ and since $x_k\in (-B,B)$, we know that 
$-A<\chi_g<\chi_d<A$.  Define $g_\la=\lfloor \bn_\la \chi_g\rfloor$ and 
$d_\la=\lfloor \bn_\la \chi_d\rfloor$. Due to $\Omega^S_{A,T,\delta}(\la)$
and since $\pi_S(\{\chi_g\})=\pi_S(\{\chi_d\})=1$ and $\pi_S((\chi_g,\chi_d))=0$
by construction,
we know that 

\vip

(i) $N^S_{\ba_\la T}(g_\la)=N^S_{\ba_\la T}(d_\la)=0$ 
(because $\chi_g \in (g_\la)_\la$ and $\chi_d \in (d_\la)_\la$),

\vip

(ii) for all $i \in \lb g_\la+1,d_\la-1\rb$, 
$\inf_{t\in [0,T-\delta]} [N^S_{\ba_\la (t+\delta)}(i)-N^S_{\ba_\la t}(i)]>0$
(because $i_\la \subset (\chi_g,\chi_d)$).

\vip
Observe now that for $\la>0$ small enough (it suffices that
$1/\bn_\la < c$), there holds $g_\la < \lfloor x_k \bn_\la \rfloor < d_\la$
(use that $\chi_g,\chi_d \in \cX_S$ and that $\chi_g<x_k<\chi_d$
so that due to $\Omega_{K,c}$, $\chi_g+c<x_k<\chi_d-c$).

\vip

Point (i) implies that $\eta^\la_{\ba_\la t}(g_\la)=\eta^\la_{\ba_\la t}(d_\la)=0$
for all $t\in [0,\ba_\la T]$. Consequently, for all $t\in [0,T]$,
there holds $C(\eta^\la_{\ba_\la t},\lfloor x_k \bn_\la \rfloor )
\subset \lb g_\la+1,d_\la-1\rb$. This implies that
$D^\la_t(x_k)\subset [(g_\la+1)/\bn_\la, (d_\la-1)/\bn_\la]\subset 
[\chi_g,\chi_d]$. Recalling that $D_t(x_k)=[\chi_g,\chi_d]$
and that $-A<\chi_g<\chi_d<A$, we deduce that 
$\bdelta(D_t(x_k),D^\la_t(x_k))\leq 2A$ for all $t\in [0,T]$.

\vip

Another consequence is that 
the matches
falling outside $\lb g_\la,d_\la\rb$ (and {\it a fortiori} 
outside $I_A^\la$) have no influence on 
$\lfloor x_k \bn_\la \rfloor$ during $[0,\ba_\la T]$. 

\vip

It only remains to check that for $t\in [0,T]\setminus \cS$,
if $\la>0$ is small enough, $\bdelta(D_t(x_k),D^\la_t(x_k))\leq 4/\bn_\la$.
We thus fix $t\in [0,T]\setminus \cS$ and consider 
$t_0=\max\{s \in \cT_M\, : \; s<t \}$. Then by definition of $\cS$,
$t-t_0>\delta$. Consequently,
point (ii) guarantees us that for all $i\in \lb g_\la+1,d_\la-1\rb$,
$N^S_{\ba_\la t}-N^S_{\ba_\la t_0}>0$: a seed falls on each of these
sites during $[\ba_\la t_0,\ba_\la t]$. Furthermore, 
there are no matches falling on $\lb g_\la+1,d_\la-1\rb$
during $[\ba_\la t_0,\ba_\la t]$, by definition of $t_0$ and due to 
$\Omega^M_{A,T}(\la)$. 
Consequently,
we have $\eta^\la_{\ba_\la t}(i)=1$ for all $i\in \lb g_\la+1,d_\la-1\rb$.
All this implies that $C(\eta^\la_{\ba_\la t}, \lfloor x_k \bn_\la \rfloor)
=\lb g_\la+1,d_\la-1\rb$, whence 
$D^\la_t(x_k)=[(g_\la+1)/\bn_\la, (d_\la-1)/\bn_\la]=
[(\lfloor \bn_\la \chi_g\rfloor+1)/\bn_\la,
(\lfloor \bn_\la \chi_d\rfloor-1)/\bn_\la ]$. Recalling that 
$D_t(x_k)=[\chi_g,\chi_d]$, we easily conclude.
\end{preuve}

\section{Well-posedness of the limit process when $\beta\in \{\infty,BS\}$}
\setcounter{equation}{0}

The aim of this section is to prove Theorems \ref{wpinfty} and \ref{wpbs},
and to localize the limit processes. All the results below
have already been proved in \cite{bf} for the $LFF(\infty)$-process.
We provide here a consequently simpler proof, that allows us to
treat simultaneously the cases $\beta=BS$ and $\beta=\infty$.

\begin{rem}\label{i=bs}
Under $(H_S(\infty))$, we put $\theta_u=\delta_u$ and $F_S(u,v)=u$
for all $u\in [0,1]$, all $v\in [0,1]$.
Using this function $F_S$, the $LFF(BS)$-process is nothing but
the $LFF(\infty)$-process.
\end{rem}

We consider a Poisson measure $\pi_M(dt,dx,dv)$ on
$[0,\infty)\times\rr\times[0,1]$ with intensity measure
$dtdxdv$ and abusively write
$\pi_M(dt,dx)=\int_{v\in [0,1]} \pi_M(dt,dx,dv)$, which is 
a Poisson measure on $[0,\infty)\times\rr$ with intensity measure $dtdx$.

\begin{defin}\label{dflffpA}
Let $\beta \in \{\infty,BS\}$. If $\beta=\infty$, consider $F_S$
as in Remark \ref{i=bs}. If $\beta=BS$, consider $F_S$ as in Definition
\ref{defF}.
Let $A>0$ be fixed. A $\rr_+\times\cI\times\rr_+$-valued process 
$(Z_t^A(x),D_t^A(x),H_t^A(x))_{t\geq 0,x\in [-A,A]}$
such that a.s., for all $x\in [-A,A]$,
$(Z_t^A(x),H_t^A(x))_{t\geq 0}$ is c\`adl\`ag,
is  called a $LFF_A(\beta)$-process if a.s., for all $t\geq 0$, 
all $x \in [-A,A]$,
\begin{align*}
Z_t^A(x)=&  \displaystyle \intot \indiq_{\{Z_s^A (x) < 1\}}ds - 
\intot \int_{[-A,A]} \indiq_{\{ Z_\sm^A(x)=1,y \in D_{\sm}^A(x)\}}\pi_M(ds,dy),\\
H_t^A(x)=& \displaystyle 
\intot \int_0^1 
F_S(Z_\sm^A(x),v)\indiq_{\{Z_\sm^A(x)<1\}} \pi_M(ds\times \{x\}\times dv) 
- \intot  \indiq_{\{H_s^A (x) > 0 \}}ds, 
\end{align*}
where $D_t^A(x) = [L_t^A(x),R_t^A(x)]$, with
\begin{align}\label{dfcc}
\left\{
\begin{array}{rl}
L_t^A(x) =& (-A) \lor \sup\{ y \in [-A,x];\; Z_t^A(y)<1 
\hbox{ or } H_t^A(y)>0 \}\\
R_t^A(x) =& A \land \inf\{ y\in [x,A];\; Z_t^A(y)<1 \hbox{ or } H_t^A(y)>0 \}
\end{array}
\right.
\end{align}
and where $D_{t-}^A(x)$ is defined similarly.
\end{defin}

Observe that for $\beta\in\{\infty,BS\}$, for any $A>0$,
the $LFF_A(\beta)$-process is obviously well and uniquely defined and can
be built as follows.

\begin{algo}\label{algo1} \rm
Consider the marks $(T_k,X_k,V_k)_{k=1,\dots,n}$ of
$\pi_M$ in $[0,T]\times[-A,A]\times[0,1]$, ordered
chronologically and set $T_0=0$. 

\vip

{\bf Step 0.} Put $Z_0^A(x)=H_0^A(x)=0$ and $D_0^A(x)=\{x\}$ 
for all $x\in [-A,A]$. 

\vip

Assume that for some $k\in \{0,\dots,n-1\}$,  
$(Z^A_t(x),D_t^A(x),H^A_t(x))_{t\in[0,T_k],x\in[-A,A]}$ has been built.

\vip

{\bf Step k+1.} Then for $t\in (T_k,T_{k+1})$ and $x\in [-A,A]$,
put $Z^A_t(x)=\min(1,Z^A_{T_k}(x)+t-T_k)$, set 
$H^A_t(x)=\max(0,H^A_{T_k}(x)-t+T_k)$ and then define $D_t^A(x)$
as in (\ref{dfcc}).
Finally, build $(Z^A_{T_{k+1}}(x),D_{T_{k+1}}^A(x),H^A_{T_{k+1}}(x))$ as follows.

\vip

$\bullet$ 
If $Z_{T_{k+1}-}^A(X_{k+1})=1$, set $H_{T_{k+1}}^A(x) =H_{T_{k+1}-}^A(x)$ for all
$x\in [-A,A]$ and consider $[a,b]:= D^A_{T_{k+1}-}(X_{k+1})$.
Set $Z^A_{T_{k+1}}(x) = 0$ for all $x\in (a,b)$ and 
$Z^A_{T_{k+1}}(x) = Z^A_{T_{k+1}-}(x)$
for all $x\in [-A,A] \setminus [a,b]$. Set finally  
$Z^A_{T_{k+1}}(a)=0$ if $Z^A_{T_{k+1}-}(a)=1$ and $Z^A_{T_{k+1}}(a)=Z^A_{T_{k+1}-}(a)$ 
if $Z^A_{T_{k+1}-}(a)<1$ and 
$Z^A_{T_{k+1}}(b)=0$ if $Z^A_{T_{k+1}-}(b)=1$ and $Z^A_{T_{k+1}}(b)=Z^A_{T_{k+1}-}(b)$ 
if $Z^A_{T_{k+1}-}(b)<1$.

\vip

$\bullet$ If $Z^A_{T_{k+1}-}(X_{k+1})<1$, set 
$H^A_{T_{k+1}}(X_{k+1})=F_S(Z^A_{T_{k+1}-}(X_{k+1}),V_{k+1})$, put 
$Z^A_{T_{k+1}}(X_{k+1})=Z^A_{T_{k+1}-}(X_{k+1})$ and 
$(Z^A_{T_{k+1}}(x),H^A_{T_{k+1}}(x))
=(Z^A_{T_{k+1}-}(x),H^A_{T_{k+1}-}(x))$ for
all $x\in [-A,A]\setminus \{X_{k+1}\}$.

\vip

$\bullet$ Using the values of $(Z^A_{T_{k+1}}(x),H^A_{T_{k+1}}(x))_{x\in[-A,A]}$,
compute $(D^A_{T_{k+1}}(x))_{x\in[-A,A]}$
as in (\ref{dfcc}).
\end{algo}

\vip

We now state a refined version of Theorems \ref{wpinfty} and \ref{wpbs}.

\begin{prop}\label{loc}
Let $\beta \in \{\infty,BS\}$.
Let $\pi_M$ be a Poisson measure on $[0,\infty)\times\rr\times[0,1]$
with intensity measure $dtdxdv$.

(i) There exists a unique $LFF(\beta)$-process 
$(Z_t(x),D_t(x),H_t(x))_{t\geq 0,x\in \rr}$.

(ii) It can be perfectly simulated on $[0,T]\times[-n,n]$ 
for any $T>0$, any $n>0$.

(iii) For $A>0$, let $(Z_t^A(x),D_t^A(x),H_t^A(x))_{t\geq 0,x\in [-A,A]}$
be the unique $LFF_A(\beta)$-process. There holds
\begin{align}\label{fcfc}
\Pr \Big[(Z_t(x),&D_t(x),H_t(x))_{t\in [0,T],x\in[-A/2,A/2]} \\
&= (Z_t^A(x),D_t^A(x),H_t^A(x))_{t\in [0,T],x\in[-A/2,A/2]} \Big] 
\geq 1 - C_T e^{-\alpha_T A},\nonumber
\end{align}
for some constants $\alpha_T>0$ and $C_T>0$ not depending on $A>0$.
\end{prop}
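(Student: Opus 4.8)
The strategy is to build the infinite-volume process as a consistent limit of the finite-box processes $(Z^A,D^A,H^A)$, using the fact that fires cannot propagate across ``barriers,'' and that barriers are frequent enough in space. First I would establish the key localization estimate, from which (i), (ii) and (iii) all follow. Fix $T>0$ and $A>0$. Call a point $x$ a \emph{blocking site on $[0,T]$} (for a given realization of $\pi_M$) if $\pi_M(\{x_0\}\times[0,T]\times[0,1])>0$ for some $x_0$ in a small neighbourhood of $x$ at a time $t_0$, AND $Z_{t_0-}(x_0)<1$, so that a microscopic fire starts there and $H$ becomes positive for a while; more robustly, I would use the simpler observation that any $x$ with no match in $[0,1]\times\{x\}$-neighbourhood will, once a fire happens, not regenerate instantaneously but only after time of order $1$, and conversely a cleverly chosen pattern of matches creates a vertical segment (a barrier) that persists. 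The cleanest route, mimicking the discrete graphical construction (Section \ref{exunidisc}) and the Poisson heuristic in Point 0 of the heuristics: on an interval of length $1$, with probability bounded below by some $p_0\in(0,1)$ (not depending on the position, by homogeneity of $\pi_M$), there is exactly one match, it falls in the central third at a time in $[1/3,2/3]$, so that a barrier $H>0$ is created there, and this barrier outlives $T$ provided we rescale — more precisely, tile $[0,T]$ into $\lceil T\rceil$ unit time-slabs and demand such a pattern in each slab, giving probability bounded below by $p_0^{\lceil T\rceil}=:q_T>0$ that a fixed spatial unit interval contains, throughout $[0,T]$, a site $y$ with either $Z_t(y)<1$ or $H_t(y)>0$. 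Since these events on disjoint unit intervals are independent, the probability that none of the $\lfloor A/4\rfloor$ unit intervals inside $[A/4,A/2]$ (resp. $[-A/2,-A/4]$) is ``blocking throughout $[0,T]$'' is at most $(1-q_T)^{\lfloor A/4\rfloor}$.

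\textbf{From localization to the three claims.}
On the complementary event, there exist $y_g\in[-A/2,-A/4]$ and $y_d\in[A/4,A/2]$ such that for all $t\in[0,T]$, at least one of $Z_t(y_g)<1$ or $H_t(y_g)>0$ holds, and similarly for $y_d$. By the definition of $D_t(x)=[L_t(x),R_t(x)]$ via the $\sup$/$\inf$ of such sites, and by the dynamics (\ref{eqlffpbs}) — a match only affects $D_{t-}(x)$, which is contained between the surrounding barriers — the restriction of the process to $[-A/4,A/4]\times[0,T]$ depends only on $\pi_M$ restricted to $[-A/2,A/2]\times[0,T]\times[0,1]$, and moreover coincides with the $LFF_{A}(\beta)$-process restricted to the same box, because the finite-box cutoffs in (\ref{dfcc}) at $\pm A$ are never felt (the barriers at $y_g,y_d$ screen them). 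This gives (\ref{fcfc}) with $\alpha_T=-\tfrac14\log(1-q_T)>0$ and $C_T=2$ (absorbing the missing $-1$ in the exponent into the constant, and replacing $A/4$ by $A/2$ everywhere by shrinking the box slightly; I would carry out the box-size bookkeeping so that the statement with $[-A/2,A/2]$ and $e^{-\alpha_T A}$ holds verbatim). For existence and uniqueness (i): the events ``$[-A/4,A/4]\times[0,T]$ is correctly described by $LFF_{A'}(\beta)$ for all $A'\geq A$'' hold eventually as $A\to\infty$ (Borel--Cantelli, since $\sum_A(1-q_T)^{\lfloor A/4\rfloor}<\infty$), and on the full-probability intersection over $T\in\nn$ the processes $(Z^A,D^A,H^A)$ are eventually constant in $A$ on each compact box, defining a unique limit $(Z,D,H)$; one checks it solves (\ref{eqlffpbs}) by passing to the limit in the finite-box equations, which involve only finitely many marks of $\pi_M$ in each box. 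For (ii): Algorithm \ref{algo1} gives an exact simulation of $LFF_A(\beta)$ on $[0,T]\times[-A,A]$, and by (iii) this agrees with the true process on $[0,T]\times[-n,n]$ with probability $\to1$; to get a \emph{perfect} (exact, not approximate) simulation on $[0,T]\times[-n,n]$ one uses a random $A$: search outward from $[-n,n]$ for a blocking pattern as above on both sides, which a.s. terminates, and then run Algorithm \ref{algo1} in the resulting finite box — the output is then exactly the restriction of the infinite-volume process.

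\textbf{Main obstacle.}
The delicate point is not the probabilistic tail bound but the deterministic screening argument: verifying rigorously that once $y_g,y_d$ are ``blocking throughout $[0,T]$,'' the process on $[-A/4,A/4]\times[0,T]$ genuinely does not depend on anything outside $[-A/2,A/2]$, \emph{and} equals the $LFF_A(\beta)$-process there. This requires checking that (a) the set $\{y: Z_t(y)<1 \text{ or } H_t(y)>0\}$ always contains $y_g$ and $y_d$, so the $\sup$/$\inf$ defining $L_t,R_t$ for $x\in[-A/4,A/4]$ never reach past $\pm A/2$; (b) a macroscopic fire at $(t,x)$ with $x\in[-A/4,A/4]$ destroys $D_{t-}(x)\subset(y_g,y_d)$ and nothing beyond; (c) the finite-box truncation in (\ref{dfcc}) at $\pm A$ is invisible because $D^A_t(x)\subset(y_g,y_d)\subset(-A,A)$ strictly, for $x\in[-A/4,A/4]$. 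Since $\beta\in\{\infty,BS\}$ only enters through the law of the barrier heights $H$ (via $F_S$), and $H\geq0$ always, the screening argument is uniform in $\beta$, which is exactly why one proof covers both cases (Remark \ref{i=bs}). I would write this screening step carefully by induction on the successive marks of $\pi_M$ in $[-A/2,A/2]\times[0,T]$, in the spirit of the induction $(H_k)\Rightarrow(H_k^*)\Rightarrow(H_{k+1})$ used in the proof of Theorem \ref{converge2}, but purely at the level of the limit process.
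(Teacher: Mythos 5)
Your overall architecture is the right one and is essentially the paper's: build, from the matches alone, ``blocking'' space-time patterns in unit spatial intervals that keep some site with $Z_t(y)<1$ or $H_t(y)>0$ throughout $[0,T]$, use independence of these patterns over disjoint unit intervals to get a probability $q_T>0$ per interval and hence the bound $1-C_T e^{-\alpha_T A}$, and then deduce well-posedness, perfect simulation (search outward for blocking intervals) and the finite-box comparison. However, your favourable event does not actually block, and the flaw is exactly at the point you wave away. First, quantitatively: with one match per unit time-slab at a time in the middle third of the slab, consecutive matches can be as much as $4/3$ apart, while a single match can only guarantee blocking at its site for a bounded time --- at most $1$ even in the best case (a macroscopic fire, after which $Z$ needs time $1$ to return to $1$), and only $\max(z,1-z)\geq 1/2$ for a microscopic fire in the case $\beta=\infty$. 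So even in the $\infty$ case your pattern leaves unprotected time windows through which an outside fire can sweep the whole interval.

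Second, and more fundamentally, your claim that ``$\beta\in\{\infty,BS\}$ only enters through the law of $H$, and $H\geq 0$ always, so the screening argument is uniform in $\beta$'' is where the real gap sits. In the $BS$ case a match falling at a microscopic site produces a barrier of height $F_S(Z_{t-},V)$, which is $0$ with positive probability (the law $\theta_z$ has an atom $\nu_S((T_Sz,T_S))$ at $0$) and can be arbitrarily small; if moreover $Z_{t-}$ is close to $1$, the residual blocking from $Z<1$ is also arbitrarily short. Thus a match may block for essentially no time at all, and ``one match per slab'' gives nothing. The paper fixes this with Lemma \ref{lblb}: there is $v_0<1$ such that $F_S(z,v)\geq 1/2$ whenever $z\in[3/4,1)$ and $v\geq v_0$; restricting attention to marks of $\pi_M$ with $V\geq v_0$ and running the case analysis $Z_{\tau-}<3/4$, $Z_{\tau-}\in[3/4,1)$, $Z_{\tau-}=1$ shows that every retained mark blocks its site for at least $1/4$. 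The blocking event $\Omega_a$ is then that the retained marks in $[0,T]\times[a,a+1)$ (together with the endpoints $0$ and $T$) have consecutive gaps smaller than $1/4$ --- an event depending only on $\pi_M$, with probability $q_T>0$ independent of $a$, after which your deduction of (i), (ii) and (iii) goes through as you describe. Without a uniform-in-$z$ (and in $\la$-free) lower bound of this kind on the blocking duration per match, the induction/screening step you defer to cannot be closed, in either case of $\beta$.
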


To prove this result, we need a lower-bound of the length of the
barriers.

\begin{lem}\label{lblb}
Let $\beta \in \{\infty,BS\}$. If $\beta=\infty$, consider $F_S$
as in Remark \ref{i=bs}. If $\beta=BS$, consider $F_S$ as in Definition
\ref{defF}. There exists $v_0\in [0,1)$ such that
for all $z\in [3/4,1)$, all $v\in [v_0,1]$, $F(z,v)\geq 1/2$.
\end{lem}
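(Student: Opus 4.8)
The plan is to treat the two regimes $\beta=\infty$ and $\beta=BS$ separately. When $\beta=\infty$ the statement is immediate: by Remark \ref{i=bs} one has $F_S(z,v)=z$ for all $z,v$, so $F_S(z,v)=z\geq 3/4>1/2$ for every $z\in[3/4,1)$ and every $v\in[0,1]$, and any $v_0\in[0,1)$ works. So the real content is under $(H_S(BS))$. There $F_S(z,\cdot)$ is nondecreasing with $F_S(z,V)$ distributed according to $\theta_z$ when $V$ is uniform on $[0,1]$ (Definition \ref{defF}); hence $\{v\in[0,1]:F_S(z,v)\geq 1/2\}$ is an up-set of Lebesgue measure $1-\theta_z([0,1/2))$, and it suffices to prove that $\bar\theta:=\sup_{z\in[3/4,1)}\theta_z([0,1/2))<1$, after which one takes $v_0\in(\bar\theta,1)$.

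To estimate $\theta_z([0,1/2))$ I would let $h\uparrow 1/2$ in the formula of Definition \ref{defF}. Writing $a:=\nu_S((T_S z,T_S))\in[0,1]$ and $g:=g_S(z,1/2-)=\Pr[N^S_{T_S z}>0,\ \exists\ \text{renewal in}\ (T_S z,T_S(z+1/2))]$, and using that $g_S(z,\cdot)$ is nondecreasing with the crude bound $g\leq 1-\nu_S((T_S/2,\infty))<1$ together with the continuity and monotonicity of $x\mapsto x/(1-x)^2$ on $[0,1)$, one gets $\theta_z([0,1/2))=a+a^2 g/(1-g)^2$. Since this is increasing in $g$, everything reduces to a uniform upper bound $g\leq 1-a-\delta_0$ with $\delta_0:=\nu_S((T_S/2,\infty))>0$ independent of $z$. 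The key point is the identity $1-a-g=\Pr[N^S_{T_S z}>0]-g=\Pr[N^S_{T_S z}>0,\ R\geq T_S/2]$, where $R$ is the forward recurrence time at $T_S z$, which is $\nu_S$-distributed by the stationarity recalled after Definition \ref{sr1}; combined with the inclusion $\{R>T_S/2\}\subset\{N^S_{T_S z}>0\}$, which holds because on $\{N^S_{T_S z}=0\}$ one has $R=T_1-T_S z\leq T_S-T_S z\leq T_S(1-z)\leq T_S/4$ (here $T_1\sim\nu_S\leq T_S$ a.s. and $z\geq 3/4$). Thus $\Pr[N^S_{T_S z}>0,\ R\geq T_S/2]\geq\Pr[R>T_S/2]=\nu_S((T_S/2,\infty))=\delta_0$, giving $g\leq 1-a-\delta_0$ uniformly in $z\in[3/4,1)$.

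Finally I would plug $1-g\geq a+\delta_0>0$ back in, obtaining $\theta_z([0,1/2))\leq\phi(a)$ with $\phi(a):=a+a^2(1-a-\delta_0)/(a+\delta_0)^2$; a short computation gives $1-\phi(a)=\delta_0\bigl(a(2-a)+\delta_0(1-a)\bigr)/(a+\delta_0)^2$, which is strictly positive for every $a\in[0,1]$. Since $\phi$ is continuous on the compact interval $[0,1]$ and $\phi<1$ there, $\max_{[0,1]}\phi<1$; as $a=\nu_S((T_S z,T_S))\in[0,1]$, this yields $\bar\theta\leq\max_{[0,1]}\phi<1$, which completes the argument. The main obstacle is precisely the uniform \emph{strict} bound $g_S(z,1/2-)\leq 1-a-\delta_0$: the two naive estimates $g_S(z,1/2-)\leq 1-a$ and $g_S(z,1/2-)\leq 1-\delta_0$ each only yield $\theta_z([0,1/2))\leq 1$, and one genuinely needs to combine them through the inclusion $\{R>T_S/2\}\subset\{N^S_{T_S z}>0\}$ — which is exactly where the hypotheses $z\geq 3/4$ and $\supp\mu_S\subset[0,T_S]$ are used.
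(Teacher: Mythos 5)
Your proof is correct, but the key estimate is obtained by a genuinely different route from the paper's. The paper bounds $\theta_z([0,h])$ for $h\le 1/2$ using only the crude inequality $g_S(z,h)\leq \Pr[N^S_{T_S(z+h)}-N^S_{T_S z}>0]=\nu_S([0,T_S h])\leq\nu_S([0,T_S/2])$: since $z\geq 3/4$ gives $\nu_S((T_S z,T_S))\leq\nu_S([3T_S/4,T_S])\leq\nu_S([T_S/2,T_S])\leq 1-g_S(z,h)$, the squared prefactor in Definition \ref{defF} is at most $1$, whence $\theta_z([0,1/2))\leq\nu_S([3T_S/4,T_S])+\nu_S([0,T_S/2])=\nu_S([0,T_S/2]\cup[3T_S/4,T_S])<1$, the strict inequality coming from $\supp\nu_S=[0,T_S]$, and this value is taken directly as $v_0$. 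You instead refine the bound on $g_S(z,1/2-)$ itself, via the identity $1-\nu_S((T_S z,T_S))-g_S(z,1/2-)=\Pr[N^S_{T_S z}>0,\,R\geq T_S/2]$ for the forward recurrence time $R\sim\nu_S$ at time $T_S z$, the inclusion $\{R>T_S/2\}\subset\{N^S_{T_S z}>0\}$ (valid because $z\geq 3/4$ and $\nu_S$ is supported in $[0,T_S]$), and then an optimization in $a=\nu_S((T_S z,T_S))$; all these steps are sound (the algebra for $1-\phi(a)$ checks out, the monotone limit $h\uparrow 1/2$ is legitimate, and choosing $v_0$ strictly above the supremum makes the final monotonicity argument clean). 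What each approach buys: yours yields the sharper structural inequality $g_S(z,1/2-)\leq 1-\nu_S((T_S z,T_S))-\nu_S((T_S/2,\infty))$ and an explicit uniform bound $\max_{[0,1]}\phi<1$, at the price of invoking stationarity of the forward recurrence time at $T_S z$; the paper's argument is shorter and uses nothing beyond the one-line bound $g_S(t,s)\leq\nu_S([0,T_S s])$ together with the fact that the ratio $\nu_S([3T_S/4,T_S])/\nu_S([T_S/2,T_S])$ is at most $1$. Incidentally, your closing remark that the two naive bounds cannot be combined without the recurrence-time inclusion is too strong: the paper shows that using $z\geq 3/4$ only through $\nu_S((T_S z,T_S))\leq\nu_S([3T_S/4,T_S])$, plus the positivity of $\nu_S$ on $(T_S/2,3T_S/4)$, already suffices.
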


\begin{proof}
If $\beta=\infty$, the result is obvious with $v_0=0$, since
$F_S(z,v)=z\geq 1/2$ for all $z\in [1/2,1]$, $v\in [0,1]$. Consider
now the case $\beta=BS$. First observe that 
$g_S(t,s)\leq \Pr[N_{T_S(t+s)}^S-N^S_{T_S t}>0]=\nu_S([0,T_S s])$. Hence for 
all $z \in [3/4,1)$,
\begin{align*}
\theta_z([0,1/2))&\leq \nu_S([3T_S/4,T_S])+\frac{\nu_S([3T_S/4,T_S])^2}
{\nu_S([T_S/2,T_S])^2}\nu_S([0,T_S/2])\\
&\leq 
\nu_S([0,T_S/2]\cup [3T_S/4,T_S])=:v_0<1,
\end{align*}
since $\supp \nu_S = [0,T_S]$.
We deduce that for $z\in [3/4,1]$,
$$
\int_0^1 dv \indiq_{\{F_S(z,v)< 1/2\}}
=\theta_z([0,1/2))\leq v_0.
$$
Recalling that $v\mapsto F_S(z,v)$ is nondecreasing, we deduce that
$F_S(z,v)\geq 1/2$ for $v \in [v_0,1]$.
\end{proof}

\begin{preuve} {\it of Proposition \ref{loc}.}
We split the proof into several steps. We work on $[0,T]$.

\vip

{\bf Step 1.} We observe that for a mark $(\tau,X,V)$ of $\pi_M$ with
$X\in [-A,A]$ and  $V\geq v_0$ (see Lemma \ref{lblb}), 
we have $H^A_{t}(X)>0$ or $Z^A_t(X)<1$
for all $t \in [\tau,\tau+1/4]$ (and the same result applies 
to the $LFF(\beta)$-process if it exists). 

\vip

Indeed,
assume first that $Z^A_{\tau-}(X)\in [0,3/4)$. 
Then $Z^A_t(X)=Z^A_{\tau-}(X) +t-\tau <1$ for $t \in [\tau,\tau+1/4]$.

\vip

Assume next that  $Z^A_{\tau-}(X)\in [3/4,1)$. Then 
$H_{\tau}(X)=F_S(Z^A_{\tau-}(X),V)\geq 1/2$ due to Lemma \ref{lblb},
so that $H_t(X)=H_{\tau}(X)-t+\tau >0$ for $t \in [\tau,\tau+1/2) \supset 
[\tau,\tau+1/4]$.

\vip

If finally $Z^A_{\tau-}(X)=1$, then $Z^A_{\tau}(X)=0$, whence
$Z^A_t(X)=t-\tau<1$ for $t \in [\tau,\tau+1) \supset  [\tau,\tau+1/4]$.

\vip

{\bf Step 2.} For $a \in \rr$, we consider the event $\Omega_a$ defined as
follows: for $\{(T_k,X_k,V_k)\}_{k=1,\dots,n}$ the marks of $\pi_M$
restricted to $[0,T]\times [a,a+1) \times [v_0,1]$ ordered chronologically,
for $T_0=0$, $T_{n+1}=T$, we put $\Omega_a=\{\max_{i=0,\dots,n} 
(T_{i+1}-T_i) < 1/4\}$. 

\vip

We immediately deduce from Step 1 that for any $a\in \rr$,
any $A > |a|+1$, 
$$
\Omega_a \subset \left\{ 
\forall\; t\in [0,T], \; \exists \; x \in (a,a+1), H^A_t(x)>0 \hbox{ or }
Z^A_t(x)<1\right\}.
$$
Thus on $\Omega_a$, clusters on the left of $a$ cannot be
connected to clusters on the right of $a+1$ during $[0,T]$.
Hence matches falling at the right of $a+1$ (resp. on the left of $a$)
do not affect the zone $(-\infty,a)$ (resp. $(a+1,\infty)$) 
during $[0,T]$.

\vip

{\bf Step 3.} Obviously, $q_T:=\Pr(\Omega_a)$ is positive
and does not depend on $a$. Furthermore, $\Omega_a$ is independent of
$\Omega_b$ for all $a,b \in \zz$ with $a\ne b$.
Hence there are a.s. infinitely many $a\in \zz$ such that $\Omega_a$
is realized.  

\vip

Then it is routine to deduce the well-posedness of the
$LFF(\beta)$-process.
The perfect simulation algorithm on a
finite-box $[0,T]\times [-n,n]$ is also easy: simulate
$\pi_M$ on $[0,T]\times [a_1,a_2]$ in such a way that
$\Omega_{a_1}\cap \Omega_{a_2}$ is realized and that $a_1+1<-n<n<a_2$.
Then apply the same rules as for the $LFF_A(\beta)$-process.
This will give the true $LFF(\beta)$-process inside $[a_1+1,a_2]\supset 
[-n,n]$, 
because matches falling outside $[a_1,a_2+1]$ 
have no effect on the process in the box 
$[a_1+1,a_2]$ during $[0,T]$.

\vip

Finally, we can clearly bound from below the left hand side of
(\ref{fcfc}) by 
\begin{align*}
\Pr\left[\left(\cup_{a \in [-A,-A/2-1]\cap \zz}\Omega_a \right) \cap  
\left(\cup_{a \in [A/2, A-1]\cap \zz}\Omega_a \right)\right] 
&\geq 1 - 2 \Pr[\Omega_0^c]^{\lfloor A \rfloor -\lfloor A/2 \rfloor -2} \\
&\geq 1 - 2 (1-q_T)^{A/2-4},
\end{align*}
whence (\ref{fcfc}) with $C_T=2/(1-q_T)^4$ and $ \alpha_T= -\log(1-q_T)/2$.
\end{preuve}

\section{Localization of the discrete processes when $\beta\in\{\infty,BS\}$}
\label{locd}
\setcounter{equation}{0}

We recall that $\ba_\la$, $\bn_\la$ and $\bm_\la$ are defined
in (\ref{ala}), (\ref{nla}) and (\ref{mla}).
For $A>0$, we set $A_\la = \lfloor A \bn_\la \rfloor$ and 
$I_A^\la=\lb -A_\la,A_\la\rb$.
For $i\in \zz$, we set $i_\la=[i/\bn_\la, (i+1)/\bn_\la)$.

\vip

For $\eta \in \{0,1\}^{I_A^\la}$ and $i \in I_A^\la$, we define the occupied 
connected component around $i$ as
$$
C_A(\eta,i)=\left\{ \begin{array}{lll}
\emptyset & \hbox{ if } & \eta(i)=0, \\
\lb l_A(\eta,i),r_A(\eta,i)\rb & \hbox{ if } & \eta(i)=1,
\end{array}\right.
$$ 
where $l_A(\eta,i)=(-A_\la) \lor (\sup\{k< i:\; \eta(k)=0\}+1)$ and
$r_A(\eta,i)=A_\la \land (\inf\{k > i:\; \eta(k)=0\}-1)$.

\begin{defin}\label{gffA}
Assume $(H_M)$ and $(H_S(\beta))$ with $\beta\in \{\infty,BS\}$.
Let $\la\in (0,1]$ and $A>0$ be fixed.
For each $i\in I_A^\la$, we consider a $SR(\mu_S)$-process 
$(N^S_t(i))_{t\geq 0}$ and a $SR(\mu_M^\la)$-process $(N^{M,\la}_t(i))_{t\geq 0}$, 
all these processes being
independent. Consider a $\{0,1\}$-valued process 
$(\eta_t^{\la,A}(i))_{i \in I_A^\la, t\geq 0}$ 
such that a.s., for all $i\in I_A^\la$, $(\eta^{\la,A}_t(i))_{t\geq 0}$ 
is c\`adl\`ag. We say that  $(\eta_t^{\la,A}(i))_{i \in I_A^\la, t\geq 0}$
is a  $FF_A(\mu_S,\mu_M^\la)$-process if a.s., for all $i \in I_A^\la$, 
all $t\geq 0$,
$$
\eta_t^{\la,A}(i)=\intot \indiq_{\{\eta_\sm^{\la,A}(i)=0\}} dN^S_s(i) 
- \sum_{j\in I_A^\la}\intot \indiq_{\{j\in C_A(\eta_\sm^{\la,A},i)\}} dN^{M,\la}_s(j)
$$
For $x\in [-A,A]$ and $t\geq 0$, we introduce
\begin{align}
D^{\la,A}_t(x)=&\frac{1}{\bn_\la} C_A(\eta^{\la,A}_{\ba_\la t},
\lfloor \bn_\la x \rfloor) \subset [-A_\la/\bn_\la,A_\la/\bn_\la]
\simeq [-A,A]
,\label{dlambdaA} \\
K^{\la,A}_t(x)=&
\frac{\left|\left\{i \in \lb \lfloor \bn_\la x \rfloor - \bm_\la 
, \lfloor \bn_\la x\rfloor + \bm_\la  \rb \cap I_A^\la\,:\;   
\eta^{\la,A}_{\ba_\la t} (x)=1
\right\}\right|}{\left|\lb \lfloor \bn_\la x \rfloor - \bm_\la 
, \lfloor \bn_\la x\rfloor + \bm_\la  \rb \cap I_A^\la \right|} \in [0,1],\ala
Z^{\la,A}_t(x)=& \frac{\psi_S(K^{\la,A}_t(x) )}{\ba_\la} \land 1 \in [0,1].
\label{zlambdaA}
\end{align}
\end{defin}

We generalize \cite[Proposition 11]{bf}, with a consequently
less intricate proof.

\begin{prop}\label{plocla}
Assume $(H_M)$ and $(H_S(\beta))$, for some $\beta \in \{\infty, BS\}$.
Let $T>0$ and $\la\in (0,1)$. 
For each $i\in \zz$, 
we consider a $SR(\mu_S)$-process 
$(N^S_t(i))_{t\geq 0}$ and a $SR(\mu_M^\la)$-process $(N^{M,\la}_t(i))_{t\geq 0}$, 
all these processes being
independent. Let $(\eta^\la_t(i))_{t\geq 0,i\in\zz}$ be the corresponding 
$FF(\mu_S,\mu_M^{\la})$-process,
and for each $A>0$, let 
$(\eta^{\la,A}_t(i))_{t\geq 0,i\in I_A^\la}$ be the corresponding 
$FF_A(\mu_S,\mu_M^{\la})$-process.
Recall (\ref{dlambda})-(\ref{zlambda}) and (\ref{dlambdaA})-(\ref{zlambdaA}).
There are some constants $\alpha_T>0$ and $C_T>0$ such that for 
all $A\geq 1$, all $\la \in (0,1]$ small enough,
\begin{align*}
&\Pr\Big[(\eta^\la_t(i))_{t\in [0,\ba_\la T],i\in I_{A/2}^\la}
=(\eta^{\la,A}_t(i))_{t\in [0,\ba_\la T],i\in I_{A/2}^\la}, \\
& \hskip1cm (Z^\la_t(x),D^\la_t(x))_{t\in [0,T],x \in [-A/2,A/2]}
=(Z^{\la,A}_t(x),D^{\la,A}_t(x))_{t\in [0,T],x \in [-A/2,A/2]}\Big] \\
\geq& 1 - C_T e^{-\alpha_T A}.
\end{align*}
\end{prop}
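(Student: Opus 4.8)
The plan is to mirror the localization of the limit process carried out in Proposition \ref{loc}: I would show that, with probability at least $1-C_Te^{-\alpha_TA}$, there exist two \emph{barriers} — a block of sites inside the left half of the annulus $I_A^\la\setminus I_{A/2}^\la$ and another inside its right half — each of which contains at least one vacant site at \emph{every} time of $[0,\ba_\la T]$. Since a fire cannot cross a vacant site, on the event that both barriers exist the evolutions of $\eta^\la$ and of $\eta^{\la,A}$ inside the region they delimit are driven by exactly the same renewal processes and hence coincide on $I_{A/2}^\la\times[0,\ba_\la T]$; I would prove this by the usual insulation argument (matches falling outside the region cannot propagate past the barrier sites, and the restricted processes solve the same equation with the same data). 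Because $\bm_\la/\bn_\la\to 0$, for $\la$ small the windows of sites entering the definitions of $D^\la_t(x)$ and $Z^\la_t(x)$ for $x\in[-A/2,A/2]$ — the cluster around $\lfloor\bn_\la x\rfloor$, of length $\le\bn_\la$, and the $2\bm_\la+1$ sites around it — stay strictly inside $I_A^\la$ and at distance $\ge\bn_\la$ from any barrier site, so $(D^\la,Z^\la)=(D^{\la,A},Z^{\la,A})$ on $[0,T]\times[-A/2,A/2]$ as well. This is the same bookkeeping as in Proposition \ref{loc}, up to a few floor-function adjustments to keep the barrier blocks strictly inside $\lb-A_\la,A_\la\rb$.

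The heart of the matter is to build, for a window $W$ of $\bn_\la$ consecutive sites, an event $\Omega_W$ depending only on the seed and match processes on $W$ (hence independent across disjoint windows) on which $W$ carries a vacant site at every $t\in[0,\ba_\la T]$, with $\Pr(\Omega_W)\ge q_T>0$ uniformly in (small) $\la$ and in the location of $W$. Granting this, I would cover each half of $I_A^\la\setminus I_{A/2}^\la$ by about $A/2$ disjoint such windows; by independence the probability that none of them is a barrier is at most $(1-q_T)^{\lfloor A/2\rfloor-2}$, and a union bound over the two sides gives $1-C_Te^{-\alpha_TA}$ (for small $A$ the bound is vacuous once $C_T$ is enlarged).

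To construct $\Omega_W$ I would relay a bounded number — of order $T$ — of barriers in time, in the spirit of the graphical construction of Proposition \ref{gcgff}. The window $W$ is empty at time $0$, and since $\bn_\la\nu_S((\ba_\la h_0,\infty))\sim\nu_S((\ba_\la h_0,\infty))/\nu_S((\ba_\la,\infty))\to\infty$ for any fixed $h_0<1$ (this is precisely $(H_S(\infty))$; under $(H_S(BS))$ it is trivial since $\ba_\la=T_S$ is constant), with probability tending to $1$ some site of $W$ receives no seed on $[0,\ba_\la(1-\e)]$ and is thus vacant throughout that interval. For later times fires are unavoidable: a match in $W$ burning a \emph{macroscopic} occupied cluster clears $\sim\bn_\la$ sites of $W$, and by the occupation-time estimates (of the form $(1-\nu_S((\ba_\la s,\infty)))^{\bn_\la}\to 0$ if $s<1$ and $\to1$ if $s>1$) that burnt zone keeps a vacant site for about one rescaled time unit while refilling completely after about one rescaled time unit. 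So I would define $\Omega_W$ by requiring, in each of $\sim T$ successive time-slabs of rescaled length one, a match to fall in the central half of $W$ while $W$ is almost fully occupied; each such requirement has probability bounded below by a constant uniformly in $\la$, since the number of matches falling in $W$ over $[0,\ba_\la T]$ is asymptotically Poisson with parameter $T$ by $(H_M)$, and ``almost fully occupied'' has probability $\to1$ given the previous fires by the refilling estimate. One also records, from the residual-lifetime identity $\Pr(\text{no seed on a given site during an interval of length }h)=\nu_S((h,\infty))$, that such barriers survive the intervals for which they are designed.

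The main obstacle is exactly this relay. The burning and the refilling of a macroscopic zone both occur on the \emph{same} scale $\ba_\la$, with no slack, so the slabs, the density threshold above which a fire counts as macroscopic, and the various $\e$'s must be tuned so that the barrier created in one slab survives until the next fire while $W$ has had time to become dense enough for that next fire to be macroscopic. This forces us to accept, at each handover, an event of merely positive (not asymptotically full) probability, which is why $q_T$ is only a positive constant to a power of order $T$ and why the argument delivers just an exponential-in-$A$ bound rather than something stronger. All the probabilistic inputs — the residual-lifetime computation, the binomial/Poisson refilling estimates, and the Poisson limit for the match count — are elementary consequences of $(H_S(\infty))$ (or $(H_S(BS))$) together with $(H_M)$; the only genuine work beyond them is the combinatorial and temporal bookkeeping.
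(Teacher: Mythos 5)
Your global scheme (barriers in each half of the annulus $I_A^\la\setminus I_{A/2}^\la$, independence across disjoint windows, union bound giving $1-C_Te^{-\alpha_T A}$) is exactly the paper's Steps 3--4, but the core construction of the window event has a genuine gap. You require $\Omega_W$ to depend \emph{only} on the seed and match processes on $W$ (this is what makes windows independent), yet you define it through the condition that a designated match falls ``while $W$ is almost fully occupied.'' The occupation state of $W$ is not a function of the processes on $W$: a match falling outside $W$ can burn a cluster that reaches into $W$ and empty part of it at an arbitrary time, so your event is either not local (and the independence across windows, hence the exponential bound, collapses) or, if read as a purely local event, it no longer guarantees a vacant site — an external fire just before your designated match can leave only a microscopic cluster to burn, whose vacancy refills too fast, and the external fire itself may clip only a few quickly-refilled sites of $W$. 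The paper's Step 2 is built precisely to evade this: it never asks for near-full occupancy. It plants two ``guard'' sites $j_g,j_d$ near the edges of $J_a^\la$ receiving no seed on the whole interval, seeds every site of a mesoscopic zone $\lb i_M-\bk_\la,i_M+\bk_\la\rb$ around the designated match, and keeps one no-seed site $j_0$ inside that zone; then either the mesoscopic zone is fully occupied at the match time, so the fire vacates $j_0$ which stays vacant, or some site of the zone is already vacant, which can only be due to an external fire having crossed the window, and that fire must have burnt $j_g$ or $j_d$, which then stays vacant for the required duration. This dichotomy is what makes the event simultaneously local and robust to everything happening outside $W$; your proposal has no substitute for it.

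A second, related gap is your relay inside a \emph{single} window: you want one window to carry a vacant site for all $t\in[0,\ba_\la T]$ by chaining $\sim T$ requirements in successive time slabs. Beyond the timing tension you acknowledge, the events attached to different time slabs of the same window are not independent (stationary renewal processes are not Markov and do not have independent increments), so the lower bound $q_T>0$ for their intersection does not follow from bounding each slab separately; conditioning on the past of a renewal process is exactly the kind of argument the paper avoids. The paper instead assigns each quarter-unit time slab $[k/4,(k+1)/4]$ to a \emph{different} spatial window $J_{a+k-1}^\la$ inside a block of $K=\lfloor 4T\rfloor$ adjacent windows: the $K$ events are then independent because the windows are disjoint, the product bound $q^{K-1}$ is immediate, and a vacant site always exists somewhere in the block of length $K$, which is all that is needed to insulate $I_{A/2}^\la$. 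As written, your construction would need both the guard-site mechanism and either this spatial spreading of the time slabs or a genuine argument for the dependent intersection before the claimed uniform lower bound $q_T>0$ — and hence the exponential estimate — is justified.
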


\begin{preuve} {\it in the case where $\beta=\infty$.}  
It of course suffices to prove the result for all $A$ large enough
(we will assume that $A>8T$).
We consider the true $FF(\mu_S,\mu_M^\la)$-process
$(\eta_t^\la(i))_{t\geq 0, i \in \zz}$.  For $a \in \rr$, we introduce 
$$
J_a^\la := \lb \lfloor a \bn_\la\rfloor,
\lfloor (a+1) \bn_\la\rfloor -1 \rb.
$$

\vip

{\bf Step 1.} We show here that for all $a\in \rr$, there exists an event
$\Omega^\la_{a,0}$, depending only on $(N_s^S(i),N_s^{M,\la}(i))_{i \in J_a^\la,
s\in [0,3\ba_\la/4]}$ such that

\vip

(i) on $\Omega^\la_{a,0}$, a.s., there is $i\in J_a^\la$ such that 
$\eta^\la_{\ba_\la s}(i)=0$ for all $s\in [0,3/4]$;

\vip

(ii) $\lim_{\la \to 0} \Pr[\Omega_{a,0}^\la] =1$.

\vip

This is very easy: consider simply $\Omega_{a,0}^\la = \{\exists\; i \in J_a^\la,
N^S_{3\ba_\la/4}(i)=0\}$. Clearly, point (i) is satisfied, since 
there is a site of $J_a^\la$ on which no seed
falls during $[0,3\ba_\la/4]$. Since
$|J_a^\la| = \bn_\la \sim
1/(\la\ba_\la)=1/\nu_S((\ba_\la,\infty))$, we deduce from
$(H_S(\infty))$ that
\begin{align*}
\Pr[\Omega_{a,0}^\la]=& 1 - \nu_S((0,3\ba_\la/4))^{\bn_\la} 
= 1 - \left(1- \nu_S((3\ba_\la/4,\infty))\right)^{\bn_\la}\\
\simeq& 1 - e^{-\nu_S((3\ba_\la/4,\infty))/\nu_S((\ba_\la,\infty))} \to 1
\end{align*}
as $\la\to 0$, whence (ii).

\vip

{\bf Step 2.} We now check that for all $a\in \rr$, all $t\geq 1/2$, 
there exists an event
$\Omega^\la_{a,t}$, depending only on $(N_s^S(i),N_s^{M,\la}(i))_{i \in J_a^\la,
s\in [(t-1/2)\ba_\la,(t+1/4)\ba_\la]}$ such that

\vip

(i) on $\Omega^\la_{a,t}$, a.s., there is $i\in J_a^\la$ such that 
$\eta^\la_{\ba_\la s}(i)=0$ for all $s\in [t,t+1/4]$;

\vip

(ii) $q_\la := \Pr[\Omega_{a,t}^\la]$ does not depend on $t,a$ and 
$q:=\liminf_{\la \to 0} q_\la>0$.

\vip

This is much more delicate. We put 
$\bk_\la= \lfloor 1/ \nu_S((3\ba_\la/8,\infty))\rfloor$. Observe that
due to $(H_S(\infty))$, $\bk_\la << 
\bn_\la= \lfloor 1/ \nu_S((\ba_\la,\infty))\rfloor$

\vip

We introduce the event $\Omega_{a,t}^\la$ on which 
(see Figure \ref{figOmegalaat}):

\begin{figure}[b] 
\fbox{
\begin{minipage}[c]{0.95\textwidth}
\centering
\includegraphics[width=8cm]{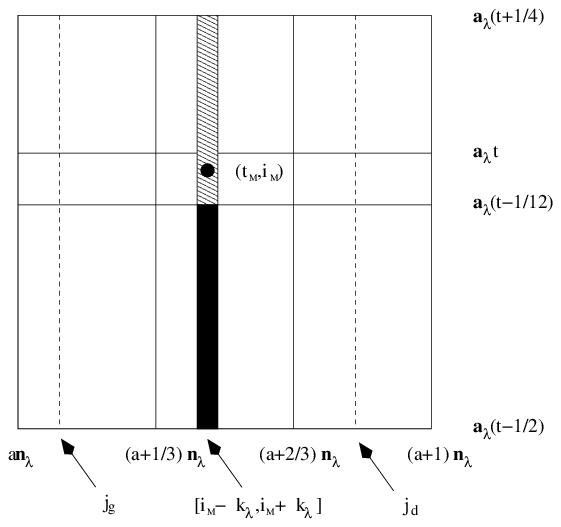}
\caption{The event $\Omega^\la_{a,t}$.}
\label{figOmegalaat}
\vip
\parbox{13.3cm}{
\footnotesize{
A match falls on $i_M$ at time $t_M$, no seed fall on $j_g$ and $j_d$ during
$[\ba_\la (t-1/2), \ba_\la (t+1/4) ]$. All the sites of 
$\lb i_M-\bk_\la,i_M+\bk_\la \rb$ receive
at least one seed during $[\ba_\la (t-1/2), \ba_\la (t-1/12) ]$.
Finally, there is at least one site of $\lb i_M-\bk_\la,i_M+\bk_\la \rb$
on which no seed falls during
$[\ba_\la (t-1/12), \ba_\la (t+1/4)]$.
}}
\end{minipage}
}
\end{figure}

\vip

(a) we have $\Delta N^{M,\la}_{t_M} (i_M)>0$ for some
$i_M \in \lb \lfloor (a+1/3) \bn_\la \rfloor, \lfloor (a+2/3)\bn_\la
\rfloor \rb$, some $t_M \in [(t-1/12)\ba_\la,t \ba_\la]$
and this is the only match
falling in $J_a^\la$ during $[(t-1/2)\ba_\la,t \ba_\la]$;

\vip

(b) there are 
$j_g \in \lb \lfloor a \bn_\la \rfloor, \lfloor (a+1/4)\bn_\la\rfloor \rb$
and $j_d \in \lb \lfloor (a+3/4) \bn_\la \rfloor, 
\lfloor (a+1)\bn_\la-1\rfloor \rb$ such that
$N^S_{\ba_\la (t+1/4)}(j_g)-N^S_{\ba_\la (t-1/2)}(j_g)=
N^S_{\ba_\la (t+1/4)}(j_d)-N^S_{\ba_\la (t-1/2)}(j_d)=0$;

\vip

(c) for all $i \in \lb i_M-\bk_\la,i_M+\bk_\la \rb$, 
$N^S_{\ba_\la (t-1/12)}(i)-N^S_{\ba_\la (t-1/2)}(i)>0$;

\vip

(d) there is $j_0 \in \lb i_M-\bk_\la,i_M+\bk_\la \rb$ such that
$N^S_{\ba_\la (t+1/4)}(j_0)-N^S_{\ba_\la (t-1/12)}(j_0)=0$.

\vip

We first prove point (i), considering two cases.

\vip

$\bullet$ If the zone $\lb i_M-\bk_\la,i_M+\bk_\la \rb$ is completely occupied
at time $t_M-$, then it burns at time $t_M$ and since no seed
falls on $j_0$, which belongs to this zone, during $[t_M,\ba_\la (t+1/4)]
\supset [\ba_\la t,\ba_\la (t+1/4)]$, we deduce that $\eta^\la_{\ba_\la s}(j_0)=0$
for all $s \in [t,t+1/4]$.

\vip

$\bullet$ Assume now that there is $i_0 \in \lb i_M-\bk_\la,i_M+\bk_\la \rb$
that is vacant at time $t_M-$. Recall that there is no fire in
$J_a^\la$ during $[\ba_\la(t-1/2),t_M)$ and that on each site of 
$\lb i_M-\bk_\la,i_M+\bk_\la \rb$,
at least one seed falls during $[\ba_\la (t-1/2), \ba_\la(t-1/12)]\subset
[\ba_\la (t-1/2),t_M)$. 
Then necessarily, a fire starting at some $i_M' \notin J_a^\la$ at some time
$t_M' \in [\ba_\la(t-1/2),t_M)$ has made vacant $i_0$.
Assume e.g. that $i_M'< \lfloor a \bn_\la
\rfloor$ and observe that $i_M'< j_g < i_0$. The fire $(t'_M,i'_M)$ has then
also necessarily made vacant $j_g$. Since no seed falls on $j_g$ during
$[\ba_\la (t-1/2), \ba_\la (t+1/4)]$, we deduce that $j_g$ remains vacant
during $[t'_M,\ba_\la (t+1/4)] \supset [\ba_\la t,\ba_\la (t+1/4)]$.

\vip

We now prove (ii). The quantity $\Pr[\Omega_{a,t}^\la]$ does obviously
not depend on $a \in \rr$ nor on $t\geq 1/2$ by invariance by
spatial translation and by time stationarity.
We infer from Proposition 
\ref{coupling1} that for $\pi_M(ds,dx)$ a Poisson measure on
$[0,\infty)\times \rr$ with intensity measure 
$dsdx$, the probability of (a) tends,
as $\la\to 0$, to 
\begin{align*}
q:=\Pr \Big( &\pi_M([t-1/12,t]\times [a+1/3,a+2/3])=1,  \\
&\pi_M(([t-1/2,t]\times [a,a+1])\backslash ([t-1/12,t]\times [a+1/3,a+2/3]) 
)=0 \Big),
\end{align*}
which is clearly positive. Next, the probability of (b) tends to $1$. Indeed,
treating e.g. the case of $j_g$, there holds, recalling that
$\bn_\la \simeq 1/\nu_S((\ba_\la,\infty))$,
\begin{align*}
\Pr\left[\exists \; j \in \lb \lfloor a \bn_\la \rfloor, 
\lfloor (a+1/4)\bn_\la\rfloor \rb, N^S_{\ba_\la(t+1/4)}(j)=N^S_{\ba_\la(t-1/2)}(j)  
\right] \simeq 1- \nu_S((0,3\ba_\la/4))^{\bn_\la/4}\\
\simeq 1 - e^{-\nu_S((3\ba_\la/4,\infty))/[4\nu_S((\ba_\la,\infty))] },
\end{align*}
which tends to $1$ as $\la \to 0$ due to $(H_S(\infty))$.
The probability 
of (c) (conditionally on $(a)$) also tends to $1$. Indeed, its value is
nothing but
$$
\nu_S((0,5\ba_\la/12))^{2\bk_\la +1} 
\simeq e^{-2 \nu_S((5\ba_\la/12,\infty))/\nu_S((3\ba_\la/8,\infty))}
$$
which tends to $1$ due to $(H_S(\infty))$, since $5/12>3/8$. Finally,
the probability of (d) (conditionally on $(a)$) also tends to $1$, since
it equals
$$
1 - \left(\nu_S((0,\ba_\la/3)) \right)^{2\bk_\la +1} 
\simeq 1 - e^{-2 \nu_S((\ba_\la/3,\infty))/\nu_S((3\ba_\la/8,\infty))},
$$
which tends to 1 due to $(H_S(\infty))$, since $1/3<3/8$.

\vip

{\bf Step 3.} Let now $T>0$ be fixed. Set $K=\lfloor 4T \rfloor$. 
For $a \in \rr$, we set
$$
\tOmega_{a,T}^\la = \Omega_{a,0}^\la \cap \bigcap_{k=2}^K \Omega_{a+(k-1),k/4}^\la.
$$
Then it is clear from Steps 1 and 2 (observe that 
$(K/4+1/4\geq T)$) that

\vip

(i) on $\tOmega_{a,T}^\la$, for all $t\in [0,T]$ there is 
$i \in \lb \lfloor a \bn_\la \rfloor, \lb \lfloor (a +K)\bn_\la -1\rfloor\rb$ 
such that $\eta_{\ba_\la t}^\la (i)=0$;

\vip

(ii) $p_\la=\Pr[\tOmega_{a,T}^\la]$ does not depend on $a$ and
$p:=\liminf_{\la \to 0} p_\la \geq q^{K-1}>0$;

\vip

(iii) $\tOmega_{a,T}^\la$ depends only on 
$(N^S_{\ba_\la t}(i),N^{M,\la}_{\ba_\la t}(i))_{t\in [0,T+1], 
i \in \lb \lfloor a \bn_\la \rfloor,\lfloor (a+K) \bn_\la \rfloor -1 \rb}$.

\vip

{\bf Step 4.} We deduce that for all $a\in\zz$,
conditionally on $\Omega_{a,T}^\la$, 
clusters on the left of $\lfloor a \bn_\la \rfloor-1$ are never connected 
(during $[0,\ba_\la T]$)
to clusters on the right of $\lfloor (a+K)\bn_\la \rfloor$. 
Thus on $\Omega_{a,T}^\la$,
fires starting on the left of
$\lfloor a\bn_\la \rfloor-1$ do not affect the zone 
$[\lfloor (a+K)\bn_\la \rfloor ,\infty)\cap \zz$
and fires starting on the right of 
$\lfloor (a+K)\bn_\la \rfloor $  do not affect the 
zone $(-\infty,\lfloor a\bn_\la\rfloor-1]\cap \zz$.

\vip

We deduce that for $A\geq 2K$, the $FF_A(\mu_S,\mu_M^\la)$-process and the
$FF(\mu_S,\mu_M^\la)$-process
coincide on $I^\la_{A/2}$ during $[0,\ba_\la T]$
as soon as there are $a_1 \in [-A,-A/2-K]$ and $a_2\in [A/2,A-K]$
with $\Omega_{a_1,T}^\la \cap \Omega_{a_2,T}^\la$ realized.
Furthermore, $\Omega_{a,T}^\la$ is independent
of $\Omega_{b,T}^\la$ for all $a,b\in\zz$ with $|a-b|>K$.
Thus we can bound the probabilities of the statement from below,
for $A\geq 2 K$ and $\la>0$ small enough (so that
$\Pr[\Omega_{a,T}^\la]\geq p/2$), by
\begin{align*}
&1- \Pr\left[\bigcap_{l=1}^{\lfloor A/(2K)\rfloor} 
(\Omega_{\lfloor -A\rfloor+lK,T}^\la)^c
\right]
- \Pr\left[\bigcap_{l=1}^{\lfloor A/(2K)\rfloor} 
(\Omega_{\lfloor A/2 \rfloor+lK,T}^\la)^c
\right] \\
\geq& 1 - 2 (1-p/2)^{\lfloor A/(2K) \rfloor} \\
\geq & 1 - 2(1-p/2)^{A/(2K) -1}.
\end{align*}
This concludes the proof: choose $C_T=2/(1-p/2)>0$
($p$ depends only on $T$) and
$\alpha_T=-\log(1-p/2)/(2K)>0$.
\end{preuve}

When $\beta=BS$, the proof is similar, but consequently simpler.

\begin{preuve} {\it when $\beta=BS$.}
Recall that $\ba_\la=T_S$ and consider the true $FF(\mu_S,\mu_M^\la)$-process
$(\eta^\la_t(i))_{t\geq 0,i\in \zz}$. 
For $a \in \rr$, let $J_{a}^\la= \lb \lfloor a \bn_\la \rfloor , 
\lfloor (a + 1)  \bn_\la \rfloor -1 \rb$. 

\vip

{\bf Step 1.} We show here that for all $a\in \rr$, there exists an event
$\Omega^\la_{a,0}$, depending only on $(N_s^S(i),N_s^{M,\la}(i))_{i \in J_a^\la,
s\in [0,3\ba_\la/4]}$ such that

\vip

(i) on $\Omega^\la_{a,0}$, a.s., there is $i\in J_a^\la$ such that 
$\eta^\la_{\ba_\la s}(i)=0$ for all $s\in [0,3/4]$;

\vip

(ii) $\lim_{\la \to 0} \Pr[\Omega_{a,0}^\la] =1$.

\vip

This is done as in the case where $\beta=\infty$.
Consider simply $\Omega_{a,0}^\la = \{\exists\; i \in J_a^\la,
N^S_{3\ba_\la/4}(i)=0\}$. Clearly, (i) is satisfied. To check (ii),
recall that $|J_a^\la| = \bn_\la \to \infty$,
whence $\Pr[\Omega_{a,0}^\la]= 1 - \nu_S((0,3T_S/4))^{\bn_\la}  \to 1$,
because $\nu_S((0,3T_S/4))<1$ (recall that $\supp \nu_S =[0,T_S]$).

\vip

{\bf Step 2.} We now check that for all $a\in \rr$, all $t\geq 1/2$, 
there exists an event
$\Omega^\la_{a,t}$, depending only on $(N_s^S(i),N_s^{M,\la}(i))_{i \in J_a^\la,
s\in [(t-1/4)\ba_\la,(t+1/4)\ba_\la]}$ such that

\vip

(i) on $\Omega^\la_{a,t}$, a.s., there is $i\in J_a^\la$ such that 
$\eta^\la_{\ba_\la s}(i)=0$ for all $s\in [t,t+1/4]$;

\vip

(ii) $q_\la := \Pr[\Omega_{a,t}^\la]$ does not depend on $t,a$ and 
$q:=\liminf_{\la \to 0} q_\la>0$.

\vip

This is much easier than in the case where $\beta=\infty$: simply set
$$
\Omega_{a,t}^\la = \left\{ \exists\; i_0 \in J_a^\la, 
N^S_{\ba_\la(t+1/4)}(i_0)=N^S_{\ba_\la(t-1/4)}(i_0),N^{M,\la}_{\ba_\la t}(i_0)
>N^{M,\la}_{\ba_\la(t-1/4)}(i_0) \right\}.
$$
Point (i) is obviously checked since no seed fall on $i_0$ during
$[\ba_\la(t-1/4),\ba_\la(t+1/4) ]$ and a match falls on $i_0$ during 
$[\ba_\la(t-1/4),\ba_\la t]$. Next,
$\Pr\left(\Omega_{a,t}^\la \right) =  1 - r_\la^{|J_a^\la|}$, where
(for any $i\in \zz$, any $t\geq 1/4$)
\begin{align*}
r_\la:=&\Pr\left[N^S_{\ba_\la(t+1/4)}(i_0)>N^S_{\ba_\la(t-1/4)}(i_0) \hbox{ or }
N^{M,\la}_{\ba_\la t}(i_0)=N^{M,\la}_{\ba_\la(t-1/4)}(i_0) \right]
\\
=&\Pr\left[N^S_{\ba_\la /2}(i)>0 \hbox{ or } N^M_{\ba_\la /4}(i)=0 
\right]\\
=& \nu_S([0,T_S/2]) + \nu^\la_M((T_S/4,\infty)) -  
\nu_S([0,T_S/2])\nu^\la_M((T_S/4,\infty)).
\end{align*}
Due to $(H_M)$, 
$\nu_M^\la((T_S/4,\infty))= 1- \la \int_0^{T_S/4}\mu_M^1(\la t,\infty) dt
= 1- \la T_S (1+\e(\la))/4$, for some function $\e$ such that
$\lim_{\la\to 0} \e(\la) =0$. Setting $\alpha=\nu_S([0,T_S/2]) \in (0,1)$, 
we deduce that 
\begin{align*}
r_\la=& \alpha +  1- \la T_S (1+\e(\la))/4 - \alpha(1- \la T_S (1+\e(\la))/4)
\\
=&1 - \la (1-\alpha)T_S (1+\e(\la))/4.
\end{align*}
Recalling that $|J_a^\la|\simeq \bn_\la\simeq 1/(\la T_S)$, 
we finally conclude that
$$
\Pr\left(\Omega_{a,t}^\la \right)\simeq 1 - r_\la^{1/(\la T_S)} 
\to 1-e^{-(1-\alpha)/4}=:q>0.
$$
\vip

{\bf Steps 3 and 4} are exactly the same as when $\beta=\infty$.
\end{preuve}

\section{Localization of the results when $\beta\in\{\infty,BS\}$}

We recall that $\ba_\la$, $\bn_\la$ are defined
in (\ref{ala}) and (\ref{nla}).
For $A>0$, we set as usual $A_\la = \lfloor A \bn_\la \rfloor$ and 
$I_A^\la=\lb -A_\la,A_\la\rb$.
For $i\in \zz$, we set $i_\la=[i/\bn_\la, (i+1)/\bn_\la)$.

\vip

In the next sections, we will prove the following localized version
of Theorems \ref{converge1} and \ref{convergebs}, separating the cases
$\beta=\infty$ and $\beta=BS$.

\begin{prop}\label{converge1A}
Let $\beta \in \{\infty, BS\}$.
Assume $(H_M)$ and $(H_S(\beta))$. Let $A>0$ be fixed.
Consider, for each $\la\in (0,1]$, the process
$(Z^{\la,A}_t(x),D^{\la,A}_t(x))_{t\geq 0,x\in[-A,A]}$ associated with
the $FF_A(\mu_S,\mu_M^\la)$-process and the $LFF_A(\beta)$-process
$(Z_t^A(x),D_t^A(x),H_t^A(x))_{t\geq 0,x\in [-A,A]}$.

(a) For any $T>0$, any $\{x_1,\dots,x_p\}\subset [-A,A]$, 
$(Z^{\la,A}_t(x_i),D^{\la,A}_t(x_i))_{t\in [0,T],i=1,\dots,p}$ goes in law to 
$(Z_t^A(x_i),D_t^A(x_i))_{t\in [0,T],i=1,\dots,p}$, in 
$\dd([0,T], \rr\times\cI\cup\{\emptyset\})^p$, 
as $\la$ tends to $0$. Here
$\dd([0,\infty), \rr\times \cI\cup\{\emptyset\})$ 
is endowed with the distance $\bd_T$.

(b) For any $\{(t_1,x_1),\dots,(t_p,x_p)\}\subset 
[0,\infty)\times [-A,A]$ (assume also that $t_k \ne 1$ for $k=1,\dots, p$
if $\beta=\infty$),
$(Z^{\la,A}_{t_i}(x_i),D^{\la,A}_{t_i}(x_i))_{i=1,\dots,p}$ goes in law to 
$(Z_{t_i}^A(x_i),D_{t_i}^A(x_i))_{i=1,\dots,p}$ 
in $(\rr\times\cI\cup\{\emptyset\})^p$.
Here $\cI\cup\{\emptyset\}$ is endowed with $\bdelta$.

(c)-(i) Assume first that $\beta=\infty$. For all $t>0$, 
$$
\left(\frac{\psi_S\left(1-1/|C_A(\eta^{\la,A}_{\ba_\la t},0)| \right)}{\ba_\la} 
\indiq_{\{|C_A(\eta^{\la,A}_{\ba_\la t},0)|\geq 1\}}\right) \land 1 
$$
goes in law to $Z^A_t(0)$ as $\la\to 0$.

(c)-(ii) Assume next that $\beta=BS$. 
For any $t\geq 0$, any $k \in \nn$, there holds
$$
\lim_{\la\to 0} \Pr\left[|C_A(\eta^{\la,A}_{T_S t},0)|=k \right] = 
\E \left[q_k(Z^A_t(0)) \right],
$$
where $q_k(z)$ was defined, for $k\geq 0$ and $z\in [0,1]$, in (\ref{defqk}).
\end{prop}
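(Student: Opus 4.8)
The plan is to run the same kind of coupling argument as for Theorems \ref{converge2} and \ref{converge3}, but now inside the fixed box $[-A,A]$, using Algorithm \ref{algo1} as the template for the limit process. First I would invoke Proposition \ref{coupling1}, which for each small $\la$ produces a coupling of the i.i.d.\ $SR(\mu_M^\la)$-processes driving matches with a Poisson measure $\pi_M(dt,dx)$ (together with, in the $BS$ case, the auxiliary i.i.d.\ uniform marks $V_k$), on an event of probability tending to $1$ on which the matches of the $FF_A(\mu_S,\mu_M^\la)$-process occur exactly at the rescaled positions $(\ba_\la T_k,\lfloor\bn_\la X_k\rfloor)$ of the marks of $\pi_M$ in $[0,T]\times[-A,A]$. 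Since there are a.s.\ only finitely many such marks, say $0=T_0<T_1<\dots<T_n<T$, one then argues by induction on $k$: assuming the discrete configuration at time $\ba_\la T_k$ is close to the limit configuration $(Z^A_{T_k},D^A_{T_k},H^A_{T_k})$ produced by Algorithm \ref{algo1}, propagate this to time $\ba_\la T_{k+1}$. Part (b) follows from (a) once one knows that for $t$ not a match time (and $t\ne 1$ when $\beta=\infty$) the convergence also holds pointwise in $t$; part (c) is obtained by the same induction, replacing the $\bdelta_T$-estimate by an explicit computation of the law of $C_A(\eta^{\la,A}_{\ba_\la t},0)$.

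\textbf{Occupation of vacant zones.} The heart of the induction step is the behaviour of a zone that has just been burned. If a zone $[a,b]\subset[-A,A]$ becomes completely vacant at rescaled time $t_0$, then, as long as no match falls on it, each of its $\simeq(b-a)\bn_\la$ sites is occupied at rescaled time $t_0+s$ with probability $\nu_S((0,\ba_\la s))$, independently, because the stationarity of the $SR(\mu_S)$-processes guarantees that at any time the waiting time for the next seed on a given site is $\nu_S$-distributed. From this I would derive the three facts needed: (i) the asymptotics of the cluster containing a fixed site (a pair of essentially independent geometric-type variables with parameter $\nu_S((\ba_\la s,\infty))$), which by $(H_S(\beta))$ gives a microscopic cluster for $s<1$ when $\beta=\infty$, and gives the explicit distribution $q_k(z)$ of (\ref{defqk}) for the cluster size when $\beta=BS$; (ii) the concentration of the local density $K^{\la,A}_{t_0+s}(x)$ around $\nu_S((0,\ba_\la s))$, hence $Z^{\la,A}_{t_0+s}(x)\to s$, which is a law-of-large-numbers estimate over the $2\bm_\la+1$ sites of the averaging window (this is where condition (\ref{mla}) and Notation \ref{phipsi} are used); (iii) the probability that the whole zone $[a,b]$ is occupied at time $t_0+s$, which tends to $\indiq_{\{s>1\}}$ when $\beta=\infty$ (by $(H_S(\infty))$) and to $\indiq_{\{s\ge 1\}}$ when $\beta=BS$ (since $\nu_S((0,T_S])=1$).

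\textbf{Fires.} When a match falls at $(\ba_\la T_{k+1},\lfloor\bn_\la X_{k+1}\rfloor)$ one distinguishes, according to the inductive hypothesis, two cases. If the site lies in a macroscopic cluster ($Z^{\la,A}_{T_{k+1}-}(X_{k+1})\to 1$), that cluster, of length $\simeq\bn_\la\,|D^A_{T_{k+1}-}(X_{k+1})|$, burns entirely and we are back to a freshly vacated macroscopic zone. If the site lies in a microscopic cluster ($Z^{\la,A}_{T_{k+1}-}(X_{k+1})=z\in(0,1)$), the match destroys a zone $\cA$ of length $\simeq1/\nu_S((\ba_\la z,\infty))\ll\bn_\la$; one must then show that $\cA$ is completely re-occupied (so that its left and right neighbours reconnect) after a rescaled time which, when $\beta=\infty$, converges to $z$ (giving the barrier dynamics of Definition \ref{dflffpA} via Remark \ref{i=bs}), and which, when $\beta=BS$, converges in law to $\Theta_z$ with law $\theta_z$ as in Definition \ref{defF}. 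The latter is the content of Lemma \ref{deftheta}: conditionally on the density being $z$, the destroyed cluster's size has the $z$-dependent law appearing in $q_k(z)$, and the further time needed for a seed to hit each of its sites, divided by $T_S$, has law $\theta_z$; matching this against the mark $V_{k+1}$ through $F_S$ yields the coupling. This is the genuinely new computation compared with \cite{bf}, where $\theta_z=\delta_z$.

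\textbf{Main obstacle.} The delicate point, exactly as in Sections \ref{prbeta} and \ref{przero}, is to run the whole induction on a single event of probability $\ge1-\e$ on which everything holds simultaneously: the match coupling of Proposition \ref{coupling1}; the separation of the (finitely many) matches in both time and space, so that each destroyed zone is analysed in isolation and no two matches interfere; the joint validity of all the $SR(\mu_S)$-estimates above for the $O(A\bn_\la)$ sites and $O(1)$ time windows involved, which forces a careful ordering of the various scales and of a small parameter $\delta>0$, and the absorption of the short post-fire intervals (of total Lebesgue measure $O(\delta)$) into the $\bdelta_T$-error; and the correct identification of the two boundary sites of each macroscopic cluster. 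A secondary nuisance, already noted after Theorem \ref{converge1}, is that when $\beta=\infty$ the full-occupation probability of a macroscopic zone at time exactly $t_0+1$ does not converge, so $D^{\la,A}_1(x)$ cannot be controlled; this is why (b) excludes $t_k=1$ in that case. Once the event is fixed, propagating the identity $\eta^{\la,A}_{\ba_\la t}(i)=$ (value predicted by Algorithm \ref{algo1}) from one match to the next is, as in the proof of Theorem \ref{converge2}, a finite case analysis with no further difficulty, and (a), (b), (c) follow.
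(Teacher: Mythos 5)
Your outline reproduces the easy part of the argument (match coupling via Proposition \ref{coupling1}, occupation estimates for freshly vacated zones, the law $\theta_z$ of the regeneration time from Lemma \ref{deftheta}, induction over the finitely many match times), but it misses the one genuinely hard ingredient, and it is exactly at the place you dismiss as ``a finite case analysis with no further difficulty''. The influence of a microscopic fire at a site $x$ is \emph{not} exhausted once the destroyed micro-cluster regenerates or once its nominal barrier time has elapsed. In the limit process, after $H_t(x)$ has vanished, $x$ still delimits clusters whenever one of its two neighbouring cells is microscopic ($Z_t<1$), and this happens persistently because later macroscopic fires stop precisely at $x$ and therefore desynchronize the two sides: $c_-$ and $c_+$ are burnt alternately at different times, forever. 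To make the discrete process track this, one must show that the width-$2\bm_\la$ zone $x_\la$ around the micro-fire site keeps, at every relevant later time, a vacant--occupied--vacant pattern (so that it both blocks connection across $x$ and places the boundary of the adjacent macroscopic cluster at the right spot), even though parts of $x_\la$ are repeatedly swept by the macroscopic fires coming alternately from the left and from the right. This is the ``persistent effect of microscopic fires'' of Subsection \ref{sspp} (resp.\ \ref{sspp2}), i.e.\ the ping-pong Lemmas \ref{pingpong} and \ref{pingpong2}, and it is indispensable in Step 6, Case 3 of the induction (Lemmas \ref{ggg} and \ref{ggg2}); without it the inductive identification of $\tau^\la_t(c)=\tau_t(c)$, $\tau^\la_t(x)=\tau_t(x)$ cannot be closed, and the convergence of the clusters $D^{\la,A}_t$ fails precisely in the configurations where barriers have expired but the two sides have different ages.

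Two secondary remarks. First, the paper does not couple the discrete regeneration time against $F_S(Z_{T_{k+1}-}(X_{k+1}),V_{k+1})$ as you suggest; it attaches to each match location $X_q$ a dedicated i.i.d.\ seed family $(N^{S,q})$ on $(X_q)_\la$ and builds the limit process (Algorithm \ref{algo2}) directly with the resulting regeneration times $\Theta^q$, checking in Lemma \ref{couplagelimite} that this has the right law; this choice makes the independence bookkeeping and the ping-pong events $\Omega^{S,q}_\cR(\la)$ manageable, whereas your version leaves unclear how the same seed randomness serves both the discrete process and the limit barrier heights. Second, in the case $\beta=\infty$ the discrete barrier height is only approximately $z$ (Lemma \ref{convtheta}), so the favorable event must also separate the asymptotic barrier-end times $2t-s$ (the set $\cS_M$) from the match times and from $1$, and include the uniform control $\Omega^S_4(\gamma,\la)$ with $\gamma<\alpha$; generic separation of the matches alone, as in your ``main obstacle'' paragraph, is not sufficient.
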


Assuming for a moment that this proposition holds true, we
conclude the proofs of Theorems \ref{converge1} and \ref{convergebs}.

\begin{preuve} {\it of Theorem \ref{converge1}.} Let us first 
prove (a). Consider a continuous bounded functional 
$\Psi:\dd([0,T], \rr\times\cI\cup\{\emptyset\})^p \mapsto \rr$. 
We have to prove that $\lim_{\la \to 0} G_\la(\Psi)=0$, where
$$
G_\la(\Psi)=
\E\left[\Psi\left((Z^{\la}_t(x_i),D^{\la}_t(x_i))_{t\in [0,T],i=1,\dots,p} \right)
\right]-
\E\left[\Psi\left((Z_t(x_i),D_t(x_i))_{t\in [0,T],i=1,\dots,p} \right)
\right].
$$
Using now Propositions \ref{loc} and \ref{plocla}, we observe that for any
$A>2 \max_{i=1,\dots,p} |x_i|$, for all $\la\in (0,1]$ small enough,
\begin{align*}
&|G_\la(\Psi)|\\
\leq& 2||\Psi||_\infty\Pr\left[(Z^{\la,A}_t(x),D^{\la,A}_t(x))_{t\in 
[0,T],x\in [-A/2,A/2]} \ne (Z^{\la}_t(x),D^{\la}_t(x))_{t\in 
[0,T],x\in [-A/2,A/2]}  \right]\\
&+2||\Psi||_\infty \Pr\left[(Z^{A}_t(x),D^{A}_t(x))_{t\in 
[0,T],x\in [-A/2,A/2]} \ne (Z_t(x),D_t(x))_{t\in 
[0,T],x\in [-A/2,A/2]}  \right]\\
&+\left|\E\left[\Psi\left((Z^{\la,A}_t(x_i),D^{\la,A}_t(x_i))_{t\in 
[0,T],i=1,\dots,p} \right) \right]-
\E\left[\Psi\left((Z_t^A(x_i),D_t^A(x_i))_{t\in [0,T],i=1,\dots,p} \right)
\right]\right|\\
\leq& 4 ||\Psi||_\infty C_T e^{-\alpha_T A}\\
&+\left|\E\left[\Psi\left((Z^{\la,A}_t(x_i),D^{\la,A}_t(x_i))_{t\in 
[0,T],i=1,\dots,p} \right) \right]-
\E\left[\Psi\left((Z_t^A(x_i),D_t^A(x_i))_{t\in [0,T],i=1,\dots,p} \right)
\right]\right|.
\end{align*}
Thus Proposition \ref{converge1A}-(a) implies that
$\limsup_{\la \to 0}|G_\la(\Psi)|
\leq 4 ||\Psi||_\infty C_T e^{-\alpha_T A}$. We conclude by making $A$ tend
to infinity.

\vip

Point (b) is checked similarly. The proof of (c) is also similar,
since $D^\la_t(0)=D^{\la,A}_t(0)$ implies that
$C(\eta^\la_{\ba_\la t},0)= C_A(\eta^{\la,A}_{\ba_\la t},0)$.
\end{preuve}

\begin{preuve} {\it of Theorem \ref{convergebs}.} 
It is deduced from Propositions \ref{loc}, \ref{plocla} and 
\ref{converge1A} exactly as Theorem \ref{converge1}.
\end{preuve}

\section{Convergence proof when $\beta = BS$}\label{prbs}
\setcounter{equation}{0}

The aim of this section is to prove
Proposition \ref{converge1A} in the case where $\beta=BS$ and
this will conclude the proof of Theorem \ref{convergebs}. In the whole
section, we thus assume $(H_M)$ and $(H_S(BS))$.
The parameters $A>0$ and $T>0$ are fixed and we
omit the subscript/superscript $A$ in the whole proof.

\vip

We recall that $\ba_\la$, $\bn_\la$ and $\bm_\la$ are defined
in (\ref{ala}), (\ref{nla}) and (\ref{mla}).
For $A>0$, we set as usual $A_\la = \lfloor A \bn_\la \rfloor$ and 
$I_A^\la=\lb -A_\la,A_\la\rb$.
For $i\in \zz$, we set $i_\la=[i/\bn_\la, (i+1)/\bn_\la)$.
For $[a,b]$ an interval of $[-A,A]$ and $\la\in(0,1)$, we introduce,
assuming that $-A<a<b<A$,
\begin{align}
[a,b]_\la =&\lb \left\lfloor  \bn_\la a +\bm_\la
\right\rfloor +1 ,  \left\lfloor \bn_\la b-\bm_\la \right\rfloor -1\rb 
\subset \zz, \label{xxbb} \\
{[-A,b]}_\la =&\lb -A_\la ,  
\left\lfloor \bn_\la b-\bm_\la \right\rfloor -1\rb 
\subset \zz, \nonumber \\
{[a,A]}_\la =&\lb \left\lfloor  \bn_\la a +\bm_\la
\right\rfloor +1,  A_\la \rb 
\subset \zz, \nonumber
\end{align}
For $x\in (-A,A)$ and $\la\in(0,1)$, we introduce
\begin{align}\label{xla}
x_\la=&\lb \left\lfloor   \bn_\la x-\bm_\la 
\right\rfloor , \left\lfloor \bn_\la x + \bm_\la \right\rfloor 
\rb \subset \zz.
\end{align}

\subsection{Height of the barriers}\label{BSHB}

We need the following lemma. It describes the time
needed for a destroyed (microscopic) cluster to be regenerated. Below, 
we assume that the zone around $0$ is completely vacant at time
$T_S t_0$. Then we consider the situation where a match falls on the site 
$0$ at some time  $T_S t_1 \in (T_St_0,T_S(t_0+1))$ and we compute
the law of $\Theta_{t_0,t_1}$, which is the delay needed for the destroyed
cluster to be fully regenerated (divided by $T_S$).

\begin{lem}\label{deftheta}
Consider a family of i.i.d. $SR(\mu_S)$-processes 
$(N^{S}_{t}(i))_{t\geq 0,i\in\zz}$. Let $0\leq t_0 <t_1 < t_0 +1$ be fixed.
Put $\zeta_{t_0,t}(i)=\min(N^{S}_{T_S(t_0+t)}(i)-N^{S}_{T_S t_0}(i),1)$ 
and  $\zeta_{t_1,t}(i)=\min(N^{S}_{T_S(t_1+t)}(i)-N^{S}_{T_S t_1}(i),1)$ 
for all $t>0$ 
and $i\in \zz$.
Define
$$
\Theta_{t_0,t_1}=\inf \left\{ t>0\; : \; \forall\; i \in 
C(\zeta_{t_0,t_1-t_0},0),\; 
\zeta_{t_1,t}(i)=1 \right\} \in [0,1].
$$
The the law of $\Theta_{t_0,t_1}$ is $\theta_{t_1-t_0}$, recall
Definition \ref{defF}.
\end{lem}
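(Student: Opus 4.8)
The strategy is to unwind the two definitions and match them term by term. On one side, $\theta_{u}$ (with $u=t_1-t_0$) is defined in Definition \ref{defF} through the quantity $g_S(u,h)=\Pr[N^S_{T_S u}>0,\ N^S_{T_S(u+h)}>N^S_{T_S u}]$ and through $\nu_S((T_Su,T_S))$. On the other side, $\Theta_{t_0,t_1}$ is the first time $t$ at which every site of the cluster $C(\zeta_{t_0,u},0)$ has received a seed during $[T_S t_1, T_S(t_1+t)]$; here $\zeta_{t_0,u}(i)=\indiq_{\{N^S_{T_S(t_0+u)}(i)>N^S_{T_S t_0}(i)\}}$ records whether site $i$ is occupied at time $T_S t_1$ (having started vacant at $T_S t_0$), and $\zeta_{t_1,t}(i)$ records whether $i$ has been reoccupied by time $T_S(t_1+t)$. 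By stationarity of the renewal processes it costs nothing to take $t_0=0$, $t_1=u$; then $\zeta_{0,u}(i)=\indiq_{\{N^S_{T_Su}(i)>0\}}$ and $\zeta_{u,t}(i)=\indiq_{\{N^S_{T_S(u+t)}(i)>N^S_{T_Su}(i)\}}$.

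First I would compute $\Pr[\Theta_{0,u}=0]$. The cluster $C(\zeta_{0,u},0)$ is empty precisely when $\zeta_{0,u}(0)=0$, i.e.\ when no seed has fallen on site $0$ in $[0,T_Su]$; this has probability $\Pr[N^S_{T_Su}(0)=0]=\nu_S((T_Su,\infty))=\nu_S((T_Su,T_S))$ (the last equality because $\supp\nu_S=[0,T_S]$). On this event $\Theta_{0,u}=0$ vacuously. This matches the atom $\theta_u(\{0\})=\nu_S((T_Su,T_S))$. Next, conditionally on site $0$ being occupied at time $T_Su$, the cluster $C(\zeta_{0,u},0)=\lb -X,Y\rb$ where $X=\inf\{k\ge 1: \zeta_{0,u}(-k)=0\}-1$ and $Y=\inf\{k\ge 1:\zeta_{0,u}(k)=0\}-1$ are independent geometric variables with success parameter $\nu_S((T_Su,T_S))$ (using independence across sites of the renewal processes). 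For the reoccupation time, I would condition on the cluster being exactly $\lb -a,b\rb$, write $\Theta_{0,u}=\max\{\sigma_{-a},\dots,\sigma_b\}$ where $\sigma_i$ is the (rescaled) waiting time for the first seed on $i$ in $(T_Su,\infty)$, and exploit that conditionally on $\zeta_{0,u}(i)=1$ for each $i$ in the cluster the variables $\sigma_i$ are i.i.d.\ with $\Pr[\sigma_i\le h\mid\zeta_{0,u}(i)=1]=\Pr[N^S_{T_S(u+h)}(i)>N^S_{T_Su}(i)\mid N^S_{T_Su}(i)>0]=g_S(u,h)/\nu_S((0,T_Su))$ (the denominator is $\Pr[N^S_{T_Su}(i)>0]$). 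Then I would sum over the geometric laws of $X$ and $Y$: for $h\in[0,1]$,
\begin{align*}
\Pr[\Theta_{0,u}\le h,\ \zeta_{0,u}(0)=1]
&=\sum_{a\ge 0}\sum_{b\ge 0}\Pr[X=a]\Pr[Y=b]\Pr[N^S_{T_Su}(0)>0]\,
\left(\frac{g_S(u,h)}{\nu_S((0,T_Su))}\right)^{a+b+1}.
\end{align*}
With $p=\nu_S((T_Su,T_S))$ the geometric weights are $\Pr[X=a]=p(1-p)^a$ etc., and $\Pr[N^S_{T_Su}(0)>0]=1-p=\nu_S((0,T_Su))$, so each factor $\nu_S((0,T_Su))$ in the denominator is consumed; the double sum is a product of two geometric series in $g_S(u,h)(1-p)/\big(\nu_S((0,T_Su))\big)=g_S(u,h)$, giving $p^2\, g_S(u,h)/(1-g_S(u,h))^2$ after resumming, which is exactly the non-atomic part of $\theta_u([0,h])$ in Definition \ref{defF}. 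Adding the atom recovers the claimed formula, and letting $h\to 1$ together with $g_S(u,1)=\nu_S((0,T_Su))$ confirms total mass one (as already noted after Definition \ref{defF}).

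\textbf{Main obstacle.} The one genuinely delicate point is the conditional independence claim: that, given $\zeta_{0,u}(i)=1$ for all $i\in\lb -a,b\rb$, the reoccupation indicators $\zeta_{u,t}(i)$ are i.i.d.\ in $i$ with the stated one-dimensional law. This needs the Markov-like renewal fact that the forward recurrence time of a $SR(\mu_S)$-process at a deterministic time is $\nu_S$-distributed and independent of the past (cf.\ the remark after Definition \ref{sr1}), applied jointly at time $T_Su$ across the finitely many sites of the cluster; conditioning on $\{N^S_{T_Su}(i)>0\}$ only changes the law of the forward recurrence time, not its independence across $i$, because the renewal processes on distinct sites are independent. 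Once this is cleanly stated, the rest is the elementary geometric-series bookkeeping above. I would also remark that the case $t_1=t_0+1$ is excluded by hypothesis, so $\zeta_{0,u}(i)$ is a genuine Bernoulli with parameter in $(0,1)$ and no degeneracy arises.
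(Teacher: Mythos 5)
Your proof is correct and follows essentially the same route as the paper's: both decompose $\Pr[\Theta_{t_0,t_1}\le h]$ over the occupied cluster around $0$ at time $T_S t_1$ (the atom $\nu_S((T_Su,T_S))$ coming from the empty cluster), use independence of the renewal processes across sites, and resum geometric series to land exactly on the formula for $\theta_u([0,h])$ in Definition \ref{defF} — your parametrization by the two geometric variables $X,Y$ with conditional probabilities is just a repackaging of the paper's double sum over cluster size and position. One small caution: the forward recurrence time of a stationary renewal process is \emph{not} independent of the past (that holds only in the Poisson case), but your argument never actually needs this, since the within-site dependence is entirely absorbed into $g_S(u,h)$ and only independence across distinct sites is used.
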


\begin{proof}
We can assume that $t_0=0$ by stationarity. We 
put $u=t_1=t_1-t_0$ and write, for $h \in [0,1]$,
\begin{align*}
\Pr\left[\Theta_{t_0,t_1} \leq h \right]=& \Pr\left[N^S_{T_S u}(0)=0 \right]
+ \sum_{k\geq 1} \sum_{j=0}^{k-1} \Pr\Big[N^S_{T_S u}(j-k)=N^S_{T_S u}(j+1)=0,\\
&\hskip2.5cm \forall\; i \in \lb j-k+1 , j \rb, \; \; 
 N^S_{T_S u}(i)>0, \; N^S_{T_S (u+h)}(i)>N^S_{T_S u}(i) \Big].
\end{align*}
This yields, since $g_S(u,h)=\Pr[N^S_{T_S u}>0,N^S_{T_S (u+h)}>N^S_{T_S u}]$,
\begin{align*}
\Pr\left[\Theta_{t_0,t_1} \leq h \right]=& \nu_S([T_Su,T_S])
+ \sum_{k\geq 1} k [\nu_S([T_Su,T_S])]^2 [g_S(u,h)]^k \\
=& \nu_S([T_Su,T_S]) + \frac{[\nu_S([T_Su,T_S])]^2}{[1-g_S(u,h)]^2} 
g_S(u,h)  = \theta_u([0,h]),
\end{align*}
recall Definition \ref{defF}.
\end{proof}

\subsection{Persistent effect of microscopic fires}\label{sspp}

Here we study the effect of microscopic fires. First, they produce a barrier,
and then, if there are alternatively macroscopic fires on the left and right,
they still have an effect. 
This phenomenon is illustrated on Figure \ref{figpp} in the case of the limit 
process.

\begin{figure}[b] 
\fbox{
\begin{minipage}[c]{0.95\textwidth}
\centering
\includegraphics[width=11cm]{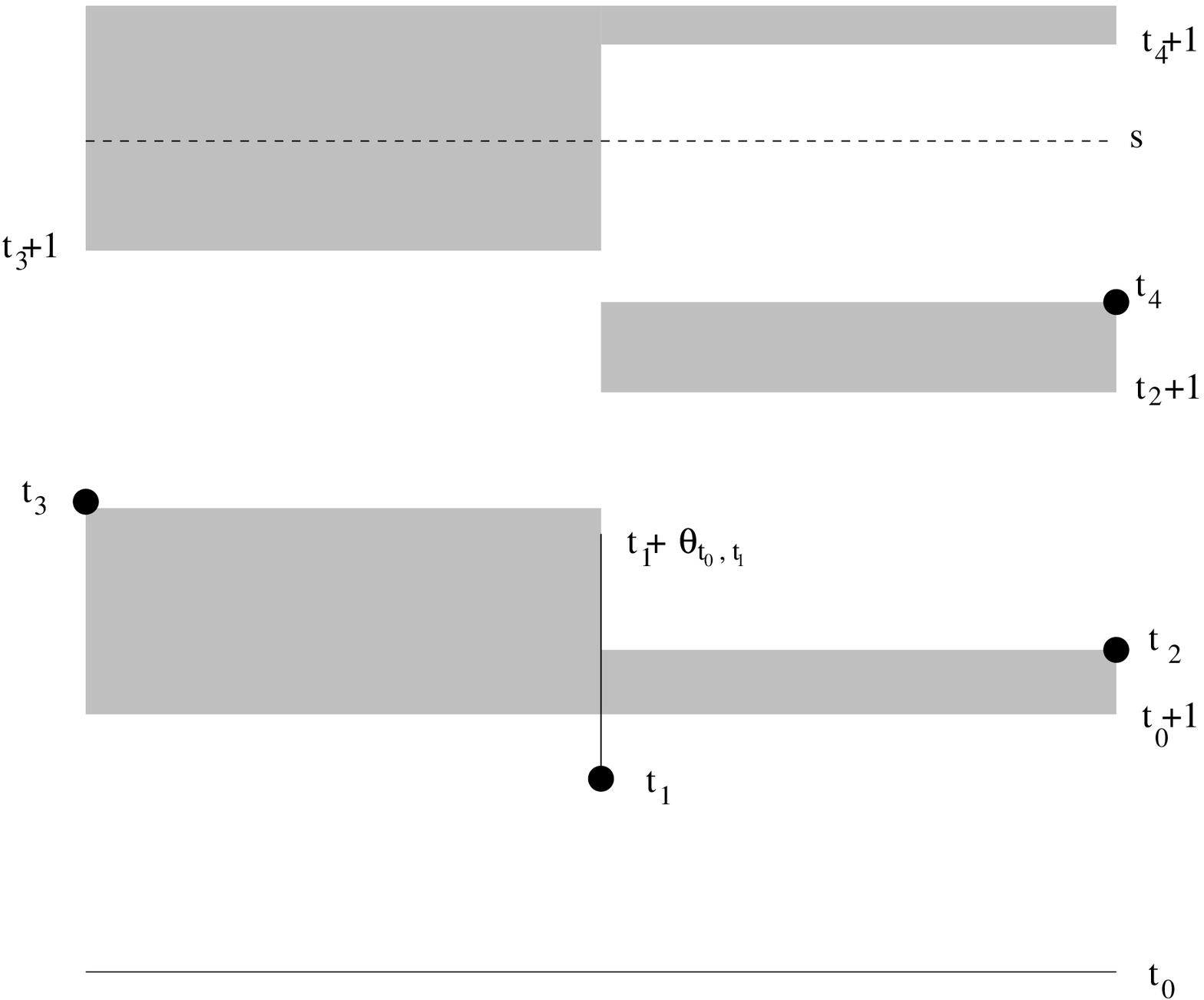}
\caption{Persistent effect of a microscopic fire. Here 
$\cR=(1;t_0,t_1,t_2,t_3,t_4;s)$.}
\label{figpp}
\end{minipage}
}
\end{figure}

\vip

We say that $\cR=(\e; t_0,t_1,\dots,t_K;s)$ satisfies $(PP)$ 
(like ping-pong) if 

\vip

(i) $K \geq 2$, $\e \in \{-1,1\}$,

\vip

(ii) $0< t_0 < t_1 < \cdots < t_K<s<t_K+1$, 

\vip

(iii) for all $k=0,\dots,K-1$, $t_{k+1}-t_k<1$,

\vip

(iv) $t_2-t_0>1$ and for all $k=2,\dots,K-2$, $t_{k+2}-t_k>1$.

\vip

We then set $\e_k= (-1)^k \e$ for $k\geq 0$. 

\vip

Consider a family of i.i.d. $SR(\mu_S)$-processes 
$(N^S_t(i))_{t\geq 0, i\in \zz}$. 

\vip

We introduce, for each $\la\in (0,1)$, 
the process $(\zeta^{\la,\cR}_t(i))_{t\geq t_0, i \in \lb -\bm_\la,\bm_\la\rb}$ 
defined as follows:

\vip

$\bullet$ for all $t\in [t_0,t_1)$, all $i \in \lb -\bm_\la,\bm_\la\rb$,
$\zeta_t^{\la,\cR}(i)=\min(N^{S}_{T_S t}(i) -N^{S}_{T_S t_0}(i),1)$, 

\vip

$\bullet$ for all $i \in \lb -\bm_\la,\bm_\la\rb$, 
$\zeta_{t_1}^{\la,\cR}(i) = \zeta_{t_1-}^{\la,\cR}(i) 
\indiq_{\{i \not\in C(\zeta^{\la,\cR}_{t_1-},0)\}}$,

\vip

$\bullet$ for $k=1, \ldots, K-1$, 

$(*)$ for all $t \in (t_k,t_{k+1})$,  $i \in \lb -\bm_\la,\bm_\la\rb$,  
$\zeta_t^{\la,\cR}(i)= \min{(\zeta_{t_k}^{\la,\cR}(i) + N^{S}_{T_S t}(i) 
-N^{S}_{T_S t_k}(i),1)}$,

$(*)$ for all $i \in \lb -\bm_\la,\bm_\la\rb$,   
$\zeta_{t_{k+1}}^{\la,\cR}(i) = \zeta_{t_{k+1}-}^{\la,\cR}(i) 
\indiq_{i \not \in C(\zeta_{t_{k+1}-}^{\la,\cR}, \epsilon_k \bm_\la)}$,

\vip

$\bullet$ for all  $t \in (t_K,\infty)$, $i \in \lb -\bm_\la,\bm_\la\rb$,  
$\zeta_t^{\la,\cR}(i)= \min{(\zeta_{t_K}^{\la,\cR}(i) + 
N^{S}_{T_S t}(i) -N^{S}_{T_S t_K}(i),1)}$.

\vip

Roughly, we start at time $T_S t_0$ with an empty configuration
and seeds fall according to $(N^S_t(i))_{t\geq 0, i\in \zz}$. At time
$T_S t_1$, there is a (microscopic) fire at $0$.
Then alternatively on the left and right, far away from $0$
(at $-\bm_\la$ or at $\bm_\la$),
there is a (macroscopic) fire at time $T_S t_k$.

\vip

Consider the event
$$
\Omega^S_{\cR}(\la)=\left\{
\exists -\bm_\la<i_1<i_2<i_3<\bm_\la\; : \; 
\zeta^{\la,\cR}_s(i_1)=\zeta^{\la,\cR}_s(i_3)=0, \zeta^{\la,\cR}_s(i_2)=1
\right\}.
$$

\begin{lem}\label{pingpong} Let $\cR=(\e; t_0,t_1,\dots,t_K;s)$ 
satisfy $(PP)$. 
Consider $\Theta_{t_0,t_1}$ defined in Lemma \ref{deftheta} and 
$(\zeta_t^{\la,\cR}(i))_{t\geq t_0, i \in \lb -\bm_\la,\bm_\la\rb}$ 
defined above. There holds
$$
\lim_{\la\to 0} \Pr\left(\left. \Omega^S_{\cR}(\la)  
\right| \Theta_{t_0,t_1}>t_2-t_1 \right) =1.
$$
\end{lem}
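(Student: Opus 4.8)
The idea is to exploit the asymptotic behaviour of $SR(\mu_S)$-processes under $(H_S(BS))$ together with the fact that $\bm_\la\to\infty$, so that local densities concentrate around their means. Condition on $\{\Theta_{t_0,t_1}>t_2-t_1\}$, which means that the microscopic cluster destroyed by the fire at $0$ at time $T_St_1$ is not yet fully regenerated at time $T_St_2$. I will exhibit three (random) sites $-\bm_\la<i_1<i_2<i_3<\bm_\la$ in the neighbourhood of $0$ which, at time $s$, satisfy $\zeta^{\la,\cR}_s(i_1)=\zeta^{\la,\cR}_s(i_3)=0$ and $\zeta^{\la,\cR}_s(i_2)=1$, with probability tending to $1$.

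\textbf{Key steps.} First I would describe the geometry at time $t_2$: on $\{\Theta_{t_0,t_1}>t_2-t_1\}$, the cluster $C(\zeta^{\la,\cR}_{t_1-},0)$ contains a site, say $j_*$, which is still vacant at time $T_St_2{-}$; moreover since the microscopic cluster has length $O(1)$ (which is $o(\bm_\la)$ because of the scaling $\bm_\la\to\infty$), the site $j_*$ sits at distance $o(\bm_\la)$ from $0$ and in particular well inside $\lb-\bm_\la,\bm_\la\rb$. Second, I would use the macroscopic fires at the boundary points $\pm\bm_\la$ at times $t_2,t_3,\dots,t_K$: because $t_{k+2}-t_k>1$ for the relevant $k$'s, by the time a boundary fire at $\e_k\bm_\la$ reaches site $j_*$, at least a full $T_S$-time interval has elapsed since the previous boundary fire on the same side, so the zone between $\e_k\bm_\la$ and (say) $0$ has been fully regenerated in between. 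Hence, as in Step~5 of the proof of Theorem~\ref{converge2} (propagation of the "disconnected by a barrier" property), the site $j_*$, or more precisely one of the two immediate neighbours of the current "hole", always survives as a vacant site that disconnects the left part from the right part of $\lb-\bm_\la,\bm_\la\rb$ between consecutive boundary fires. Third, between $t_K$ and $s$ (note $s<t_K+1$), seeds fall on each site with probability $\nu_S((0,T_S(s-t_K)))<1$; since we only need to find one vacant site among $O(\bm_\la)$ sites near the current hole, a Borel--Cantelli / union-bound argument over $\bm_\la\to\infty$ sites shows that at time $s$ there is still (with probability $\to1$) a vacant site $i_2'$ of this type. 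Finally, to produce $i_1<i_2<i_3$ with the prescribed pattern $0,1,0$, I note that outside of the regenerating microscopic cluster the density of occupied sites at time $s$ is $\nu_S((0,T_S(s-t_K)))$-ish near $\e_{K-1}\bm_\la$; but at the boundaries $\pm\bm_\la$ the macroscopic fires leave long vacant stretches, so $\zeta^{\la,\cR}_s(\e_{K-1}\bm_\la)=0$ with high probability while there are plenty of occupied sites strictly between the boundary and the hole. Taking $i_2$ to be an occupied site between the hole and one boundary, $i_1$ the hole (or a boundary vacant site), and $i_3$ on the far side gives the required triple.

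\textbf{Main obstacle.} The delicate point is the bookkeeping of the alternating boundary fires: one must check carefully that the "barrier" (a vacant site disconnecting left and right of $\lb-\bm_\la,\bm_\la\rb$) never disappears, i.e. that the successive regenerations triggered by $t_2<t_3<\dots<t_K$ never have time to fill in the hole before the next boundary fire re-creates one, and that this works uniformly as $\la\to0$. This is where conditions $(PP)$(iii)--(iv) are used: $(iii)$ guarantees the hole is still vacant when the next boundary fire fires, and $(iv)$ guarantees that after a boundary fire on one side, the zone on that side fully regenerates before the next boundary fire on that same side, so the hole does not "walk off" to $\pm\bm_\la$ and escape the window. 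Making this rigorous amounts to an induction over $k=2,\dots,K$ entirely analogous to the induction $(H_k)\Rightarrow(H_k^*)\Rightarrow(H_{k+1})$ in the proof of Theorem~\ref{converge2}, combined with the elementary concentration estimates for binomial counts over $O(\bm_\la)$ i.i.d. sites that follow from $\bm_\la\to\infty$; once the induction is set up, each individual step is routine and each "bad event" has probability $o(1)$, and there are only finitely many ($K$, fixed) of them, so the union bound closes the argument.
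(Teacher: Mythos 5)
Your overall route is the paper's: the conditioning $\Theta_{t_0,t_1}>t_2-t_1$ blocks the first macroscopic fire at the central hole; by $(PP)$(iv) each side is fully regenerated when it is hit again, so each alternating fire clears a whole side, while $(PP)$(iii) provides a fresh vacant site inside the zone destroyed by the previous fire which stops the new fire from crossing; and the $0,1,0$ pattern is finally read off from the partial density (since $s-t_K<1$) in a freshly burned zone. The trouble is that the step you yourself single out as the main obstacle is precisely where your justification does not hold up. The assertion that $j_*$ (or a neighbour of "the current hole") \emph{always survives} is false: $j_*$ is reoccupied at time $t_1+\Theta_{t_0,t_1}\le t_1+1$, so from the third fire on the barrier must be a genuinely new site created inside the last-destroyed zone. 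More importantly, the non-escape of these successive barriers from $\lb -\bm_\la,\bm_\la\rb$ does not follow from $(PP)$(iv): (iv) only guarantees that a side is fully regenerated between two same-side fires; it gives no control on \emph{where} in the previously destroyed zone the new blocking site lies, so a priori the barriers could drift toward $\pm\bm_\la$ and the zone cleared by the last fire could be too small to contain your triple.

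The paper closes exactly this gap by a deterministic spatial discretization: setting $\alpha=1/K$, the blocking site for the $(k+1)$-th fire is sought in the fixed window of width $\alpha\bm_\la$ adjacent to the previous one (it exists with probability tending to $1$ because that window contains $\alpha\bm_\la\to\infty$ i.i.d. sites, each seed-free during an interval of length $T_S(t_{k+1}-t_k)<T_S$ with a fixed positive probability), so after the $K$ fires the barriers have advanced by at most $\bm_\la/2$ and the fire at $t_K$ deterministically clears $\lb \lfloor\bm_\la/2\rfloor,\bm_\la\rb$; the vacant--occupied--vacant triple is then found in the three thirds of that zone. Your induction would become correct once you add this quantitative localization of each barrier (your "binomial concentration over $O(\bm_\la)$ sites" applied to such $\bm_\la/K$-windows is exactly what is needed), but as written it is missing. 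A smaller slip in your last step: a single prescribed site such as $\e_{K-1}\bm_\la$ is vacant at time $s$ only with probability $\nu_S((T_S(s-t_K),T_S))<1$, not with high probability; one must instead look for a vacant (resp. occupied) site among order $\bm_\la$ sites of the freshly burned zone, as above.
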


\begin{proof}
We assume that $\e=1$ and that $K$ is even for simplicity.
Fix $\alpha = 1/K$.

\vip

{\it First fire.} We put $C=C(\zeta^{\la,\cR}_{t_1-},0)$. Since $t_1-t_0<1$ 
(so that each site is vacant with probability $\nu_S((T_S(t_1-t_0),T_S))>0$ 
at time $t_1$),
the probability
that $C \subset \lb - \lfloor \alpha \bm_\la \rfloor, 
\lfloor \alpha \bm_\la \rfloor   
\rb$ clearly tends to $1$. Thus the match falling at time 
$t_1$ at $0$ destroys nothing outside 
$\lb - \lfloor \alpha \bm_\la \rfloor, \lfloor \alpha \bm_\la \rfloor \rb$
(with probability tending to $1$).
\vip

{\it Second fire.} 
Since $t_2-t_0>1$ (so that $T_S(t_2-t_0)>T_S$), at least one seed has
fallen, during $[t_0,t_2)$ 
on each site of $\lb \lfloor \alpha \bm_\la \rfloor+1, \bm_\la \rb$.
Thus the fire at time $t_2$ destroys completely this zone, but does
not affect $\lb -\bm_\la, -\lfloor \alpha \bm_\la \rfloor-1 \rb$,
because $t_2<t_1+\Theta_{t_0,t_1}$ and because by 
definition of $\Theta_{t_0,t_1}$, there is an empty site in 
$C\subset \lb - \lfloor \alpha \bm_\la \rfloor, 
\lfloor \alpha \bm_\la \rfloor \rb$ during 
$[t_1,t_1+\Theta_{t_0,t_1}]$.
\vip

{\it Third fire.} Since $t_3-t_2<1$, the probability that there is a 
vacant site in 
$\lb\lfloor \alpha \bm_\la \rfloor+1, \lfloor 2\alpha \bm_\la \rfloor \rb$ 
at time $t_3$ tends to $1$ as $\la\to 0$.

\vip

Next, all the sites of $\lb -\bm_\la, -\lfloor \alpha \bm_\la \rfloor-1 \rb$
are occupied at time $t_3-$ (because they have not been affected by a fire
and because $t_3-t_0>t_2-t_0>1$). Thus the fire at time $t_3$ destroys the zone
$\lb -\bm_\la, -\lfloor \alpha \bm_\la \rfloor-1 \rb$ and does not
affect the zone  $\lb \lfloor 2\alpha \bm_\la \rfloor , \bm_\la \rb$.
\vip

{\it Fourth fire.} Since $t_4-t_3<1$, the probability that there is 
(at least) a
vacant site in $\lb -\lfloor 2 \alpha \bm_\la\rfloor, 
-\lfloor \alpha \bm_\la \rfloor-1 \rb$
at time $t_4$ tends to $1$ as $\la\to 0$.

\vip

Next, all the sites of $\lb \lfloor 2\alpha \bm_\la \rfloor , \bm_\la \rb$
are occupied at time $t_4-$ (because they have not been affected by a fire
during $(t_2,t_4)$ with $t_4-t_2>1$). 
Thus the fire at time $t_4$ destroys the zone
$\lb \lfloor 2\alpha \bm_\la \rfloor , \bm_\la \rb$
and does not affect the zone  
$\lb -\bm_\la, -\lfloor 2\alpha \bm_\la \rfloor \rb$.
\vip

{\it Last fire and conclusion.} Iterating the procedure, we see that
with a probability tending to $1$ as $\la\to 0$, 
the fire at time $t_K$ destroys the zone
$\lb \lfloor (K\alpha/2) \bm_\la \rfloor , \bm_\la \rb =
\lb \lfloor \bm_\la/2 \rfloor , \bm_\la \rb$.

\vip

Then one easily concludes: since $0<s-t_K<1$, the probability that there
is at least one site in
$\lb \lfloor \bm_\la/2 \rfloor , \lfloor 2\bm_\la/3 \rfloor \rb$
with no seed falling during $[t_K,s]$  tends to $1$,
the probability that there is at least one site in
$\lb \lfloor 2\bm_\la/3 \rfloor +1, \lfloor 5\bm_\la/6 \rfloor \rb$
with at least one seed falling during $[t_K,s]$  tends to $1$,
and the probability that there is at least one site in
$\lb \lfloor 5\bm_\la/6 \rfloor +1, \bm_\la \rb$
with no seed falling during $[t_K,s]$  tends to $1$.
\end{proof}

\subsection{The coupling}\label{coucou}
We are going to construct a coupling between the $FF_A(\mu_S,\mu_M^\la)$-process
(on the time interval $[0,T_S T]$) and the $LFF_A(BS)$-process (on $[0,T]$). 

\vip

First, we couple a family
of i.i.d. $SR(\mu^\la_M)$-processes $(N^{M,\la}_t(i))_{t\geq 0, 
i \in \zz}$ with a Poisson measure $\pi_M(dt,dx)$ on $[0,T]\times [-A,A]$
with intensity measure $dtdx$  as in Proposition \ref{coupling1}.

\vip

We call $n:=\pi_M([0,T]\times[-A,A])$
and we consider the marks $(T_q,X_q)_{q=1,\dots,n}$ of $\pi_M$ ordered 
in such a way that $0<T_1<\dots<T_n<T$.

\vip

Next, we introduce some i.i.d. families of i.i.d. $SR(\mu_S)$-processes
$(N^{S,q}_t(i))_{t\geq 0,i\in \zz}$, for $q=0,1,\dots$, independent of 
$\pi_M$ and $(N^{M,\la}_t(i))_{t\geq 0, i \in \zz}$.

\vip

Then we build a family of i.i.d. $SR(\mu_S)$-processes, independent
of $(N^{M,\la}_t(i))_{t\geq 0, i \in \zz}$ and of $\pi_M$, as follows.

\vip

$\bullet$ For $q \in \{1, \ldots, n\}$, for all $i \in (X_q)_\la$,
set $(N^{S,\la}_t(i))_{t\geq 0}=
(N^{S,q}_t(i-\lfloor \bn_\la X_q \rfloor))_{t\geq 0}$
(if $i$ belongs to  $(X_q)_\la\cap (X_r)_\la$ for some $q<r$, set e.g. 
$(N^{S,\la}_t(i))_{t\geq 0}=(N^{S,q}_t(i-\lfloor \bn_\la X_q \rfloor))_{t\geq 0}$.
This will occur with a very small probability, so that this choice is not
important).

\vip

$\bullet$ For all other $i \in \zz$ 
set $(N^{S,\la}_t(i))_{t\geq 0} = (N^{S,0}_t(i))_{t\geq 0}$.    

\vip

The $FF_A(\mu_S,\mu_M^\la)$-process 
$(\eta^\la_t(i))_{t\geq 0, i \in I_A^\la}$ is built upon the seed processes
$(N^{S,\la}_t(i))_{t\geq 0, i \in \zz}$ and match
processes $(N^{M,\la}_t(i))_{t\geq 0, i \in \zz}$.

\vip

The advantage of the previous construction is the following.
When a match falls at some $X_q$ for the $LFF_A(BS)$-process,
it will fall at $\lfloor \bn_\la X_q \rfloor$ in the discrete process,
and thus if it is {\it microscopic}, it will  involve the same seed processes
for all values of $\la$.

It also considerably 
simplifies the dependence/independence considerations.

\vip

Finally, we build the $LFF_A(BS)$-process. We consider 
the Poisson measure $\pi_M$ previously introduced, 
and for all $0<t_0<t_1<t_0+1$, for all $q=1,\dots,n$, we consider 
$\Theta_{t_0,t_1}^q$ defined from $(N^{S,q}_t(i))_{t\geq 0,i\in \zz}$ 
as in Lemma \ref{deftheta}. 
We define $(Z_t(x),D_t(x),H_t(x))_{t\in [0,T],x\in [-A,A]}$
as follows:

\begin{algo}\label{algo2} \rm
Consider the marks $(T_k,X_k)_{k=1,\dots,n}$ of
$\pi_M$ in $[0,T]\times[-A,A]$, ordered
chronologically and set $T_0=0$. 

\vip

{\bf Step 0.} Put $Z_0(x)=H_0(x)=0$ and $D_0(x)=\{x\}$ 
for all $x\in [-A,A]$. 

\vip

Assume that for some $k\in \{0,\dots,n-1\}$,  
$(Z_t(x),D_t(x),H_t(x))_{t\in[0,T_k],x\in[-A,A]}$ has been built.

\vip

{\bf Step k+1.} For $t\in (T_k,T_{k+1})$ and $x\in [-A,A]$,
put $Z_t(x)=\min(1,Z_{T_k}(x)+t-T_k)$, set 
$H_t(x)=\max(0,H_{T_k}(x)-t+T_k)$ and then define $D_t(x)$
as in (\ref{dfcc}).
Finally, build $(Z_{T_{k+1}}(x),D_{T_{k+1}}(x),H_{T_{k+1}}(x))$ as follows.

\vip

(i) If $Z_{T_{k+1}-}(X_{k+1})=1$, set $H_{T_{k+1}}(x) =H_{T_{k+1}-}(x)$ for all
$x\in [-A,A]$ and consider $[a,b]:= D_{T_{k+1}-}(X_{k+1})$.
Set $Z_{T_{k+1}}(x) = 0$ for all $x\in (a,b)$ and 
$Z_{T_{k+1}}(x) = Z_{T_{k+1}-}(x)$
for all $x\in [-A,A] \setminus [a,b]$. Set finally  
$Z_{T_{k+1}}(a)=0$ if $Z_{T_{k+1}-}(a)=1$ and $Z_{T_{k+1}}(a)=Z_{T_{k+1}-}(a)$ 
if $Z_{T_{k+1}-}(a)<1$ and 
$Z_{T_{k+1}}(b)=0$ if $Z_{T_{k+1}-}(b)=1$ and $Z_{T_{k+1}}(b)=Z_{T_{k+1}-}(b)$ 
if $Z_{T_{k+1}-}(b)<1$.

\vip

(ii) If $Z_{T_{k+1}-}(X_{k+1})<1$, set 
$H_{T_{k+1}}(X_{k+1})=\Theta^{k+1}_{T_{k+1}- Z_{T_{k+1}-}(X_{k+1}),T_{k+1}}$,
$Z_{T_{k+1}}(X_{k+1})=Z_{T_{k+1}-}(X_{k+1})$ and 
$(Z_{T_{k+1}}(x),H_{T_{k+1}}(x))
=(Z_{T_{k+1}-}(x),H_{T_{k+1}-}(x))$ for
all $x\in [-A,A]\setminus \{X_{k+1}\}$.

\vip

(iii) Using the values of $(Z_{T_{k+1}}(x),H_{T_{k+1}}(x))_{x\in[-A,A]}$,
compute $(D_{T_{k+1}}(x))_{x\in[-A,A]}$
as in (\ref{dfcc}).
\end{algo}

\begin{lem} \label{couplagelimite}
The process $(Z_t(x),D_t(x),H_t(x))_{t\in [0,T],x\in [-A,A]}$ built in
Algorithm \ref{algo2}
is a $LFF_A(BS)$-process.
\end{lem}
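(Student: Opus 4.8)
Here is my plan for proving Lemma \ref{couplagelimite}.

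The plan is to compare Algorithm \ref{algo2} with Algorithm \ref{algo1}. Word for word, the two algorithms are identical except at a \emph{microscopic} fire (the case $Z_{T_{k+1}-}(X_{k+1})<1$): Algorithm \ref{algo1} sets the barrier height to $F_S(Z_{T_{k+1}-}(X_{k+1}),V_{k+1})$, while Algorithm \ref{algo2} sets it to $\Theta^{k+1}_{T_{k+1}-Z_{T_{k+1}-}(X_{k+1}),\,T_{k+1}}$. Since Algorithm \ref{algo1}, fed with a Poisson measure $\pi_M(dt,dx,dv)$ of intensity $dt\,dx\,dv$, produces a.s.\ the $LFF_A(BS)$-process (recall Definition \ref{dflffpA} and Proposition \ref{loc}), it suffices to show that, conditionally on $\pi_M(dt,dx)$, the barrier heights produced by Algorithm \ref{algo2} at successive microscopic matches are independent, the one created at the $q$-th match having law $\theta_{Z_{T_q-}(X_q)}$. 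This is indeed, conditionally on $\pi_M(dt,dx)$, the law of the corresponding family $(F_S(Z_{T_q-}(X_q),V_q))$, because the $V_q$ are i.i.d.\ uniform on $[0,1]$, independent of $\pi_M$, and $F_S(u,V)$ has law $\theta_u$ for $V$ uniform and every $u\in[0,1]$ (Definition \ref{defF}). Granting the conditional-law statement, one moreover obtains a genuine $LFF_A(BS)$-process by enlarging the probability space: for each microscopic $q$, disintegrate the (Lebesgue) image law of $v\mapsto F_S(Z_{T_q-}(X_q),v)$ to build a uniform $V_q$ with $F_S(Z_{T_q-}(X_q),V_q)$ equal to the realized height, the auxiliary randomizations being taken mutually independent and independent of everything else; then $(T_q,X_q,V_q)_{q\geq 1}$ is a Poisson measure of intensity $dt\,dx\,dv$ for which Algorithm \ref{algo1} reproduces exactly the output of Algorithm \ref{algo2}. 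For the sequel only the equality in law matters, so one may also stop at the conditional-law statement.

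To prove that statement I would condition on $\pi_M(dt,dx)$ --- which freezes $n$ and the marks $(T_q,X_q)_{q=1,\dots,n}$ with $0<T_1<\dots<T_n<T$ --- and argue by induction on $q$. Recall that $\Theta^q_{t_0,t_1}$ is built from the family $(N^{S,q}_t(i))_{t\geq0,i\in\zz}$ exactly as $\Theta_{t_0,t_1}$ is built from $(N^S_t(i))$ in Lemma \ref{deftheta}, that the families $(N^{S,q})_{q\geq0}$ are i.i.d.\ and independent of $\pi_M$, and that Algorithm \ref{algo2} uses $(N^{S,q})$ only at the $q$-th match. Hence, after conditioning on $\pi_M$, the limit process restricted to $t<T_q$ --- and in particular the micro/macro status of the first $q-1$ matches and the numbers $Z_{T_j-}(X_j)$, $j<q$ --- is a measurable function of $(N^{S,1},\dots,N^{S,q-1})$ alone. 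If the $q$-th match is microscopic, then a.s.\ $z:=Z_{T_q-}(X_q)\in(0,1)$, so with $t_0:=T_q-z$ and $t_1:=T_q$ one has $0\leq t_0<t_1<t_0+1$; since $(N^{S,q})$ is independent of $\sigma(\pi_M)\vee\sigma(N^{S,1},\dots,N^{S,q-1})$, Lemma \ref{deftheta} shows that, conditionally on this $\sigma$-field, $\Theta^q_{t_0,t_1}$ has law $\theta_{t_1-t_0}=\theta_z$ and is independent of the conditioning. Iterating over $q=1,\dots,n$ yields the asserted conditional independence and the laws $\theta_{Z_{T_q-}(X_q)}$.

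The main obstacle, and really the only delicate point, is this measurability bookkeeping: one has to unwind the recursion of Algorithm \ref{algo2} to check that, given $\pi_M$, the entire history of the limit process strictly before $T_q$ --- hence the value $Z_{T_q-}(X_q)$ and whether the $q$-th fire is macroscopic or microscopic --- depends only on $(N^{S,1},\dots,N^{S,q-1})$, so that it is legitimate to freeze $(t_0,t_1)$ before invoking Lemma \ref{deftheta}, which is stated for deterministic times. A handful of null events must be discarded along the way (a match landing exactly when the surrounding zone is emptied, which would give $z=0$; a match falling exactly on a cluster boundary; two matches at the same instant), none of which affects the outcome. Everything else --- the disintegration construction of the $V_q$ and the verification that $(T_q,X_q,V_q)_q$ is a Poisson measure of intensity $dt\,dx\,dv$ --- is routine.
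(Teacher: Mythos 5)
Your proposal is correct and follows essentially the same route as the paper: compare Algorithms \ref{algo1} and \ref{algo2}, note they differ only in the barrier height assigned at a microscopic fire, and use Lemma \ref{deftheta} together with the independence of $(N^{S,q}_t(i))_{t\geq 0,i\in\zz}$ from $\pi_M$ and from the earlier families to conclude that, conditionally on the past, $\Theta^{q}_{T_q-Z_{T_q-}(X_q),T_q}$ has the law $\theta_{Z_{T_q-}(X_q)}$, exactly as $F_S(Z_{T_q-}(X_q),V_q)$ does. Your version merely spells out the conditioning/induction bookkeeping (and the optional reconstruction of the $V_q$'s) that the paper's one-paragraph proof leaves implicit.
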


\begin{proof}
The only difference between algorithms \ref{algo1} and \ref{algo2}
is that in Step $k+1$, point (ii), we use 
$\Theta_{T_{k+1}- Z_{T_{k+1}-}(X_{k+1}),T_{k+1}}^{k+1}$ instead of
$F_S(Z_{T_{k+1}-}(X_{k+1}), V_{k+1})$. But due
to Lemma \ref{deftheta} and Definition \ref{defF}, these two variables have 
the same law $\theta_{Z_{T_{k+1}-}(X_{k+1})}$ 
(conditionally on  $T_{k+1}$, $X_{k+1}$ and
$(Z_t(x),D_t(x),H_t(x))_{t\in [0,T_{k+1}), x\in [-A,A]}$). Indeed, 
it suffices to use that in Algorithm \ref{algo1}, 
$V_{k+1}$ is independent of $Z_{T_{k+1}-}(X_{k+1})$,
while in Algorithm \ref{algo2}, 
the family $(N^{S,k+1}_t(i))_{t\geq 0,i\in \zz}$ is
independent of $(T_{k+1},Z_{T_{k+1}-}(X_{k+1}))$.
\end{proof}

Finally, we observe that  $(Z_t(x),D_t(x),H_t(x))_{t\in[0,T],x\in[-A,A]}$ 
depends only on $\pi_M$ and on $((N^{S,q}_t(i))_{t\in [0,T], i \in \zz})_{q\geq 1}$.
It is independent of $(N^{S,0}_t(i))_{t\in [0,T], i \in \zz}$.

\subsection{A favorable event}

First, we know from Proposition \ref{coupling1} that
$$
\Omega^M_{A,T}(\la):=\left\{\forall t\in [0,T],\;
\forall i\in I_A^\la, \;
\Delta N^{M,\la}_{T_S t}(i) \ne 0 \; \hbox{iff} \; 
\pi_M(\{t\}\times i_\la) \ne 0
\right\}
$$
satisfies $\lim_{\la \to 0} \Pr[\Omega^M_{A,T}(\la)]=1$.
Next, we recall that the marks of $\pi_M$ are called
$(T_1,X_1),$ $\dots,$ $(T_n,X_n)$ and are ordered chronologically.
We introduce $\cT_M=\{0,T_1,\dots,T_n\}$,  
$\cB_M=\{X_1,\dots,X_n\}$, as well as the set
$\cC_M$ of connected components of  $[-A,A]\setminus \cB_M$ (sometimes 
referred to as {\it cells}).  

For $\alpha>0$, we consider the event
$$
\Omega_M(\alpha)=\left\{\min_{s,t\in \cT_M,s\ne t}|t-s|\geq \alpha, 
\min_{x,y \in \cB_M \cup\{-A,A\},x\ne y}|x-y|\geq \alpha, 
\right\},
$$
which clearly satisfies $\lim_{\alpha \to 0} \Pr[\Omega_M(\alpha)]=1$.
Observe that for any given $\alpha>0$, there is $\la_\alpha>0$
such that for all $\la\in(0,\la_\alpha)$, on $\Omega_M(\alpha)$,
there holds that

\vip

$\bullet$ for all $x,y\in \cB_M\cup\{-A,A\}$ with $x\ne y$, 
$x_\la\cap y_\la =\emptyset$, 

\vip

$\bullet$ the family $\{c_\la, c\in \cC_M\}\cup \{x_\la, x \in 
\cB_M\}$ is a partition of $I_A^\la$ (recall (\ref{xxbb}) and (\ref{xla})).

\vip

Indeed, it suffices that $\sup_{(0,\la_\alpha)} [\bm_{\la}/\bn_{\la}] <\alpha/4$.

\vip

Let $q\in \{1,\dots,n\}$.
We call $\cU_q$ the set of all possible $\cR=(\e,t_0,\dots,t_K;s)$
satisfying $(PP)$ with $\e\in \{-1,1\}$, with $\{t_0,\dots,t_K,s\}
\subset \cT_M$ and with $\Theta_{t_0,t_1}^q>t_2-t_1$.
We introduce, for $q=1,\dots,n$ and $\cR\in \cU_q$, 
the event $\Omega^{S,q}_{\cR}(\la)$ defined as in Subsection \ref{sspp}
with the $SR(\mu_S)$-processes $(N^{S,q}_t(i))_{t\geq 0, i\in \zz}$.
Then we put 
$$
\Omega^S_1(\la)=\cap_{q=1}^n \cap_{\cR\in\cU_q} \Omega^{S,q}_{\cR}(\la),
$$
which satisfies $\lim_{\la \to 0} \Pr\left( \Omega^S_1 (\la) \right)=1$
thanks to Lemma \ref{pingpong} (since for each $q$,
$(N^{S,q}_t(i))_{t\geq 0, i \in \zz}$ is independent of $\pi_M$
and since conditionally on $\pi_M$, the set $\cU_q$ is finite).

\vip

We also consider the event $\Omega^S_2(\la)$ on which the following 
conditions hold:
for all $t_1,t_2 \in \cT_M$ with  $0<t_2-t_1<1$, for all $q=1,\dots,n$,
there are  
$$-\bm_\la<i_1<i_2 < -\bm_\la/2 <i_3<0<i_4< \bm_\la/2 <i_5<i_6<\bm_\la$$ 
such that

\vip

$\bullet$ for $j=1,3,4,6$, $N^{S,q}_{T_S t_2}(i_j)- N^{S,q}_{T_S t_1}(i_j)=0$,

\vip

$\bullet$ for $j=2,5$, $N^{S,q}_{T_S t_2}(i_j)- N^{S,q}_{T_S t_1}(i_j)>0$.

\vip

There holds $\lim_{\la \to 0} \Pr\left( \Omega^S_2 (\la) \right)=1$.
Indeed, it suffices to prove that almost surely, $\lim_{\la\to 0} 
\Pr\left(\Omega^S_2 (\la) \vert \pi_M\right)=1$. 
Since there are a.s. finitely
many possibilities for $q,t_1,t_2$ and since $\pi_M$ is independent
of  $(N^{S,q}_t(i))_{t\geq 0, i\in \zz}$, it suffices to work with 
a fixed $q\in \{1,\dots,n\}$ and some fixed $0<t_2-t_1<1$.

\vip

Observe that for each $i$, 
$\Pr(N^{S,q}_{T_S t_2}(i)- N^{S,q}_{T_S t_1}(i)=0)=\nu_S((T_S(t_2-t_1),T_S))<1$
and $\Pr(N^{S,q}_{T_S t_2}(i)- N^{S,q}_{T_S t_1}(i)>0)=\nu_S((0,T_S(t_2-t_1)))<1$
by definition of $T_S$ and since $t_2-t_1<1$. Recall
also that $\bm_\la$ tends to infinity. Thus during $[T_St_1,T_St_2]$, 
the probability that 
a seed falls on each site of $\lb -\bm_\la+1, -\lfloor \bm_\la/4\rfloor \rb$ 
tends to $0$, the probability that no seed at all falls on 
$\lb -\lfloor \bm_\la/4\rfloor +1, -\lfloor \bm_\la/2\rfloor-1 \rb$
tends to $0$, the probability a seed falls on each site of
$\lb -\lfloor \bm_\la/2 \rfloor, -1\rb$ tends to $0$, etc.

\vip

We finally introduce the event 
$$
\Omega(\alpha,\la)= \Omega^M_{A,T}(\la)\cap\Omega_M(\alpha)\cap 
\Omega^S_1(\la)\cap\Omega^S_2(\la).
$$
We observe that $\Omega(\alpha,\la)$ is independent of 
$(N^{S,0}_t(i))_{t\geq 0, i \in \zz}$ and that for any
$\e>0$, choosing $\alpha>0$ small enough, $\Pr[\Omega(\alpha,\la)]>1-\e$
for all $\la>0$ small enough.

\subsection{Heart of the proof}\label{hpBS}
We now handle the main part of the proof.

\vip

Consider the $LFF_A(BS)$-process.
Observe that by construction, we have, for $c\in \cC_M$ and $x,y\in c$,
$Z_t(x)=Z_t(y)$ for all $t\in [0,T]$, thus we can introduce
$Z_t(c)$. 

\vip

If $x \in \cB_M$, it is at the boundary of two cells 
$c_-,c_+ \in \cC_M$ and then we set $Z_t(x_-)=Z_t(c_-)$ and $Z_t(x_+)=Z_t(c_+)$ 
for all $t\in [0,T]$. 

\vip

If $x \in (-A,A)\setminus \cB_M$, we put  $Z_t(x_-)=Z_t(x_+)=Z_t(x)$ for all
$t\in [0,T]$.

\vip

For $x\in \cB_M$ and $t\geq 0$ we set 
$\tH_t(x)=\max(H_t(x),1-Z_t(x),1-Z_t(x_-),1-Z_t(x_+))$. 
Observe that $x$ is {\it microscopic} or
{\it acts like a barrier} at time $t$
if and only if $\tH_t(x)>0$.

\vip

Actually $Z_t(x)$ always equals either $Z_t(x_-)$ or $Z_t(x_+)$ 
and these can be distinct only at a point where has occurred  a 
microscopic fire (that is if $x=X_q$ for some $q\in \{1,\dots,n\}$,
if $Z_{T_q-}(X_q)<1$ and if $t > T_q$).

\vip

For all $x\in (-A,A)$ and $t \in [0,T]$, we put 
$$
\tau_t(x)= \sup \left\{s\leq t\;:\; Z_s(x_+)=Z_s(x_-)=Z_s(x)=0\right\} \in [0,t]
\cap \cT_M.
$$
For $c\in \cC_M$ and $t\in [0,T]$, it clearly holds that
$\tau_t(x)=\tau_t(y)$ for all $x,y\in c$,
so that we can also define $\tau_t(c)$.

\vip

Observe, using Algorithm \ref{algo2}, that
\begin{align}
&\hbox{for }x\notin \cB_M, \; \; Z_t(x) = \min{(t-\tau_t(x),1)} \hbox{ for all }
t\in[0,T], \label{tauzC}\\
&\hbox{for }q=1,\dots,n,\;\;  Z_t(X_q) = \min{(t-\tau_t(X_q),1)} 
\hbox{ for all }
t\in[0,T_{q}).\label{tauzB}
\end{align}

Indeed, $\tau_t(x)$ stands for the last time before $t$ where $x$ was involved 
in a macroscopic fire (with the convention that a macroscopic fire occurs
at time $0$). Thus for $x \notin \cB_M$, 
if $t-\tau_t(x)\geq 1$, $Z_t(x)=1$,
and if $t-\tau_t(x)<1$, $Z_t(x)=t-\tau_t(x)$. For $x=X_q$, the same 
reasoning holds during $[0,T_q)$.

\vip

We also define for all $t\in [0,T]$, all $c \in \cC_M$ and all $x\in (-A,A)$
here ($c_\la$ is defined by (\ref{xxbb}) and $x_\la$ by (\ref{xla}))
\begin{align*}
\tau_t^\la(c)=& \sup \left\{s\leq t\;:\; \forall i \in c_\la, 
\eta_{T_S t-}^\la(i)=1 \hbox{ and } \eta_{T_S t}^\la(i)=0   \right\}\in [0,t],\\
\rho_t^\la(c)=& \sup \left\{s\leq t\;:\; \exists i \in c_\la, 
\eta_{T_S t-}^\la(i)=1 \hbox{ and } \eta_{T_S t}^\la(i)=0   \right\}\in [0,t],\\
\tau_t^\la(x)=& \sup \left\{s\leq t\;:\; \forall i \in x_\la, 
\eta_{T_S t-}^\la(i)=1 \hbox{ and } \eta_{T_S t}^\la(i)=0 \right\}\in [0,t]
\end{align*}
with the convention that $\eta_{0-}^\la(i)=1$ for all $i\in I_A^\la$.
Observe that on $\Omega^M_{A,T}(\la)$, there holds that 
$\tau_t^\la(c),\rho_t^\la(c),\tau_t^\la(x) \in [0,t]\cap \cT_M$
for all $t\in [0,T]$, all $c \in \cC_M$ and all $x\in (-A,A)$.

\vip

For $t \in [0,T]$, consider the event  
$$
\Omega_t^\la =  \left\{ \forall s\in [0,t], \forall c \in\cC_M, 
\tau^{\la}_{s}(c) = \rho^\la_s(c)=\tau_{s}(c) \hbox{ and } \forall x \in \cB_M,
\tau^{\la}_{s}(x) =\tau_{s}(x)\right\}.
$$ 
We define $\Omega_{t-}^\la$ similarly, replacing $[0,t]$
by $[0,t)$. The aim of the subsection is to prove the following
result.

\begin{lem}\label{ggg}
For any $\alpha>0$, any $\la\in(0,\la_\alpha)$, $\Omega^\la_T$ a.s.
holds on $\Omega(\alpha,\la)$.
\end{lem}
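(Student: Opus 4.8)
The statement is proved by induction on the chronologically ordered marks $(T_q,X_q)_{q=1,\dots,n}$ of $\pi_M$, working throughout on the favorable event $\Omega(\alpha,\la)$ with $\la\in(0,\la_\alpha)$. Concretely, I would prove by induction on $q\in\{0,\dots,n\}$ the assertion
\[
(G_q):\qquad \Omega^\la_{T_{q+1}-}\ \hbox{holds on }\Omega(\alpha,\la),
\]
with $T_{n+1}:=T$, which for $q=n$ gives exactly Lemma \ref{ggg}. The base case $q=0$ (no fire yet before $T_1$) is easy: on $[0,T_1)$ there are no matches in $I_A^\la$, so in the discrete process each cell $c_\la$ and each $x_\la$ simply fills up with seeds, and by construction $\tau_s(c)=0$ for all $s<T_1$, $c\in\cC_M$, while $\tau^\la_s(c)=\rho^\la_s(c)=0$ and $\tau^\la_s(x)=0$ because $\eta^\la_{0-}(i)=1$ by convention. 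So $\Omega^\la_{T_1-}$ holds trivially.

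\textbf{Induction step.} Assume $(G_{q-1})$, i.e. $\Omega^\la_{T_q-}$ holds on $\Omega(\alpha,\la)$; I want $\Omega^\la_{T_{q+1}-}$, i.e. to extend the coincidence of the $\tau$'s past the $q$-th fire up to (just before) $T_{q+1}$. The key point is that on $\Omega^\la_{T_q-}$ the discrete configuration $\eta^\la_{T_S t}$ on $I_A^\la$ is, at every $t<T_q$ and in each cell $c_\la$, either ``all occupied'' (when $t-\tau_t(c)\ge 1$) or ``completely vacant'' (when $t=\tau_t(c)$, a fire instant) or in an intermediate regenerating state; this is because $\rho^\la_t(c)=\tau^\la_t(c)$ forces the whole cell to regenerate simultaneously, and combined with $(H_S(BS))$ (so that $\nu_S((0,T_S))=1$, meaning every site is occupied at time $T_S$ in the absence of fires) and the local density statistic $K^{\la,A}$, one recovers exactly the dichotomy encoded by $Z_t(c)=\min(t-\tau_t(c),1)$, recall \eqref{tauzC}-\eqref{tauzB}. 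Then I distinguish whether the $q$-th fire is macroscopic or microscopic for the limit process.

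\emph{Macroscopic case} ($Z_{T_q-}(X_q)=1$): then $t-\tau_{T_q-}(X_q)\ge 1$, so by the above the whole discrete cluster $C_A(\eta^\la_{T_S T_q-},\lfloor \bn_\la X_q\rfloor)$ is fully occupied and coincides (up to boundary sites, using $\Omega_M(\alpha)$ and $\la<\la_\alpha$ so that the cells and the $x_\la$'s tile $I_A^\la$) with $[L_{T_q-}(X_q),R_{T_q-}(X_q)]$ rescaled; the fire at $\lfloor \bn_\la X_q\rfloor$ destroys exactly this zone, matching the limit dynamics in Algorithm \ref{algo2}(i), and the $\tau$'s are updated identically on both sides. \emph{Microscopic case} ($Z_{T_q-}(X_q)<1$): here the construction in Subsection \ref{coucou} is designed precisely so that the discrete fire at $\lfloor \bn_\la X_q\rfloor$ uses the seed family $(N^{S,q}_t(i))_{t,i}$; by Lemma \ref{deftheta} the regeneration time of the destroyed microscopic cluster is $\Theta^q_{T_q-Z_{T_q-}(X_q),T_q}=H_{T_q}(X_q)$, so the barrier lasts the same (rescaled) time in both processes; and the ``persistent effect'' phenomenon — that this barrier keeps separating left from right through subsequent alternating macroscopic fires on either side — is transferred from the limit to the discrete process by $\Omega^S_1(\la)$ (Lemma \ref{pingpong}), since on $\Omega(\alpha,\la)$ every relevant pattern $\cR=(\e;t_0,\dots,t_K;s)$ with times in $\cT_M$ and $\Theta^q_{t_0,t_1}>t_2-t_1$ belongs to $\cU_q$. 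Finally $\Omega^S_2(\la)$ guarantees that between any two consecutive match instants separated by less than $1$, the cells (and half-cells around barriers) contain the appropriate mixture of occupied and vacant sites, so that $\tau^\la(c)=\rho^\la(c)$ is maintained and no spurious connection or disconnection occurs. Combining these, the $\tau$'s on both sides agree on $[0,T_{q+1})$, which is $(G_q)$.

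\textbf{Main obstacle.} The delicate part is the bookkeeping in the microscopic case: one must check that a microscopic fire in the discrete model really does destroy only an $o(\bn_\la)$-sized cluster entirely contained in $x_\la$, that its subsequent regeneration is governed by the {\it same} randomness for all $\la$ (this is the reason for the somewhat unusual coupling of seeds via $(N^{S,q})$ in Subsection \ref{coucou}), and above all that the barrier it creates is not prematurely erased — this is where the ping-pong analysis of Subsection \ref{sspp} and the inclusion of {\it all} admissible patterns $\cR$ in $\cU_q$ is essential. One also has to be careful that a point $x=X_q$ may carry two different values $Z_t(x_-)\ne Z_t(x_+)$ after a microscopic fire, and that the quantity $\tilde H_t(x)$ (not $H_t(x)$ alone) is what controls whether $x$ acts as a barrier; keeping the discrete analogue $\tau^\la_t(x)$ synchronized with $\tau_t(x)$ through such a point requires handling the two adjacent cells separately. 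Everything else is routine verification, assuming the estimates of Proposition \ref{coupling1}, Lemmas \ref{deftheta}, \ref{pingpong}, and the density computations already established.
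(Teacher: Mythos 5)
Your plan is essentially the paper's own proof: the same induction over the chronologically ordered marks of $\pi_M$ on the favorable event, the same macroscopic/microscopic dichotomy at each fire, with cells filling up deterministically because $\supp\nu_S=[0,T_S]$, barrier heights matched through the coupling of Lemma \ref{deftheta} (Algorithm \ref{algo2}), blocking sites in the zones $x_\la$ supplied by $\Omega^S_2(\la)$, and persistence of barriers through alternating macroscopic fires supplied by the ping-pong event $\Omega^S_1(\la)$ via Lemma \ref{pingpong}. The only slips are minor and organizational: $\Omega^S_2(\la)$ concerns the zones $x_\la$ built on the seed families $(N^{S,q})$, not the cells (which use $N^{S,0}$ and never need a vacant-site guarantee), and the "routine verification" you defer is in fact the bulk of the paper's argument — the case analysis showing that expired barriers are fully reoccupied, that surviving barriers still block, and that no exterior fire can interfere with $x_\la$ during the relevant time windows.
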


\begin{proof}
We work on  $\Omega(\alpha,\la)$ and assume that $\la\in(0,\la_\alpha)$.
Clearly, $\tau_0(x)=\tau_0^\la(x)=0$ and $\tau_0(c)=\tau_0^\la(c)
=\rho^\la_0(c)=0$ 
for all $x\in\cB_M$, all $c\in\cC_M$, so that $\Omega^\la_0$ a.s. holds.
We will show that for $q=0,\dots,n-1$, $\Omega^\la_{T_q}$ implies
$\Omega^\la_{T_{q+1}}$. This will prove that $\Omega^\la_{T_{n}}$ holds.
The extension to $\Omega^\la_{T}$ will be straightforward (see Step 1 below).

\vip

We thus fix  $q\in\{0,\dots,n-1\}$ and assume $\Omega^\la_{T_q}$. 
We repeatedly use below that
on the time interval $(T_q,T_{q+1})$, there are no fires at all (in $[-A,A]$)
for the $LFF_A(BS)$-process and no fires at all (in $I_A^\la$) during 
$(T_S T_{q},T_S T_{q+1})$ for the $FF_A(\mu_S,\mu_M^{\la})$-process
(use $\Omega^{M}_{A,T}(\la)$).

\vip

{\bf Step 1.} To start with, we observe that since there are no fires 
between $T_S T_q$ and $T_S T_{q+1}$, we have 
$\tau^\la_{t}(x)=\tau^\la_{T_q}(x)$, $\tau^\la_{t}(c)=\tau^\la_{T_q}(c)$ and
$\rho^\la_{t}(c)=\rho^\la_{T_q}(c)$
for all $x\in \cB_M$, all $c\in\cC_M$, all $t\in [T_q,T_{q+1})$
(because $\eta_{T_S t}^\la(i)$ is nondecreasing on $[T_q,T_{q+1})$
for all $i\in I_A^\la$). By the same way, $\tau_{t}(x)=\tau_{T_q}(x)$ 
and $\tau_{t}(c)=\tau_{T_q}(c)$
for all $x\in \cB_M$, all $c\in\cC_M$, all $t\in [T_q,T_{q+1})$
(because $Z_t(x),Z_{t}(x_+),Z_t(x_-)$ are nondecreasing on $[T_q,T_{q+1})$
for all $x \in [-A,A]$).
Hence for $t \in [T_q,T_{q+1})$, 
$\Omega_t^\la = \Omega^\la_{T_q}$. Thus $\Omega^\la_{T_q}$ implies
$\Omega^\la_{T_{q+1}-}$.

\vip
{\bf Step 2.} 
Let $c \in \cC_M$. Observe that on $\Omega^\la_{T_{q+1}-}$, there holds,
for all $i \in c_\la$,  
\begin{align}
\label{danslescellules}
\eta^\la_{T_S T_{q+1}-}(i)
=\min\left(N^{S,0}_{T_S T_{q+1}-}(i) -N^{S,0}_{T_S \tau_{T_{q}}(c)}(i),1
\right).
\end{align}
Indeed, seeds are falling on $i$ according to $(N^{S,0}_{t}(i))_{t\geq 0}$.
Furthermore, we know from Step 1 that $\rho^\la_{T_{q+1}-}(c)=\tau^\la_{T_{q+1}-}(c)
=\tau_{T_{q+1}-}(c)=\tau_{T_{q}}(c)$. By definition of $\tau^\la_{T_{q+1}-}(c)$,
$\eta^\la_{T_S\tau_{T_{q}}}(i)=0$ for all $i\in c_\la$.
And by definition of $\rho^\la_{T_{q+1}-}(c)$,
no fire affects $c_\la$ during $(T_S\rho^\la_{T_{q+1}-}(c),T_S T_{q+1})$.

\vip

{\bf Step 3.} We show here that if $Z_{T_{q+1}-}(X_{q+1})<1$, there exist
$j_1,j_2,j_3,j_4 \in (X_{q+1})_\la$ such that 
$j_1<j_2<\lfloor \bn_\la X_{q+1} \rfloor <j_3<j_4$ and 
$\eta^\la_{T_S T_{q+1}-}(j_2)=\eta^\la_{T_S T_{q+1}-}(j_3)=0$ and
$\eta^\la_{T_S T_{q+1}-}(j_1)= \eta^\la_{T_S T_{q+1}-}(j_4)=1$.

\vip

Recall that for $i\in (X_{q+1})_\la$, the seeds fall according to 
$(N^{S,q+1}_t(i- \lfloor \bn_\la X_{q+1} \rfloor))_{t\geq 0}$.
Recall also that $\tau^\la_{T_{q+1}-}(X_{q+1})=\tau_{T_{q+1}-}(X_{q+1})$ 
(by Step 1), so that by definition, $(X_{q+1})_\la$ is completely vacant
at time $T_S \tau_{T_{q+1}-}(X_{q+1})$.
Recall finally that $\tau_{T_{q+1}-}(X_{q+1}) \in \cT_M$ (and so does $T_{q+1}$).

\vip

Observe that by (\ref{tauzB}), $Z_{T_{q+1}-}(X_{q+1})<1$ implies
that  $T_{q+1}-\tau_{T_{q+1}-}(X_{q+1})<1$.
Since we work on $\Omega^S_2(\la)$,
we know that there are some sites $i_1<i_2<i_3<\lfloor \bn_\la X_{q+1} \rfloor
<i_4<i_5<i_6$ in $(X_{q+1})_\la$ such that
at least one seed has fallen on $i_2$ and $i_5$ and 
no seed has fallen on $i_1,i_3,i_4,i_6$ during
$[T_S \tau_{T_{q+1}-}(X_{q+1}),T_ST_{q+1})$. All this
implies that $\eta^\la_{T_ST_{q+1}-}(i_2)=\eta^\la_{T_ST_{q+1}-}(i_5)=1$ and 
$\eta^\la_{T_ST_{q+1}-}(i_3)=\eta^\la_{T_ST_{q+1}-}(i_4)= 0$
(because the vacant sites $i_1,i_6$ protect the occupied sites
$i_2,i_4$ from fires falling outside $(X_{q+1})_\la$ and because
no fire falls on $(X_{q+1})_\la$ during $[0,T_ST_{q+1})$).

\vip

{\bf Step 4.} Next we check that if $Z_{T_{q+1}-}(c)=1$ for some $c\in\cC_M$,
then $\eta^\la_{T_S T_{q+1}-}(i)=1$ for all $i\in c_\la$.

\vip

Recalling (\ref{tauzC}), we see that $Z_{T_{q+1}-}(c)=1$ implies that
$T_{{q+1}}-\tau_{T_{q+1}-}(c)\geq 1$ and thus  $T_{{q+1}}-\tau_{T_{q}}(c)\geq 1$
by Step 1.
Using (\ref{danslescellules}), we conclude that for all 
$i\in c_\la$, $\eta_{T_ST_{q+1}-}^\la(i)=\min(N^{S,0}_{T_S T_{q+1}-}(i)
-N^{S,0}_{T_S \tau_{T_q}(c)(i)}  ,1)=1$
(at least one seed falls on each site
during a time interval of length greater than $T_S$).

\vip

{\bf Step 5.} We now prove that if $\tH_{T_{q+1}-}(x)=0$ for some $x \in \cB_M$,
then for all $i\in x_\la$, $\eta^\la_{T_S T_{q+1}-}(i)=1$.

\vip

{\it Preliminary considerations.}
Let $k\in \{1,\dots,n\}$
such that $x=X_k$, which is at the boundary of two
cells $c_-,c_+ \in \cC_M$. We know that 
$\tH_{T_{q+1}-}(x)=0$, whence $H_{T_{q+1}-}(x)=0$ and 
$Z_{T_{q+1}-}(x)=Z_{T_{q+1}-}(c_+)=Z_{T_{q+1}-}(c_-)=1$. This implies
that $T_{q+1}\geq 1$, because $Z_t(x)=t$ for all $t<1$, all $x\in [-A,A]$.

\vip

No fire has concerned
$(c_-)_\la$ during
$(T_S\rho^\la_{T_{q+1}-}(c_-), T_S T_{q+1})$
(by definition of $\rho^\la_{T_{q+1}-}(c_-)$).
But Step 1 implies that 
$\rho^\la_{T_{q+1}-}(c_-)=\tau_{T_{q+1}-}(c_-) \leq T_{q+1}-1$,
because $Z_{T_{q+1}-}(c_-)=1$, see (\ref{tauzC}). 
Using a similar argument for $c_+$, we conclude that 
no match falling outside $(X_k)_\la$ can affect $(X_k)_\la$ during
$(T_S(T_{q+1}-1), T_ST_{q+1})$ (because to affect $(X_k)_\la$, a match falling
outside $(X_k)_\la$ needs to cross $c_-$ or $c_+$). 

\vip

{\it Case 1.} First assume that $k\geq q+1$. Then we know that
no fire has fallen on $(X_k)_\la$ during $[0,T_ST_{q+1})$.
Due to the preliminary considerations, we deduce that 
no fire at all has concerned $(X_k)_\la$ during 
$(T_S(T_{q+1}-1), T_ST_{q+1})$. This time interval is of
length greater than $T_S$. Thus  $(X_k)_\la$ is completely
occupied at time $T_S T_{q+1}-$.

\vip

{\it Case 2.} Assume that $k \leq q$ and $Z_{T_k-}(X_k)=1$, 
so that there already has been a macroscopic fire
in $(X_k)_\la$ (at time $\ba_\la T_k$).
Since then $Z_{T_k}(X_k)=0$ and
$Z_{T_{q+1}-}(X_k)=1$, we deduce
that $T_{q+1}-T_k\geq 1$. We conclude as in Case 1 that 
no fire at all has concerned $(X_k)_\la$ during 
$(T_S(T_{q+1}-1), T_ST_{q+1})$, which implies the claim.

\vip

{\it Case 3.} Assume that $k \leq q$ and $Z_{T_k-}(X_k)<1$ and 
$T_{q+1}-T_k\geq 1$.
Then there already has been a microscopic fire
in $(X_k)_\la$ (at time $T_ST_k$). But there are no fire in 
$(X_k)_\la$ during $(T_ST_k,T_S T_{q+1})$ and we conclude as in Case 2.

\vip

{\it Case 4.} Assume finally 
that $k \leq q$ and $Z_{T_k-}(X_k)<1$ and $T_{q+1}-T_k<1$.
There has been a microscopic fire in $(X_k)_\la$ (at time $T_ST_k$).
Since $H_{T_{q+1}-}(X_k)=0$, we deduce (see Algorithm \ref{algo2})
that $T_k+\Theta^k_{T_k-Z_{T_k-}(X_k),T_k} \leq T_{q+1}$. 

Consider the zone $C(\eta^\la_{T_ST_k-},\lfloor \bn_\la X_k \rfloor)$ destroyed
by the match falling at time $T_ST_k$. This zone is completely occupied
at time $T_S(T_k+\Theta_{T_k-Z_{T_k-}(X_k),T_k})\leq T_ST_{q+1}$ by definition of
$\Theta_{T_k-Z_{T_k-}(X_k),T_k}$, see Lemma \ref{deftheta}, using here
again the preliminary considerations.

\vip

We deduce that
$C(\eta^\la_{T_ST_k-},\lfloor \bn_\la X_k \rfloor)$ is completely occupied
at time $T_ST_{q+1}-$. 

\vip

Consider now
$i\in (X_k)_\la \setminus C(\eta^\la_{T_ST_k-},\lfloor \bn_\la X_k \rfloor)$.
Then $i$ has not been killed by the fire falling on
$\lfloor \bn_\la X_k \rfloor$. Thus $i$ cannot have been killed
during $(T_S(T_{q+1}-1),T_ST_{q+1})$ (due to the preliminary considerations)
and is thus occupied at time $T_ST_{q+1}-$. This implies the claim.

\vip

{\bf Step 6.} Let us now prove 
that if $\tH_{T_{q+1}-}(x)>0$ and $Z_{T_{q+1}-}(x_+)=1$
for some $x \in \cB_M$, there are $i_1,i_2 \in x_\la$ such that $i_1<i_2$
and $\eta^\la_{T_S T_{q+1}-}(i_1)=1$, $\eta^\la_{T_S T_{q+1}-}(i_2)=0$.

\vip

Recall that $x$ is at the boundary of two cells $c_-,c_+$.
We have either $H_{T_{q+1}-}(x)>0$
or  $Z_{T_{q+1}-}(c_-)<1$ (because $Z_{T_{q+1}-}(c_+)=1$ by assumption). Clearly,
$x=X_k$ for some $k\leq q$, with $Z_{T_k-}(X_k)<1$
(else, we would have  $H_{t}(x)=0$ and $Z_t(c_-)=Z_t(c_+)$ for all $t\in 
[0,T_{q+1})$).
Thus, recalling (\ref{tauzB}), $T_k-Z_{T_k-}(X_k)=\tau_{T_k-}(X_k)
=\tau^\la_{T_k-}(X_k)$, so that $(X_k)_\la$ is completely empty at time
$T_S(T_k-Z_{T_k-}(X_k))$.
\vip

{\it Case 1.} Assume first that $H_{T_{q+1}-}(x)>0$. Then by construction,
see Algorithm \ref{algo2}, $T_k+\Theta^k_{T_k-Z_{T_k-}(X_k),T_k} > T_{q+1}>T_k$.  

\vip

Consider $C=C(\eta^\la_{T_ST_k-},\lfloor \bn_\la X_k \rfloor)$.
By  $\Omega^S_2(\la)$, we have 
$$
C \subset
\lb \lfloor \bn_\la X_k -\bm_\la/2\rfloor, \lfloor \bn_\la X_k +\bm_\la/2 
\rfloor\rb,
$$
because $T_k-Z_{T_k-}(X_k)$ and $T_k$ belong to $\cT_M$ and $0<Z_{T_k-}(X_k)<1$.

\vip

The component $C$ is destroyed at time $T_ST_k$. 
By Definition of
$\Theta^k_{T_k-Z_{T_k-}(X_k),T_k}$, see Lemma \ref{deftheta}, we deduce that
$C$ is not completely occupied at time $T_ST_{q+1} < 
T_S( T_k+\Theta^k_{T_k-Z_{T_k-}(X_k),T_k})$. 
Consequenty, there is $i_2 \in \lb \lfloor \bn_\la X_k -\bm_\la/2\rfloor, 
\lfloor \bn_\la X_k +\bm_\la/2 \rfloor\rb$ such that $\eta^\la_{T_ST_{q+1}-}(i_2)
=0$.

\vip

Finally, using again
$\Omega^S_2(\la)$  there is necessarily (at least)
one seed falling on a site in $\lb \lfloor \bn_\la X_k -\bm_\la +1 \rfloor, 
\lfloor \bn_\la X_k - \bm_\la/2 -1\rfloor
\rb \subset (X_k)_\la$ during
$(T_ST_q,T_ST_{q+1})$. This shows the result.

\vip

{\it Case 2.} Assume next that $H_{T_{q+1}-}(x)=0$ and that 
$T_{q+1}- [T_k-Z_{T_k-}(X_k)]<1$.  Recall that $(X_k)_\la$ is completely
empty at time $T_S(T_k-Z_{T_k-}(X_k))$. Since
$T_k-Z_{T_k-}(X_k)$ and $T_{q+1}$ belong to $\cT_M$ and since their
difference is smaller than $1$ by assumption,
$\Omega^S_2(\la)$ guarantees us the existence of $i_1<i_2<i_3$, all in
$(X_k)_\la$, such that (at least) one seed falls on $i_2$ and no
seed fall on $i_1$ nor on $i_3$ during $(T_S(T_k-Z_{T_k-}(X_k)),T_ST_{q+1})$.
One easily concludes that $i_2$ is occupied and $i_3$ is vacant
at time $T_ST_{q+1}-$, as desired.

\vip

{\it Case 3.} Assume finally that $H_{T_{q+1}-}(x)=0$ and that 
$T_{q+1}- [T_k-Z_{T_k-}(X_k)] \geq 1$. 
Since $H_{T_{q+1}-}(x)=0$, there holds 
$Z_{T_{q+1}-}(c_-)< 1 = Z_{T_{q+1}-}(c_+)$ and
$T_k+\Theta^k_{T_k-Z_{T_k-}(X_k),T_k} \leq T_{q+1}$.
We aim to use
the event $\Omega^S_1(\la)$. We introduce 
$$
t_0=T_k-Z_{T_k-}(X_k)=\tau_{T_k-}(X_k)=\tau^\la_{T_k-}(X_k). 
$$
Observe that
$\tau_{T_k-}(c_-)=\tau_{T_k-}(c_+)=\tau_{T_k-}(x)$
because there is no match falling (exactly) on $x$ during $[0,T_k)$. 
Thus
$Z_{t_0}(x)=Z_{t_0}(c_-)=Z_{t_0}(c_+)=0$.

\vip

Set now $t_1=T_k$ and $s=T_{q+1}$. Observe that $0<t_1-t_0<1$
(because $Z_{T_k-}(X_k)<1$).
Necessarily, $Z_t(c_-)$ has jumped to $0$ at least one time
between $t_0$ and $T_{q+1}-$ (else, one would have  $Z_{T_{q+1}-}(c_-)=1$,
since $T_{q+1}-t_0\geq 1$ by assumption) and this jump occurs
after $t_0+1>t_1$ (since a jump of  $Z_t(c_-)$ requires that $Z_t(c_-)=1$,
and since for all $t\in [t_0,t_0+1)$, $Z_t(c_-)=t-t_0<1$).

\vip

We thus may denote by
$t_2<t_3<\dots<t_K$, for some $K\geq 2$, the successive times of jumps of
the process $(Z_t(c_-),Z_t(c_+))$  during $(t_0+1,s)$. 
We also put $\e=1$ if $t_2$ is a jump of $Z_t(c_+)$ and $\e=-1$ else.
Then we observe that $Z_t(c_-)$ and $Z_t(c_+)$ do never jump to $0$ at the
same time during $(t_0,s]$ (else, it would mean that they are killed by the 
same fire at some time $u$, whence necessarily, 
$H_r(u)=0$ and $Z_r(c_-)=Z_r(c_+)$ for all $r\in (u,s]$).

\vip

Furthermore, there is always at least one jump of $(Z_t(c_-),Z_t(c_+))$
in any time interval of length $1$ (during $[t_0+1,s)$), because else,
$Z_t(c_+)$ and $Z_t(c_-)$ would both become equal to $1$ and thus would
remain equal forever. 

\vip

Finally, 
observe that two jumps of $Z_t(c_-)$ 
cannot occur in a time interval of length $1$
(since a jump of  $Z_t(c_-)$ requires that $Z_t(c_-)=1$)
and the same thing holds for $c_+$. 

\vip

Consequently, the family 
$\cR=\{\e,t_0,\dots,t_K;s\}$ necessarily satisfies the condition $(PP)$.

\vip

Next, there holds that $t_2-t_1 <\Theta^k_{T_k-Z_{T_k-}(X_k),T_k}=\Theta^k_{t_0,t_1}$,
because else, we would have $H_{t_2-}(X_k)=0$ and thus the fire destroying
$c_+$ (or $c_-$) at time $t_2$ would also destroy $c_-$ (or $c_+$),
we thus would have $Z_{t_2}(c_+)=Z_{t_2}(c_-)=0$, so that 
$Z_t(c_+)$ and $Z_t(c_-)$ would remain equal forever.

\vip

Finally, we check that 
$(\eta^\la_{T_St}(i))_{t\geq t_0, i \in x_\la}=
(\zeta^{\la,\cR,k}_t(i+\lfloor \bn_\la x \rfloor))_{t\geq t_0, i \in x_\la}$,
this last process being built with the family of seed processes
$(N^{S,k}_{T_St}(i))_{t\geq t_0, i \in x_\la}$ as in Subsection \ref{sspp}.
Both are empty at time $t_0$. Seeds fall according to the same processes.
In both cases, a first match falls on $\lfloor \bn_\la x \rfloor$
at time $t_1$. In both cases (say that $\e=1$) 
a fire destroys the occupied connected component containing
$\lfloor \bn_\la x \rfloor+\bm_\la$  at time $t_2$ 
(by definition for $\zeta^{\la,\cR}$ and since
$Z_{t_2-}(c_+)=1$ implies, exactly as in Step 4, that $\eta^\la_{T_S t_2-}(i)=1$
for all $i$ in $(c_+)_\la$, so that the fire destroying $c_+$ at time
$t_2$ also destroys the occupied connected component around 
$\lfloor \bn_\la x \rfloor+\bm_\la$, which is at the boundary of $c_+$).
And so on.

\vip

We thus can use $\Omega^S_1(\la)$ and conclude that there are some sites
$i_1<i_2$ in $x_\la$ with  $\eta^\la_{T_ST_{q+1-}}(i_1)=1$ and
$\eta^\la_{T_ST_{q+1-}}(i_2)=0$ as desired.

\vip

{\bf Step 7.} We finally conclude the proof.
We put $z:=Z_{T_{q+1}-}(X_{q+1})$ and consider separately the
cases where $z\in (0,1)$ and $z=1$. 
Observe that $z=0$ never happens,
since by construction, 
$Z_{T_{q+1}-}(X_{q+1})=\min(Z_{T_q}(X_{q+1})+(T_{q+1}-T_q),1)>0$
and since $T_{q+1}>T_q$.

\vip

{\it Case $z\in (0,1)$.} Then in the $LFF_A(BS)$-process,
see Algorithm \ref{algo2}, $Z_{T_{q+1}}(x)=Z_{T_{q+1}-}(x)>0$ for all $x\in [-A,A]$,
whence $\tau_{T_{q+1}}(x)= \tau_{T_{q+1}-}(x)$ and 
$\tau_{T_{q+1}}(c)= \tau_{T_{q+1}-}(c)$ for all $x\in\cB_M$, all $c\in\cC_M$.

\vip

Using Step 3, we see that the match
falling on  $\lfloor \bn_\la X_{q+1} \rfloor$ at time $T_S T_{q+1}$
destroys nothing outside $\lb j_2+1,j_3-1 \rb$. As a conclusion, we
obviously have $\tau^\la_{T_{q+1}}(x)=\tau^\la_{T_{q+1}-}(x)$ and
$\rho^\la_{T_{q+1}}(c)=\tau^\la_{T_{q+1}}(c)=\tau^\la_{T_{q+1}-}(c)$ 
for all $x\in \cB_M\setminus\{X_{q+1}\}$ and all $c \in \cC_M$. 
There also holds 
$\tau^\la_{T_{q+1}}(X_{q+1})=\tau^\la_{T_{q+1}-}(x)$
because $j_1$ (see Step 3), 
which is occupied at time $T_ST_{q+1}-$ and not killed at
time $T_ST_{q+1}$ (thanks to $j_2$), does belong to $(X_{q+1})_\la$.

\vip

We conclude that when $z \in (0,1)$, $\Omega^\la_{T_{q+1}-}$
implies $\Omega^\la_{T_{q+1}}$. Using Step 1, we deduce that $\Omega^\la_{T_q}$
implies $\Omega^\la_{T_{q+1}}$ when $z\in (0,1)$.

\vip

{\it Case $z=1$.} Then there are $a,b \in \cB_M\cup \{-A,A\}$ such that
$D_{T_{q+1}-}(X_{q+1})=[a,b]$. We assume that  $a,b \in \cB_M$, the other
cases being treated similarly. Recalling Algorithm \ref{algo2}, we
know that for all $c \in \cC_M$ with $c \subset (a,b)$, $Z_{T_{q+1}-}(c)=1$,
for all $x \in \cB_M\cap (a,b)$, $\tH_{T_{q+1}-}(x)=0$, while finally
$\tH_{T_{q+1}-}(a)>0$ and $\tH_{T_{q+1}-}(b)>0$.  For the $LFF_A(BS)$-process,
we have 

\vip

(i) $\tau_{T_{q+1}}(c)= T_{q+1}$ for all $c \in\cC_M$ with 
$c \subset (a,b)$,

\vip

(ii) $\tau_{T_{q+1}}(x)= T_{q+1}$ for all $x \in\cB_M \cap (a,b)$,

\vip

(iii) $\tau_{T_{q+1}}(c)=\tau_{T_{q+1}-}(c)$ for all $c \in\cC_M$ with 
$c \cap (a,b)=\emptyset$,

\vip

(iv) $\tau_{T_{q+1}}(x)=\tau_{T_{q+1}-}(x)$ for all $x \in\cB_M \setminus (a,b)$.

\vip

Next, using Steps 4, 5, using Step 6 for $a$ (and a very similar 
result for $b$), we immediately check that
the fire occurring at $\lfloor \bn_\la X_{q+1}  \rfloor$ at time
$T_S T_{q+1}$ 

\vip

$\bullet$ destroys completely all the cells $c\in \cC_M$ with 
$c \subset (a,b)$, 

\vip

$\bullet$ destroys completely all the zones 
$x_\la$ with $x\in \cB_M \cap (a,b)$,

\vip

$\bullet$ does not destroy at all the cells $c\in \cC_M$ with 
$c \cap (a,b)=\emptyset$ and the 
zones $x_\la$ with $x\in \cB_M \setminus [a,b]$,

\vip

$\bullet$ does not destroy completely $a_\la$ nor $b_\la$.

\vip

Consequently, we have 

\vip

(i) $\rho^\la_{T_{q+1}}(c)=\tau^\la_{T_{q+1}}(c)= T_{q+1}$ for all $c \in\cC_M$ with 
$c \subset (a,b)$,

\vip

(ii) $\tau^\la_{T_{q+1}}(x)= T_{q+1}$ for all $x \in\cB_M \cap (a,b)$,

\vip

(iii) $\rho^\la_{T_{q+1}}(c)=\rho^\la_{T_{q+1}-}(c)$ and 
$\tau^\la_{T_{q+1}}(c)=\tau^\la_{T_{q+1}-}(c)$ for all $c \in\cC_M$ with 
$c \cap (a,b)=\emptyset$,

\vip

(iv) $\tau^\la_{T_{q+1}}(x)=\tau^\la_{T_{q+1}-}(x)$ for all 
$x \in\cB_M \setminus (a,b)$.

\vip

We conclude that when $z=1$, $\Omega^\la_{T_{q+1}-}$
implies $\Omega^\la_{T_{q+1}}$. Using Step 1, we deduce that $\Omega^\la_{T_q}$
implies $\Omega^\la_{T_{q+1}}$ when $z=1$.
\end{proof}

\subsection{Conclusion}

To achieve the proof, we will need the following result.

\begin{lem}\label{densitiesbs}
Let $(N^S_t(i))_{t\geq 0,i \in \zz}$ be a family of i.i.d. $SR(\mu_S)$-processes.

(i) Put $K_t^\la=(2\bm_\la+1)^{-1} 
|\{i \in \lb -\bm_\la,\bm_\la\rb \; : \; N^S_{T_St}(i)>0\}|$ and 
$$
U^\la_t
= \left(\frac{\psi_S(K^\la_t)}{T_S}\right)\land 1,
$$ 
recall Notation \ref{phipsi}.
Then for any $T>0$, $\sup_{[0,T]}|U^\la_t - t\land 1 |$ tends a.s. to $0$
as $\la$ tends to $0$.

(ii) For any $k\geq 0$, $\Pr[|C(\min(N^S_{T_St},1),0)|=k] = q_k(t\land 1)$,
where $q_k(z)$ was defined (\ref{defqk}).
\end{lem}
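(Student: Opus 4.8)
\textbf{Plan of proof of Lemma \ref{densitiesbs}.}

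\emph{Part (ii) first, since it is purely a renewal computation.} For a single site, $\Pr[N^S_{T_St}(i)>0]=\nu_S((0,T_St))=\nu_S((0,T_S(t\land1)))$, because $\nu_S$ is supported on $[0,T_S]$ (recall $(H_S(BS))$), so for $t\geq 1$ the site is a.s.\ occupied. Writing $z=t\land 1$, the sites are i.i.d., each vacant with probability $\nu_S((zT_S,T_S))$. Then $\{|C(\min(N^S_{T_St},1),0)|=0\}=\{N^S_{T_St}(0)=0\}$ has probability $\nu_S((zT_S,T_S))=q_0(z)$; and for $k\geq1$, $\{|C|=k\}$ is the union over $j=0,\dots,k-1$ of the events that site $-j-1$ and site $-j+k$ are vacant while the $k$ intermediate sites (containing $0$) are occupied, giving $k[\nu_S((zT_S,T_S))]^2[\nu_S((0,zT_S))]^k=q_k(z)$. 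This is exactly the computation already done in the proof of Lemma \ref{deftheta} (the law of $\Theta_{t_0,t_1}$ has the same combinatorial structure), so I would just transcribe it.

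\emph{Part (i): pointwise convergence via the law of large numbers.} For fixed $t$, the random variables $(\indiq_{\{N^S_{T_St}(i)>0\}})_{i\in\lb-\bm_\la,\bm_\la\rb}$ are i.i.d.\ Bernoulli with parameter $\nu_S((0,T_S(t\land1)))$, and $\bm_\la\to\infty$, so by the strong law of large numbers (applied along the sequence indexed by $\la$, or rather along any sequence $\la_n\to0$; since $\bm_\la$ is non-increasing this is a genuine SLLN with a growing sample size), $K^\la_t\to\nu_S((0,T_S(t\land1)))$ a.s. Since $\psi_S$ is the continuous inverse of $z\mapsto\nu_S((0,z))$ on $(0,1)$, and $\nu_S((0,T_S(t\land1)))\in[0,1]$, we get $\psi_S(K^\la_t)/T_S\to (t\land1)\land$ (something); more precisely, when $t<1$ the limit is $\psi_S(\nu_S((0,T_St)))/T_S=t$, and when $t\geq1$ the empirical density tends to $1$ so $U^\la_t\to1=t\land1$. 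Hence $U^\la_t\to t\land1$ a.s.\ for each fixed $t$.

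\emph{Upgrading to uniform convergence on $[0,T]$.} This is the only real step. The map $t\mapsto U^\la_t$ is not monotone in general (the empirical density $K^\la_t$ is monotone in $t$, being a count of occupied sites which only increases, so actually $t\mapsto K^\la_t$ \emph{is} non-decreasing, hence so is $t\mapsto U^\la_t$ since $\psi_S$ is increasing). Given monotonicity of both $U^\la_\cdot$ and the continuous limit $t\mapsto t\land1$, pointwise convergence on the dense set $\mathbb{Q}\cap[0,T]$ plus a Dini-type argument (the standard ``Pólya/Dini lemma'': a monotone sequence of monotone functions converging pointwise to a continuous monotone limit converges uniformly on a compact interval) yields $\sup_{[0,T]}|U^\la_t-t\land1|\to0$ a.s. So the plan is: (1) observe $K^\la_\cdot$ is non-decreasing in $t$; (2) SLLN for fixed rational $t$; (3) continuity of $\psi_S$ to transfer the limit; (4) the monotone-Dini argument for uniformity. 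The main obstacle is being careful that the SLLN is being applied to a triangular array (sample size $2\bm_\la+1$ depending on $\la$); I would handle this by noting $\bm_\la$ is non-increasing and $\bm_\la\to\infty$, so after passing to an appropriate subsequence where $\bm_\la$ is strictly increasing one has an honest SLLN, and then monotonicity in $\la$ of the relevant empirical averages (or a Borel–Cantelli argument using a concentration bound for the binomial) fills the gap.
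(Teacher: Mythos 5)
Your proof is correct and takes essentially the same route as the paper's: for (ii) the same Bernoulli/combinatorial computation (the paper simply treats $t\geq 1$ as the trivial case $q_k(1)=0$), and for (i) the binomial law of $(2\bm_\la+1)K^\la_t$, continuity of $\psi_S$, and the Dini-type argument for nondecreasing functions with continuous limit. One remark: there is no triangular-array issue, since for fixed $t$ the indicators $\indiq_{\{N^S_{T_St}(i)>0\}}$ do not depend on $\la$ and $K^\la_t$ is just their empirical mean over the window $\lb -\bm_\la,\bm_\la\rb$ with $\bm_\la\to\infty$ deterministically, so the ordinary SLLN applies directly (note that empirical averages are not monotone in the window size, so the ``monotonicity in $\la$'' patch you sketch would not work, but it is not needed and your Borel--Cantelli alternative is also fine).
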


\begin{proof} We start with (i). First
observe that $t\mapsto U^\la_t$ and $t \mapsto t\land 1$ are
nondecreasing and  $t \mapsto t\land 1$ is continuous. By the Dini
Theorem, it suffices to prove that for all $t\in [0,T]$, a.s.,
$\lim U^\la_t = t\land 1$. To do so, observe that $(2\bm_\la+1)K^\la_t$
has a binomial distribution with parameters $2\bm_\la +1$
and $\nu_S((0,T_St))$. Thus $K^\la_t$ tends a.s. to $\nu_S((0,T_St))$.
Hence $U^\la_t$ tends a.s. to $(\psi_S(\nu_S((0,T_St)))/T_S)\land 1=t\land 1$
by definition of $\psi_S$.

\vip

We now check (ii). If $t\geq 1$, then obviously, $\min(N^S_{T_St}(i),1)=1$
for all $i\in\zz$, whence $|C(\min(N^S_{T_St},1),0)|=\infty$ a.s. 
Consequently, $\Pr[|C(\min(N^S_{T_St},1),0)|=k]=0=q_k(1)$. 

\vip

For $t<1$, the result relies on a simple
computation involving the i.i.d. random variables $\min(N^S_{T_St}(i),1)$,
which have a Bernoulli distribution with parameter $\nu_S((0,T_St))$:
if $k=0$, there holds 
$$\Pr[|C(\min(N^S_{T_St},1),0)|=0]=\Pr[N_{T_St}(i)=0]
=\nu_S((T_St,T_S))=q_0(t). 
$$
For $k\geq 1$,
\begin{align*}
&\Pr[|C(\min(N^S_{T_St},1),0)|=k]\\
=& \sum_{j=0}^{k-1}
\Pr\left[N^S_{T_St}(j-k)=N^S_{T_St}(j+1)=0, \forall i \in \lb j-k+1,j \rb,
N^S_{T_St}(i)=1 \right]\\
=& k [\nu_S((T_St,T_S))]^2 [\nu_S((0,T_St))]^k=q_k(t),
\end{align*}
which ends the proof.
\end{proof}

We finally give the

\begin{preuve} {\it of Proposition \ref{converge1A} when $\beta=BS$.}
Let us fix $x_0 \in (-A,A)$, $t_0\in (0,T]$ and $\e>0$. We will prove that 
with our coupling (see Subsection \ref{coucou}), there holds

\vip

(a) $\lim_{\la\to 0} \Pr\left[ \bdelta(D^\la_{t_0}(x_0),D_{t_0}(x_0) )
>\e\right] =0$;

\vip

(b) $\lim_{\la\to 0} \Pr\left[ \bdelta_T(D^\la(x_0),D(x_0) )>\e\right] =0$;

\vip

(c) $\lim_{\la\to 0} \Pr\left[ \sup_{[0,T]} |Z_t^\la(x_0)-Z^\la_t(x_0)|\geq \e
\right] =0$;

\vip

(d) $\lim_{\la \to 0} \Pr\left[ |C(\eta^\la_{T_St_0},\lfloor \bn_\la x_0 \rfloor)|
=k \right] = \E[q_k(Z_t(x_0))]$.

\vip

Recall that $q_k(z)$ was defined, for $k\geq 0$ and
$z\in [0,1]$ in (\ref{defqk}). These points will clearly imply the result.

\vip

First, we introduce, for $\zeta>0$,  
the event $\Omega^{x_0}_{A,T}(\zeta)$ on which
$x_0 \notin \cup_{q=1}^n [X_q - \zeta, X_q + \zeta]$. The probability
of this event obviously tends to $1$ as $\zeta \to 0$.

\vip

On  $\Omega^{x_0}_{A,T}(\zeta)$, it holds that for $\la>0$ small enough
(say, small enough such that $4 \bm_\la/\bn_\la < \zeta$),
$\lfloor \bn_\la x_0 \rfloor \notin 
\cup_{q=1}^n (X_q)_\la$.  We then call $c_0\in \cC_M$
the cell containing $x_0$.  

\vip

{\bf Step 1.} We first show that (a) (which holds for an arbitrary
value of $t_0\in (0,T]$) implies (b). Indeed, we have by construction,
for any $t\in [0,T]$, $\bdelta(D^\la_{t}(x_0),D_{t}(x_0) )< 4A$. Hence
by dominated convergence, (a) implies that 
$\lim_{\la \to 0} \E \left[\bdelta(D^\la_{t}(x_0),D_{t}(x_0) ) \right] =0$, 
whence again by dominated convergence, 
$\lim_{\la \to 0} \E \left[\bdelta_T(D^\la(x_0),D(x_0) ) \right] =0$.

\vip

{\bf Step 2.} Due to Lemma \ref{ggg}, we know that on 
$\Omega(\alpha,\la) \cap \Omega^{x_0}_{A,T}(\zeta)$, there holds that
$\tau^\la_t(c_0)=\rho^\la_t(c_0)=\tau_t(x_0)$ for all $t\in [0,T]$.
This implies that for all $i\in (c_0)_\la$, for all $t \in [0,T]$, 
$$
\eta^\la_{T_St}(i)=\min(N^{S,0}_{T_St}(i)-N^{S,0}_{T_S \tau_t(x_0)}(i), 1).
$$
We also recall that by construction, $(\tau_t(x_0))_{t\geq 0}$ is independent of
$(N^{S,0}_{t}(i))_{t\geq 0, i \in \zz}$.

\vip

{\bf Step 3.} Here we prove (d), for some fixed $k\geq 0$.
Let $\delta>0$ be fixed. We first consider $\alpha_0>0$, $\zeta_0>0$
and $\la_0>0$ such that for all $\la \in (0,\la_0)$, 
$\Pr\left[ \Omega(\alpha_0,\la) \cap \Omega^{x_0}_{A,T}(\zeta_0)\right] 
> 1-\delta$.
Then we consider $\la_k\leq \la_0$ in such a way
that for $\la \in (0,\la_k)$,  $\lb \lfloor \bn_\la x_0 \rfloor -k-1, 
\lfloor \bn_\la x_0 \rfloor +k+1 \rb \subset (c_0)_\la$ on 
$\Omega^{x_0}_{A,T}(\zeta_0)$ (it suffices that $2k < \zeta_0 \bn_\la$ for all
$\la\in(0,\la_k)$).

\vip

We easily conclude: for $\la \in (0,\la_k)$, recalling 
(\ref{tauzC}), using Lemma \ref{densitiesbs}-(ii) together with 
a (spatial and temporal) stationarity argument, using Step 2
and that $(N^{S,0}_{t}(i))_{t\geq 0, i \in \zz}$ is independent of 
$\Omega^{x_0}_{A,T}(\zeta)\cap \Omega(\alpha,\la)$ and $\tau_t(x_0) $, we
obtain
\begin{align*}
&\left|\Pr\left[|C(\eta^\la_{T_St},\lfloor \bn_\la x_0 \rfloor)|=k \right] - 
\E\left[q_k(Z_t(x_0)) \right] \right|\\
=& \left|\Pr\left[|C(\eta^\la_{T_St},\lfloor \bn_\la x_0 \rfloor)|=k \right] - 
\E\left[q_k(\min(t-\tau_t(x_0),1) ) \right] \right|\\
\leq& \Pr \left[ \left(\Omega(\alpha,\la) \cap \Omega^{x_0}_{A,T}(\zeta)
\right)^c\right] < \delta.
\end{align*}
This concludes the proof of (d).

\vip

{\bf Step 4.} We next prove (c). For $\delta>0$
fixed, we consider $\alpha_0>0$, $\zeta_0>0$
and $\la_0>0$ be as in Step 3. 
Consider the successive
values $0=s_0<s_1<\dots<s_l<T$ of $(\tau_t(x_0))_{t\in [0,T]}$.  
Set also $s_{l+1}=T$.
Recall the definition of $Z^\la_t(x)$, see (\ref{zlambda}), and compare
to Lemma \ref{densitiesbs}-(i).

\vip

Let $k\in \{0,\dots,l\}$ be fixed. Denote by $(U^{k,\la}_t)_{t\geq 0}$
the process defined as in Lemma \ref{densitiesbs}-(i) with the seed
process $(N^{S,0}_{s_k/T_S+t}(i-\lfloor \bn_\la x_0 \rfloor)
-N^{S,0}_{s_k/T_S}(i-\lfloor \bn_\la x_0 \rfloor))_{t\geq 0,i \in \zz}$
(this is indeed a family of $SR(\mu_S)$-processes by stationarity
and since $s_1,\dots,s_l$ are independent of 
$(N^{S,0}_t(i))_{t\geq 0, i \in \zz}$).
Then due to Lemma \ref{densitiesbs}-(i), for all $\la>0$ small enough,
say $\la \in (0,\la_1)$,
$$
\Pr\left(\sup_{[s_k,s_{k+1})} |U^{k,\la}_{t-s_k}- (t-s_k)\land 1| \geq \e\right)
\leq \delta.
$$
But on $\Omega(\alpha,\la) \cap \Omega^{x_0}_{A,T}(\zeta)$, it holds that
$Z^\la_{t}(x_0)=U^{k,\la}_{t-s_k}$ for all $t\in [s_k,s_{k+1})$, see Step 2. 
It also holds, recall (\ref{tauzC}), that $Z_t(x_0)=(t-s_k)\land 1$ for
$t\in [s_k,s_{k+1})$. As a conclusion, for all $\la>0$ small enough,
\begin{align*}
\Pr\left( \sup_{[s_k,s_{k+1})} |Z^\la_t(x_0)-Z_t(x_0)|\geq \e \right)
\leq& \Pr\left( (\Omega(\alpha,\la) \cap \Omega^{x_0}_{A,T}(\zeta) )^c\right)\\
&+ \Pr\left(\sup_{[s_k,s_{k+1})} |U^{k,\la}_{t-s_k}- (t-s_k)\land 1| \geq \e\right) 
\leq 2\delta.
\end{align*}
Observing finally that $l\leq \pi_M([0,T]\times(-A,A))$ and that
$\E[ \pi_M([0,T]\times(-A,A))]=2TA$, we easily deduce that 
for all $\la>0$ small enough,
$$
\Pr\left( \sup_{[0,T]} |Z^\la_t(x_0)-Z_t(x_0)|\geq \e \right)
\leq 2TA\delta.
$$
Point (c) immediately follows.

\vip

{\bf Step 5.} It remains to prove (a). Let $\delta>0$.
Put $\cT_M^*=\cT_M\cup\{t_0\}$.
Define the events $\Omega_M^*(\alpha)$, $\Omega^{S,*}_1(\la)$ and
$\Omega^{S,*}_2(\la)$ as $\Omega_M(\alpha)$, $\Omega^S_1(\la)$ and
$\Omega^{S}_2(\la)$, replacing $\cT_M$ by $\cT_M^*$. 
Define also $\Omega^*(\la,\alpha)=\Omega^M_{A,T}(\la)\cap\Omega^*_M(\alpha)
\cap \Omega^{S,*}_1(\la)\cap \Omega^{S,*}_2(\la)$. Clearly,
choosing $\alpha_1>0$ and $\zeta_1>0$ small enough, 
we have $\Pr[\Omega^*(\la,\alpha_1) 
\cap \Omega^{x_0}_{A,T}(\zeta_1)]\geq
1-\delta$ for all $\la>0$ small enough, say $\la\in (0,\la_2)$.

\vip

On $\Omega^*(\la,\alpha_1) \cap \Omega^{x_0}_{A,T}(\zeta_1)$, we can argue
exactly as in the proof of Lemma \ref{ggg} to check that

\vip

$(i)$ if $Z_{t_0}(x_0)<1$, then $D_{t_0}(x_0)=\{x_0\}$ and
$C(\eta^\la_{T_St},\lfloor \bn_\la x_0 \rfloor)\subset (x_0)_\la$ 
(see Step 3 of the proof of Lemma \ref{ggg}), whence
$D_{t_0}^\la (x_0) \subset [x_0- \bm_\la/\bn_\la,x_0+\bm_\la/\bn_\la]$.
We deduce that 
$\bdelta(D_{t_0}(x_0),D_{t_0}^\la (x_0)) \leq 2 \bm_\la/\bn_\la$;

\vip

$(ii)$ if $Z_{t_0}(x_0)=1$ and $D_{t_0}(x_0)=[a,b]$ for some
$a,b \in \cB_M \cup\{-A,A\}$, then 

\vip

$\bullet$ for all $c\in \cC_M$ with
$c\subset [a,b]$, $\eta^\la_{T_St}(i)=1$ for all $i \in c_\la$
(see Step 4 of the preceding proof);

\vip

$\bullet$ for all $x\in \cB_M\cap (a,b)$, $\eta^\la_{T_St}(i)=1$ for all 
$i \in x_\la$ (see Step 5 of the preceding proof);

\vip

$\bullet$ there are $i\in a_\la$ and $j \in b_\la$ such that  $\eta^\la_{T_St}(i)=
\eta^\la_{T_St}(j)=0$ (see Step 6 of the preceding proof);

\vip

so that 
$$
\lb \lfloor \bn_\la a \rfloor +\bm_\la+1, \lfloor \bn_\la b \rfloor -\bm_\la-1  
\rb \subset C(\eta^\la_{T_St},\lfloor \bn_\la x_0 \rfloor )
\subset \lb \lfloor \bn_\la a \rfloor -\bm_\la, \lfloor \bn_\la b \rfloor 
+\bm_\la\rb,
$$
and thus $[a+\bm_\la/\bn_\la,b-\bm_\la/\bn_\la]\subset D^\la_{t_0}(x_0)
\subset [a-\bm_\la/\bn_\la,b+\bm_\la/\bn_\la]$, whence as previously,
$\bdelta(D_{t_0}(x_0),D_{t_0}^\la (x_0)) \leq 2 \bm_\la/\bn_\la$.

\vip
Thus for all $\la \in (0,\la_2)$,
on $\Omega^*(\la,\alpha_1) \cap \Omega^{x_0}_{A,T}(\zeta_1)$, we always
have  $\bdelta(D_{t_0}(x_0),D_{t_0}^\la (x_0)) \leq 2 \bm_\la/\bn_\la$.
We conclude that for $\delta>0$, for all $\la\in(0,\la_2)$ small enough
(so that $2 \bm_\la/\bn_\la<\e$), there holds 
$$
\Pr\left[\bdelta(D_{t_0}(x_0),D_{t_0}^\la (x_0)) >\e \right] 
\leq \Pr[(\Omega^*(\la,\alpha) \cap \Omega^{x_0}_{A,T}(\zeta))^c] <\delta.
$$ 
This concludes the proof.
\end{preuve}

\section{Convergence proof when $\beta = \infty$}\label{pri}
\setcounter{equation}{0}

The aim of this section is to prove
Proposition \ref{converge1A} in the case where $\beta=\infty$ and
this will conclude the proof of Theorem \ref{convergebs}. 
This section generalizes consequently \cite[Section 4]{bf}
and the proof we present here is quite different and slightly
less intricate. We follow essentially the ideas of the previous section.
Some points are easier (because the height of the barriers are 
deterministic in the limit process), but some other points are
more complicated (in particular, the height of the barriers are not constant
as a function of $\la$).

\vip

In the whole
section, we assume $(H_M)$ and $(H_S(\infty))$.
The parameters $A>0$ and $T>0$ are fixed and we
omit the subscript/superscript $A$ in the whole proof.

\vip

We recall that $\ba_\la$, $\bn_\la$ and $\bm_\la$ are defined
in (\ref{ala}), (\ref{nla}) and (\ref{mla}).
For $A>0$, we set as usual $A_\la = \lfloor A \bn_\la \rfloor$ and 
$I_A^\la=\lb -A_\la,A_\la\rb$.
For $i\in \zz$, we set $i_\la=[i/\bn_\la, (i+1)/\bn_\la)$.
For $[a,b]$ an interval of $[-A,A]$ and $\la\in(0,1)$, we introduce,
assuming that $-A<a<b<A$,
\begin{align}
[a,b]_\la =&\lb \left\lfloor  \bn_\la a +\bm_\la
\right\rfloor +1 ,  \left\lfloor \bn_\la b-\bm_\la \right\rfloor -1\rb 
\subset \zz, \label{xxbb2} \\
{[-A,b]}_\la =&\lb -A_\la ,  
\left\lfloor \bn_\la b-\bm_\la \right\rfloor -1\rb 
\subset \zz, \nonumber \\
{[a,A]}_\la =&\lb \left\lfloor  \bn_\la a +\bm_\la
\right\rfloor +1,  A_\la \rb 
\subset \zz, \nonumber
\end{align}
For $x\in (-A,A)$ and $\la\in(0,1)$, we introduce as usual
\begin{align}\label{xla2}
x_\la=&\lb \left\lfloor   \bn_\la x-\bm_\la 
\right\rfloor , \left\lfloor \bn_\la x + \bm_\la \right\rfloor 
\rb \subset \zz.
\end{align}

\subsection{Speed of occupation}

We start with some easy estimates.

\begin{lem}\label{occupation}
Consider a family of i.i.d. $SR(\mu_S)$-processes 
$(N^{S}_{t}(i))_{t\geq 0,i\in\zz}$. Let $a<b$.

(i) For $t<1$, $\lim_{\la\to 0} \Pr[\forall i \in \lb 
\lfloor a\bm_\la \rfloor, \lfloor b\bm_\la \rfloor
\rb, N^S_{\ba_\la t}(i)>0]=0$.

(ii) For $t\geq 1$, 
$\lim_{\la\to 0} \Pr[\forall i \in \lb \lfloor a\bm_\la \rfloor , 
\lfloor b\bm_\la \rfloor
\rb, N^S_{\ba_\la t}(i)>0]=1$.

(iii) For $t<1$, $\lim_{\la\to 0} 
\Pr[\forall i \in \lb \lfloor a\bn_\la \rfloor, 
\lfloor b\bn_\la \rfloor
\rb, N^S_{\ba_\la t}(i)>0]=0$.

(iv) For $t > 1$, 
$\lim_{\la\to 0} \Pr[\forall i \in \lb \lfloor a\bn_\la \rfloor, 
\lfloor b\bn_\la \rfloor
\rb, N^S_{\ba_\la t}(i)>0]=1$.

(v) For $t>0$, $\lim_{\la\to 0} \Pr[\exists i \in \lb \lfloor a\bm_\la \rfloor, 
\lfloor b\bm_\la \rfloor
\rb, N^S_{\ba_\la t}(i)>0]=1$.
\end{lem}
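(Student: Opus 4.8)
\textbf{Plan of proof of Lemma \ref{occupation}.}

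The plan is to reduce everything to the elementary fact that for a fixed site $i$ and a $SR(\mu_S)$-process $(N^S_t(i))_{t\geq 0}$, $\Pr[N^S_{\ba_\la t}(i)=0]=\nu_S((\ba_\la t,\infty))$, and then to exploit independence across sites together with the defining identity $\nu_S((\ba_\la,\infty))=\la\ba_\la$ and the hypothesis $(H_S(\infty))$. First I would record that, by independence,
\begin{align*}
\Pr\Big[\forall\, i\in\lb\lfloor a\bk_\la\rfloor,\lfloor b\bk_\la\rfloor\rb,\ N^S_{\ba_\la t}(i)>0\Big]
=\big(1-\nu_S((\ba_\la t,\infty))\big)^{N_\la},
\end{align*}
where $\bk_\la$ is $\bm_\la$ or $\bn_\la$ according to the case and $N_\la=\lfloor b\bk_\la\rfloor-\lfloor a\bk_\la\rfloor+1\sim (b-a)\bk_\la$. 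Taking logarithms, this behaves like $\exp(-N_\la\,\nu_S((\ba_\la t,\infty)))$ up to lower-order terms (using $\nu_S((\ba_\la t,\infty))\to 0$), so everything comes down to the asymptotics of $\bk_\la\,\nu_S((\ba_\la t,\infty))$.

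For the $\bn_\la$ statements (iii) and (iv): since $\bn_\la=\lfloor 1/(\la\ba_\la)\rfloor\sim 1/\nu_S((\ba_\la,\infty))$ by (\ref{ala})--(\ref{nla}), we get $\bn_\la\,\nu_S((\ba_\la t,\infty))\sim \nu_S((\ba_\la t,\infty))/\nu_S((\ba_\la,\infty))\to t^\infty$ by $(H_S(\infty))$. For $t<1$ this limit is $0$, hence the occupation probability is $(1-\nu_S((\ba_\la t,\infty)))^{\bn_\la(b-a)(1+o(1))}\to 1$; wait---I must be careful: $t^\infty=0$ means $\nu_S((\ba_\la t,\infty))\bn_\la\to 0$, and since $\nu_S((\ba_\la t,\infty))$ is small while the exponent $\sim (b-a)\bn_\la$, the product $\bn_\la\nu_S((\ba_\la t,\infty))\to 0$ forces the whole expression to tend to $1$, which is what (iii) needs only if the claimed limit is $1$; re-reading the statement, (iii) claims limit $0$ for $t<1$. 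That is because for $t<1$, $\nu_S((\ba_\la t,\infty))$ is bounded below (indeed $\nu_S((\ba_\la t,\infty))\gg \nu_S((\ba_\la,\infty))$ since $t<1$), so $N_\la\nu_S((\ba_\la t,\infty))\to\infty$ and the probability tends to $0$; for (iv) with $t>1$ we have $\nu_S((\ba_\la t,\infty))/\nu_S((\ba_\la,\infty))\to t^\infty=\infty$, wait that also gives $N_\la\nu_S\to\infty$. The correct bookkeeping: for $t>1$, $\nu_S((\ba_\la t,\infty))=o(\nu_S((\ba_\la,\infty)))=o(1/\bn_\la)$, so $\bn_\la\nu_S((\ba_\la t,\infty))\to 0$ and the occupation probability $\to 1$; for $t<1$, $\nu_S((\ba_\la t,\infty))\gg\nu_S((\ba_\la,\infty))\sim 1/\bn_\la$, so $\bn_\la\nu_S((\ba_\la t,\infty))\to\infty$ and the probability $\to 0$. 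This is exactly the mechanism already used in Step 2-(ii) of the heuristics and in Proposition \ref{plocla}, so the computation is routine once phrased via $(H_S(\infty))$.

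For the $\bm_\la$ statements (i), (ii), (v) the only extra input is the second line of (\ref{mla}): under $(H_S(\infty))$, for all $z\in[0,1)$, $\bm_\la\,\nu_S((\ba_\la z,\infty))\to\infty$. Thus for $t<1$ (case (i)) we get $N_\la\,\nu_S((\ba_\la t,\infty))\sim(b-a)\bm_\la\,\nu_S((\ba_\la t,\infty))\to\infty$, whence the occupation probability tends to $0$; case (v) is the complementary event $1-(1-\nu_S((\ba_\la t,\infty)))^{N_\la}$, wait (v) asks for $t>0$ arbitrary and claims limit $1$ for the existence of an occupied site, i.e. $1-\nu_S((0,\ba_\la t))^{N_\la}$; here one uses that $\nu_S((0,\ba_\la t))=1-\nu_S((\ba_\la t,\infty))\le 1-\nu_S((\ba_\la,\infty))=1-\la\ba_\la\le 1-1/(2\bn_\la)$ for small $\la$, and since $\bm_\la\to\infty$ with $\bm_\la=o(\bn_\la)$ this is not immediately enough, so instead for $t\ge 1$ one just uses (ii), and for $t<1$ one uses that $\nu_S((0,\ba_\la t))\le 1-\nu_S((\ba_\la t,\infty))$ with $\bm_\la\nu_S((\ba_\la t,\infty))\to\infty$, giving $\nu_S((0,\ba_\la t))^{N_\la}\le\exp(-(b-a)\bm_\la\nu_S((\ba_\la t,\infty))(1+o(1)))\to 0$. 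Finally (ii): for $t\ge1$, $\nu_S((\ba_\la t,\infty))\le\nu_S((\ba_\la,\infty))=\la\ba_\la\sim 1/\bn_\la$, and since $\bm_\la=o(\bn_\la)$, $N_\la\nu_S((\ba_\la t,\infty))=O(\bm_\la/\bn_\la)\to 0$, so the occupation probability $\to1$. The only mildly delicate point---and the one I would single out as the main thing to get right---is the uniform control of the $o(1)$ error in passing from $(1-\nu_S((\ba_\la t,\infty)))^{N_\la}$ to $\exp(-N_\la\nu_S((\ba_\la t,\infty)))$, which is handled by the standard two-sided bound $x-x^2\le -\log(1-x)\le x/(1-x)$ valid for $x$ small, together with $\nu_S((\ba_\la t,\infty))\to0$; with that in hand all five items follow immediately from $(H_S(\infty))$ and (\ref{mla}).
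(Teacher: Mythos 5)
Your treatment of (i)--(iv) is correct and is essentially the paper's own argument: write the all-occupied probability as $\nu_S((0,\ba_\la t))^{N_\la}\simeq \exp(-N_\la\,\nu_S((\ba_\la t,\infty)))$ and sort out the asymptotics of $\bk_\la\,\nu_S((\ba_\la t,\infty))$ using $(H_S(\infty))$ for the $\bn_\la$-scale and (\ref{mla}) together with $\bm_\la=o(\bn_\la)$, $\nu_S((\ba_\la,\infty))=\la\ba_\la$ for the $\bm_\la$-scale. After your mid-paragraph hesitation you land on the correct bookkeeping in every one of these four cases.

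There is, however, a genuine error in your handling of (v). The event in (v) is $\{\exists\, i:\ N^S_{\ba_\la t}(i)>0\}$, i.e.\ \emph{at least one occupied site}; its complement is ``all sites vacant,'' whose probability is $\left[\nu_S((\ba_\la t,\infty))\right]^{N_\la}$, and this tends to $0$ for every $t>0$ simply because $\nu_S((\ba_\la t,\infty))\to 0$ (as $\ba_\la\to\infty$) while $N_\la\sim(b-a)\bm_\la\to\infty$ --- this is the one-line argument the paper gives. You instead wrote the probability of (v) as $1-\nu_S((0,\ba_\la t))^{N_\la}=1-\bigl(1-\nu_S((\ba_\la t,\infty))\bigr)^{N_\la}$, which is the probability that \emph{at least one site is vacant}: you took the complement of ``all occupied'' rather than of ``all vacant.'' Consequently, your argument for $t<1$ (showing $\nu_S((0,\ba_\la t))^{N_\la}\to 0$ via $\bm_\la\nu_S((\ba_\la t,\infty))\to\infty$) proves the wrong statement --- it is essentially a repetition of (i) --- and does not establish (v) on $t\in(0,1)$. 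Your fallback to (ii) for $t\geq 1$ is fine, since ``all occupied'' implies ``some site occupied,'' but for $t<1$ the claim is left unproved as written. The fix is immediate: bound $\Pr[\forall i,\ N^S_{\ba_\la t}(i)=0]=\left[\nu_S((\ba_\la t,\infty))\right]^{N_\la}\to 0$, which handles all $t>0$ at once.
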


\begin{proof}
To check points (i) and (ii), it suffices to note that
$$
\nu_S((0,\ba_\la t))^{(b-a)\bm_\la} \sim e^{-(b-a) 
\bm_\la \nu_S((\ba_\la t,\infty))},
$$
which tends to $0$ if $t<1$ (see (\ref{mla})) and to $1$ if $t\geq 1$
(because then $\bm_\la \nu_S((\ba_\la t,\infty))\leq 
\bm_\la \nu_S((\ba_\la,\infty))\simeq \bm_\la/\bn_\la\to 0$).
To check points (iii) and (iv), observe that
$$
\nu_S((0,\ba_\la t))^{(b-a)\bn_\la} \sim e^{-(b-a) 
\bn_\la \nu_S((\ba_\la t,\infty))}
\sim e^{-(b-a) \nu_S((\ba_\la t,\infty))/\nu_S((\ba_\la,\infty))  }
$$
tends to $0$ if $t<1$ and to $1$
if $t>1$ due to $(H_S(\infty))$. Finally, (v) follows from the
fact that
$$
1- \nu_S((\ba_\la t,\infty))^{(b-a)\bm_\la }
$$
obviously tends to $1$.
\end{proof}

\subsection{Height of the barriers}

We describe here the time
needed for a destroyed (microscopic) cluster to be regenerated. Roughly, 
we assume that the zone around $0$ is completely vacant at time
$\ba_\la t_0$. Then we consider the situation where a match falls on the site 
$0$ at some time  $\ba_\la t_1 \in (\ba_\la t_0,\ba_\la(t_0+1))$ and we denote
by $\Theta_{t_0,t_1}^\la$ the delay needed for the destroyed
cluster to be fully regenerated (divided by $\ba_\la$). We show
that $\Theta_{t_0,t_1}^\la\simeq t_1-t_0$ when $\la$ is small.

\begin{lem}\label{convtheta}
Consider a family of i.i.d. $SR(\mu_S)$-processes 
$(N^{S}_{t}(i))_{t\geq 0,i\in\zz}$. Let $0\leq t_0 <t_1 < t_0 +1$ be fixed.
Put $\zeta_{t_0,t}^\la(i)=\min(N^{S}_{\ba_\la(t_0+t)}(i)-N^{S}_{\ba_\la t_0}(i),1)$ 
and  $\zeta_{t_1,t}^\la(i)=\min(N^{S}_{\ba_\la(t_1+t)}(i)-N^{S}_{\ba_\la t_1}(i),1)$ 
for all $t>0$ 
and $i\in \zz$.
Define
$$
\Theta_{t_0,t_1}^\la=\inf \left\{ t>0\; : \; \forall\; i \in 
C(\zeta_{t_0,t_1-t_0}^\la,0),\; 
\zeta_{t_1,t}^\la(i)=1 \right\} \in [0,1].
$$
Then for all $\delta>0$,
$$
\lim_{\la \to 0} \Pr \left[ 
|\Theta_{t_0,t_1}^\la - (t_1-t_0)|\geq \delta\right] = 0.
$$
\end{lem}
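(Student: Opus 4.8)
\textbf{Proof plan for Lemma \ref{convtheta}.}

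The plan is to show that the destroyed cluster $C(\zeta_{t_0,t_1-t_0}^\la,0)$ is microscopic (of size $o(\bm_\la)$ in a suitable sense) and then that a microscopic zone gets completely refilled in a time which concentrates at $t_1-t_0$. First I would describe the cluster $C=C(\zeta_{t_0,t_1-t_0}^\la,0)$: since seeds fall on each site with the first seed after time $\ba_\la(t_1-t_0)$ with probability $\nu_S((\ba_\la(t_1-t_0),\infty))$, the cluster $C$ is of the form $\lb -X,Y \rb$ with $X,Y$ independent geometric random variables of parameter $\nu_S((\ba_\la(t_1-t_0),\infty))$. Hence $|C|$ is, with probability tending to $1$, at most $1/\nu_S((\ba_\la(t_1-t_0),\infty))^{1+\e'}$ for any small $\e'>0$; crucially, by $(H_S(\infty))$ this is $<<\bm_\la$ for $\la$ small (and in fact $<<1/\nu_S((\ba_\la z,\infty))$ for any $z>t_1-t_0$), because $\nu_S((\ba_\la(t_1-t_0),\infty))>>\nu_S((\ba_\la,\infty))\simeq 1/\bn_\la$ under $(H_S(\infty))$. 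So I would fix $\delta>0$ and work on the event $\{|C|\le N_\la\}$ for a well-chosen sequence $N_\la\to\infty$ with $N_\la=o\bigl(1/\nu_S((\ba_\la(t_1-t_0+\delta/2),\infty))\bigr)$ and $N_\la=o(\bm_\la)$, which has probability tending to $1$.

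Next I would establish the two-sided bound on $\Theta_{t_0,t_1}^\la$. For the upper bound: on $\{|C|\le N_\la\}$, the probability that every site of $C$ has received a seed by time $\ba_\la(t_1-t_0+\delta)$ (counting seeds from time $\ba_\la t_1$, i.e. looking at $\zeta_{t_1,\cdot}^\la$) is at least $\nu_S((0,\ba_\la(t_1-t_0+\delta)))^{N_\la}\simeq \exp\bigl(-N_\la\,\nu_S((\ba_\la(t_1-t_0+\delta),\infty))\bigr)$, which tends to $1$ since $N_\la\nu_S((\ba_\la(t_1-t_0+\delta),\infty))\to 0$ by the choice of $N_\la$ (and monotonicity of $\nu_S$ in its argument). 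Hence $\Pr[\Theta_{t_0,t_1}^\la > t_1-t_0+\delta]\to 0$. For the lower bound: I must show that not all sites of $C$ are refilled too early, i.e. before $\ba_\la(t_1-t_0-\delta)$. Conditionally on $C$ containing at least one site (which it does iff $\zeta_{t_0,t_1-t_0}^\la(0)=0$, an event of probability $\nu_S((\ba_\la(t_1-t_0),\infty))$ — I may need to condition on $\{|C|\ge 1\}$, or rather just work inside the already-conditioned picture since $C$ is given), the probability that a given site $i\in C$ gets its first seed after time $\ba_\la t_1$ before time $\ba_\la(t_1-t_0-\delta)$ is $\nu_S((0,\ba_\la(t_1-t_0-\delta)))=1-\nu_S((\ba_\la(t_1-t_0-\delta),\infty))$, so the probability that ALL of $C$ is refilled before $\ba_\la(t_1-t_0-\delta)$ is $\prod_{i\in C}(1-\nu_S((\ba_\la(t_1-t_0-\delta),\infty)))$; I want this to be small. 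This requires $|C|$ to be reasonably large: $|C|\,\nu_S((\ba_\la(t_1-t_0-\delta),\infty))\to\infty$, i.e. $|C|>>1/\nu_S((\ba_\la(t_1-t_0-\delta),\infty))$. But $|C|$ has mean of order $1/\nu_S((\ba_\la(t_1-t_0),\infty))$, which under $(H_S(\infty))$ is $<<1/\nu_S((\ba_\la(t_1-t_0-\delta),\infty))$ — the wrong direction. So the lower bound does \textbf{not} come from the bulk of $C$; instead it must come from a single "deep" site.

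The right argument for the lower bound is therefore: conditionally on $\{|C|\ge 1\}$, at least one endpoint site of $C$, say the site $-X$ where $X\ge 1$, has the property that its first seed after $\ba_\la t_0$ falls after time $\ba_\la t_1$ (that is how it ended up in the cluster at time $t_1$). By the renewal/stationarity structure, conditionally on $X\ge 1$ the residual waiting time on that site after $\ba_\la t_1$ is again $\nu_S$-distributed (or at least stochastically comparable to it, since the residual of a $SR$-process at any fixed time is $\nu_S$), so the probability that this site is still vacant at time $\ba_\la t_1+\ba_\la(t_1-t_0-\delta)$ is $\nu_S((\ba_\la(t_1-t_0-\delta),\infty))/\nu_S((\ba_\la(t_1-t_0),\infty))$ — wait, that also tends to $\infty$ rather than being bounded. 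The cleaner route: simply note $\Pr[\Theta_{t_0,t_1}^\la < t_1-t_0-\delta \mid |C|\ge 1]\le \Pr[\text{every site of }C\text{ refilled before }\ba_\la(t_1-t_0-\delta)]$, and bound this by the probability that the \emph{particular} boundary site $-X$ (which carries a residual-$\nu_S$ clock after $\ba_\la t_1$ by stationarity of the renewal process) is refilled before $\ba_\la(t_1-t_0-\delta)$, which is exactly $\nu_S((0,\ba_\la(t_1-t_0-\delta)))\to \nu_S((0,\infty))\cdot\mathbf{1}$? No — it tends to $1$. The resolution is that we are already implicitly conditioning the whole statement on the \emph{unconditioned} event, i.e. $\Theta_{t_0,t_1}^\la$ is only defined interestingly when $|C|\ge 1$, and when $|C|=0$ we have $\Theta_{t_0,t_1}^\la=0\ne t_1-t_0$; so in fact the event $\{|\Theta_{t_0,t_1}^\la-(t_1-t_0)|\ge\delta\}$ includes $\{|C|=0\}$, which has probability $\nu_S((\ba_\la(t_1-t_0),\infty))\to 0$ — fine, that piece vanishes. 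On $\{|C|\ge 1\}$ we then genuinely need the lower bound, and here I would use: a boundary site of $C$, conditioned on being in $C$, has survived past $\ba_\la t_1$, and by the strong renewal property its law of the next jump from $\ba_\la t_1$ is $\nu_\mu$ \emph{conditioned to the past}, which combined with $(H_S(\infty))$ (the key heavy-decay property, making $\nu_S((\ba_\la z,\infty))/\nu_S((\ba_\la z',\infty))\to 0$ for $z>z'$) lets me choose $N_\la$ and conclude that with probability $\to 1$ at least one site of $C$ stays vacant until $\ba_\la(t_1-t_0-\delta)$. The main obstacle is precisely this lower bound: getting the probabilistic estimate on the survival of a deep boundary site of $C$ right, using stationarity of the $SR(\mu_S)$-process at the random time $\ba_\la t_1$ together with $(H_S(\infty))$, and handling the conditioning on the (random) cluster $C$ correctly. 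Once both bounds are in place, combining them with the estimate $\Pr[|C|=0]\to 0$ gives $\Pr[|\Theta_{t_0,t_1}^\la-(t_1-t_0)|\ge\delta]\to 0$, as required.
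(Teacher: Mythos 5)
Your upper bound ($\Pr[\Theta^\la_{t_0,t_1}>t_1-t_0+\delta]\to 0$) is essentially sound, but the lower bound — which you yourself identify as the main obstacle — contains a genuine error and is never completed. Writing $u=t_1-t_0$, you dismissed the ``bulk'' argument on the grounds that $|C|\simeq 1/\nu_S((\ba_\la u,\infty))$ is much smaller than $1/\nu_S((\ba_\la (u-\delta),\infty))$; this reverses the comparison. Under $(H_S(\infty))$, since $u-\delta<u$ one has $\nu_S((\ba_\la(u-\delta),\infty))\gg \nu_S((\ba_\la u,\infty))$, hence $1/\nu_S((\ba_\la u,\infty))\gg 1/\nu_S((\ba_\la(u-\delta),\infty))$ and $|C|\,\nu_S((\ba_\la(u-\delta),\infty))\simeq \nu_S((\ba_\la(u-\delta),\infty))/\nu_S((\ba_\la u,\infty))\to\infty$: the typical cluster contains many more sites than $1/\nu_S((\ba_\la(u-\delta),\infty))$, so with probability tending to $1$ at least one of them receives no seed during $(\ba_\la t_1,\ba_\la(t_1+u-\delta)]$, which is exactly the lower bound you need. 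The ``deep boundary site'' substitute you then propose is incorrect: every site of $C$ received at least one seed during $(\ba_\la t_0,\ba_\la t_1]$ (that is why it is occupied), so no site of $C$ carries a long vacant clock at time $\ba_\la t_1$; the sites that ``survived'' without a seed are the two delimiting sites, which lie outside $C$ and play no role in $\Theta^\la_{t_0,t_1}$. A secondary point: in both bounds you treat the post-$t_1$ refilling clocks of the sites of $C$ as fresh $\nu_S$-clocks independent of $C$, which is not exact for a renewal process conditioned to have had a jump in $(\ba_\la t_0,\ba_\la t_1]$; the correct per-site quantity is the joint probability $g^\la_S(u,h)=\Pr[N^S_{\ba_\la u}(0)>0,\,N^S_{\ba_\la(u+h)}(0)>N^S_{\ba_\la u}(0)]$.

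For comparison, the paper avoids all of this by an exact computation, as in the bounded-support case: conditioning on the (geometric-type) cluster size and summing the series gives
$\Pr[\Theta^\la_{t_0,t_1}\le h]=\nu_S((\ba_\la u,\infty))+\bigl(\nu_S((\ba_\la u,\infty))/(1-g^\la_S(u,h))\bigr)^2 g^\la_S(u,h)$,
and then the two elementary bounds $g^\la_S(u,h)\ge 1-\nu_S((\ba_\la h,\infty))-\nu_S((\ba_\la u,\infty))$ (for $h>u$) and $g^\la_S(u,h)\le 1-\nu_S((\ba_\la h,\infty))$ (for $h<u$) combined with $(H_S(\infty))$ give both directions at once, with the conditioning handled exactly. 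If you repair the direction of the comparison and quantify the conditional refilling probability via $g^\la_S$ (or via the exact formula), your truncation strategy can be made to work, but as written the proof does not establish $\Pr[\Theta^\la_{t_0,t_1}<t_1-t_0-\delta]\to 0$.
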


\begin{proof}
We can assume that $t_0=0$ by stationarity. We 
put $u=t_1=t_1-t_0$. Exactly as in the case where $\beta=BS$ (see Subsection
\ref{BSHB}), we obtain, for  $h >0$,
$$
\Pr\left[ \Theta_{t_0,t_1}^\la \leq h \right] = \nu_S((\ba_\la u, \infty))
+ \frac{[\nu_S((\ba_\la u, \infty))]^2}{[1-g_S^\la(u,h)]^2}g^\la_S(u,h),
$$
where
$$
g^\la_S(u,h)= \Pr\left[N^S_{\ba_\la u}(0)>0,N^S_{\ba_\la (u+h)}(0)>N^S_{\ba_\la u}(0)  
\right].
$$

For $h>u$, we observe that $g_S^\la(u,h)
\geq 1 - \nu_S((\ba_\la h,\infty))- \nu_S((\ba_\la u, \infty))$, whence
$$
\Pr\left[ \Theta_{t_0,t_1}^\la \leq h \right] \geq \left(
\frac{\nu_S((\ba_\la u, \infty))}{\nu_S((\ba_\la h,\infty))
+\nu_S((\ba_\la u, \infty))  }\right)^2
[1 - \nu_S((\ba_\la h,\infty))- \nu_S((\ba_\la u, \infty))],
$$
which tends to $1$ as $\la\to 0$ due to $(H_S(\infty))$, since $\ba_\la$ 
increases to infinity and since $h>u$.

\vip

For $h<u$, there holds  $g_S^\la(u,h) \leq 1 - \nu_S((\ba_\la h, \infty))$, 
so that
$$
\Pr\left[ \Theta_{t_0,t_1}^\la \leq h \right] \leq 
\nu_S((\ba_\la u, \infty))+ \left(
\frac{\nu_S((\ba_\la u, \infty))}{\nu_S((\ba_\la h,\infty))}\right)^2
[1 - \nu_S((\ba_\la h,\infty))],
$$
which tends to $0$ due to $(H_S(\infty))$ and since $h<u$. 
This concludes the proof.
\end{proof}

\subsection{Persistent effect of microscopic fires}\label{sspp2}

We handle a study similar to subsection \ref{sspp}.

\vip

Recall that $\cR=(\e; t_0,t_1,\dots,t_K;s)$ satisfies $(PP)$ if 

\vip

(i) $K \geq 2$, $\e \in \{-1,1\}$,

\vip

(ii) $0< t_0 < t_1 < \cdots < t_K<s<t_K+1$, 

\vip

(iii) for all $k=0,\dots,K-1$, $t_{k+1}-t_k<1$,

\vip

(iv) $t_2-t_0>1$ and for all $k=2,\dots,K-2$, $t_{k+2}-t_k>1$,

\vip

and that we then set $\e_k= (-1)^k \e$ for $k\geq 0$. 

\vip

For a family of i.i.d. $SR(\mu_S)$-processes 
$(N^S_t(i))_{t\geq 0, i\in \zz}$,
we introduce, for each $\la\in (0,1)$, 
the process $(\zeta^{\la,\cR}_t(i))_{t\geq t_0, i \in \lb -\bm_\la,\bm_\la\rb}$ 
defined as follows:

\vip

$\bullet$ for all $t\in [t_0,t_1)$, all $i \in \lb -\bm_\la,\bm_\la\rb$,
$\zeta_t^{\la,\cR}(i)=\min(N^{S}_{\ba_\la t}(i) -N^{S}_{\ba_\la t_0}(i),1)$, 

\vip

$\bullet$ for all $i \in \lb -\bm_\la,\bm_\la\rb$, 
$\zeta_{t_1}^{\la,\cR}(i) = \zeta_{t_1-}^{\la,\cR}(i) 
\indiq_{\{i \not\in C(\zeta^{\la,\cR}_{t_1-},0)\}}$,

\vip

$\bullet$ for $k=1, \ldots, K-1$, 

$(*)$ for all $t \in (t_k,t_{k+1})$,  $i \in \lb -\bm_\la,\bm_\la\rb$,  
$\zeta_t^{\la,\cR}(i)= \min{(\zeta_{t_k}^{\la,\cR}(i) + N^{S}_{\ba_\la t}(i) 
-N^{S}_{\ba_\la t_k}(i),1)}$,

$(*)$ for all $i \in \lb -\bm_\la,\bm_\la\rb$,   
$\zeta_{t_{k+1}}^{\la,\cR}(i) = \zeta_{t_{k+1}-}^{\la,\cR}(i) 
\indiq_{i \not \in C(\zeta_{t_{k+1}-}^{\la,\cR}, \epsilon_k \bm_\la)}$,

\vip

$\bullet$ for all  $t \in (t_K,\infty)$, $i \in \lb -\bm_\la,\bm_\la\rb$,  
$\zeta_t^{\la,\cR}(i)= \min{(\zeta_{t_K}^{\la,\cR}(i) + 
N^{S}_{\ba_\la t}(i) -N^{S}_{\ba_\la t_K}(i),1)}$.

\vip

Consider the event
$$
\Omega^S_{\cR}(\la)=\left\{
\exists -\bm_\la<i_1<i_2<i_3<\bm_\la\; : \; 
\zeta^{\la,\cR}_s(i_1)=\zeta^{\la,\cR}_s(i_3)=0, \zeta^{\la,\cR}_s(i_2)=1
\right\}.
$$

\begin{lem}\label{pingpong2} Let $\cR=(\e; t_0,t_1,\dots,t_K;s)$ 
satisfy $(PP)$. For each $\la\in (0,1]$, consider the process
$(\zeta_t^{\la,\cR}(i))_{t\geq t_0, i \in \lb -\bm_\la,\bm_\la\rb}$ 
defined above. If $t_2-t_1<t_1-t_0$, there holds
$$
\lim_{\la\to 0} \Pr\left(\Omega^S_{\cR}(\la)  \right) =1.
$$
\end{lem}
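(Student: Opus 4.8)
The strategy is exactly that of the proof of Lemma \ref{pingpong} (case $\beta=BS$), with the only change that the deterministic statements ``a seed falls (resp. does not fall) on each site of an interval during a time-span of length of order $\ba_\la$'' must be justified via Lemma \ref{occupation} (which is the $(H_S(\infty))$-analogue of the elementary $(H_S(BS))$-facts used there) rather than by the support condition on $\nu_S$. The key extra ingredient compared to the $BS$ case is the hypothesis $t_2-t_1<t_1-t_0$: it guarantees that, with probability tending to $1$, the destroyed cluster $C:=C(\zeta^{\la,\cR}_{t_1-},0)$ is not completely regenerated before time $t_2$. Indeed, by Lemma \ref{convtheta} applied with the parameters $t_0,t_1$, the regeneration time $\Theta^\la_{t_0,t_1}$ concentrates on $t_1-t_0$ as $\la\to 0$, so $\Pr[\Theta^\la_{t_0,t_1}>t_2-t_1]\to 1$; this is precisely why the statement can afford to be unconditional here whereas Lemma \ref{pingpong} was conditioned on $\{\Theta_{t_0,t_1}>t_2-t_1\}$.

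\textbf{Main steps.} First I would fix $\alpha=1/K$ and set up the cascade of fires exactly as in the proof of Lemma \ref{pingpong}. Step one: the first (microscopic) fire at time $t_1$ destroys $C$, and since $t_1-t_0<1$, Lemma \ref{occupation}(v) combined with the fact that each site is vacant at time $\ba_\la(t_1-t_0)$ with probability $\nu_S((\ba_\la(t_1-t_0),\infty))\to 0$ shows $\Pr[C\subset\lb-\lfloor\alpha\bm_\la\rfloor,\lfloor\alpha\bm_\la\rfloor\rb]\to 1$; also, by Lemma \ref{convtheta}, $\Pr[\Theta^\la_{t_0,t_1}>t_2-t_1]\to 1$. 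Step two: since $t_2-t_0>1$, Lemma \ref{occupation}(ii) gives that all sites of $\lb\lfloor\alpha\bm_\la\rfloor+1,\bm_\la\rb$ are occupied at time $t_2-$, so the (macroscopic) fire at $t_2$ wipes out exactly that half-zone and, because $\Theta^\la_{t_0,t_1}>t_2-t_1$, an empty site survives in $C$ so the left half $\lb-\bm_\la,-\lfloor\alpha\bm_\la\rfloor-1\rb$ is untouched. Step three onward: I iterate, alternating sides, using $t_{k+1}-t_k<1$ (so Lemma \ref{occupation}(i) or (v) produces a surviving vacant site that blocks propagation back across the origin) and $t_{k+2}-t_k>1$ (so Lemma \ref{occupation}(ii) makes the relevant half-zone fully occupied before the next fire on that side). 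After finitely many steps the fire at $t_K$ destroys $\lb\lfloor(K\alpha/2)\bm_\la\rfloor,\bm_\la\rb=\lb\lfloor\bm_\la/2\rfloor,\bm_\la\rb$. Finally, since $0<s-t_K<1$, Lemma \ref{occupation}(v) applied to three disjoint sub-intervals of $\lb\lfloor\bm_\la/2\rfloor,\bm_\la\rb$ yields, with probability $\to 1$, a site with no seed in $[t_K,s]$, then a site with a seed, then again a site with no seed, producing the required $i_1<i_2<i_3$ with $\zeta^{\la,\cR}_s(i_1)=\zeta^{\la,\cR}_s(i_3)=0$, $\zeta^{\la,\cR}_s(i_2)=1$.

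\textbf{Main obstacle.} The only genuinely delicate point is the bookkeeping that shows, at each stage of the cascade, that the surviving vacant site (coming from the ``$<1$'' inter-fire gaps) actually lies on the correct side of the already-burnt region and hence truly prevents a fire on one side from reaching the other — this is what forces the geometric shrinking of the ``unreached'' zone by $\alpha\bm_\la$ at each step and why $\alpha=1/K$ suffices for exactly $K$ fires. This is handled verbatim as in Lemma \ref{pingpong}; the probabilistic content is entirely packaged into Lemma \ref{occupation} and Lemma \ref{convtheta}, and a simple union bound over the finitely many ($K+2$, say) events — each of probability tending to $1$ — finishes the argument. Since everything is uniform in $\la$ small and there are only finitely many steps, no quantitative rate is needed.
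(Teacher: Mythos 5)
Your proposal is correct and follows essentially the same route as the paper: the same $\alpha=1/K$ alternating cascade as in Lemma \ref{pingpong}, with Lemma \ref{occupation} replacing the bounded-support facts and Lemma \ref{convtheta} together with the hypothesis $t_2-t_1<t_1-t_0$ replacing the conditioning on $\{\Theta_{t_0,t_1}>t_2-t_1\}$. The only slip is bibliographic: the containment of $C(\zeta^{\la,\cR}_{t_1-},0)$ in $\lb -\lfloor\alpha\bm_\la\rfloor,\lfloor\alpha\bm_\la\rfloor\rb$ and the two ``no seed'' sites in the final step rely on Lemma \ref{occupation}-(i) (not on (v), which produces occupied sites), exactly as in the paper.
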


Compare to Lemma \ref{pingpong}: the
condition $\Theta_{t_0,t_1}>t_2-t_1$ is replaced by the condition
$t_1-t_0>t_2-t_1$. This is very natural, in view of Lemma \ref{convtheta}.

\begin{proof}
In view of Lemma \ref{occupation}, 
the proof is very similar to that of Lemma \ref{pingpong}.
We assume that $\e=1$ and that $K$ is even for simplicity.
Fix $\alpha = 1/K$.

\vip

{\it First fire.} We put $C=C(\zeta^{\la,\cR}_{t_1-},0)$. Since $t_1-t_0<1$,
$C \subset \lb - \lfloor \alpha \bm_\la \rfloor, 
\lfloor \alpha \bm_\la \rfloor   
\rb$ with probability tending to $1$ (use Lemma \ref{occupation}-(i)
and space/time stationarity).  
Thus the match falling at time 
$t_1$ destroys nothing outside 
$\lb - \lfloor \alpha \bm_\la \rfloor, \lfloor \alpha \bm_\la \rfloor \rb$.
\vip

{\it Second fire.} 
Since $t_2-t_0>1$, at least one seed has
fallen, during $[t_0,t_2)$ 
on each site of $\lb \lfloor \alpha \bm_\la \rfloor+1, \bm_\la \rb$ with
probability tending to $1$ (use Lemma \ref{occupation}-(ii)
and space/time stationarity).
Thus the fire at time $t_2$ destroys completely this zone, but does
not affect $\lb -\bm_\la, -\lfloor \alpha \bm_\la \rfloor-1 \rb$
with probability tending to $1$, because 
because $t_2<t_1+\Theta^\la_{t_0,t_1}$ with probability tending to $1$
(by Lemma \ref{convtheta}, $\Theta^\la_{t_0,t_1}\simeq t_1-t_0$,
and $t_2-t_1<t_1-t_0$ by assumption)
and because by 
definition of $\Theta_{t_0,t_1}^\la$, there is an empty site in 
$C\subset \lb - \lfloor \alpha \bm_\la \rfloor, 
\lfloor \alpha \bm_\la \rfloor   
\rb$ during 
$[t_1,t_1+\Theta_{t_0,t_1}^\la]$.
\vip

{\it Third fire.} Since $t_3-t_2<1$, the probability that there is a 
vacant site in 
$\lb\lfloor \alpha \bm_\la \rfloor+1, \lfloor 2\alpha \bm_\la \rfloor \rb$ 
at time $t_3$ tends to $1$ as $\la\to 0$ (use  Lemma \ref{occupation}-(i)
and space/time stationarity).

\vip

Next, all the sites of $\lb -\bm_\la, -\lfloor \alpha \bm_\la \rfloor-1 \rb$
are occupied at time $t_3-$ with probability tending to $1$
(because they have not been affected by a fire during $[t_0,t_3)$
and because $t_3-t_0>t_2-t_0>1$, see Lemma \ref{occupation}-(ii)). 
Thus the fire at time $t_3$ destroys the zone
$\lb -\bm_\la, -\lfloor \alpha \bm_\la \rfloor-1 \rb$ and does not
affect the zone  $\lb \lfloor 2\alpha \bm_\la \rfloor , \bm_\la \rb$.

\vip

{\it Last fire and conclusion.} Iterating the procedure, we see that
with a probability tending to $1$ as $\la\to 0$, 
the fire at time $t_K$ destroys the zone
$\lb \lfloor (K\alpha/2) \bm_\la \rfloor , \bm_\la \rb =
\lb \lfloor \bm_\la/2 \rfloor , \bm_\la \rb$.

\vip

Then one easily concludes: since $0<s-t_K<1$, the probability that there
is at least one site in
$\lb \lfloor \bm_\la/2 \rfloor , \lfloor 2\bm_\la/3 \rfloor \rb$
with no seed falling during $[t_K,s]$  tends to $1$ 
(by Lemma \ref{occupation}-(i)),
the probability that there is at least one site in
$\lb \lfloor 2\bm_\la/3 \rfloor +1, \lfloor 5\bm_\la/6 \rfloor \rb$
with at least one seed falling during $[t_K,s]$  tends to $1$
(by Lemma \ref{occupation}-(v)),
and the probability that there is at least one site in
$\lb \lfloor 5\bm_\la/6 \rfloor +1, \bm_\la \rb$
with no seed falling during $[t_K,s]$  tends to $1$
(by Lemma \ref{occupation}-(i)).
\end{proof}

\subsection{The coupling}\label{coucou2}
We are going to construct a coupling between the $FF_A(\mu_S,\mu_M^\la)$-process
(on the time interval $[0,\ba_\la T]$) and the $LFF_A(\infty)$-process 
(on $[0,T]$). 

\vip

First, we couple a family
of i.i.d. $SR(\mu^\la_M)$-processes $(N^{M,\la}_t(i))_{t\geq 0, 
i \in \zz}$ with a Poisson measure $\pi_M(dt,dx)$ on $[0,T]\times [-A,A]$
with intensity measure $dtdx$  as in Proposition \ref{coupling1}.

\vip

We call $n:=\pi_M([0,T]\times[-A,A])$
and we consider the marks $(T_q,X_q)_{q=1,\dots,n}$ of $\pi_M$ ordered 
in such a way that $0<T_1<\dots<T_n<T$.

\vip

Next, we introduce some i.i.d. families of i.i.d. $SR(\mu_S)$-processes
$(N^{S,q}_t(i))_{t\geq 0,i\in \zz}$, for $q=0,1,\dots$, independent of 
$\pi_M$ and $(N^{M,\la}_t(i))_{t\geq 0, i \in \zz}$.

\vip

Then we build a family of i.i.d. $SR(\mu_S)$-processes (independent
of $(N^{M,\la}_t(i))_{t\geq 0, i \in \zz}$ and $\pi_M$) as follows.

\vip

$\bullet$ For $q \in \{1, \ldots, n\}$, for all $i \in (X_q)_\la$,
set $(N^{S,\la}_t(i))_{t\geq 0}=
(N^{S,q}_t(i-\lfloor \bn_\la X_q \rfloor))_{t\geq 0}$
(if $i$ belongs to  $(X_q)_\la\cap (X_r)_\la$ for some $q<r$, set e.g. 
$(N^{S,\la}_t(i))_{t\geq 0}=(N^{S,q}_t(i-\lfloor \bn_\la X_q \rfloor))_{t\geq 0}$.
This will occur with a very small probability, so that this choice is not
important).

\vip

$\bullet$ For all other $i \in \zz$ 
set $(N^{S,\la}_t(i))_{t\geq 0} = (N^{S,0}_t(i))_{t\geq 0}$.    

\vip

The $FF_A(\mu_S,\mu_M^\la)$-process 
$(\eta^\la_t(i))_{t\geq 0, i \in I_A^\la}$ is built from the seed processes
$(N^{S,\la}_t(i))_{t\geq 0, i \in \zz}$ and 
from the match processes $(N^{M,\la}_t(i))_{t\geq 0, i \in \zz}$.

\vip

Finally, we build the $LFF_A(\infty)$-process  
$(Z_t(x),D_t(x),H_t(x))_{t\in[0,T],x\in[-A,A]}$ from $\pi_M$ 
(use Algorithm \ref{algo2} replacing
$\Theta^{k+1}_{T_{k+1}- Z_{T_{k+1}-}(X_{k+1}),T_{k+1}}$ by $Z_{T_{k+1}-}(X_{k+1})$)
and
observe that  it is independent of 
$(N^{S,q}_t(i))_{t\in [0,T], i \in \zz, q\geq 0}$.

\subsection{A favorable event}

First, we know from Proposition \ref{coupling1} that
$$
\Omega^M_{A,T}(\la):=\left\{\forall t\in [0,T],\;
\forall i\in I_A^\la, \;
\Delta N^{M,\la}_{\ba_\la t}(i) \ne 0 \; \hbox{iff} \; 
\pi_M(\{t\}\times i_\la) \ne 0
\right\}
$$
satisfies $\lim_{\la \to 0} \Pr[\Omega^M_{A,T}(\la)]=1$.
Next, we recall that the marks of $\pi_M$ are called
$(T_1,X_1),$ $\dots,$ $(T_n,X_n)$ and are ordered chronologically.
We introduce $\cT_M=\{0,T_1,\dots,T_n\}$,  
$\cB_M=\{X_1,\dots,X_n\}$, as well as the set
$\cC_M$ of connected components of  $[-A,A]\setminus \cB_M$ (sometimes 
referred to as {\it cells}).  

\vip

We also introduce $\cS_M=\{2t-s\; : s,t\in\cT_M, s<t \}$, which has
to be seen as the possible limit values of $t+\Theta_{s,t}^\la \simeq
t+t-s$, recall Lemma \ref{convtheta}.

\vip

For $\alpha>0$, we consider the event 
\begin{align*}
\Omega_M(\alpha)=\Big\{\min_{s,t\in \cT_M \cup \cS_M,s\ne t}|t-s|\geq \alpha, 
\min_{s,t\in \cT_M \cup \cS_M,s\ne t}|t-(s+1)|\geq \alpha,\\
\min_{x,y \in \cB_M \cup\{-A,A\},x\ne y}|x-y|\geq \alpha
\Big\},
\end{align*}
which clearly satisfies $\lim_{\alpha \to 0} \Pr[\Omega_M(\alpha)]=1$.
As in the case $\beta=BS$, for any given $\alpha>0$, there is $\la_\alpha>0$
such that for all $\la\in(0,\la_\alpha)$, on $\Omega_M(\alpha)$,
there holds that

\vip

$\bullet$ for all $x,y\in \cB_M\cup\{-A,A\}$, with $x\ne y$, 
$x_\la\cap y_\la =\emptyset$, 

\vip

$\bullet$ the family $\{c_\la, c\in \cC_M\}\cup \{x_\la, x \in 
\cB_M\}$ is a partition of $I_A^\la$ (recall (\ref{xxbb2}) and (\ref{xla2})).

\vip

Let $q\in \{1,\dots,n\}$.
We call $\cU_q$ the set of all possible $\cR=(\e,t_0,\dots,t_K;s)$
satisfying $(PP)$ with $\e\in \{-1,1\}$, with $\{t_0,\dots,t_K,s\}
\subset \cT_M$ and with $t_1-t_0 > t_2-t_1$.
We introduce, for $q=1,\dots,n$ and $\cR\in \cU_q$, 
the event $\Omega^{S,q}_{\cR}(\la)$ defined as in Subsection \ref{sspp2}
with the $SR(\mu_S)$-processes $(N^{S,q}_t(i))_{t\geq 0, i\in \zz}$.
Then we put 
$$
\Omega^S_1(\la)=\cap_{q=1}^n \cap_{\cR\in\cU_q} \Omega^{S,q}_{\cR}(\la),
$$
which satisfies $\lim_{\la \to 0} \Pr\left( \Omega^S_1 (\la) \right)=1$
thanks to Lemma \ref{pingpong2}.

\vip

We also consider the event $\Omega^S_2(\la)$ on which the following 
conditions hold:
for all $t_1,t_2 \in \cT_M \cup \cS_M$ 
with  $0<t_2-t_1<1$, for all $q=1,\dots,n$,
there are  
$$-\bm_\la<i_1<i_2 < -\bm_\la/2 <i_3<0<i_4< \bm_\la/2 <i_5<i_6<\bm_\la$$ 
such that

\vip

$\bullet$ for $j=1,3,4,6$, $N^{S,q}_{\ba_\la t_2}(i_j)- N^{S,q}_{\ba_\la t_1}(i_j)=0$,

\vip

$\bullet$ for $j=2,5$, $N^{S,q}_{\ba_\la t_2}(i_j)- N^{S,q}_{\ba_\la t_1}(i_j)>0$.

\vip

There holds $\lim_{\la \to 0} \Pr\left( \Omega^S_2 (\la) \right)=1$.
Indeed, it suffices to prove that almost surely, $\lim_{\la\to 0} 
\Pr\left(\Omega^S_2 (\la) \vert \pi_M\right)=1$. 
Since there are a.s. finitely
many possibilities for $q,t_1,t_2$ and since $\pi_M$ is independent
of  $(N^{S,q}_t(i))_{t\geq 0, i\in \zz}$, it suffices to work with 
a fixed $q\in \{1,\dots,n\}$ and some fixed $0<t_2-t_1<1$.
The result then follows from Lemma \ref{occupation}-(i)-(v) together
with space/time stationarity.

\vip

Next we introduce the event $\Omega^S_3(\la)$ on which the following 
conditions hold:
for all $t_1,t_2 \in \cT_M \cup \cS_M$, 

\vip

$\bullet$ for all $c\in \cC_M$, if $0<t_2-t_1<1$, there is $i\in c_\la$ 
with $N^{S,\la}_{\ba_\la t_2}(i)- N^{S,\la}_{\ba_\la t_1}(i)=0$;

\vip

$\bullet$ for all $x \in \cB_M$, if $0<t_2-t_1<1$, there is $i\in x_\la$ 
with $N^{S,\la}_{\ba_\la t_2}(i)- N^{S,\la}_{\ba_\la t_1}(i)=0$;

\vip

$\bullet$ if $t_2-t_1>1$, for all $c\in \cC_M$,  for all 
$i\in c_\la$, $N^{S,\la}_{\ba_\la t_2}(i)- N^{S,\la}_{\ba_\la t_1}(i)>0$.

\vip

$\bullet$ if $t_2-t_1>1$, for all $x\in \cB_M$,  for all 
$i\in x_\la$, $N^{S,\la}_{\ba_\la t_2}(i)- N^{S,\la}_{\ba_\la t_1}(i)>0$.

\vip

There holds $\lim_{\la \to 0} \Pr\left( \Omega^S_3 (\la) \right)=1$.
As previously, it suffices to work with some fixed $t_1,t_2$, $x\in (-A,A)$
and $c=(a,b) \subset (-A,A)$.
Observing that $|x_\la|\sim 2\bm_\la$ and that 
$|c_\la|\sim (b-a)\bn_\la$,
Lemma \ref{occupation} and space/time stationarity
shows the result.

\vip

We also need $\Omega^S_4(\gamma,\la)$, defined for $\gamma>0$ as follows:
for all $q=1,\dots,n$, for all $t_0,t_1 \in \cT_M$
with $t_0<t_1<t_0+1$, there holds that $|\Theta^{q,\la}_{t_0,t_1} - 
(t_1-t_0)|<\gamma$. Here $\Theta^{q,\la}_{t_0,t_1}$ is defined as in 
Lemma \ref{convtheta} with the seed processes family
$(N^{S,q}_t(i))_{t\geq 0, i\in\zz} $.
Lemma \ref{convtheta} directly implies that for any $\gamma>0$,
$\lim_{\la\to 0} \Pr[\Omega^S_4(\gamma,\la)]=1$.

\vip

We finally introduce the event 
$$
\Omega(\alpha,\gamma,\la)= \Omega^M_{A,T}(\la)\cap\Omega_M(\alpha)\cap 
\Omega^S_1(\la)\cap\Omega^S_2(\la) \cap \Omega^S_3(\la) \cap
\Omega^S_4(\gamma,\la).
$$
We have shown that for any $\e>0$, there exists 
$\alpha>0$ such that for any $\gamma>0$, there holds
$\liminf_{\la\to 0} \Pr[\Omega(\alpha,\gamma,\la)]>1-\e$.

\subsection{Heart of the proof}
We now handle the main part of the proof, following closely Subsection
\ref{hpBS}.

\vip

Consider the $LFF_A(\infty)$-process.
Observe that by construction, we have, for $c\in \cC_M$ and $x,y\in c$,
$Z_t(x)=Z_t(y)$ for all $t\in [0,T]$, thus we can introduce
$Z_t(c)$. 

\vip

If $x \in \cB_M$, it is at the boundary of two cells 
$c_-,c_+ \in \cC_M$ and then we set $Z_t(x_-)=Z_t(c_-)$ and $Z_t(x_+)=Z_t(c_+)$ 
for all $t\in [0,T]$. 

\vip

If $x \in (-A,A)\setminus \cB_M$, we put  $Z_t(x_-)=Z_t(x_+)=Z_t(x)$ for all
$t\in [0,T]$.

\vip

For $x\in \cB_M$ and $t\geq 0$ we set 
$\tH_t(x)=\max(H_t(x),1-Z_t(x),1-Z_t(x_-),1-Z_t(x_+))$. 

\vip

Actually $Z_t(x)$ always equals either $Z_t(x_-)$ or $Z_t(x_+)$ 
and these can be distinct only at a point where has occurred  a 
microscopic fire (that is if $x=X_q$ for some $q\in\{1,\dots,n\}$
with $T_q<t$ and $Z_{T_q-}(X_q)<1$).

\vip

For all $x\in (-A,A)$ and $t \in [0,T]$, we put 
$$
\tau_t(x)= \sup \left\{s\leq t\;:\; Z_s(x_+)=Z_s(x_-)=Z_s(x)=0\right\} \in [0,t]
\cap \cT_M.
$$
For $c\in \cC_M$ and $t\in [0,T]$, we can define 
$\tau_t(c)$ as usual.

\vip

Observe, using Algorithm \ref{algo2}, that as when $\beta=BS$,
\begin{align}
&\hbox{for }x\notin \cB_M, \; \; Z_t(x) = \min{(t-\tau_t(x),1)} 
\hbox{ for all }
t\in[0,T], \label{tauzC2}\\
&\hbox{for }q=1,\dots,n,\;\;  Z_t(X_q) = \min{(t-\tau_t(X_q),1)} 
\hbox{ for all }
t\in[0,T_{q}).\label{tauzB2}
\end{align}

\vip

We also define for all $t\in [0,T]$, all $c \in \cC_M$ and all $x\in (-A,A)$
\begin{align*}
\tau_t^\la(c)=& \sup \left\{s\leq t\;:\; \forall i \in c_\la, 
\eta_{\ba_\la t-}^\la(i)=1 \hbox{ and } \eta_{\ba_\la t}^\la(i)=0   
\right\}\in [0,t],\\
\rho_t^\la(c)=& \sup \left\{s\leq t\;:\; \exists i \in c_\la, 
\eta_{\ba_\la t-}^\la(i)=1 \hbox{ and } \eta_{\ba_\la t}^\la(i)=0   
\right\}\in [0,t],\\
\tau_t^\la(x)=& \sup \left\{s\leq t\;:\; \forall i \in x_\la, 
\eta_{\ba_\la t-}^\la(i)=1 \hbox{ and } \eta_{\ba_\la t}^\la(i)=0 \right\}\in [0,t]
\end{align*}
with the convention that $\eta_{0-}^\la(i)=1$ for all $i\in I_A^\la$.
Observe that on $\Omega^M_{A,T}(\la)$, there holds that 
$\tau_t^\la(c),\rho_t^\la(c),\tau_t^\la(x) \in [0,t]\cap \cT_M$
for all $t\in [0,T]$, all $c \in \cC_M$ and all $x\in (-A,A)$.

\vip

For $t \in [0,T]$, consider the event  
$$
\Omega_t^\la =  \left\{ \forall s\in [0,t], \forall c \in\cC_M, 
\tau^{\la}_{s}(c) = \rho^\la_s(c)=\tau_{s}(c) \hbox{ and }
\forall x \in \cB_M,
\tau^{\la}_{s}(x) =\tau_{s}(x)\right\}.
$$ 

\begin{lem}\label{ggg2}
Let $\alpha>\gamma>0$. For any $\la\in(0,\la_\alpha)$, $\Omega^\la_T$ a.s.
holds on $\Omega(\alpha,\gamma,\la)$.
\end{lem}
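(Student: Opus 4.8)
\begin{preuve}[{\it sketch}]
The plan is to mimic the proof of Lemma \ref{ggg} line by line, the only structural changes being that $T_S$ is replaced everywhere by $\ba_\la$, that the deterministic occupation statements available under $(H_S(BS))$ (which rest on $\supp\nu_S=[0,T_S]$) are replaced by the high-probability events $\Omega^S_2(\la),\Omega^S_3(\la)$ together with Lemma \ref{occupation}, and that the regeneration times $\Theta^k$ — which in the $BS$ case are exact and $\la$-independent — are now the $\la$-dependent quantities $\Theta^{q,\la}$, controlled via $\Omega^S_4(\gamma,\la)$ (Lemma \ref{convtheta}). So I would fix $\alpha>\gamma>0$ and $\la\in(0,\la_\alpha)$, work on $\Omega(\alpha,\gamma,\la)$, and prove by induction on the chronologically ordered marks $(T_q,X_q)_{q=1,\dots,n}$ of $\pi_M$ (with $T_0=0$) that $\Omega^\la_{T_q}$ holds; the base case is immediate since all the $\tau$'s and $\rho$'s vanish at time $0$, and the passage from $\Omega^\la_{T_n}$ to $\Omega^\la_T$ is the same monotonicity argument as Step 1 of Lemma \ref{ggg}. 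The induction step itself splits, as there, into: (Step 1) extending $\Omega^\la_{T_q}$ to $\Omega^\la_{T_{q+1}-}$ by monotonicity of $Z,H,\eta^\la$ on the fire-free interval $(T_q,T_{q+1})$ (using $\Omega^M_{A,T}(\la)$); (Steps 2--6) analysing, site by site, the state of each cell $c_\la$ and each zone $x_\la$ at time $\ba_\la T_{q+1}-$; and (Step 7) propagating the identities $\tau^\la=\tau$, $\rho^\la=\tau$ across the fire occurring at $T_{q+1}$, distinguishing $Z_{T_{q+1}-}(X_{q+1})\in(0,1)$ from $Z_{T_{q+1}-}(X_{q+1})=1$.

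For the occupation analysis I would use: a cell $c$ or zone $x_\la$ that has seen no fire for a time of order strictly greater than $\ba_\la$ is completely occupied at time $\ba_\la T_{q+1}-$ (Lemma \ref{occupation}-(iv), resp. (ii), via $\Omega^S_3(\la)$); and any cell or zone contains both an occupied and a vacant site a short time after a fire ($\Omega^S_2(\la)$, Lemma \ref{occupation}-(i),(v)). With these substitutions Steps 2, 3, 4, 5 carry over essentially verbatim. Wherever the $BS$ proof invokes the barrier height $\Theta^k_{T_k-Z_{T_k-}(X_k),T_k}$ I would instead invoke $\Theta^{k,\la}_{T_k-Z_{T_k-}(X_k),T_k}$, which on $\Omega^S_4(\gamma,\la)$ is within $\gamma$ of $Z_{T_k-}(X_k)=T_k-\tau_{T_k-}(X_k)$ — and $Z_{T_k-}(X_k)$ is precisely the height assigned to the barrier in the limit process (Algorithm \ref{algo2} with $\Theta$ replaced by $Z_{\cdot-}$, cf. Subsection \ref{coucou2}). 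The reason the $\gamma$-slack is harmless is the reinforced separation built into $\Omega_M(\alpha)$: the relevant approximate regeneration instants $T_k+\Theta^{k,\la}\simeq 2T_k-(T_k-Z_{T_k-}(X_k))$ lie $\gamma$-close to points of $\cS_M$, which are $\alpha$-separated from each other and from $\cT_M\cup\cS_M$ and from their unit translates, and $\alpha>\gamma$; hence a perturbed barrier still expires strictly inside the correct interval $(T_p,T_{p+1})$. In the ping-pong case (the analogue of Step 6, Case 3) this lets one check that the family $\cR$ of successive jump times of $(Z_t(c_-),Z_t(c_+))$ lies in $\cU_q$ — the condition $t_1-t_0>t_2-t_1$ replacing $\Theta>t_2-t_1$ and being equivalent to it up to $O(\gamma)$ on $\Omega^S_4$, using $2t_1-t_0\in\cS_M$ and $t_2\in\cT_M$ — so that $\Omega^S_1(\la)$, produced via Lemma \ref{pingpong2}, applies and yields the sites $i_1<i_2<i_3$ in $x_\la$ with $\eta^\la_{\ba_\la T_{q+1}-}(i_1)=1$, $\eta^\la_{\ba_\la T_{q+1}-}(i_2)=0$, $\eta^\la_{\ba_\la T_{q+1}-}(i_3)=1$ needed to conclude the step.

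The main obstacle is purely combinatorial bookkeeping rather than any new probabilistic input: one has to verify, in each of the (roughly a dozen) sub-cases of the analogue of Step 6, that every comparison of the form ``$T_{q+1}$ lies before / after the randomly perturbed expiry of a given barrier'' is unambiguously decided by the $\alpha$-separation of $\cT_M\cup\cS_M$ in spite of the $\gamma$-error on $\Theta^{\cdot,\la}$, and to confirm that every family $\cR$ arising genuinely satisfies $(PP)$ with the \emph{strict} inequality $t_1-t_0>t_2-t_1$ so that Lemma \ref{pingpong2} (rather than Lemma \ref{pingpong}) is the relevant tool. No ingredient beyond Lemmas \ref{occupation}, \ref{convtheta} and \ref{pingpong2} is required, and the final propagation argument (Step 7) is identical to that of Lemma \ref{ggg}.
\end{preuve}
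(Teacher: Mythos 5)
Your sketch follows exactly the paper's own proof: induction over the marks of $\pi_M$ as in Lemma \ref{ggg}, with $T_S$ replaced by $\ba_\la$, occupation handled through $\Omega^S_2(\la)$, $\Omega^S_3(\la)$ and Lemma \ref{occupation}, the $\gamma$-error on $\Theta^{k,\la}$ (from $\Omega^S_4(\gamma,\la)$ and Lemma \ref{convtheta}) absorbed by the $\alpha$-separation of $\cT_M\cup\cS_M$ and its unit translates with $\gamma<\alpha$, and the ping-pong case resolved by Lemma \ref{pingpong2} through the condition $t_1-t_0>t_2-t_1$ defining $\cU_q$, with Step 7 unchanged. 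The only slip is cosmetic: the event $\Omega^S_{\cR}(\la)$ yields the pattern vacant--occupied--vacant ($\zeta^{\la,\cR}_s(i_1)=\zeta^{\la,\cR}_s(i_3)=0$, $\zeta^{\la,\cR}_s(i_2)=1$), not occupied--vacant--occupied, but this still provides an occupied site to the left of a vacant one inside $x_\la$, which is all the analogue of Step 6 requires.
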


\begin{proof}
We work on  $\Omega(\alpha,\gamma,\la)$ and assume that $\la\in(0,\la_\alpha)$.
Clearly, $\tau_0(x)=\tau_0^\la(x)=0$ and 
$\tau_0(c)=\tau_0^\la(c)=\rho^\la_0(c)=0$
for
all $x\in\cB_M$, all $c\in\cC_M$, so that $\Omega^\la_0$ a.s. holds.
We will show that for $q=0,\dots,n-1$, $\Omega^\la_{T_q}$ implies
$\Omega^\la_{T_{q+1}}$. This will prove that $\Omega^\la_{T_{n}}$ holds.
The extension to $\Omega^\la_{T}$ will be straightforward (see Step 1 below).

\vip

We thus fix  $q\in\{0,\dots,n-1\}$ and assume $\Omega^\la_{T_q}$. 
We repeatedly use below that
on the time interval $(T_q,T_{q+1})$, there are no fires at all (in $[-A,A]$)
for the $LFF_A(BS)$-process and no fires at all (in $I_A^\la$) during 
$(\ba_\la T_{q}, \ba_\la T_{q+1})$ for the $FF_A(\mu_S,\mu_M^{\la})$-process
(use $\Omega^{M}_{A,T}(\la)$).

\vip

{\bf Step 1.} Exactly as in the proof of Lemma \ref{ggg}-Step 1,
$\Omega^\la_{T_q}$ implies
$\Omega^\la_{T_{q+1}-}$.

\vip
{\bf Step 2.} Exactly as in the proof of Lemma \ref{ggg}-Step 2, we
observe that for $c \in \cC_M$, on $\Omega^\la_{T_{q+1}-}$, there holds,
for all $i \in c_\la$,  
\begin{align}
\label{danslescellules2}
\eta^\la_{\ba_\la T_{q+1}-}(i)=\min\left(N^{S,0}_{\ba_\la T_{q+1}-}(i) -
N^{S,0}_{\ba_\la \tau_{T_{q}}(c)}(i),1\right).
\end{align}

\vip

{\bf Step 3.} If $Z_{T_{q+1}-}(X_{q+1})<1$, there exist
$j_1,j_2,j_3,j_4 \in (X_{q+1})_\la$ such that 
$j_1<j_2<\lfloor \bn_\la X_{q+1} \rfloor <j_3<j_4$ and 
$\eta^\la_{\ba_\la T_{q+1}-}(j_2)=\eta^\la_{\ba_\la T_{q+1}-}(j_3)=0$ and
$\eta^\la_{\ba_\la T_{q+1}-}(j_1)= \eta^\la_{\ba_\la T_{q+1}-}(j_4)=1$.
The proof is the same as Lemma \ref{ggg}-Step 3.

\vip

{\bf Step 4.} Next we check that if $Z_{T_{q+1}-}(c)=1$ for some $c\in\cC_M$,
then $\eta^\la_{\ba_\la T_{q+1}-}(i)=1$ for all $i\in c_\la$.

Recalling (\ref{tauzC2}), we see that $Z_{T_{q+1}-}(c)=1$ implies that
$T_{{q+1}}-\tau_{T_{q+1}-}(c)\geq 1$ and thus  $T_{{q+1}}-\tau_{T_{q}}(c)\geq 1$
by Step 1.
Using $\Omega_M(\alpha)$ and that $T_{{q+1}},\tau_{T_{q}}(c) \in \cT_M$,
we deduce that $T_{{q+1}}-\tau_{T_{q}}(c)> 1$.
Using (\ref{danslescellules2}), we conclude that for all 
$i\in c_\la$, $\eta_{\ba_\la T_{q+1}-}^\la(i)=\min(N^{S,0}_{\ba_\la T_{q+1}-}(i)
-N^{S,0}_{\ba_\la \tau_{T_q}(c)(i)}  ,1)=1$
by $\Omega^S_3(\la)$.

\vip

{\bf Step 5.} We now prove that if $\tH_{T_{q+1}-}(x)=0$ for some $x \in \cB_M$,
then for all $i\in x_\la$, $\eta^\la_{\ba_\la T_{q+1}-}(i)=1$.

\vip

{\it Preliminary considerations.}
Let $k\in \{1,\dots,n\}$
such that $x=X_k$, which is at the boundary of two
cells $c_-,c_+ \in \cC_M$. We know that 
$\tH_{T_{q+1}-}(x)=0$, whence $H_{T_{q+1}-}(x)=0$ and 
$Z_{T_{q+1}-}(x)=Z_{T_{q+1}-}(c_+)=Z_{T_{q+1}-}(c_-)=1$. This implies that
$T_{q+1}\geq 1$ (because $Z_t(x)=t$ for all $t<1$ and all $x\in [-A,A]$)
and thus $T_{q+1}\geq 1+\alpha$ due to $\Omega_M(\alpha)$.

\vip

No fire has concerned
$(c_-)_\la$ during
$(\ba_\la \rho^\la_{T_{q+1}-}(c_-), \ba_\la T_{q+1})$
(by definition of $\rho^\la_{T_{q+1}-}(c_-)$).
But Step 1 implies that $\rho^\la_{T_{q+1}-}(c_-)=\tau_{T_{q+1}-}(c_-) 
\leq T_{q+1}-1$ because $Z_{T_{q+1}-}(c_-)=1$, see (\ref{tauzC2}). Recalling
$\Omega_M(\alpha)$, we deduce that  $\rho^\la_{T_{q+1}-}(c_-)< T_{q+1}-1-\alpha$.
Using a similar argument for $c_+$, we conclude that 
no match falling outside $(X_k)_\la$ can affect $(X_k)_\la$ during
$(\ba_\la(T_{q+1}-1-\alpha), \ba_\la T_{q+1})$ 
(because to affect $(X_k)_\la$, a match falling
outside $(X_k)_\la$ needs to cross $c_-$ or $c_+$). 

\vip

{\it Case 1.} First assume that $k\geq q+1$. Then we know that
no fire has fallen on $(X_k)_\la$ during $[0,\ba_\la T_{q+1})$.
Due to the preliminary considerations, we deduce that 
no fire at all has concerned $(X_k)_\la$ during 
$(\ba_\la (T_{q+1}-1-\alpha), \ba_\la T_{q+1})$. 
Using $\Omega^S_3(\la)$, we conclude that 
$(X_k)_\la$ is completely
occupied at time $\ba_\la T_{q+1}-$.

\vip

{\it Case 2.} Assume that $k \leq q$ and $Z_{T_k-}(X_k)=1$, 
so that there already has been a macroscopic fire
in $(X_k)_\la$ (at time $\ba_\la T_k$).
Since $Z_{T_k}(X_k)=0$ and
$Z_{T_{q+1}-}(X_k)=1$, we deduce
that $T_{q+1}-T_k\geq 1$, whence $T_{q+1}-T_k\geq 1+\alpha$ as usual. 
We conclude as in Case 1 that 
no fire at all has concerned $(X_k)_\la$ during 
$(T_S(T_{q+1}-1-\alpha), T_ST_{q+1})$, which implies the claim
by $\Omega^S_3(\la)$.

\vip

{\it Case 3.} Assume that $k \leq q$ and $Z_{T_k-}(X_k)<1$ and 
$T_{q+1}-T_k\geq 1$,
whence  $T_{q+1}-T_k\geq 1+\alpha$ due to $\Omega_M(\alpha)$.
Then there already has been a microscopic fire
in $(X_k)_\la$ (at time $\ba_\la T_k$). But there are no fire in 
$(X_k)_\la$ during $(\ba_\la T_k,\ba_\la T_{q+1})\supset
(T_S(T_{q+1}-1-\alpha), T_ST_{q+1})$ and we conclude as in Case 2.

\vip

{\it Case 4.} Assume finally 
that $k \leq q$ and $Z_{T_k-}(X_k)<1$ and $T_{q+1}-T_k<1$, whence
$T_{q+1}-T_k\leq 1-\alpha$ due to $\Omega_M(\alpha)$.
There has been a microscopic fire in $(X_k)_\la$ (at time $\ba_\la T_k$).
Since $H_{T_{q+1}-}(X_k)=0$, we deduce (see Algorithm \ref{algo2}
and recall that $\Theta^k_{T_k-Z_{T_k-}(X_k),T_k}$ is replaced by
$Z_{T_k-}(X_k)$) that 
$T_k+Z_{T_k-}(X_k)\leq T_{q+1}$, whence $T_k+Z_{T_k-}(X_k)\leq T_{q+1}-\alpha$
by $\Omega_M(\alpha)$ ($\cS_M$ was designed for that purpose).

\vip

Consider the zone $C=C(\eta^\la_{\ba_\la T_k-},\lfloor \bn_\la X_k \rfloor)$ 
destroyed
by the match falling at time $\ba_\la T_k$. This zone is completely occupied
at time $\ba_\la (T_k+\Theta^{k,\la}_{T_k-Z_{T_k-}(X_k),T_k})$: this follows from
the definition of
$\Theta^{k,\la}_{T_k-Z_{T_k-}(X_k),T_k}$, see Lemma \ref{deftheta} 
and from the preliminary considerations.
Using $\Omega^S_4(\gamma,\la)$, we deduce that 
$T_k+\Theta^{k,\la}_{T_k-Z_{T_k-}(X_k),T_k} \leq T_k+Z_{T_k-}(X_k)+\gamma < T_{q+1}$,
since $\gamma<\alpha$.
Hence $C$ is completely occupied at time $\ba_\la T_{q+1}-$.

\vip

Consider now
$i\in (X_k)_\la \setminus C$.
Then $i$ has not been killed by the fire starting at 
$\lfloor \bn_\la X_k \rfloor$. Thus $i$ cannot have been killed
during $(\ba_\la(T_{q+1}-1-\alpha),\ba_\la T_{q+1})$ 
(due to the preliminary considerations) and we conclude,
using $\Omega^S_3(\la)$, that $i$ is occupied at time $\ba_\la T_{q+1}-$. 
This implies the claim.

\vip

{\bf Step 6.} Let us now prove 
that if $\tH_{T_{q+1}-}(x)>0$ and $Z_{T_{q+1}-}(x_+)=1$
for some $x \in \cB_M$, there are $i_1,i_2 \in x_\la$ such that $i_1<i_2$
and $\eta^\la_{\ba_\la T_{q+1}-}(i_1)=1$, $\eta^\la_{\ba_\la T_{q+1}-}(i_2)=0$.
Recall that $x$ is at the boundary of two cells $c_-,c_+$.

\vip

We have either $H_{T_{q+1}-}(x)>0$
or  $Z_{T_{q+1}-}(c_-)<1$ (because $Z_{T_{q+1}-}(c_+)=1$ by assumption). Clearly,
$x=X_k$ for some $k\leq q$, with $Z_{T_k-}(X_k)<1$
(else, we would have  $H_{t}(x)=0$ and $Z_t(c_-)=Z_t(c_+)$ for 
all $t\in[0,T_{q+1})$). 
Thus, recalling (\ref{tauzB2}), $T_k-Z_{T_k-}(X_k)=\tau_{T_k-}(X_k)
=\tau^\la_{T_k-}(X_k)$, so that $(X_k)_\la$ is completely empty at time
$\ba_\la (T_k-Z_{T_k-}(X_k))$.
\vip

{\it Case 1.} Assume first that $H_{T_{q+1}-}(x)>0$. Then by construction,
see Algorithm \ref{algo2} (with $\Theta_{T_k-Z_{T_k-}(X_k),T_k}$ replaced by
$Z_{T_k-}(X_k)$), there holds 
$T_k+Z_{T_k-}(X_k) > T_{q+1}>T_k$, whence by $\Omega_M(\alpha)$,
$T_k+Z_{T_k-}(X_k) > T_{q+1} + \alpha >T_k + 2\alpha$.

\vip

Consider $C=C(\eta^\la_{\ba_\la T_k-},\lfloor \bn_\la X_k \rfloor)$.
By  $\Omega^S_2(\la)$, we have 
$C \subset
\lb \lfloor \bn_\la X_k -\bm_\la/2\rfloor, \lfloor \bn_\la X_k +\bm_\la/2 
\rfloor\rb$
(because $(X_k)_\la$ is completely empty at time
$\ba_\la (T_k-Z_{T_k-}(X_k))$, because 
$T_k-Z_{T_k-}(X_k)$ and $T_k$ belong to $\cT_M$ and because $0<Z_{T_k-}(X_k)<1$).

\vip

The component $C$ is destroyed at time $T_ST_k$. 
By Definition of
$\Theta^{k,\la}_{T_k-Z_{T_k-}(X_k),T_k}$, see Lemma \ref{convtheta}, we deduce that
$C$ is not completely occupied at time $\ba_\la(T_k+ 
\Theta^{k,\la}_{T_k-Z_{T_k-}(X_k),T_k})$. But by $\Omega^S_4(\gamma,\la)$
we see that $\Theta^{k,\la}_{T_k-Z_{T_k-}(X_k),T_k}\geq Z_{T_k-}(X_k)-\gamma$,
whence $T_k+ \Theta^{k,\la}_{T_k-Z_{T_k-}(X_k),T_k} \geq T_k+Z_{T_k-}(X_k)-\gamma
>T_{q+1}$ since $\gamma<\alpha$. All
this implies that $C$ is not completely occupied
at time  $\ba_\la T_{q+1}-$.

\vip

Finally, using again
$\Omega^S_2(\la)$  there is necessarily (at least)
one seed falling on a site in $\lb \lfloor \bn_\la X_k -\bm_\la +1 \rfloor, 
\lfloor \bn_\la X_k - \bm_\la/2 -1\rfloor
\rb \subset (X_k)_\la$ during
$(\ba_\la T_q,\ba_\la T_{q+1})$. This shows the result.

\vip

{\it Case 2.} Assume next that $H_{T_{q+1}-}(x)=0$ and that 
$T_{q+1}- [T_k-Z_{T_k-}(X_k)]<1$.
Recall that $(X_k)_\la$ is completely
empty at time $\ba_\la (T_k-Z_{T_k-}(X_k))$. Since
$T_k-Z_{T_k-}(X_k)$ and $T_{q+1}$ belong to $\cT_M$ and since their
difference is smaller than $1$ by assumption,
$\Omega^S_2(\la)$ guarantees us the existence of $i_1<i_2<i_3$, all in
$(X_k)_\la$, such that (at least) one seed falls on $i_2$ and no
seed fall on $i_1$ nor on $i_3$ during $(\ba_\la (T_k-Z_{T_k-}(X_k)),
\ba_\la T_{q+1})$.
One easily concludes that $i_2$ is occupied and $i_3$ is vacant
at time $\ba_\la T_{q+1}-$, as desired.

\vip

{\it Case 3.} Assume finally that $H_{T_{q+1}-}(x)=0$ and that 
$T_{q+1}- [T_k-Z_{T_k-}(X_k)] \geq 1$, whence $T_{q+1}- [T_k-Z_{T_k-}(X_k)] 
\geq 1+\alpha$ by $\Omega_M(\alpha)$.
Since $H_{T_{q+1}-}(x)=0$, there holds 
$Z_{T_{q+1}-}(c_-)< 1 = Z_{T_{q+1}-}(c_+)$ and
$T_k+ Z_{T_k-}(X_k) \leq T_{q+1}$, so that 
$T_k+ Z_{T_k-}(X_k) \leq T_{q+1}-\alpha$.

\vip

We aim to use
the event $\Omega^S_1(\la)$. We introduce 
$t_0=T_k-Z_{T_k-}(X_k)=\tau_{T_k-}(X_k)=\tau^\la_{T_k-}(X_k)$. Observe that
$\tau_{T_k-}(c_-)=\tau_{T_k-}(c_+)=\tau_{T_k-}(x)$
because there has been no fire (exactly) at $x$ during $[0,T_k)$. 
Thus 
$Z_{t_0-}(x)=Z_{t_0-}(x_-)=Z_{t_0-}(x_+)=1$ and 
$Z_{t_0}(x)=Z_{t_0}(c_-)=Z_{t_0}(c_+)=0$.

\vip

Set now $t_1=T_k$ and $s=T_{q+1}$. Observe that $0<t_1-t_0<1$.
Necessarily, $Z_t(c_-)$ has jumped to $0$ at least one time
between $t_0$ and $T_{q+1}-$ (else, one would have  $Z_{T_{q+1}-}(c_-)=1$,
since $T_{q+1}-t_0\geq 1$ by assumption) and this jump occurs
after $t_0+1>t_1$ (since a jump of  $Z_t(c_-)$ requires that $Z_t(c_-)=1$,
and since for all $t\in [t_0,t_0+1)$, $Z_t(c_-)=t-t_0<1$).

\vip

We thus may denote by
$t_2<t_3<\dots<t_K$, for some $K\geq 2$, the successive times of jumps of
the process $(Z_t(c_-),Z_t(c_+))$  during $(t_0+1,s)$. 
We also put $\e=1$ if $t_2$ is a jump of $Z_t(c_+)$ and $\e=-1$ else.
Then we prove exactly as in Lemma \ref{ggg}-Step 6-Case 3
that 
$\cR=\{\e,t_0,\dots,t_K;s\}$ necessarily satisfies the condition $(PP)$.

\vip

Next, there holds that $t_2-t_1 <Z_{T_k-}(X_k)=t_1-t_0$,
because else, we would have $H_{t_2-}(X_k)=0$ and thus the fire destroying
$c_+$ (or $c_-$) at time $t_2$ would also destroy $c_-$ (or $c_+$),
we thus would have $Z_{t_2}(c_+)=Z_{t_2}(c_-)=0$, so that 
$Z_t(c_+)$ and $Z_t(c_-)$ would remain equal forever.

\vip

Finally, we check as in Lemma \ref{ggg}-Step 6-Case 3 that 
$(\eta^\la_{\ba_\la t}(i))_{t\geq t_0, i \in x_\la}=
(\zeta^{\la,\cR,k}_t(i+ \lfloor \bn_\la x \rfloor ))_{t\geq t_0, i \in x_\la}$,
this last process being built upon the family 
$(N^{S,k}_{t}(i))_{t\geq t_0, i \in x_\la}$ as in Subsection \ref{sspp2}.

\vip

We thus can use $\Omega^S_1(\la)$ and conclude that there are some sites
$i_1<i_2$ in $x_\la$ with  $\eta^\la_{T_ST_{q+1-}}(i_1)=1$ and
$\eta^\la_{T_ST_{q+1-}}(i_2)=0$ as desired.

\vip

{\bf Step 7.} The conclusion follows from the previous steps exactly as
in the proof of Lemma \ref{ggg}-Step 7: it suffices to replace everywhere
$T_S$ by $\ba_\la$.
\end{proof}

\subsection{Conclusion}

To achieve the proof, we will need the following result.

\begin{lem}\label{densitiesinfty}
Let $(N^S_t(i))_{t\geq 0,i \in \zz}$ be a family of i.i.d. $SR(\mu_S)$-processes,
and define $\zeta^\la_t(i)=\min(N^S_{\ba_\la t}(i),1)$.

(i) Put $K_t^\la=(2\bm_\la+1)^{-1} 
|\{i \in \lb -\bm_\la,\bm_\la\rb \; : \; \zeta^\la_{t}(i)>0\}|$ and 
$$
U^\la_t= \left(\frac{\psi_S(K^\la_t)}{\ba_\la}\right)\land 1, 
$$
recall Notation \ref{phipsi}.
Then for any $\e>0$, any $T>0$, 
$$
\lim_{\la\to 0} \Pr\left[
\sup_{[0,T]}|U^\la_t - t\land 1 | >\e \right] = 0.
$$

(ii) Put also $C^\la_t=C(\zeta^\la_t,0)$ and define
$$
V_t^\la=\left(\ba_\la^{-1} \psi_S( 1-1/|C^\la_t| ) \indiq_{\{|C^\la_t|>0\}}
\right)\land 1. 
$$
Then 
for any $\e>0$,  for all $t\in [0,1)$,
$$
\lim_{\la\to 0} \Pr \left[ 
C^\la_t \subset   \lb -\bm_\la,\bm_\la \rb, |V^\la_t - t| <\e\right]=1.
$$
\end{lem}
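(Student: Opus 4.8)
The plan is to use that, with fires switched off, site $i$ is occupied at time $\ba_\la t$ exactly when $N^S_{\ba_\la t}(i)>0$, so that $(\zeta^\la_t(i))_{i\in\zz}$ is an i.i.d.\ Bernoulli field with parameter $p_\la(t):=\nu_S((0,\ba_\la t))=1-r_\la(t)$, where $r_\la(t):=\nu_S((\ba_\la t,\infty))$. Both $K^\la_t$ and $|C^\la_t|$ are elementary functionals of this field, and the whole argument rests on three facts: (a) $r_\la(t)\to 0$ for every $t>0$ (since $\ba_\la\to\infty$); (b) $\bm_\la r_\la(t)\to\infty$ for $t\in[0,1)$ --- this is precisely the extra requirement in (\ref{mla}) under $(H_S(\infty))$ --- whereas for $t\ge 1$, $\bm_\la r_\la(t)\le(2\bm_\la+1)\la\ba_\la\sim 2\bm_\la/\bn_\la\to0$ because $r_\la(t)\le\nu_S((\ba_\la,\infty))=\la\ba_\la$ by (\ref{ala}) and $\bn_\la=\lfloor1/(\la\ba_\la)\rfloor$; (c) $(H_S(\infty))$ gives, for fixed $t>0$, $\nu_S((\ba_\la s,\infty))/r_\la(t)\to0$ if $s>t$ and $\to\infty$ if $0<s<t$.

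For (i), I first establish $U^\la_t\to t\land1$ in probability for each fixed $t$. The case $t=0$ is deterministic. For $t\ge1$, $(2\bm_\la+1)(1-K^\la_t)$ is Binomial with mean $(2\bm_\la+1)r_\la(t)\to0$, whence $\Pr[K^\la_t<1]\to0$ and $U^\la_t=1$ with probability tending to $1$. For $0<t<1$, $(2\bm_\la+1)(1-K^\la_t)$ is Binomial with mean $m_\la:=(2\bm_\la+1)r_\la(t)\to\infty$ (fact (b)) and variance $\le m_\la$, so Chebyshev yields $(1-K^\la_t)/r_\la(t)\to1$ in probability; then, since $\psi_S$ is increasing and inverts $s\mapsto\nu_S((0,s))$, for $\delta>0$ with $t+\delta<1$ one has
\begin{align*}
\{U^\la_t\ge t+\delta\}=\{1-K^\la_t\le\nu_S((\ba_\la(t+\delta),\infty))\},\qquad
\{U^\la_t\le t-\delta\}=\{1-K^\la_t\ge\nu_S((\ba_\la(t-\delta),\infty))\},
\end{align*}
and both events have vanishing probability by fact (c). To pass to the uniform statement I use monotonicity: $t\mapsto K^\la_t$ is nondecreasing and $\psi_S$ increasing, so $t\mapsto U^\la_t$ is nondecreasing, while $t\mapsto t\land1$ is nondecreasing and continuous; fixing a grid $0=s_0<\cdots<s_N=T$ of mesh $<\e$ and sandwiching $U^\la_t$ and $t\land1$ between consecutive grid values gives $\sup_{[0,T]}|U^\la_t-t\land1|\le\max_{0\le k\le N}|U^\la_{s_k}-s_k\land1|+\e$, and the finite maximum tends to $0$ in probability by the preceding step.

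For (ii), conditionally on $\{\zeta^\la_t(0)=1\}$ (probability $p_\la(t)\to1$) one has $|C^\la_t|=L+R+1$ with $L,R$ i.i.d., $\Pr[L=l]=p_\la(t)^l r_\la(t)$, so $r_\la(t)L$ converges in law to an exponential variable and $r_\la(t)|C^\la_t|$ to a Gamma$(2,1)$ variable; in particular $r_\la(t)|C^\la_t|$ is tight and bounded away from $0$ in probability. Also $\Pr[L\lor R>\bm_\la]\le2p_\la(t)^{\bm_\la}\le2e^{-\bm_\la r_\la(t)}\to0$ by fact (b), so $C^\la_t\subset\lb-\bm_\la,\bm_\la\rb$ with probability tending to $1$. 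For $V^\la_t\to t$ in probability: $\{|C^\la_t|=0\}$ has probability $r_\la(t)\to0$, and on $\{|C^\la_t|>0\}$, for $\delta>0$ with $t+\delta<1$,
\begin{align*}
\{V^\la_t\ge t+\delta\}=\Big\{\tfrac{1}{r_\la(t)|C^\la_t|}\le\tfrac{\nu_S((\ba_\la(t+\delta),\infty))}{r_\la(t)}\Big\},
\end{align*}
which has vanishing probability since the right-hand side tends to $0$ (fact (c)) while the left-hand side stays bounded away from $0$; the event $\{V^\la_t\le t-\delta\}$ is symmetric. A union bound then gives $\Pr[\,C^\la_t\subset\lb-\bm_\la,\bm_\la\rb,\ |V^\la_t-t|<\e\,]\to1$ for every $\e>0$ and $t\in[0,1)$ (the case $t=0$ being deterministic).

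The delicate point is that $\psi_S$ blows up at $z=1$, so no naive continuity argument is available at the relevant scale; everything goes through the ratios $\nu_S((\ba_\la s,\infty))/\nu_S((\ba_\la t,\infty))$ controlled by $(H_S(\infty))$, together with the quantitative lower bound (\ref{mla}) on $\bm_\la$, which is exactly what forces the local density $K^\la_t$ and the cluster length $|C^\la_t|$ to concentrate at the correct order for $t<1$ while collapsing onto $1$ (resp.\ $\infty$) for $t\ge1$. Verifying the Gamma$(2,1)$ limit for $r_\la(t)|C^\la_t|$ and the monotone sandwiching are the only slightly technical, but entirely routine, pieces.
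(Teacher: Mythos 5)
Your proof is correct and follows essentially the same route as the paper: pointwise convergence of $U^\la_t$ via binomial concentration (Chebyshev) combined with $(H_S(\infty))$ and the lower bound $\bm_\la\nu_S((\ba_\la t,\infty))\to\infty$ from (\ref{mla}), then uniformity by monotonicity and a grid (the paper's Dini-type step), and for (ii) the geometric two-sided cluster giving the Gamma$(2,1)$ limit of $\nu_S((\ba_\la t,\infty))|C^\la_t|$, translated through $\psi_S$. Your direct tail estimates replace the paper's appeal to its Lemma \ref{occupation} and its explicit quantile bounds, but these are only cosmetic differences.
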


\begin{proof} We split the proof into three steps. 

\vip
 
{\bf Step 1.} Here we show that for $t\geq 0$ fixed, 
$\lim_{\la\to 0} \Pr\left[|U^\la_t - t\land 1 | >\e \right] = 0$.

\vip

{\it Case 1.} Assume first that $t\geq 1$. Then Lemma \ref{occupation}-(ii)
implies that $\lim_{\la\to 0} \Pr[K^\la_t=1]=1$.
But $K^\la_t=1$ implies that $U^\la_t= [\psi_S(1)/\ba_\la]\land 1=1$
(because $\psi_S(1)=\infty)$.

\vip

{\it Case 2.} Assume next that $t<1$. 
Then the random variable $X^\la_t=(2\bm_\la+1)K^\la_t$
has a binomial distribution with parameters $2\bm_\la +1$
and $\nu_S((0,\ba_\la t))$. Let $\e\in (0,t)$ be fixed. Then,
using Bienaym\'e-Chebyshev's inequality,
\begin{align*}
\Pr[K^\la_t \leq &\nu_S((0,\ba_\la (t-\e)))]=
\Pr[X^\la_t \leq (2\bm_\la+1)\nu_S((0,\ba_\la (t-\e)))]\\
&\leq \Pr[|X^\la_t -(2\bm_\la +1)\nu_S((0,\ba_\la t))| \geq 
(2\bm_\la+1)\nu_S((\ba_\la (t-\e),\ba_\la t))]\\
&\leq \frac{(2\bm_\la+1) \nu_S((0,\ba_\la t)) \nu_S((\ba_\la t,\infty))}
{ (2\bm_\la+1)^2\nu_S^2((\ba_\la (t-\e),\ba_\la t))}\\
&\leq  \frac{\nu_S((\ba_\la t,\infty))}
{ (2\bm_\la+1) \nu_S^2((\ba_\la (t-\e),\ba_\la t))}.
\end{align*}
This last quantity tends to $0$. Indeed, $(H_S(\infty))$ implies that
$\nu_S((\ba_\la (t-\e),\ba_\la t)) \sim \nu_S((\ba_\la (t-\e),\infty)) 
\geq \nu_S((\ba_\la t,\infty))$ and it suffices to use that
$\bm_\la  \nu_S((\ba_\la t,\infty)) \to \infty$ by (\ref{mla}), since $t<1$.

\vip

By the same way, for $\e>0$, 
\begin{align*}
\Pr[K^\la_t \geq &\nu_S((0,\ba_\la (t+\e)))]=
\Pr[X^\la_t \geq (2\bm_\la+1)\nu_S((0,\ba_\la (t+\e)))]\\
&\leq \Pr[|X^\la_t -(2\bm_\la +1)\nu_S((0,\ba_\la t))| \geq 
(2\bm_\la+1)\nu_S((\ba_\la t,\ba_\la (t+\e)))]\\
&\leq \frac{(2\bm_\la+1) \nu_S((0,\ba_\la t)) \nu_S((\ba_\la t,\infty))}
{ (2\bm_\la+1)^2\nu_S^2((\ba_\la t,\ba_\la (t+\e)))}\\
&\leq  \frac{\nu_S((\ba_\la t,\infty))}
{ (2\bm_\la+1) \nu_S^2((\ba_\la t,\ba_\la (t+\e)))},
\end{align*}
which also tends to $0$, because $(H_S(\infty))$ implies that
$\nu_S((\ba_\la t,\ba_\la (t+\e))) \sim \nu_S((\ba_\la t,\infty))$,
and because $\bm_\la \nu_S((\ba_\la t, \infty))\to \infty$, since
$t<1$.

\vip

To conclude the step it suffices to note that for $0<t-\e<t<t+\e<1$, 
$K^\la_t \in (\nu_S((0,\ba_\la (t-\e))), \nu_S((0,\ba_\la (t+\e))))$
implies that $U^\la_t \in (t-\e, t+\e)$ by definition of $\psi_S$.

\vip

{\bf Step 2.} Using a well suited version of the Dini theorem, we conclude
the proof of (i). Indeed, let $\e>0$ and consider a subdivision 
$0=t_0<t_1<\dots < t_l=T$, with $t_{i+1}-t_i<\e/2$. Using Step 1,
we see that $\lim_{\la\to 0} \Pr[ \max_{i=0,\dots,l} 
|U^\la_{t_i}- t_i\land 1|>\e/2]=0$.  Observe that $t\mapsto U^\la_t$
is a.s. nondecreasing and that $t \mapsto t\land 1$ is nondecreasing
and Lipschitz continuous. We deduce that 
$\sup_{[0,T]} |U^\la_t - t\land 1| \leq \e/2 + \max_{i=0,\dots,l} 
|U^\la_{t_i}- t_i\land 1|$. One immediately concludes.

\vip

{\bf Step 3.} It remains to prove (ii). Let thus $t<1$ and $\e>0$
be fixed. We can of course assume that $0<t-\e<t<t+\e<1$.

\vip

First,
$\lim_{\la \to 0} \Pr[C^\la_t \subset \lb -\bm_\la,\bm_\la \rb]=1$ 
due to Lemma \ref{occupation}-(i). 

\vip

Next, each site
is vacant with probability $\nu_S((\ba_\la t, \infty))$.
It is thus classical that as $\la\to 0$,
$\nu_S((\ba_\la t, \infty)) |C^\la_t|$ goes in law to a random variable
$X$ with density $xe^{-x}\indiq_{x>0}$. Indeed, 

\vip

$\bullet$ for $Y_\delta$ a geometric random variable with parameter
$\delta$, the random variable $\delta Y_\delta$ goes in law,
as $\delta\to 0$,  to an exponentially distributed
random variable with parameter $1$; 

\vip

$\bullet$ $|C^\la_t|$ is the sum of two independent geometric random variables,
both with parameter $\nu_S((\ba_\la t, \infty))$;
\vip

$\bullet$ $xe^{-x}\indiq_{x>0}$ is the density of the sum of two independent
exponentially distributed random variables with parameter $1$.

\vip

For $\delta>0$, consider $0<a<1<b$
such that $\Pr[X\in (a,b))]\geq 1-\delta$.
Then 
$$
\lim_{\la\to 0} \Pr[ |C^\la_t| \in (a/\nu_S((\ba_\la t, \infty)),
b/\nu_S((\ba_\la t, \infty))] \geq 1-\delta.
$$
But due to $(H_S(\infty))$, $|C^\la_t| \in (a/\nu_S((\ba_\la t, \infty)),
b/\nu_S((\ba_\la t, \infty))$ implies, if $\la$ is small enough, 
that  
$|C^\la_t| \in (1/\nu_S((\ba_\la (t-\e), \infty)),
1/\nu_S((\ba_\la (t+\e), \infty))$,
whence finally
$$
V^\la_t \in (\ba_\la^{-1} \psi_S(\nu_S((0,\ba_\la (t-\e)))), 
\ba_\la^{-1} \psi_S(\nu_S((0,\ba_\la (t+\e))))=(t-\e,t+\e).
$$
We have proved that for all $\delta>0$, 
$\liminf_{\la\to 0} \Pr[|V^\la_t-t|<\e ] \geq 1-\delta$, 
which concludes the proof.
\end{proof}

We finally give the

\begin{preuve} {\it of Proposition \ref{converge1A} when $\beta=\infty$.}
Let us fix $x_0 \in (-A,A)$, $t_0\in (0,T]\setminus \{1\}$ 
and $\e>0$. We will prove that 
with our coupling (see Subsection \ref{coucou}), there holds

\vip

(a) $\lim_{\la\to 0} 
\Pr\left[ \bdelta(D^\la_{t_0}(x_0),D_{t_0}(x_0) )>\e\right] =0$;

\vip

(b) $\lim_{\la\to 0} \Pr\left[ \bdelta_T(D^\la(x_0),D(x_0) )>\e\right] =0$;

\vip

(c) $\lim_{\la\to 0} \Pr\left[ \sup_{[0,T]} |Z_t^\la(x_0)-Z_t(x_0)|\geq \e
\right] =0$;

\vip

(d) $\lim_{\la \to 0} \Pr\left[|W^\la_{t_0}(x_0) - Z_{t_0}(x_0)|>\e \right] = 0$,
where 
$$
W^\la_{t_0}(x_0)=\left(\frac{\psi_S( 1-1/|C_A(\eta^\la_{\ba_\la t_{0}}, 
\lfloor \bn_\la x_0 \rfloor)| ) 
\indiq_{\{|C_A(\eta^\la_{\ba_\la t_{0}}, 
\lfloor \bn_\la x_0 \rfloor)|>0\}}}{\ba_\la} \right)\land 1 .
$$

These points will clearly imply the result.

\vip

First, we introduce, for $\zeta>0$,  
the event $\Omega^{x_0}_{A,T}(\zeta)$ on which
$x_0 \notin \cup_{q=1}^n [X_q - \zeta, X_q + \zeta]$.
The probability
of this event obviously tends to $1$ as $\zeta \to 0$.

\vip

On  $\Omega^{x_0}_{A,T}(\zeta)$, it holds that for $\la>0$ small enough
(say, such that $4 \bm_\la/\bn_\la < \zeta$),
$\lfloor \bn_\la x_0 \rfloor \notin 
\cup_{q=1}^n (X_q)_\la$.  We then call $c_0\in \cC_M$
the cell containing $x_0$.  

\vip

{\bf Step 1.} As in the case where $\beta=BS$, (a) implies (b)
(the fact that $t_0=1$ is excluded in (a) is of course not a problem,
because $\{1\}$ is Lebesgue-negligible).

\vip

{\bf Step 2.} Due to Lemma \ref{ggg2}, we know that if $0<\gamma<\alpha$ on 
$\Omega(\alpha,\gamma,\la) \cap \Omega^{x_0}_{A,T}(\zeta)$, there holds that
$\tau^\la_t(c_0)=\rho^\la_t(c_0)=\tau_t(x_0)$ for all $t\in [0,T]$.
This implies that for all $i\in (c_0)_\la$, for all $t \in [0,T]$, 
$$
\eta^\la_{T_St}(i)=\min(N^{S,0}_{\ba_\la t}(i)-N^{S,0}_{\ba_\la  \tau_t(x_0)}(i), 1).
$$
We also recall that by construction, $(\tau_t(x_0))_{t\geq 0}$ is independent of
$(N^{S,0}_{t}(i))_{t\geq 0, i \in \zz}$.

\vip

{\bf Step 3.} Here we prove (d).
Let $\delta>0$ be fixed. We first consider $\alpha_0>0$, 
$\gamma_0\in (0,\alpha_0)$, $\zeta_0>0$
and $\la_0>0$ such that for all $\la \in (0,\la_0)$, 
$\Pr\left[ \Omega(\alpha_0,\gamma_0,\la) 
\cap \Omega^{x_0}_{A,T}(\zeta_0)\right] > 1-\delta$.
Then we consider $\la_1\leq \la_0$ in such a way
that for $\la \in (0,\la_1)$,  $\lb \lfloor \bn_\la x_0 \rfloor -\bm_\la, 
\lfloor \bn_\la x_0 \rfloor + \bm_\la \rb \subset (c_0)_\la$ (this can be
done properly by using $\Omega^{x_0}_{A,T}(\zeta)$ and the fact
that $\bm_\la/\bn_\la \to 0$). 

\vip

Introduce $C^\la_t$ and $V^\la_t$ as in Lemma \ref{densitiesinfty}-(ii),
using the seed processes 
$(N^{S,\la}_{t + \tau_t(x_0)/\ba_\la}(i+\lfloor \bn_\la x_0 \rfloor)-
N^S_{\tau_t(x_0)/\ba_\la}(i+\lfloor \bn_\la x_0 \rfloor))_{t\geq 0, i \in \zz}$.

\vip

Then by Step 2, we observe that 
$C^\la_{t_0-\tau_t(x_0)}\subset \lb -\bm_\la,\bm_\la\rb$
implies that, 
on $\Omega(\alpha,\gamma,\la) \cap \Omega^{x_0}_{A,T}(\zeta)$
and for $\la<\la_1$, $C_A(\eta^\la_{\ba_\la t_0},\lfloor \bn_\la x_0 \rfloor)
=\{i+\lfloor \bn_\la x_0 \rfloor\; : \; i \in C^\la_{t_0-\tau_t(x_0)} \}$,
whence $W^\la_{t_0}(x_0)=V^\la_{t_0-\tau_{t_0}(x_0)}$.
All this implies, using Lemma  \ref{densitiesinfty}-(ii), that 
$$
\liminf_{\la\to 0} \Pr\left[\left.|W^\la_{t_0}(x_0)-(t_0-\tau_{t_0}(x_0)) 
\vert <\e \;
\right\vert \; t_0-\tau_{t_0}(x_0)<1 \right]\geq 1-\delta.
$$
Recalling finally (\ref{tauzC2}), we deduce that
$$
\liminf_{\la\to 0} \Pr\left[\left.|W^\la_{t_0}(x_0)-Z_{t_0}(x_0)\vert <\e \;
\right\vert \; t_0-\tau_{t_0}(x_0)<1 \right]\geq 1-\delta.
$$
If now $t_0-\tau_{t_0}(x_0)> 1$, then Step 2
and $\Omega^S_3(\la)$ imply that $(c_0)_\la$ is completely occupied
at time $\ba_\la t_0$. Hence 
$|C(\eta^\la_{\ba_\la t_0},\lfloor \bn_\la x_0 \rfloor)|
\geq |(c_0)_\la| \simeq |c| \bn_\la \geq \alpha \bn_\la$ by $\Omega_M(\alpha)$.
Consequently, 
$W^\la_{t_0}(x_0)\geq [\ba_\la^{-1}\psi_S(1-1/(\alpha\bn_\la))]\land 1
\simeq [\ba_\la^{-1}\psi_S(1- \nu_S((\ba_\la,\infty))/\alpha)]\land 1$.
For $\e>0$, there holds that $\nu_S((\ba_\la,\infty))/\alpha
\leq \nu_S(((1-\e)\ba_\la,\infty))$ for all $\la$ small enough:
use $(H_S(\infty))$.

\vip

Thus for all $\la$ small enough, 
on $\Omega(\alpha,\gamma,\la) \cap \Omega^{x_0}_{A,T}(\zeta)$, 
we have 
$W^\la_{t_0}(x_0) \geq  
[\ba_\la^{-1}\psi_S(1- \nu_S(((1-\e)\ba_\la,\infty)))]\land 1
=1-\e$ by definition of $\psi_S$. Thus 
$$
\liminf_{\la\to 0} \Pr\left[\left. W^\la_{t_0}(x_0) \in (1-\e,1] \;
\right\vert \; t_0-\tau_{t_0}(x_0)>1 \right]\geq 1-\delta.
$$
Recalling (\ref{tauzC2}), we deduce that
$$
\liminf_{\la\to 0} \Pr\left[\left.|W^\la_{t_0}(x_0)-Z_{t_0}(x_0)\vert <\e \;
\right\vert \; t_0-\tau_{t_0}(x_0)>1 \right]\geq 1-\delta.
$$
Finally, we observe that a.s.,  $t_0-\tau_{t_0}(x_0)\ne 1$.
Indeed, we have excluded $t_0=1$ and the only value charged with
positive probability by $\tau_{t_0}(x_0)$ is $0$.
Thus there holds that
$$
\liminf_{\la\to 0} \Pr\left[|W^\la_{t_0}(x_0)-Z_{t_0}(x_0)\vert <\e \right]
\geq 1-\delta.
$$
Since this holds for any $\delta>0$, this concludes the proof
of (d).

\vip

{\bf Step 4.} Next, (c) is proved exactly as when $\beta=BS$ 
(change the beginning: let first $\delta>0$, 
$\alpha_0>0$, $\zeta_0 \in (0,\alpha_0)$
and $\la_0>0$ be as in Step 3; replace everywhere $T_S$ by $\ba_\la$;
and make use of Lemma \ref{densitiesinfty} instead of Lemma 
\ref{densitiesbs}). 

\vip

{\bf Step 5.} Finally, (a) is also proved as when $\beta=BS$.
The only difference is that when put $\cT_M^*=\cT_M\cup\{t_0\}$, 
we need that $t_0\ne 1$
(because $0\in\cT_M$ and $\Omega_M^*(\alpha)$ will thus require that
for $|t_0-1|>\alpha$).
\end{preuve}

\section{Cluster-size distribution when $\beta\in\{\infty,BS\}$}\label{prcsd}
\setcounter{equation}{0}

The aim of this section is to prove Corollaries \ref{co1} and \ref{cobs}.

\subsection{Study of the $LFF(\infty)$ and $LFF(BS)$-processes}

We first extend \cite[Lemma 17]{bf}.

\begin{lem}\label{zunif}
Let $\beta \in \{\infty,BS\}$. Let $(Z_t(x),D_t(x),H_t(x))_{t\geq 0, x\in \rr}$
be a $LFF(\beta)$-process.
There are some constants $0<c_1<c_2$ and $0 < \kappa_1<\kappa_2$ 
such that the following estimates hold.

(i)  For any $t\in (1,\infty)$, any $x\in \rr$, any
$z\in [0,1)$, $\Pr[Z_t(x)=z]=0$.

(ii) For any $t\in[0,\infty)$, any $B>0$, any $x\in\rr$, $P[|D_t(x)|=B]=0$.

(iii) For all
$t\in [0,\infty)$, all $x\in \rr$, all $B>0$,
$\Pr[|D_t(x)|\geq B] \leq c_2 e^{-\kappa_1 B}$.

(iv) For all
$t\in [3/2,\infty)$, all $x\in \rr$, all $B>0$, 
$\Pr[|D_t(x)|\geq B] \geq c_1 e^{-\kappa_2 B}$.

(v) For all $t\in[5/2,\infty)$, all
$0\leq a < b < 1$, all $x\in \rr$, 
$c_1(b-a) \leq \Pr(Z_t(x)\in [a,b])   \leq c_2 (b-a)$.
\end{lem}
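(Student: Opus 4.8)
The plan is to work throughout with the infinite–volume $LFF(\beta)$–process (well defined by Proposition \ref{loc}, which also guarantees that near a fixed $x$ only finitely many marks of $\pi_M$ are relevant on a bounded time interval), and to exploit the structural identity
$$Z_t(x)=\min\bigl(t-\tau_t(x),\,1\bigr),\qquad \tau_t(x):=\sup\{s\le t:\ x\ \hbox{is reset by a macroscopic fire at }s\}\cup\{0\},$$
valid because between successive macroscopic fires at $x$ the map $s\mapsto Z_s(x)$ is nondecreasing with slope $\indiq_{\{Z_s(x)<1\}}$, and microscopic fires at $x$ (and all fires elsewhere) leave $Z_\cdot(x)$ unchanged. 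Statement (i) is then immediate: if $t>1$ and $Z_t(x)=z\in[0,1)$ then $\tau_t(x)=t-z>0$, so $\tau_t(x)$ is a jump time of $\pi_M$, and the time–marginal of $\pi_M$ has no atoms, whence $\Pr[\tau_t(x)=t-z]=0$.

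The common cornerstone for (iii) and for the upper bound in (v) is the following deterministic remark, proved by a short case analysis: if $\pi_M$ has a mark $(s,y,v)$ with $s\in(t-\tfrac14,t)$ and $v\ge v_0$ (recall Lemma \ref{lblb}; when $\beta=\infty$, $v_0=0$), then a.s. (using that the marks of $\pi_M$ have pairwise distinct positions, so $y$ is hit exactly once) $y$ is a {\it defect} at time $t$, i.e. $Z_t(y)<1$ or $H_t(y)>0$; indeed, if $Z_{s-}(y)=1$ the fire is macroscopic and $Z_t(y)=t-s<\tfrac14$, if $Z_{s-}(y)\in[\tfrac34,1)$ then $H_s(y)=F_S(Z_{s-}(y),v)\ge\tfrac12$ and $H_t(y)\ge\tfrac14$, and if $Z_{s-}(y)<\tfrac34$ then $Z_t(y)<\tfrac34+\tfrac14=1$. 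Since $\{|D_t(x)|\ge B\}\subset\{R_t(x)-x\ge B/2\}\cup\{x-L_t(x)\ge B/2\}$, and on $\{R_t(x)-x\ge B/2\}$ there is no such mark with $y\in(x,x+B/2)$, the Poisson property gives $\Pr[|D_t(x)|\ge B]\le 2\exp(-(1-v_0)B/8)$ for all $t\ge0$ (trivial for $t<1$): this is (iii), and it yields $M:=\sup_s\E[|D_s(x)|]\le 2/\kappa_1<\infty$. For the upper bound in (v), note the exact identity $\{Z_t(x)\in[a,b]\}=\{\pi_M$ has a mark $(\sigma,y)$ with $\sigma\in[t-b,t-a]$, $Z_{\sigma-}(x)=1$, $y\in D_{\sigma-}(x)\}$ for $0\le a<b<1$, $t>1$ — a macroscopic fire at $x$ at $\sigma\in[t-b,t-a]$ forces $\tau_t(x)=\sigma$ since afterwards $x$ stays microscopic until $\sigma+1>t$ — and the Campbell/Mecke formula applied to the predictable set $\{(\sigma,y):\sigma\in[t-b,t-a],\,y\in D_{\sigma-}(x),\,Z_{\sigma-}(x)=1\}$ bounds its probability by $\int_{t-b}^{t-a}\E[|D_{\sigma-}(x)|\indiq_{\{Z_{\sigma-}(x)=1\}}]\,d\sigma\le(b-a)M$.

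For (ii), by Proposition \ref{loc} it suffices to show the finite–box $|D^A_t(x)|$ has no atom at $B>0$; this is clear for $t<1$, where $D^A_t(x)=\{x\}$ a.s. For $t\ge1$, conditionally on the a.s.\ finite number, jump times and $v$–marks of $\pi_M$ in $[0,T]\times[-A,A]$, the positions $(X_q)$ are i.i.d.\ uniform, both endpoints of $D^A_t(x)$ lie a.s.\ in $\{X_q\}\cup\{-A,A\}$, and off a Lebesgue–null "threshold" set in $(X_q)$–space the combinatorial type of the run is locally constant; there $|D^A_t(x)|$ is an affine function of one coordinate with nonzero gradient (nonzero because $B>0$ excludes $|D^A_t(x)|=0$), hence has a density. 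Summing over finitely many combinatorial types and letting $A\to\infty$ gives (ii).

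The delicate points are the lower bounds. The scheme is to exhibit a favorable Poisson configuration of probability $\ge c_1 e^{-\kappa_2 B}$ (resp.\ $\ge c_1(b-a)$) forcing the desired event. For (iv) one asks that $\pi_M$ have no mark in $[x-B/2,x+B/2]\times[t-1,t]$ (probability $e^{-B}$) and that no macroscopic fire from outside reaches $(x-B/2,x+B/2)$ during $(t-1,t)$; on this event $(x-B/2,x+B/2)$ is undisturbed on $(t-1,t)$, hence becomes a clean macroscopic cluster by time $t$, so $|D_t(x)|\ge B$. For the lower bound in (v) one asks that there be exactly one mark in $[x-1,x+1]\times[t-b,t-a]$ and none in $[x-1,x+1]\times[t-\tfrac52,t-b)$ (contributing the factor $(b-a)$), together with the same "no intrusion from outside" clause on the length–$\le\tfrac32$ window before the mark time; then that mark triggers a macroscopic fire destroying $[x-1,x+1]\ni x$ at some $\sigma\in[t-b,t-a]$, after which $[x-1,x+1]$ is microscopic (so no fire, from inside or outside, can touch $x$) until $\sigma+1>t$, whence $\tau_t(x)=\sigma$ and $Z_t(x)=t-\sigma\in[a,b]$. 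The hard part — as everywhere in this paper — is making the "no intrusion from outside" clause have positive probability, because a macroscopic fire can destroy arbitrarily long clusters and so reach $[x\pm1]$ or $[x\pm B/2]$ from afar; one handles this by prescribing, near the two endpoints, a short chain of extra marks producing a pair of overlapping microscopic/macroscopic fires whose defects cover the required window (using Lemma \ref{lblb} to guarantee that the microscopic barriers in the chain have height $\ge\tfrac12$ when $\beta=BS$). The number of extra marks needed depends only on $t$ (and is $0$ once the window reaches back to the empty initial configuration), so this costs only a multiplicative constant that is uniform for $t\in[\tfrac32,\infty)$, resp.\ $[\tfrac52,\infty)$, and independent of $B$. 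Combining all of the above yields (i)–(v) with a common choice of constants.
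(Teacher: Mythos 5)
Your proposal is correct and, for (i), (ii), (iii), (iv) and the upper bound in (v), it follows essentially the same route as the paper: the representation $Z_t(x)=\min(t-\tau_t(x),1)$ for (i); the fact that cluster endpoints are a.s.\ positions of marks of $\pi_M$ for (ii) (the paper does this in one line, without your finite-box/combinatorial-type detour, by writing $|D_t(x)|$ as $|X_i-X_j|$ and union-bounding over pairs); the "every mark with $v\ge v_0$ leaves a defect for a quarter unit of time" observation (which is exactly Step 1 of the proof of Proposition \ref{loc}, via Lemma \ref{lblb}) for (iii); the staggered five-mark barrier chains flanking an empty strip for (iv) (the paper's $\Omega^1_{t,B}\cap\Omega^2_t\cap\Omega^3_{t,B}$, giving $p^2e^{-3B/2}$ instead of your $e^{-B}$ times constants); and the first-moment/Campbell bound $\Pr(Z_t(x)\in[a,b])\le\int_{t-b}^{t-a}\E[|D_s(x)|]\,ds\le C(b-a)$ for the upper half of (v). The only genuine divergence is the lower bound in (v): you re-run the barrier-chain construction a second time, forcing an explicit configuration (a protected window of length about $3/2$, then exactly one mark in $[x-1,x+1]\times[t-b,t-a]$), whereas the paper avoids any new construction by observing that $\{\pi_M((t-b,t-a]\times D_{t-b}(x))\ge 1\}\subset\{Z_t(x)\in[a,b]\}$ (if no fire touches the cluster of $x$ then that cluster only grows on the window), then conditioning on $D_{t-b}(x)$, which is independent of the marks after time $t-b$, and using (iv) at time $t-b\ge 3/2$ to get $\Pr[|D_{t-b}(x)|\ge1]\ge c$, so that $\Pr(Z_t(x)\in[a,b])\ge c(1-e^{-(b-a)})\ge c(b-a)/2$. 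Your version works (and you correctly identify that the delicate point is blocking intrusion from outside, handled as in (iv)), but it duplicates the hardest part of (iv) and needs the extra bookkeeping about the first chain fire possibly penetrating the zone; the paper's argument buys the lower bound of (v) for free from (iv) plus the Markov-type independence of the driving Poisson measure, which is also where the threshold $5/2=3/2+1$ (rather than your implicit $t>1$) comes from.
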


\begin{proof} By invariance by translation,
it suffices to treat the case $x=0$. When 
$\beta=BS$, the function $F_S$ was defined in Definition \ref{defF}.
Recall that the $LFF(\infty)$-process
can be viewed as a $LFF(BS)$-process with the function
$F_S(z,v)=z$, see Remark \ref{i=bs}.

\vip

We consider a Poisson measure $\pi_M(dt,dx,dv)$ on 
$[0,\infty)\times \rr \times [0,1]$ with intensity measure $dtdxdv$. 
We also denote by $\pi_M(dt,dx)=\int_{v\in [0,1]} \pi_M(dt,dx,dv)$.

\vip

{\bf Point (i).} For $t\in [0,1]$,
we have a.s. $Z_t(0)=t$. But for $t> 1$ and $z\in [0,1)$,
$Z_t(0)=z$ implies that the cluster containing $0$
has been killed at time $t-z$, so that necessarily 
$\pi_M(\{t-z\}\times \rr)>0$. This happens with probability $0$. 

\vip

{\bf Point (ii).} 
For any $t>0$,
$|D_t(0)|$ is either $0$ or of the form $|X_i-X_j|$ (with $i\ne j$),
where $(T_i,X_i)_{i\geq 1}$ are the marks of the 
Poisson measure $\pi_M(ds,dx)$ restricted to $[0,t]\times \rr$. 
We easily conclude as previously that for $B>0$,
$\Pr(|D_t(0)|=B)=0$.

\vip

{\bf Point (iii).}
First if $t \in [0,1)$, we have a.s. $|D_t(0)|=0$ and the result is obvious. 
Recall now that $v_0\in [0,1)$ was defined in Lemma \ref{lblb}
and that for $(\tau,X,V)$ a mark of $\pi_M$ such that
$V\geq v_0$, we have $H_t(X)>0$ or $Z_t(X)<1$
for all $t\in [\tau,\tau+1/4]$ (see
the proof of Proposition \ref{loc}-Step 1).
This implies that for $t\geq 1$,
\begin{align*}
\{D_t(0)\geq B\} \subset& \{[0,B/2] \hbox{ 
is connected at time $t$ or $[-B/2,0]$
is connected at time $t$} \}\\
\subset& \left\{\pi_M([t-1/4,t]\times [0,B/2]\times [v_0,1])=0 \right\}\\
&\cup \left\{\pi_M([t-1/4,t]\times [-B/2,0]\times [v_0,1])=0\right\}.
\end{align*}
Consequently, $\Pr[|D_t(0)|\geq B] \leq 2 e^{-(1-v_0)B/8}$ as desired.

\vip

{\bf Point (iv).}
Fix $t\geq 3/2$ and $B>0$. Consider the event
$\Omega_{t,B}=\Omega^1_{t,B} \cap\Omega^2_{t}\cap\Omega^3_{t,B}$,
illustrated by Figure \ref{figOmegatB},
where 

\vip

$\bullet$ $\Omega^{1}_{t,B}=\{\pi_M([t-3/2,t]\times [0,B]\times [0,1])=0\}$;

\vip

$\bullet$ $\Omega^2_{t}$ is the event that in the box $[t-3/2,t]\times[-1,0]
\times [0,1]$,
$\pi_M$ has exactly $5$ marks $(S_i,Y_i,V_i)_{i=1,...5}$ 
with $Y_5<Y_4<Y_3<Y_2<Y_1$,
$\min_{i=1,\dots,5} V_i>v_0$ 
and $t-3/2 < S_1 < t-1$, $S_1<S_2 < S_1+1/4$, $S_2<S_3< S_2+1/4$,
$S_3<S_4< S_3+1/4$, $S_4<S_5< S_4+1/4$ and $S_5+1/4>t$.

\vip

$\bullet$ $\Omega^3_{t,B}$ is the event that in the box 
$[t-3/2,t]\times[B,B+1] \times [0,1]$,
$\pi_M$ has exactly $5$ marks $(\tS_i,\tY_i,\tV_i)_{i=1,...,5}$ with 
$\tY_1<\tY_2<\tY_3<\tY_4<\tY_5$, $\min_{i=1,\dots,5} \tV_i>v_0$ 
and $t-3/2 < \tS_1 < t-1$, $\tS_1<\tS_2 < \tS_1+1/4$, $\tS_2<\tS_3< \tS_2+1/4$,
$\tS_3<\tS_4< \tS_3+1/4$, $\tS_4<\tS_5< \tS_4+1/4$ and  $\tS_5+1/4>t$.

\begin{figure}[b] 
\fbox{
\begin{minipage}[c]{0.95\textwidth}
\centering
\includegraphics[width=12cm]{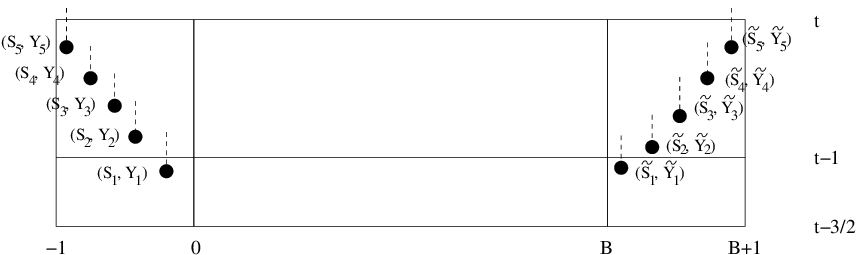}
\caption{The event $\Omega_{t,B}$.}
\label{figOmegatB}
\end{minipage}
}
\end{figure}

\vip

We of course have $p:=\Pr(\Omega^2_{t})=\Pr(\Omega^3_{t,B})>0$,
and this probability does not depend on $t\geq 3/2$ nor on $B>0$.
Furthermore, $\Pr(\Omega^1_{t,B})=e^{-3B/2}$. These three
events being independent, we conclude that 
$\Pr(\Omega_{t,B}) \geq p^2 e^{-3B/2}$. To conclude the proof of (iv),
it thus suffices to check that $\Omega_{t,B}\subset \{[0,B]\subset D_t(0)\}$.
But on $\Omega_{t,B}$, using the same arguments as in Point (iii),
we observe that:

\vip

$\bullet$ the fire starting at $(S_2,Y_2)$ can not affect $[0,B]$,
because since $S_2 \in [S_1, S_1 +1/4)$, $H_{S_2-}(Y_1)>0$ or
$Z_{S_2-}(Y_1)>0$, with $Y_2<Y_1<0$;

\vip

$\bullet$ then the fire starting at $(S_3,Y_3)$ can not affect $[0,B]$, because
since $S_3 \in [S_2,S_2+1/4)$, $H_{S_3-}(Y_2)>0$ or
$Z_{S_3-}(Y_2)>0$, with $Y_3<Y_2<0$;

\vip

$\bullet$ then the fire starting at $(S_4,Y_4)$ can not affect $[0,B]$, because
since $S_4 \in [S_3,S_3+1/4)$, $H_{S_4-}(Y_3)>0$ or
$Z_{S_4-}(Y_3)>0$, with $Y_4<Y_3<0$;

\vip

$\bullet$ then the fire starting at $(S_5,Y_5)$ can not affect $[0,B]$, because
since $S_5 \in [S_4,S_4+1/4)$, $H_{S_5-}(Y_4)>0$ or
$Z_{S_5-}(Y_4)>0$, with $Y_5<Y_4<0$;

\vip

$\bullet$ furthermore, the fires starting on the left at $-1$ during $(S_1,t]$
cannot affect $[0,B]$, because for all $t\in (S_1,t]$, there is
always a site $x_t \in  \{Y_1,Y_2,Y_3,Y_4\} \subset [-1,0]$ 
with $H_t(x_t)>0$ or $Z_t(x_t)<1$;

\vip

$\bullet$ the same arguments apply on the right of $B$.

\vip

As a conclusion, the zone $[0,B]$ is not affected by any fire
during $(S_1 \lor \tS_1,t]$. Since the length of this time interval is 
greater than $1$, we deduce that for all $x \in [0,B]$,
$Z_t(x)=\min(Z_{S_1 \lor \tS_1}(x) + t- S_1 \lor \tS_1,1)
\geq \min(t- S_1 \lor \tS_1,1)=1$ 
and $H_t(x)=\max(H_{S_1 \lor \tS_1}(x) - (t- S_1 \lor \tS_1),0) 
\leq \max(1 - (t- S_1 \lor \tS_1),0)=0$, whence $[0,B]\subset D_t(0)$.

\vip

{\bf Point (v).}
For $0\leq a < b < 1$ and $t\geq 1$,
we have $Z_t(0)\in [a,b]$ if and only there is $\tau\in [t-b,t-a]$
such that $Z_\tau(0)=0$. And this happens if and only if
$X_{t,a,b}:=\int_{t-b}^{t-a}\int_\rr \indiq_{\{y\in D_\sm(0)\}}\pi_M(ds,dy)\geq 1$.
We deduce that
$$
\Pr\left(Z_t(0)\in [a,b]\right)
=\Pr\left(X_{t,a,b}\geq 1\right)
\leq \E\left[X_{t,a,b}\right]
= \int_{t-b}^{t-a}\E[|D_s(0)|]ds \leq C(b-a),
$$
where we used Point (iii) for the last inequality.

\vip

Next, we have $\{\pi_M([t-b,t-a]\times D_{t-b}(0))\geq 1 \}\subset
\{X_{t,a,b}\geq 1\}$: it suffices to note that a.s.,
$\{X_{t,a,b}=0\} \subset \{X_{t,a,b}=0, D_{t-b}(0)\subset D_s(0)$ for all $s\in
[t-b,t-a] \} \subset \{\pi_M([t-b,t-a]\times D_{t-b}(0))=0\}$.
Now since $D_{t-b}(0)$ is independent of $\pi_M(ds,dx)$ restricted to
$(t-b,\infty)\times \rr$, we deduce that for $t\geq 5/2$
\begin{align*}
\Pr\left(Z_t(0)\in [a,b]\right)
&\geq \Pr\left[\pi_M((t-b,t-a]\times D_{t-b}(0))\geq 1\right]\\
&\geq \Pr\left[ |D_{t-b}(0)|\geq 1 \right] (1-e^{-(b-a)}) \geq c (1-e^{-(b-a)}),
\end{align*}
where we used Point (iv) (here $t-b\geq 3/2$) to get the last inequality.
This concludes the proof, since $1-e^{-x}\geq x/2$ for all $x\in [0,1]$.
\end{proof}

\subsection{The case $\beta=\infty$}

We can now handle the

\begin{preuve} {\it of Corollary \ref{co1}.}
We thus assume $(H_M)$ and $(H_S(\infty))$ and
consider, for each $\la>0$, a $FF(\mu_S,\mu_M^\la)$-process 
$(\eta^\la_t(i))_{t\geq 0,i\in\zz}$.
Let also  $(Z_t(x),D_t(x),H_t(x))_{t\geq 0, x\in \rr}$ be a $LFF(\infty)$-process.

\vip

{\bf Point (ii).} Using Lemma \ref{zunif}-(iii)-(iv)
and recalling that $|C(\eta_{\ba_\la t}^\la,0)|/\bn_\la
=|D^\la_t(0)|$ by (\ref{dlambda}), it suffices
to check that for all $t\geq 3/2$, all $B>0$,
$\lim_{\la\to 0} \Pr\left[|D^\la_t(0)|\geq B \right] = 
\Pr\left[|D_t(0)|\geq B \right]$.
This follows from Theorem \ref{converge1}-(b), which implies
that $|D^\la_t(0)|$ goes in law to $|D_t(0)|$ and from Lemma \ref{zunif}-(ii).

\vip

{\bf Point (i).} Due to Lemma \ref{zunif}-(v)
we only need that for all $0 <a < b < 1$, all $t\geq 5/2$, 
$$
\lim_{\la \to 0} \Pr\left(|C(\eta^\la_{\ba_\la t},0)|\in 
[1/\nu_S((\ba_\la a,\infty)), 1/\nu_S((\ba_\la b,\infty))]
\right)
=\Pr\left(Z_t(0)\in [a,b]\right).
$$
But using Theorem \ref{converge1}-(c) and Lemma \ref{zunif}-(i), we know that
$$
\lim_{\la\to 0} \Pr\left[\psi_S\left(1-1/|C(\eta^\la_{\ba_\la t},0)| \right)
\indiq_{\{|C(\eta^\la_{\ba_\la t},0)| \geq 1\}}\in[\ba_\la a,
\ba_\la b] \right] 
= \Pr\left(Z_t(0)\in [a,b]\right).
$$
Using finally the definition of $\psi_S$ (see Notation
\ref{phipsi}-(ii)), we see that for all $c\in \nn$, all $0<\alpha < \beta$,
$$
\psi_S\left(1-1/c \right) \indiq_{\{c\geq 1\}}\in[\alpha, \beta] 
\hbox{ if and only if }
c \in [1/\nu_S((\alpha,\infty)), 1/\nu_S((\beta,\infty))].
$$
One immediately concludes.
\end{preuve}

\subsection{The case $\beta=BS$}

We finally give the

\begin{preuve} {\it of Corollary \ref{cobs}.}
We thus assume $(H_M)$ and $(H_S(BS))$ and
consider, for each $\la>0$, a $FF(\mu_S,\mu_M^\la)$-process 
$(\eta^\la_t(i))_{t\geq 0,i\in\zz}$.
Let also  $(Z_t(x),D_t(x),H_t(x))_{t\geq 0, x\in \rr}$ 
be a $LFF(BS)$-process.

\vip

{\bf Point (ii).}  
Using Lemma \ref{zunif}-(iii)-(iv)
and recalling that $|C(\eta_{\ba_\la t}^\la,0)|/\bn_\la
=|D^\la_t(0)|$ by (\ref{dlambda}), it suffices
to check that for all $t\geq 3/2$, all $B>0$,
$\lim_{\la\to 0} \Pr\left[|D^\la_t(0)|\geq B \right] = 
\Pr\left[|D_t(0)|\geq B \right]$.
This follows from Theorem \ref{convergebs}-(b), which implies
that $|D^\la_t(0)|$ goes in law to $|D_t(0)|$ and from Lemma \ref{zunif}-(ii).

\vip

{\bf Point (i).} Theorem \ref{convergebs}-(c) asserts 
that for all $t\geq 0$, all $k\geq 0$, $\lim_{\la \to 0}
\Pr [ |C(\eta_{T_S t}^\la,0)|=k] =\E[q_k(Z_t(0))]$, where
$q_k(z)$ was defined in (\ref{defqk}).
Using next Lemma \ref{zunif}-(v) and recalling that
$Z_t(0)\in [0,1]$ a.s., we see that for
$t\geq 5/2$, the law of $Z_t(0)$ is of the form 
$$
g_t(z)\indiq_{\{0\leq z \leq 1\}}dz + \alpha_t \delta_1(dz),
$$
for some function
$g_t:[0,1]\mapsto \rr_+$ satisfying $c\leq g_t \leq C$, where
the constants $0<c<C$ do not depend on $t\geq 5/2$. One immediately
deduces that for any $k\geq 0$, $\E[q_k(Z_t(0))] \in [c q_k,Cq_k]$.
Indeed, there holds $q_k=\int_0^1 q_k(z)dz$ and $q_k(1)=0$.
This concludes the proof.
\end{preuve}

\part{Numerical simulations}

\section{Simulations}\label{simu}
\setcounter{equation}{0}

\begin{figure}[b]
\fbox{
\begin{minipage}[c]{0.95\textwidth}
\centering
\includegraphics[width=7.5cm]{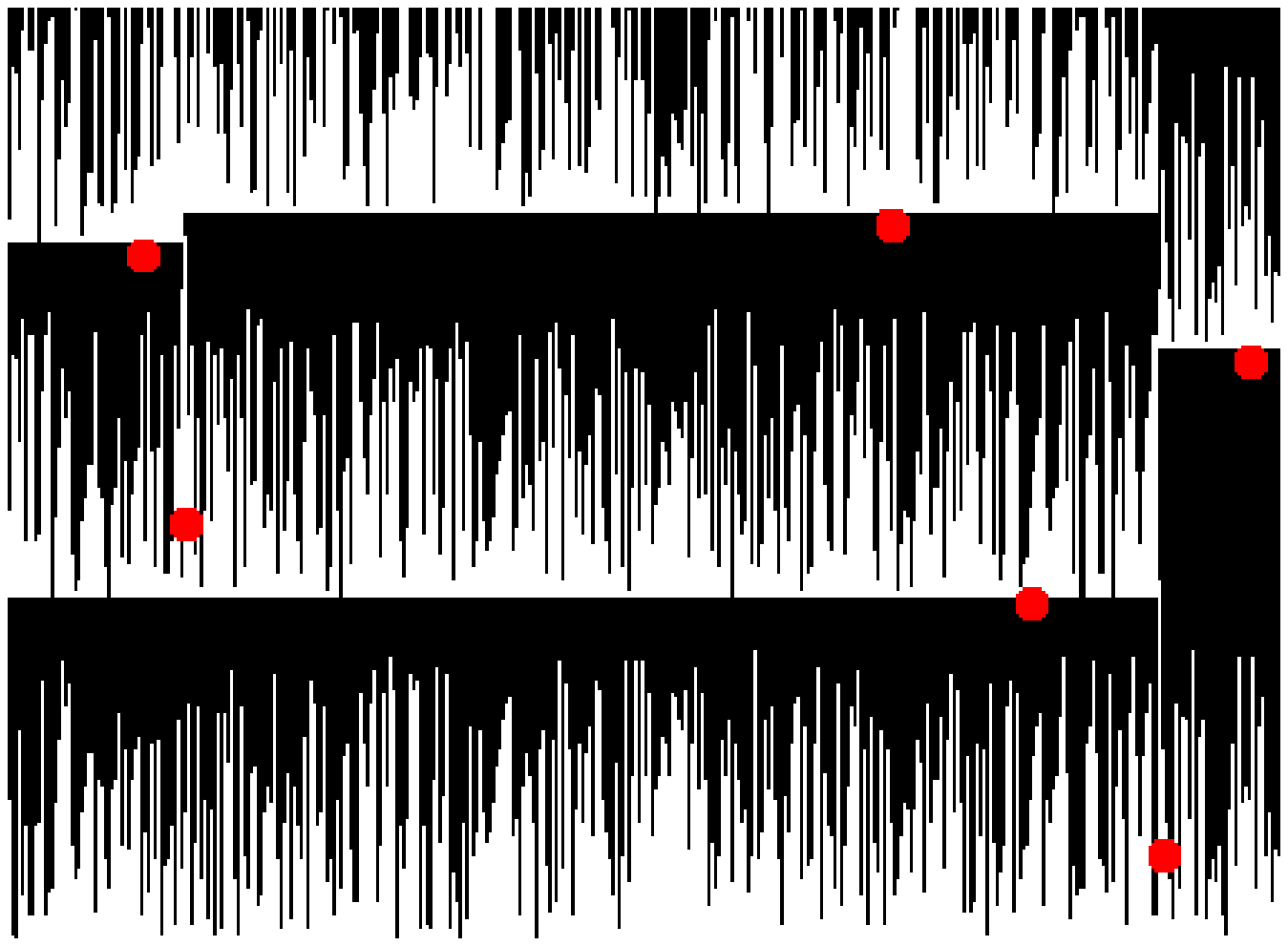}
\caption{Simulation with $\beta=BS$.}
\label{SimBS}
\vip
\parbox{13.3cm}{
\footnotesize{
We used $\mu_S=\delta_1$, 
$\nu_S(dt)=\indiq_{\{t\in [0,1]\}}dt$, $\ba_\la=T_S=1$ and $\la=10^{-3}$.
Here everything happens, roughly, as described by the limit process.
The first fire is microscopic and limits the length of the
second fire, which is macroscopic. Then there is another microscopic fire, etc.
Observe that the effect of the first microscopic fire persists for quite
a long time.
}}
\end{minipage}
}
\end{figure}

\begin{figure}[b]
\fbox{
\begin{minipage}[c]{0.95\textwidth}
\centering
\includegraphics[width=7.5cm]{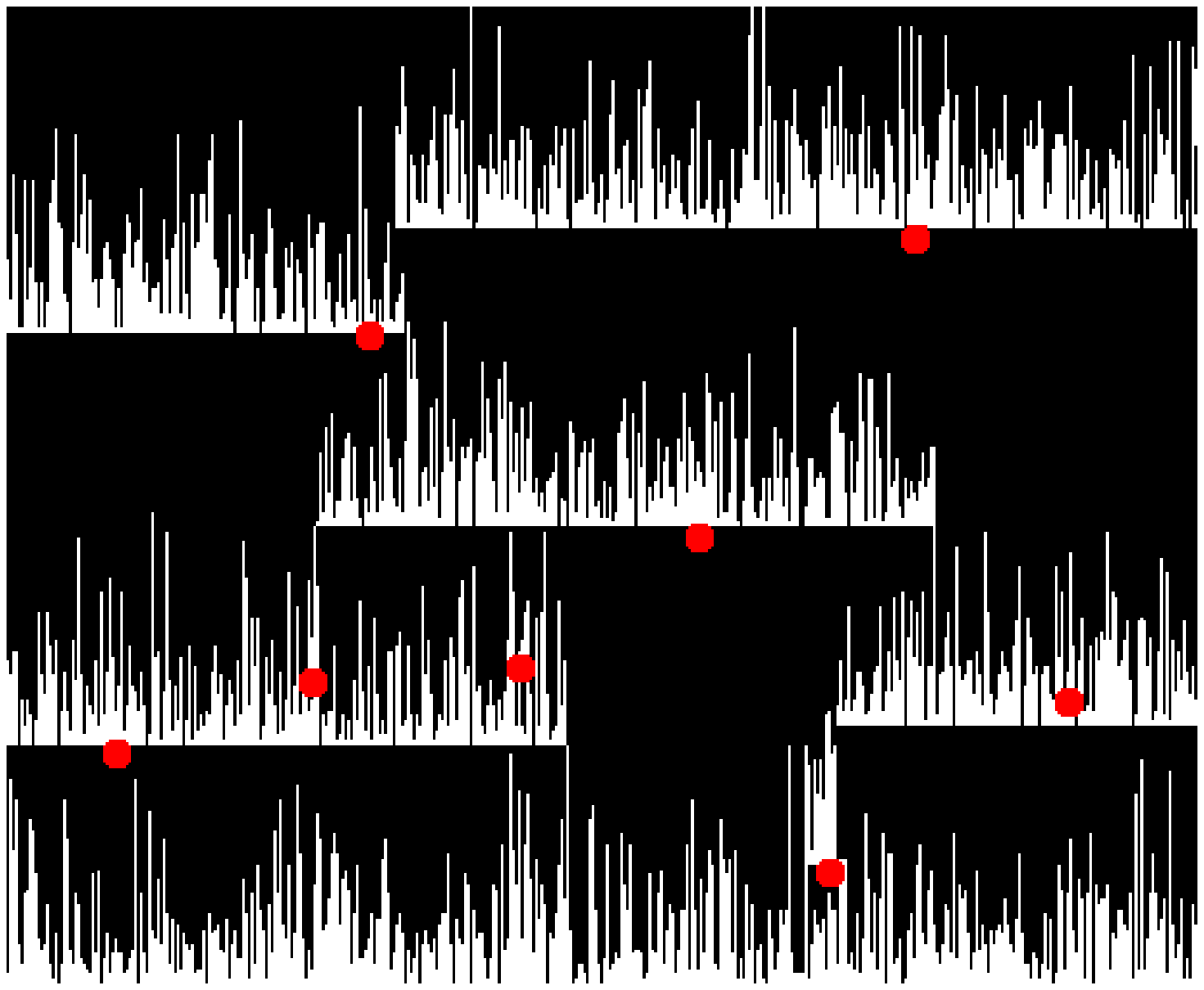}
\caption{Simulation with $\beta=\infty$.}
\label{SimInf}
\vip
\parbox{13.3cm}{
\footnotesize{Here $\mu_S((t,\infty))=e^{-t^2/2}$, 
$\nu_S(dt)=(\sqrt{2/\pi})e^{-t^2/2}\indiq_{\{t\geq 0\}}dt$ and
$\la=10^{-3}$. We used the approximate value
$\ba_\la\simeq \sqrt{2 \log(1/\la)}$.
The picture is not so far from the limit process, 
but there are some defaults.
The first fire is 
rather microscopic, but has however quite a large length. 
The second fire, which is clearly macroscopic, is limited
not by a previous microscopic fire, but by a site where the first seed 
has needed an unusual large time to fall.
}}
\end{minipage}
}
\end{figure}

\begin{figure}[b]
\fbox{
\begin{minipage}[c]{0.95\textwidth}
\centering
\includegraphics[width=7.5cm]{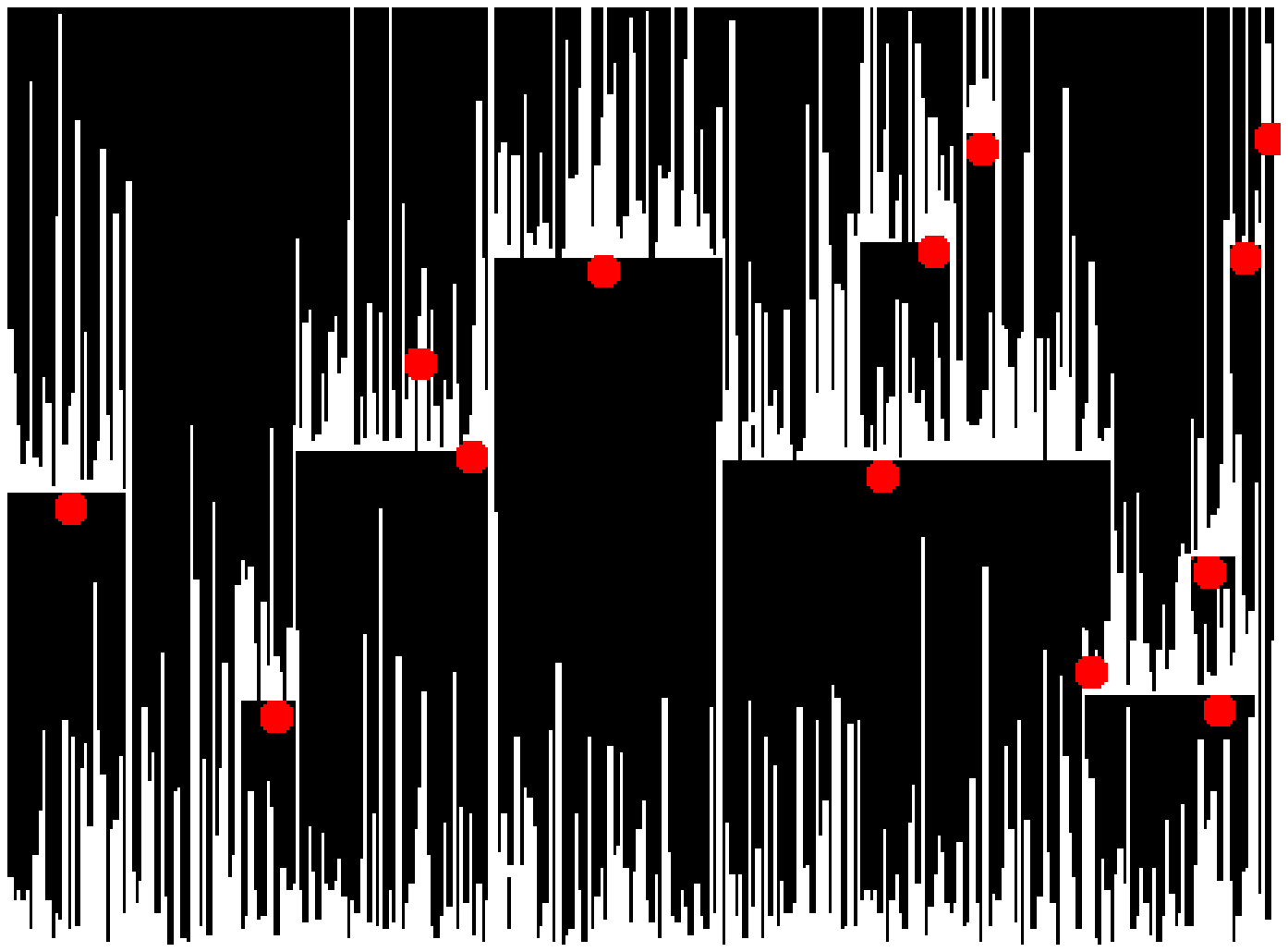}
\caption{Simulation with $\beta=5$.}
\label{Sim5}
\vip
\parbox{13.3cm}{
\footnotesize{We considered
$\mu_S((t,\infty))=(1+t/\beta)^{-\beta-1}$
and $\nu_S((t,\infty))=(1+t/\beta)^{-\beta}$ with $\beta=5$, $\la=5.10^{-3}$.
We used the approximate value $\ba_\la\simeq (1/\la)^{1/(\beta+1)}$. 
This picture resembles much the limit process: almost all
the fires are macroscopic.
}}
\end{minipage}
}
\end{figure}

\begin{figure}[b]
\fbox{
\begin{minipage}[c]{0.95\textwidth}
\centering
\includegraphics[width=7.5cm]{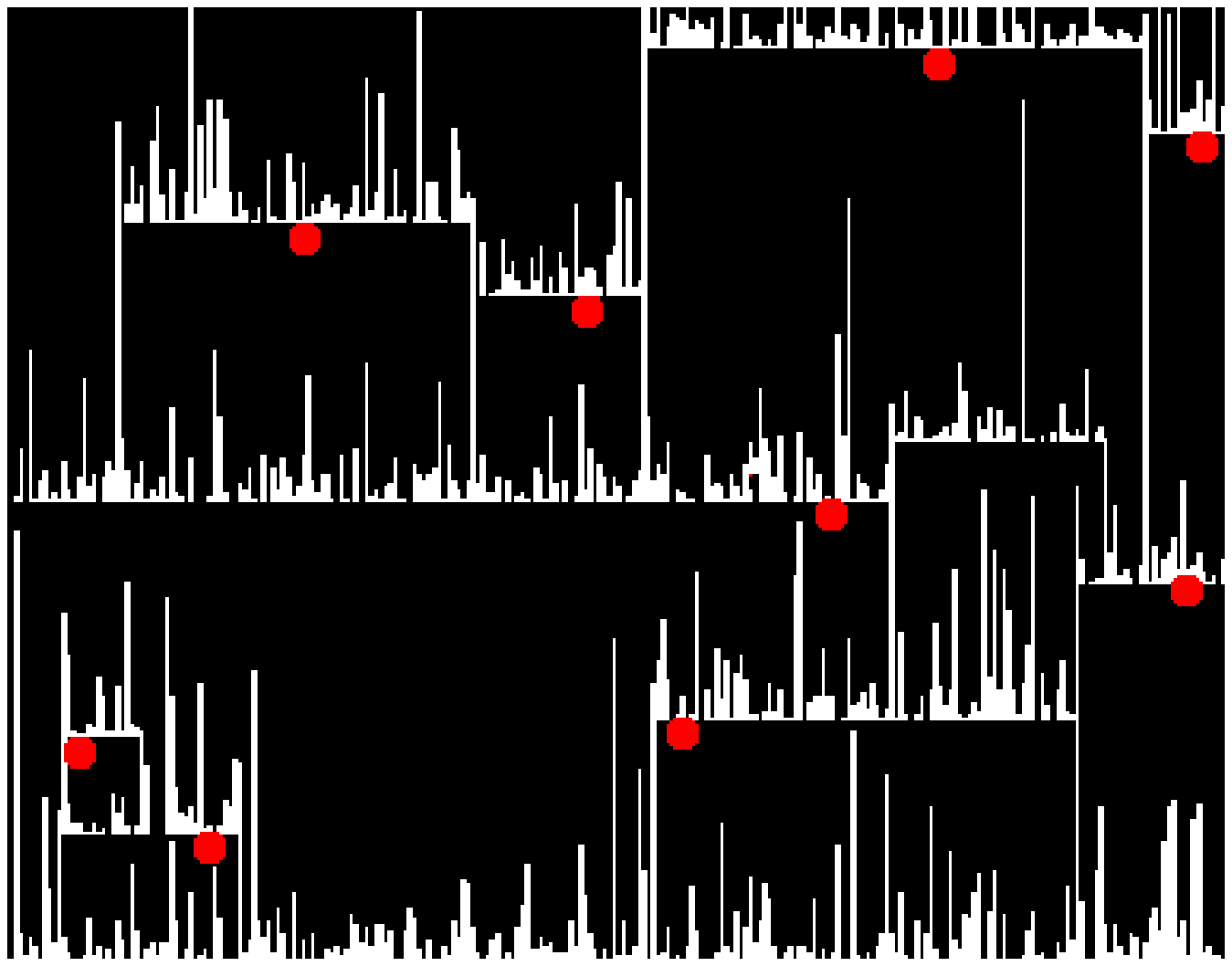}
\caption{Simulation with $\beta=2$.}
\label{Sim2}
\vip
\parbox{13.3cm}{
\footnotesize{
Same thing as Figure \ref{Sim5} with $\beta=2$ and $\la=10^{-3}$.
This picture is in perfect adequacy with 
the limit process, at least from a qualitative point of view.
}}
\end{minipage}
}
\end{figure}

\begin{figure}[b]
\fbox{
\begin{minipage}[c]{0.95\textwidth}
\centering
\includegraphics[width=7.5cm]{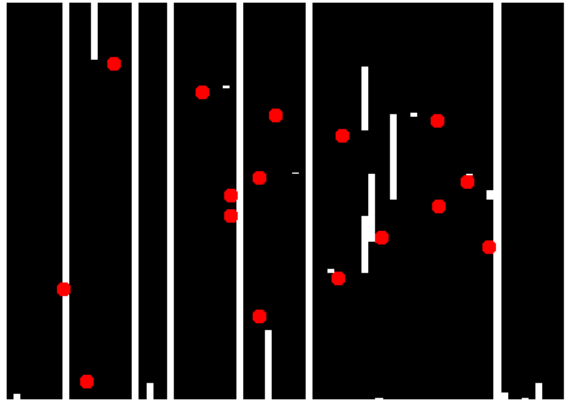}
\caption{Simulation with $\beta=0$.}
\label{Sim0}
\vip
\parbox{13.3cm}{
\footnotesize{We used $\mu_S((t,\infty))=e (e+t)^{-1} 
[\log(e+t)]^{-2}$, $\nu_S((t,\infty))=[\log(e+t)]^{-1}$ and $\la=10^{-7}$.
We used the approximate value $\ba_\la\simeq 1/[\la \log(1/\la)]$.
This picture is quite satisfactory:
there are six sites where the first seed never falls and the fires
have quite a low effect.
}}
\end{minipage}
}
\end{figure}

We would like to present some simulations of the discrete forest fire process.
In all the simulations below, we choose
$\mu_M^\la(dt)=\la e^{-\la t}\indiq_{t\geq 0}dt$ and we consider
different laws $\mu_S$. We simulate the $FF_A(\mu_S,\mu^\la_M)$ process
with $A=2.5$, for some given value of $\la$. Since there are too much
concerned sites, it is not possible to draw the whole picture.
We thus extract a zone in which some interesting events occur.
The vacant (resp. occupied) zones are drawn in white (resp. black).
Matches are represented by bullets.

\part{Appendix}

\section{Appendix}\label{ap}
\setcounter{equation}{0}

\subsection{Regularly varying functions}
The proof below is closely related to the theory 
of regularly varying functions and is probably completely standard.

\begin{lem}\label{hsimplieshsbeta}
Assume $(H_S)$. Then either $(H_S(BS))$ holds or there exists
$\beta\in [0,\infty)\cup\{\infty\}$ such that  $(H_S(\beta))$ holds.
\end{lem}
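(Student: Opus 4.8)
The plan is to show that under $(H_S)$, once we exclude the bounded support case, the limiting ratio function
$$
\chi(t):=\lim_{x\to\infty}\frac{\nu_S((x,\infty))}{\nu_S((tx,\infty))},\qquad t>0,
$$
which exists in $[0,\infty]$ by assumption, is forced to be of the form $\chi(t)=t^\beta$ for some $\beta\in[0,\infty)\cup\{\infty\}$. First I would record the elementary multiplicative (Cauchy-type) functional equation satisfied by $\chi$: for $s,t>0$ with all three limits finite and nonzero,
$$
\chi(st)=\lim_{x\to\infty}\frac{\nu_S((x,\infty))}{\nu_S((stx,\infty))}
=\lim_{x\to\infty}\frac{\nu_S((x,\infty))}{\nu_S((sx,\infty))}\cdot\frac{\nu_S((sx,\infty))}{\nu_S((stx,\infty))}
=\chi(s)\chi(t),
$$
where the second factor converges to $\chi(t)$ because $sx\to\infty$. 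So on the set where $\chi$ is finite and positive it is multiplicative. Since $g:=1/\nu_S((\cdot,\infty))$ is nondecreasing, $\chi$ is nonincreasing is false — rather $\chi(t)=\lim g(tx)/g(x)$ is \emph{nondecreasing} in $t$ and $\chi(1)=1$, $\chi(t)\le 1$ for $t\le 1$, $\chi(t)\ge1$ for $t\ge1$. Monotonicity plus multiplicativity on a set containing a neighbourhood of $1$ is the standard input that upgrades a Hamel-type solution to $\chi(t)=t^\beta$.

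Next I would argue that the "bad" values are excluded: if $\chi(t_0)=0$ for some $t_0<1$, then multiplicativity (applied where valid) and monotonicity force $\chi(t)=0$ for all $t<1$, and then by taking reciprocal limits $\chi(t)=\infty$ for all $t>1$ — this is exactly the case $\beta=\infty$, i.e.\ $(H_S(\infty))$. Symmetrically, if $\chi(t_0)=\infty$ for some $t_0>1$ the same conclusion $\beta=\infty$ holds. So we may assume $\chi(t)\in(0,\infty)$ for all $t>0$. Then $\chi$ is a nondecreasing multiplicative function on $(0,\infty)$, hence $\psi(u):=\log\chi(e^u)$ is nondecreasing and additive: $\psi(u+v)=\psi(u)+\psi(v)$. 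A nondecreasing additive function on $\mathbb{R}$ is linear, $\psi(u)=\beta u$ with $\beta=\psi(1)\ge 0$ (the sign coming from $\chi(t)\ge1$ for $t\ge1$). Therefore $\chi(t)=t^\beta$ with $\beta\in[0,\infty)$, which is $(H_S(\beta))$. Together with $(H_S(BS))$ and the $\beta=\infty$ case, this exhausts the possibilities.

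The remaining point, which I expect to be the main (though still routine) obstacle, is justifying that the multiplicativity identity $\chi(st)=\chi(s)\chi(t)$ genuinely holds for \emph{all} $s,t>0$ and not merely formally: one must be careful that $\nu_S((sx,\infty))$ is eventually positive (true since $\mu_S$ has unbounded support, so $\nu_S((y,\infty))>0$ for every $y$) and that the two limits one multiplies both exist in $(0,\infty)$ before invoking the excluded cases. I would handle this by first treating the dichotomy — either $\chi\equiv$ (the degenerate $0/\infty$ pattern) giving $\beta=\infty$, or $\chi$ is everywhere finite and positive — and only in the second branch perform the logarithmic change of variables and apply the classical fact that a monotone additive function is linear. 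I would also note in passing that the smoothness remarks in Subsection on Examples ($\nu_S((\cdot,\infty))$ is Lipschitz, decreasing, convex) are not needed here; monotonicity alone suffices. Finally I would remark that $\beta$ is uniquely determined by $\mu_S$, since $\chi$ is, so the trichotomy $(H_S(BS))$ / $(H_S(\beta))$, $\beta\in[0,\infty)$ / $(H_S(\infty))$ is exclusive.
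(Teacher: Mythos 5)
Your proposal is correct and follows essentially the same route as the paper's proof: define $\varphi(t)=\lim_{x\to\infty}\nu_S((x,\infty))/\nu_S((tx,\infty))$, use its monotonicity, the reciprocal identity $\varphi(1/t)=1/\varphi(t)$ and multiplicativity, dispose of the degenerate $0/\infty$ pattern as the case $\beta=\infty$, and invoke the classical fact that a monotone multiplicative function with $\varphi(1)=1$ is $t\mapsto t^\beta$. The one step you assert rather than prove, namely that a single zero $\varphi(t_0)=0$ with $t_0<1$ forces $\varphi\equiv 0$ on $(0,1)$, is precisely where the paper does its only real work (a contradiction argument with $\alpha_*=\sup\{\alpha>0:\varphi(\alpha)=0\}$, using that the product rule persists when one factor vanishes and the other is finite); your sketch is easily completed, e.g.\ by observing that if $\varphi(t)>0$ for some $t\in(t_0,1)$ then $\varphi(t)^n=\varphi(t^n)\le\varphi(t_0)=0$ as soon as $t^n\le t_0$, a contradiction.
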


\begin{proof}
We thus assume $(H_S)$ and that the support of $\mu_S$ is unbounded.
Hence, for all $t>0$,
$$
\varphi(t) := \lim_{x\to \infty} \frac{\nu_S((x,\infty))}{\nu_S((xt,\infty))}
\in [0,\infty)\cup\{\infty\}
$$
exists.
The function $\varphi$
is clearly nondecreasing and satisfies $\varphi(1)=1$.

\vip

{\bf Step 1.} We first show that for all $t>0$, $\varphi(1/t)=1/\varphi(t)$,
with the convention that $1/0=\infty$ and $1/\infty=0$.
This is not hard:
$$
\varphi(1/t)=\lim_{x \to \infty} \frac{\nu_S((x,\infty))}{\nu_S((x/t,\infty))}
=\lim_{y \to \infty} \frac{\nu_S((yt,\infty))}{\nu_S((y,\infty))}=1/\varphi(t).
$$

{\bf Step 2.} By the same way, one easily checks that for 
$0<s\leq t$, one has 
$\varphi(st)=\varphi(s)\varphi(t)$ as soon as $\varphi(s)>0$ 
or $\varphi(t)<\infty$. It suffices to write
$$
\varphi(st)=\lim_{x \to \infty} \frac{\nu_S((x,\infty))}{\nu_S((xst,\infty))}
=\lim_{x \to \infty} \frac{\nu_S((x,\infty))}{\nu_S((xs,\infty))}
\frac{\nu_S((xs,\infty))}{\nu_S((xst,\infty))}
=\varphi(s)\varphi(t).
$$

{\bf Step 3.} We assume first that $\varphi(s)>0$ for all $s\in (0,1)$. By Step
1, one easily deduces that $\varphi(s)\in (0,\infty)$ for all $s>0$.
We thus have a nondecreasing function $\varphi: (0,\infty) \mapsto (0,\infty)$
such that $\varphi(st)=\varphi(s)\varphi(t)$ for all $0<s\leq t$ and
such that $\varphi(1)=1$. One classically concludes that 
there exists $\beta \in [0,\infty)$ such that $\varphi(t)=t^\beta$.

\vip

{\bf Step 4.} We now assume that $\varphi(\alpha)=0$ for some 
$\alpha \in (0,1)$. We want to show that if so, then $\varphi(t)=0$
for all $t\in (0,1)$. This will imply that
$\varphi(t)=\infty$ for $t>1$ by Step 1, whence $\varphi(t)=t^\infty$.

\vip

Let thus $\alpha_*=\sup\{\alpha>0:\; 
\varphi(\alpha)=0\}$. Suppose by contradiction that 
$\alpha_*\in (0,1)$. By monotonicity, we have
$\varphi(\alpha)=0$ for all $\alpha \in (0,\alpha_*)$.
By Step 1,
we know that $\varphi(s)\in (0,\infty)$ for all $s\in (\alpha_*,1/\alpha_*)$.
Due to Step 2, we deduce that for all small $\e>0$,
$\varphi((\alpha_*-\e)(1/\alpha_*-\e))=0$. But for
$\e>0$ small enough, we have $(\alpha_*-\e)(1/\alpha_*-\e)>\alpha_*$
(because $\alpha_*<1$). This contradicts the definition of $\alpha_*$.
\end{proof}

Next, we prove the existence of the scale $\bm_\la$ satisfying (\ref{mla}).

\begin{lem}\label{mlaexist}
Assume $(H_S(\infty))$. Recall (\ref{ala}), (\ref{nla}). There exists
a function $\bm_\la : (0,1] \mapsto \nn$ satisfying (\ref{mla}).
\end{lem}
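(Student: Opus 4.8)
The plan is to reduce the fourth (and only substantive) clause of (\ref{mla}) to a countable list of requirements, and then to produce $\bm_\la$ by a single "diagonal" formula that is built to be monotone. Throughout I would use the basic facts already available under $(H_S(\infty))$: by Notation \ref{phipsi}, $\ba_\la=\phi_S(1/\la)$ with $\phi_S$ an increasing continuous bijection of $(0,\infty)$, so $\la\mapsto\ba_\la$ is (continuous and) decreasing and $\ba_\la\to\infty$ as $\la\to 0$; consequently $\nu_S((\ba_\la,\infty))$ is decreasing as $\la\downarrow 0$, hence $\la\mapsto\bn_\la=\lfloor 1/\nu_S((\ba_\la,\infty))\rfloor$ is non-decreasing as $\la\downarrow0$ and $\bn_\la\to\infty$. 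Put $z_n=1-1/n$ for $n\ge1$ (so $z_1=0$) and $h_n(\la)=1/\nu_S((\ba_\la z_n,\infty))\ge1$; for fixed $\la$, $n\mapsto h_n(\la)$ is non-decreasing, and for fixed $n$, $\la\mapsto h_n(\la)$ is non-decreasing as $\la\downarrow0$. Since $z\mapsto\nu_S((\ba_\la z,\infty))$ is non-increasing and every $z\in[0,1)$ satisfies $z\le z_n$ for $n$ large, it suffices to arrange $\lim_{\la\to0}\bm_\la\nu_S((\ba_\la z_n,\infty))=\infty$ for each $n$, i.e. $\bm_\la/h_n(\la)\to\infty$.

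The key preliminary estimate is that $h_n(\la)=o(\bn_\la)$ as $\la\to0$ for every $n$. Indeed $\bn_\la\ge 1/\nu_S((\ba_\la,\infty))-1$, so it is enough that $\nu_S((\ba_\la z_n,\infty))/\nu_S((\ba_\la,\infty))\to\infty$. For $n=1$ this is exactly $1/\nu_S((\ba_\la,\infty))\to\infty$. For $n\ge2$, apply $(H_S(\infty))$ with $x=\ba_\la z_n\to\infty$ and $t=1/z_n>1$: since $(\ba_\la z_n)\cdot(1/z_n)=\ba_\la$ and $t^\infty=\infty$, we get $\nu_S((\ba_\la z_n,\infty))/\nu_S((\ba_\la,\infty))\to\infty$, as wanted.

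Now the diagonal construction. Using $h_k=o(\bn_\la)$ and $\bn_\la\to\infty$, choose a strictly decreasing sequence $1\ge\la_1>\la_2>\cdots$ with $\la_k\to0$ such that $h_k(\la)\le\bn_\la/k$ and $\bn_\la\ge k$ for all $\la\in(0,\la_k]$. Let $N(\la)=\max\bigl(1,\sup\{k\ge1:\la\le\la_k\}\bigr)$ (with $\sup\emptyset=0$); it is finite for each $\la>0$ and non-decreasing as $\la\downarrow0$, and for $\la$ small one genuinely has $\la\le\la_{N(\la)}$, hence $h_{N(\la)}(\la)\le\bn_\la/N(\la)$. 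Define
$$\bm_\la=\big\lceil\sqrt{\bn_\la\,h_{N(\la)}(\la)}\,\big\rceil\in\nn .$$
Monotonicity is immediate and junction-free: as $\la\downarrow0$ all of $\bn_\la$, $N(\la)$ and $h_{N(\la)}(\la)$ (monotone in both the index and in $\la$) are non-decreasing, so $\bn_\la h_{N(\la)}(\la)$ is non-decreasing, hence $\la\mapsto\bm_\la$ is non-increasing on $(0,1]$. Moreover $\bm_\la\ge\sqrt{\bn_\la}\to\infty$; from $h_{N(\la)}(\la)\le\bn_\la/N(\la)$ we get $\bm_\la/\bn_\la\le 1/\sqrt{N(\la)}+1/\bn_\la\to0$; and for fixed $n$, once $N(\la)\ge n$ we have $h_n(\la)\le h_{N(\la)}(\la)$, so $\bm_\la/h_n(\la)\ge\sqrt{\bn_\la/h_{N(\la)}(\la)}\ge\sqrt{N(\la)}\to\infty$, i.e. $\bm_\la\nu_S((\ba_\la z_n,\infty))\to\infty$. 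Combined with the reduction of the first paragraph, this gives all the conditions of (\ref{mla}).

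The only real obstacle is the last requirement in (\ref{mla}): one cannot take $\bm_\la=\lfloor\sqrt{\bn_\la}\rfloor$, because $1/\nu_S((\ba_\la z,\infty))$ may grow almost as fast as $\bn_\la$ when $z$ is close to $1$, so $\bm_\la$ must be tuned to outgrow every $h_n$ while remaining $o(\bn_\la)$. The gliding index $N(\la)$ is exactly what achieves this; and writing $\bm_\la$ as a single ceiling of a jointly monotone expression is what spares us any case analysis at the breakpoints $\la_k$ in checking that $\bm_\la$ is non-increasing.
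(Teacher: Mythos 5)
Your proof is correct and follows essentially the same route as the paper: both arguments rest on the observation that $(H_S(\infty))$ forces $\nu_S((\ba_\la(1-1/n),\infty))/\nu_S((\ba_\la,\infty))\to\infty$ for each fixed $n$, choose thresholds $\la_n\downarrow 0$ accordingly, and define $\bm_\la$ through a gliding index. The only difference is cosmetic: the paper sets $\bm_\la=\lfloor 1/\nu_S((\ba_\la(1-\e_\la),\infty))\rfloor$ with $\e_\la$ a step function built from the $\la_n$, whereas you take the geometric mean $\lceil\sqrt{\bn_\la\,h_{N(\la)}(\la)}\rceil$, which changes nothing essential in the verification of (\ref{mla}).
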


\begin{proof}
Recalling that $\lim_{\la\to 0} \ba_\la = \infty$ and using
$(H_S(\infty))$, we observe that for any $n\geq 1$, 
$\lim_{\la\to 0} \nu_S(((1-1/n)\ba_\la,\infty))/\nu_S((\ba_\la,\infty))=\infty$.
Thus there exists $\la_n \in (0,1]$ such that for all
$\la\in (0,\la_n]$, $\nu_S(((1-1/n)\ba_\la,\infty))/\nu_S((\ba_\la,\infty))
\geq n$. We of course may choose $\la_1=1$ and choose 
the sequence $(\la_n)_{n\geq 1}$ decreasing to $0$. Then we define 
$\e_\la: (0,1]\mapsto (0,1]$ by setting,
for all $n\geq 1$,  $\e_\la=1/n$ for $\la \in (\la_{n+1},\la_n]$.
There holds $\lim_{\la \to 0} \e_\la = 0$. Finally, we put
$\bm_\la = \lfloor 1/\nu_S((\ba_\la(1-\e_\la),\infty)) \rfloor$. This function
is obviously non-increasing. Next, recalling that $\bn_\la =
\lfloor 1/ \nu_S((\ba_\la,\infty)) \rfloor$, we see that for all $n\geq 1$,
all $\la\in (\la_{n+1},\la_n)$,
$$
\frac{\bm_\la}{\bn_\la} \simeq \frac{\nu_S((\ba_\la,\infty))}
{\nu_S((\ba_\la(1-\e_\la),\infty))} = \frac{\nu_S((\ba_\la,\infty))}
{\nu_S((\ba_\la(1-1/n),\infty))} \leq 1/n,
$$
whence $\lim_{\la \to 0} (\bm_\la/\bn_\la)=0$. Finally, fix
$z\in (0,1)$ and consider $n$ large enough, so that $1-1/n>z$. 
Then for $\la \in (0, \la_n)$, there holds $\e_\la \leq 1/n$, whence
$$
\nu_S((\ba_\la z,\infty)) \bm_\la \simeq \frac{\nu_S((\ba_\la z,\infty))}
{\nu_S((\ba_\la(1-\e_\la),\infty))} \geq \frac{\nu_S((\ba_\la z,\infty))}
{\nu_S((\ba_\la(1-1/n),\infty))} \to \infty
$$
as $\la \to 0$ due to $(H_S(\infty))$, since $z<1-1/n$. 
\end{proof}

\subsection{Coupling}
Finally, we recall some well-known facts about coupling.

\begin{lem}\label{gcou}
(i) Let $(p_k)_{k\geq 0}$ and $(q_k)_{k\geq 0}$ be two probability 
laws on $\{0,1,\dots\}$. One can couple
$X\sim (p_k)_{k\geq 0}$ and $Y\sim (q_k)_{k\geq 0}$ such that for
all $k\geq 0$, $\Pr[X=Y=k]\geq p_k\land q_k$.

(ii) For $f,g$ two probability densities on $\rr$, one can couple
$X\sim f(x)dx$ and $Y\sim g(x)dx$ in such a way that 
$\Pr[X=Y] \geq \int_\rr \min(f(x),g(x))dx$. 

(iii) If we have a sequence of laws $\mu_n$ on some Polish space, converging
weakly to some law $\mu$, then it is possible to find 
some random variables $X_n \sim \mu_n$ and $X \sim \mu$ such that a.s.,
$\lim_{n \to \infty} X_n=X$.
\end{lem}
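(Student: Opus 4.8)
The plan is to derive (i) and (ii) from the standard \emph{maximal coupling} construction, and to obtain (iii) by invoking the Skorokhod representation theorem.

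For (i), I would first set $c:=\sum_{k\geq 0}(p_k\land q_k)\in[0,1]$. If $c=1$ then $p_k=q_k$ for every $k$ and the diagonal coupling $X=Y$ does the job, so assume $c<1$. Then introduce the three probability laws on $\{0,1,\dots\}$ given by $r_k:=(p_k\land q_k)/c$, $\tilde p_k:=(p_k-p_k\land q_k)/(1-c)$ and $\tilde q_k:=(q_k-p_k\land q_k)/(1-c)$; these are genuine probability laws, since $\sum_k(p_k\land q_k)=c$ forces $\sum_k(p_k-p_k\land q_k)=1-c$ (and likewise for $q$). Now take $B$ a $\mathrm{Bernoulli}(c)$ variable, $Z\sim(r_k)_k$, $U\sim(\tilde p_k)_k$, $V\sim(\tilde q_k)_k$, mutually independent, and set $(X,Y):=(Z,Z)$ if $B=1$ and $(X,Y):=(U,V)$ if $B=0$. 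A one-line computation gives $\Pr[X=k]=c\,r_k+(1-c)\tilde p_k=p_k$ and, similarly, $\Pr[Y=k]=q_k$, while $\Pr[X=Y=k]\geq\Pr[B=1,\,Z=k]=c\,r_k=p_k\land q_k$, which is (i).

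For (ii), I would run the same argument with sums replaced by integrals: set $c:=\int_\rr\min(f,g)\,dx$, and if $c<1$ introduce the densities $r:=\min(f,g)/c$, $\tilde f:=(f-\min(f,g))/(1-c)$, $\tilde g:=(g-\min(f,g))/(1-c)$, then combine them through an independent $\mathrm{Bernoulli}(c)$ switch exactly as above; this yields $\Pr[X=Y]\geq\Pr[B=1]=c=\int_\rr\min(f,g)\,dx$, and measurability of $\min(f,g)$ and of the corrected densities is immediate. Part (iii) is precisely the Skorokhod representation theorem for a weakly convergent sequence of Borel probability measures on a Polish space, which I would simply quote from a standard reference (e.g. Billingsley or Dudley).

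The only point requiring any care is verifying that the auxiliary laws $\tilde p,\tilde q$ (resp. the densities $\tilde f,\tilde g$) are well-defined probability laws; this amounts to the normalization identity $\sum_k(p_k-p_k\land q_k)=1-c$ (resp. its integral analogue), so there is no real obstacle. Beyond that the statement is entirely classical, and (iii) is a verbatim citation, so I do not expect any genuine difficulty here — the lemma is collected only for ease of reference in the couplings of Sections \ref{pr1}, \ref{prbeta} and \ref{przero}.
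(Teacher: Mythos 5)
Your proof is correct and follows essentially the same route as the paper: the maximal-coupling construction with a Bernoulli switch mixing the common part $\min(p,q)$ (resp.\ $\min(f,g)$) with the normalized remainders, and a direct citation of the Skorokhod representation theorem for (iii). Your explicit treatment of the degenerate case $c=1$ is a minor refinement the paper leaves implicit, but the argument is otherwise identical.
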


\begin{proof}
First observe that (iii) is nothing but the Skorokhod representation Theorem.

\vip

To prove (i), set $r_k=p_k \land q_k$ and $r=\sum_0^\infty r_k$.
Consider a Bernoulli r.v. $C$ with parameter $r$,
a $(r_k/r)_{k\geq 0}$-distributed r.v. $Z$, a 
$((p_k-r_k)/(1-r))_{k\geq 0}$-distributed r.v. $U$ and  a 
$((q_k-r_k)/(1-r))_{k\geq 0}$-distributed r.v. $V$. Assume that all
these objects are independent and put $(X,Y)=C (Z,Z) + (1-C)(U,V)$.
Some immediate computations show that $X\sim (p_k)_{k\geq 0}$ and 
$Y\sim (q_k)_{k\geq 0}$ and for $k\geq 0$, $\Pr[X=Y=k]\geq r_k$.

\vip

The proof of (ii) is similar: put $h=\min(f,g)$ and 
$r=\int_\rr h(x)dx$.
Consider a Bernoulli r.v. $C$ with parameter $r$,
a r.v. $Z$ with density $h/r$, a r.v. $U$ with density
$(f-h)/(1-r)$ and  a r.v. $V$ with density
$(g-h)/(1-r)$. Assume that all
these objects are independent and put $(X,Y)=C (Z,Z) + (1-C)(U,V)$.
Some immediate computations show that $X\sim f(x)dx$,
$Y\sim g(y)dy$ and $\Pr[X=Y]\geq r$.
\end{proof}


\begin{thebibliography}{99}


\bibitem{a3}{D.J. Aldous, {\it Emergence of the giant component in special 
Marcus-Lushnikov processes}, Random Structures Algorithms {\bf 12}
(1998), 179-–196.}

\bibitem{a}{D.J. Aldous, {\it Deterministic and Stochastic Models for 
Coalescence (Aggregation, Coagulation): A Review of the Mean-Field Theory for 
Probabilists}, Bernoulli {\bf 5} (1999), 3--48.}

\bibitem{a2}{D.J.  Aldous, {\it The Percolation Process on a Tree where 
Infinite Clusters are Frozen},  Math. Proc. Cambridge Philos. Soc. {\bf 128} 
(2000),  465--477.}

\bibitem{al}{L.A.N. Amaral, K.B. Lauritsen, {\it Self-organized criticality 
in a rice-pile model}, Phys Rev. E. {\bf 54} (1996), 4512--4515.}

\bibitem{btw1}{P. Bak, C. Tang, K. Wiesenfeld, {\it
Self-organized criticality: an explanation of $1/f$ noise},  
Phys. Rev. Letters {\bf 59} (1987), 381--384.}

\bibitem{btw2}{P. Bak, C. Tang, K. Wiesenfeld,  {\it
Self-organized criticality},  Phys. Rev. A {\bf 38}
(1988),  364--374.}

\bibitem{vdbb}{J. van den Berg, R. Brouwer, {\it Self-organized 
forest-fires near the critical time},  Comm. Math. Phys.  {\bf 267} (2006), 
265--277.}

\bibitem{vdbbv}{J. van den Berg, R. Brouwer, B. V\'agv\"olgyi, 
{\it Box-crossings and continuity results for self-destructive 
percolation in the plane}, In and Out of Equilibrium 2,
V. Sidoravicius, M. E. Vares 
editors, Progress in probability 60, Birkh\"auser, 2008, 
117--136.}

\bibitem{vdbj}{J. van den Berg, A.A. J\'arai, 
{\it On the asymptotic density in a one-dimensional 
self-organized critical forest-fire model},  
Comm. Math. Phys.  {\bf 253} (2005), 633--644.}

\bibitem{vdbt}{J. van den Berg, B. T\'oth, {\it A signal-recovery system: 
asymptotic properties, and construction of an infinite-volume process}, 
Stochastic Process. Appl. {\bf 96} (2001), 177--190.}

\bibitem{b}{J. Bertoin, {\it Random fragmentation and coagulation processes},
Cambridge studies in advanced math. {\bf 102}, 2006.}

\bibitem{be}{J. Bertoin, {\it Burning cars in parkings}, preprint (2010) 
[http://hal.archives-ouvertes.fr/ccsd-00505206/en/].}

\bibitem{bfold}{X. Bressaud, N. Fournier, {\it 
On the invariant distribution of an avalanche process.},
Ann. Probab. {\bf 37} (2009), 48--77.}

\bibitem{bf}{X. Bressaud, N. Fournier, {\it Asymptotics of one-dimensional 
forest fire processes.}, Ann. Probab. {\bf 38} (2010), 1783--1816.}

\bibitem{br}{R. Brouwer, {\it  A modified version of frozen percolation on 
the binary tree}, preprint, 2005, arXiv: math/0511021v1}

\bibitem{bp}{ R. Brouwer and J. Pennanen, {\it The cluster size
distribution for a forest-fire process on $\zz$},  
Electron. J. Probab.  {\bf 11}  (2006), 1133--1143.}

\bibitem{bry}{T. Brylawski, {\it The lattice of integer partitions}, Discrete
Math. {\bf 6} (1973), 201--219.}

\bibitem{cpv}{R. Cafiero, L. Pietronero, A. Vespignani, {\it Persistence of 
screening and self-criticality in the scale invariant dynamics of 
diffusion limited aggregation}, Phys. Rev. Lett. {\bf 70} (1993), 3939--3942.}

\bibitem{ccffj}{K. Christensen, A. Corral, V. Frette, J. Feder, T. Jossang,
{\it Tracer dispersion in a self-organized critical system,} Phys. Rev. Lett.
{\bf 77} (1996), 107--110.}

\bibitem{ct}{C. Cocozza-Thivent, {\it Processus stochastiques et fiabilit\'e 
des syst\`emes}, Springer-Smai, 1997.}

\bibitem{ctl}{A. Corral, L. Telesca, and R. Lasaponara, {\it Scaling and 
correlations in the dynamics of forest-fire occurrence}, Phys. Rev. E {\bf 77}
(2008), 016101, 7 pp.} 

\bibitem{cp} {W. Cui and A. H.Perera,  
{\it What do we know about forest fire size distribution, and 
why is this knowledge useful for forest management ?}, 
International Journal of Wildland Fire {\bf 17} (2008), 234--244.}

\bibitem{d2}{D. Dhar, {\it  Self-organized critical state of sand pile 
automaton models},  Phys. Rev. Lett.  {\bf 64} (1990), 1613--1616.} 

\bibitem{d}{D. Dhar,{\it Theoretical studies of self-organized criticality},  
Phys. A {\bf 369} (2006), 29--70.}

\bibitem{dcs}{B. Drossel, S. Clar, F. Schwabl, {\it Exact results
for the one-dimensional self-organized critical forest-fire model}, 
Phys. Rev. Lett. {\bf 71} (1993), 3739--3742.}

\bibitem{ds}{B. Drossel, F. Schwabl, {\it Self-organized
critical forest-fire model}, Phys. Rev. Lett. {\bf 69} (1992),
1629--1632.}

\bibitem{du1}{M. D\"{u}rre, {\it  Existence of multi-dimensional 
infinite volume self-organized critical forest-fire models}, 
Electron. J. Probab. {\bf 11} (2006), 513--539.}

\bibitem{du2}{M. D\"{u}rre, {\it Uniqueness of multi-dimensional
infinite volume self-organized critical forest-fire models},  
Electronic Communications in Probability {\bf 11} (2006), 304--315.}

\bibitem{du}{M. D\"{u}rre, {\it Self-organized critical phenomena: 
Forest fire and sand pile models}, PhD Dissertation, LMU M\"unchen, 2009.}

\bibitem{er}{P. Erd\"os, A. R\'enyi, {\it On the evolution of random graphs,}
Bull. Inst. Internat. Statist. {\bf 38} (1961), 343--347.}

\bibitem{fl}{N. Fournier, P. Lauren\c cot, {\it Marcus Lushnikov processes, 
Smoluchowski's and Flory's models}, Stochastic Process. Appl. {\bf 119}
(2009), 167--189.}

\bibitem{glmmp}{E. Goles, M. Latapy, C. Magnien, M. Morvan, H.D. Phan, 
{\it Sand pile models and lattices: a 
comprehensive survey},   Theoret. Comput. Sci.  {\bf 322}  (2004),
383--407.}

\bibitem{g}{P. Grassberger, {\it Critical Behaviour of the
Drossel-Schwabl Forest Fire Model}, New Journal of Physics {\bf 4} (2002), 17.}

\bibitem{h}{C.L. Henley, {\it Self-organized percolation: a
simpler model},  Bull. Am. Phys. Soc. {\bf 34} (1989), 838.}

\bibitem{hhw}{T.P. Holmes, R.J. Huggett, A.L. Westerling, 
{\it Statistical analysis of large wildfires}, In {\it The Economics 
of Forest Disturbances: Wildfires, Storms, and Invasive Species} (2008), 
T.P. Holmes, J.P. Prestemon and  K.L. Abt, Springer, Dordrecht, The 
Netherlands, 59--77.}

\bibitem{hlmppw}{A.E. Holroyd, L. Levine, K. Meszaros, Y. Peres, J. Propp,
D. Wilson, {\it Chip-Firing and Rotor-Routing on Directed Graphs}, 
In and Out of Equilibrium 2, Progress in 
Probability {\bf 60} (2008), 331--364.}

\bibitem{hp}{A. Honecker, I. Peschel, {\it Critical Properties of the 
One-Dimensional Forest-Fire Model}, Physica A {\bf 229} (1996) 478--500.} 

\bibitem{js}{J. Jacod, A.N. Shiryaev, {\it Limit theorems for 
stochastic processes}, Springer, 1982.} 

\bibitem{j2}{A.A. J\'arai, { \it Thermodynamic limit of the Abelian 
sand pile model on $\zz^d$},  Markov Process. Related Fields  {\bf 11}  
(2005),  313--336.} 

\bibitem{j}{H.J. Jensen, {\it Self-Organized criticality}, Volume 10 of
Cambridge Lecture Notes in Physics, Cambridge University Press, 1998.}

\bibitem{k}{J. Kingman,  {\it The coalescent},  Stoch. Process. Applic., 
{\bf 13} (1982), 235--248.}

\bibitem{lbr}{Y. Le Borgne, D. Rossin, {\it On the identity of the 
sand pile group}, Discrete Math. {\bf 256} (2002), 775--790.}

\bibitem{l}{T.M. Liggett, {\it Interacting particle systems}, 
Springer, 1985.}

\bibitem{lu}{S. L\"ubeck, K. D. Usadel, {\it The Bak-Tang-Wiesenfeld 
sand pile model around the upper critical dimension},  Phys. Rev. E {\bf 56}
(1997), 5138--5143.}

\bibitem{mrs}{C. Maes, F. Redig, E. Saada, {\it The infinite volume 
limit of dissipative abelian sand piles}, Commun. Math. Phys. {\bf 244}
(2004), 395--417.}

\bibitem{maj}{S.N. Majumdar, {\it Exact fractal dimension of the loop-erased
random walk in two dimensions}, Phys. Rev. Lett. {\bf 68} (1992), 2329--2331.}

\bibitem{man}{A. Mangiavillano, {\it Multi-scalarit\'e du ph\'enom\`ene 
feu de for\^et en r\'egions m\'editerran\'eennes fran\c caises de 
1973--2006}, PhD thesis, Universit\'e d'Avignon, 2008. }

\bibitem{ofc}{Z. Olami, H. J. S. Feder, K. Christensen, 
{\it Self-organized criticality in a continuous, nonconservative 
cellular automaton modeling earthquakes},  Physical Review Letters 
{\bf 68} (1992), 1244--1247. }

\bibitem{ptz}{L. Pietronero, P. Tartaglia, Y.C. Zhang, {\it Theoritical studies
of self-organized criticality}, Physica A {\bf 173} (1991), 22--44.}

\bibitem{pddk}{V. B. Priezzhev, D. Dhar, A. Dhar, and S. Krishnamurthy, 
{\it Eulerian walkers as a model of self-organised criticality},  Phys. 
Rev. Lett. {\bf  77} (1996), 5079--5082. }

\bibitem{pj}{G. Pruessner, H. J. Jensen, {\it A new, efficient algorithm 
for the Forest Fire Model}, 2003, unpublished.}

\bibitem{ra}{B. R\'ath, {\it Mean field frozen percolation}, Journal 
of Statistical Physics {\bf 137} (2009), 459--499.}

\bibitem{rt}{B. R\'ath and B. T\'oth, {\it Erdos-Renyi Random graphs + 
forest fires=Self-Organized Criticality}, Electronic Journal of 
Probability {\bf 14} (2009), 1290--1327.} 

\bibitem{r1}{F. Redig, {\it Mathematical aspects of the abelian 
sand pile model},  in Mathematical statistical physics,  
Elsevier B. V., Amsterdam, 2006, 657--730.} 

\bibitem{scheid}{A.E. Scheidegger, {\it A stochastic model for drainage 
patterns into a intramontane trench,} 
Bull. Assoc. Sci. Hydrol. {\bf 12} (1967), 15--20.} 

\bibitem{smol}{M. Smoluchowski, {\it Drei Vortr\"age \"ber Diffusion, 
Brownsche Molekularbewegung und Koagulation von Kolloidteilchen}, 
Physik. Zeitschr. {\bf 17} (1916), 557–-599.}

\bibitem{s}{D. Sornette, {\it  Critical market crashes}, Physics Reports, 
{\bf 378} (2003), 1--98.}

\bibitem{stahl}{A. Stahl, personnal communication, 2010.}

\bibitem{ss}{D. Stauffer, D. Sornette, {\it Self-Organized Percolation 
Model for Stock Market Fluctuations}, Physica A {\bf 271} (1999), 
496--506.}

\bibitem{tardos}{G. Tardos, {\it Polynomial bound for a chip firing game
on graphs}, SIAM J. Discrete Math. {\bf 1} (1988),  397-398.}

\bibitem{vag}{B. V\'agv\"olgyi, {\it Self-destructive percolation, invasion 
percolation and related models}, PhD thesis,  Vrije Universiteit, Amsterdam 
(2009). }

\bibitem{vel}{Y. Velenik, {\it Le mod\`ele d'Ising}, 
[http://cel.archives-ouvertes.fr/cel-00392289], 2009.}

\bibitem{vo}{S. Volkov, {\it Forest fires on $\zz_+$ with ignition only 
at $0$}, preprint (2009), [http://arxiv.org/abs/0907.1821].}

\bibitem{z}{Y.C. Zhang, {\it Scaling theory of self-organized criticality},
Phys. Rev. Lett. {\bf 63} (1989), 470--473.}

\bibitem{zg}{R. Zinck, V. Grimm, {\it More Realistic than Anticipated: a 
classical Forest-Fire Model from Statistical Physics Captures Real Fire 
Shapes} The Open Ecology Journal {\bf 1} (2008), 8--13.} 

\bibitem{zgj}{R. Zinck, K. Johst, V. Grimm, {\it Wildfire, landscape 
diversity and the Drossel-Schwabl model}. Ecological Modelling {\bf 221} 
(2010),  98--105.} 




\end{thebibliography}
\end{document}